\documentclass[10pt]{article}
\usepackage[all,2cell]{xy} \UseAllTwocells \SilentMatrices
\usepackage{latexsym,amsfonts,amssymb}
\usepackage{amsmath,amsthm,amscd}
\usepackage{hyperref,psfrag}
\usepackage{diagcat}
\usepackage{color}
\usepackage{etoolbox}
\usepackage[dvips]{epsfig}
\usepackage{psfrag}
\usepackage[vcentermath]{youngtab}

\usepackage{graphicx}

\usepackage{a4wide}
\usepackage{geometry}\geometry{margin=1in}

\usepackage{epigraph,wrapfig}

\normalfont\upshape

\usepackage{fancyhdr}
\pagestyle{fancyplain}

\lhead[\fancyplain{}{\bfseries\thepage}]{\fancyplain{}{\sl\bfseries\rightmark}}
\rhead[\fancyplain{}{\sl\bfseries\leftmark}]{\fancyplain{}{\bfseries\thepage}}
\cfoot{}

\hfuzz=6pc

\theoremstyle{definition}
\newtheorem{thm}{Theorem}[section]
\newtheorem{cor}[thm]{Corollary}
\newtheorem{conj}[thm]{Conjecture}
\newtheorem{lem}[thm]{Lemma}
\newtheorem{rem}[thm]{Remark}
\newtheorem{prop}[thm]{Proposition}
\newtheorem{defn}[thm]{Definition}
\newtheorem{example}[thm]{Example}

\newtheorem*{thm*}{Theorem}

%


\numberwithin{equation}{section}


\newcommand{\B}{\scriptstyle b+1}

\usepackage{bbm}
\def\o{\otimes}

\def\C{{\mathbbm C}}
\def\N{{\mathbbm N}}

\def\Z{{\mathbbm Z}}
\def\Q{{\mathbbm Q}}

\def\F{{\mathbbm F}}

\def\1{{\mathbbm{1}}}

\newcommand{\Hom}{{\rm Hom}}
\newcommand{\HOM}{{\rm HOM}}

\newcommand{\END}{{\rm END}}
\newcommand{\Tab}{{\rm Tab}}


\newcommand{\gr}{\mathrm{Gr}} 

\def\dif{\partial}

\def\lra{{\longrightarrow}}
   
\def\dmod{{\mbox{-}\mathrm{mod}}}   

\def\Id{\mathrm{Id}}
\def\mc{\mathcal}
\def\mf{\mathfrak}
\def\shuffle{\,\raise 1pt\hbox{$\scriptscriptstyle\cup{\mskip
               -4mu}\cup$}\,}

\newcommand{\refequal}[1]{\xy {\ar@{=}^{#1}
(-1,0)*{};(1,0)*{}};
\endxy}

\def\Tab{\mathrm{Tab}}
\def\e{\varepsilon}



\newcommand{\sym}{\mathrm{\Lambda}}

\newcommand{\nh}{\mathrm{NH}}

\newcommand{\mH}{\mathrm{H}} 
\newcommand{\RHOM}{\mathbf{R}\mathrm{HOM}}

%
%


\newcommand{\cwbubble}[2]{
\begin{DGCpicture}
\DGCcoupon*(-0.4,-0.4)(.4,.4){ }
\DGCbubble(0,0){0.35}
\DGCdot*<{0.2,L}
\DGCdot*.{0.1,R}[r]{#2}
\DGCcoupon*(-.4,-.4)(.4,.4){\small{#1}}
\end{DGCpicture}
}

\newcommand{\ccwbubble}[2]{
\begin{DGCpicture}
\DGCcoupon*(-0.4,-0.4)(.4,.4){ }
\DGCbubble(0,0){0.35}
\DGCdot*>{0.2,L}
\DGCdot*.{0.1,R}[r]{#2}
\DGCcoupon*(-.4,-.4)(.4,.4){\small{#1}}
\end{DGCpicture}
}

\newcommand{\bigcwbubble}[2]{
\begin{DGCpicture}
\DGCcoupon*(-0.8,-0.8)(.8,.8){ }
\DGCbubble(0,0){0.5}
\DGCdot*<{0.25,L}
\DGCdot*.{0.25,R}[r]{#2}
\DGCcoupon*(-.6,-.6)(.6,.6){\small{#1}}
\end{DGCpicture}
}

\newcommand{\bigccwbubble}[2]{
\begin{DGCpicture}
\DGCcoupon*(-0.8,-0.8)(.8,.8){ }
\DGCbubble(0,0){0.5}
\DGCdot*>{0.25,L}
\DGCdot*.{0.25,R}[r]{#2}
\DGCcoupon*(-.6,-.6)(.6,.6){\small{#1}}
\end{DGCpicture}
}

\newcommand{\cwcapbubcup}[4]{
\begin{DGCpicture}
\DGCcoupon*(-0.3,-0.1)(1.3,2.1){ }
\ifstrequal{#4}{no}{}{
\DGCcoupon*(0,.8)(.3,1.4){#4}}
\ifstrequal{#1}{no}{}{
\DGCstrand(0,0)(1,0)/d/
\DGCdot*>{0.25,2}
\ifstrequal{#1}{$0$}{}{
\ifstrequal{#1}{$1$}{\DGCdot{0.25,1}}{
\DGCdot{0.25,1}[r]{\mbox{\scriptsize #1}}}}}
\ifstrequal{#3}{no}{}{
\DGCstrand/d/(0,2)(1,2)
\DGCdot*<{1.75,2}
\ifstrequal{#3}{$0$}{}{
\ifstrequal{#3}{$1$}{\DGCdot{1.75,1}}{
\DGCdot{1.75,1}[r]{\mbox{\scriptsize #3}}}}}
\ifstrequal{#2}{no}{}{
\DGCbubble(1,1){.3}
\DGCdot*>{1.2,L}
\DGCcoupon*(.65,.65)(1.35,1.35){\small{#2}}}
\end{DGCpicture}
}

\newcommand{\ccwcapbubcup}[4]{
\begin{DGCpicture}
\DGCcoupon*(-0.3,-0.1)(1.3,2.1){ }
\ifstrequal{#4}{no}{}{
\DGCcoupon*(0,.8)(.3,1.4){#4}}
\ifstrequal{#1}{no}{}{
\DGCstrand(0,0)(1,0)/d/
\DGCdot*<{0.25,2}
\ifstrequal{#1}{$0$}{}{
\ifstrequal{#1}{$1$}{\DGCdot{0.25,1}}{
\DGCdot{0.25,1}[r]{\mbox{\scriptsize #1}}}}}
\ifstrequal{#3}{no}{}{
\DGCstrand/d/(0,2)(1,2)
\DGCdot*>{1.75,2}
\ifstrequal{#3}{$0$}{}{
\ifstrequal{#3}{$1$}{\DGCdot{1.75,1}}{
\DGCdot{1.75,1}[r]{\mbox{\scriptsize #3}}}}}
\ifstrequal{#2}{no}{}{
\DGCbubble(1,1){.3}
\DGCdot*<{1.2,L}
\DGCcoupon*(.65,.65)(1.35,1.35){\small{#2}}}
\end{DGCpicture}
}

\newcommand{\RIII}[5]{
\begin{DGCpicture}[scale=0.85]
\DGCcoupon*(-0.3,-0.3)(2.3,2.3){}
\ifstrequal{#5}{no}{}{
\DGCcoupon*(2.1,.95)(2.3,1.15){#5}}
\DGCstrand(0,0)(2,2)
\DGCdot*>{2}
\ifstrequal{#4}{$0$}{}{
\ifstrequal{#4}{$1$}{\DGCdot{1.7}}{
\DGCdot{1.7}[r]{\mbox{\scriptsize #4}}}}
\DGCstrand(2,0)(0,2)
\DGCdot*>{2}
\ifstrequal{#2}{$0$}{}{
\ifstrequal{#2}{$1$}{\DGCdot{1.7}}{
\DGCdot{1.7}[r]{\mbox{\scriptsize #2}}}}
\ifstrequal{#1}{L}{\DGCstrand(1,0)(0,1)(1,2)}{\DGCstrand(1,0)(2,1)(1,2)}
\DGCdot*>{2}
\ifstrequal{#3}{$0$}{}{
\ifstrequal{#3}{$1$}{\DGCdot{1.7}}{
\DGCdot{1.7}[r]{\mbox{\scriptsize #3}}}}
\end{DGCpicture}
}

\newcommand{\twolines}[3]{
\begin{DGCpicture}
\DGCcoupon*(-.3,-.3)(1.3,1.3){}
\ifstrequal{#3}{no}{}{\DGCcoupon*(1.1,.6)(1.4,.9){#3}}
\DGCstrand(0,0)(0,1)
\DGCdot*>{1}
\ifstrequal{#1}{$0$}{}{
\ifstrequal{#1}{$1$}{\DGCdot{.4}}{
\DGCdot{.4}[r]{\mbox{\scriptsize #1}}}}
\DGCstrand(1,0)(1,1)
\DGCdot*>{1}
\ifstrequal{#2}{$0$}{}{
\ifstrequal{#2}{$1$}{\DGCdot{.4}}{
\DGCdot{.4}[r]{\mbox{\scriptsize #2}}}}
\end{DGCpicture}
}

\newcommand{\twolinesD}[3]{
\begin{DGCpicture}
\DGCcoupon*(-.3,-.3)(1.3,1.3){}
\ifstrequal{#3}{no}{}{\DGCcoupon*(-.4,.6)(-.1,.9){#3}}
\DGCstrand(0,0)(0,1)
\DGCdot*<{0}
\ifstrequal{#1}{$0$}{}{
\ifstrequal{#1}{$1$}{\DGCdot{.6}}{
\DGCdot{.6}[r]{\mbox{\scriptsize #1}}}}
\DGCstrand(1,0)(1,1)
\DGCdot*<{0}
\ifstrequal{#2}{$0$}{}{
\ifstrequal{#2}{$1$}{\DGCdot{.6}}{
\DGCdot{.6}[r]{\mbox{\scriptsize #2}}}}
\end{DGCpicture}
}

\newcommand{\crossing}[5]{
\begin{DGCpicture}
\DGCcoupon*(-.3,-.3)(1.3,1.3){}
\ifstrequal{#5}{no}{}{
\DGCcoupon*(1,.4)(1.3,.7){#5}}
\DGCstrand(0,0)(1,1)
\DGCdot*>{1}
\ifstrequal{#2}{$0$}{}{
\ifstrequal{#2}{$1$}{\DGCdot{.7}}{
\DGCdot{.7}[r]{\mbox{\scriptsize #2}}}}
\ifstrequal{#3}{$0$}{}{
\ifstrequal{#3}{$1$}{\DGCdot{.3}}{
\DGCdot{.3}[r]{\mbox{\scriptsize #3}}}}
\DGCstrand(1,0)(0,1)
\DGCdot*>{1}
\ifstrequal{#1}{$0$}{}{
\ifstrequal{#1}{$1$}{\DGCdot{.7}}{
\DGCdot{.7}[r]{\mbox{\scriptsize #1}}}}
\ifstrequal{#4}{$0$}{}{
\ifstrequal{#4}{$1$}{\DGCdot{.3}}{
\DGCdot{.3}[r]{\mbox{\scriptsize #4}}}}
\end{DGCpicture}
}

\newcommand{\crossingD}[5]{
\begin{DGCpicture}
\DGCcoupon*(-.3,-.3)(1.3,1.3){}
\ifstrequal{#5}{no}{}{\DGCcoupon*(-.3,.4)(0,.7){#5}}
\DGCstrand(0,0)(1,1)
\DGCdot*<{0}
\ifstrequal{#2}{$0$}{}{
\ifstrequal{#2}{$1$}{\DGCdot{.3}}{
\DGCdot{.3}[r]{\mbox{\scriptsize #2}}}}
\ifstrequal{#3}{$0$}{}{
\ifstrequal{#3}{$1$}{\DGCdot{.7}}{
\DGCdot{.7}[r]{\mbox{\scriptsize #3}}}}
\DGCstrand(1,0)(0,1)
\DGCdot*<{0}
\ifstrequal{#1}{$0$}{}{
\ifstrequal{#1}{$1$}{\DGCdot{.3}}{
\DGCdot{.3}[r]{\mbox{\scriptsize #1}}}}
\ifstrequal{#4}{$0$}{}{
\ifstrequal{#4}{$1$}{\DGCdot{.7}}{
\DGCdot{.7}[r]{\mbox{\scriptsize #4}}}}
\end{DGCpicture}
}

\newcommand{\oneline}[2]{
\begin{DGCpicture}
\DGCcoupon*(-.3,-.1)(0.3,2.1){}
\DGCstrand(0,0)(0,2)
\DGCdot*>{2}
\ifstrequal{#2}{no}{}{\DGCcoupon*(.1,1.4)(.4,1.7){#2}}
\ifstrequal{#1}{$0$}{}{
\ifstrequal{#1}{$1$}{\DGCdot{1}}{
\DGCdot{1}[r]{\mbox{\scriptsize #1}}}}
\end{DGCpicture}
}

\newcommand{\onelineD}[2]{
\begin{DGCpicture}
\DGCcoupon*(-.3,-.1)(0.3,2.1){}
\DGCstrand(0,0)(0,2)
\DGCdot*<{0}
\ifstrequal{#2}{no}{}{\DGCcoupon*(-.4,1.4)(-.1,1.7){#2}}
\ifstrequal{#1}{$0$}{}{
\ifstrequal{#1}{$1$}{\DGCdot{1}}{
\DGCdot{1}[r]{\mbox{\scriptsize #1}}}}
\end{DGCpicture}
}

\newcommand{\onelineshort}[2]{
\begin{DGCpicture}
\DGCcoupon*(-.3,-.1)(0.3,1.1){}
\DGCstrand(0,0)(0,1)
\DGCdot*>{1}
\ifstrequal{#2}{no}{}{\DGCcoupon*(.1,.7)(.4,1){#2}}
\ifstrequal{#1}{$0$}{}{
\ifstrequal{#1}{$1$}{\DGCdot{.5}}{
\DGCdot{.5}[r]{\mbox{\scriptsize #1}}}}
\end{DGCpicture}
}

\newcommand{\onelineDshort}[2]{
\begin{DGCpicture}
\DGCcoupon*(-.3,-.1)(0.3,1.1){}
\DGCstrand(0,0)(0,1)
\DGCdot*<{0}
\ifstrequal{#2}{no}{}{\DGCcoupon*(-.4,.7)(-.1,1){#2}}
\ifstrequal{#1}{$0$}{}{
\ifstrequal{#1}{$1$}{\DGCdot{.5}}{
\DGCdot{.5}[r]{\mbox{\scriptsize #1}}}}
\end{DGCpicture}
}

\newcommand{\curl}[5]{
\begin{DGCpicture}
\ifstrequal{#1}{L}{
\DGCcoupon*(-2,-.8)(.3,1.8){}
\DGCstrand(0,-.5)(0,.25)/u/(-1.5,.5)/d/(0,.75)/u/(0,1.5)
\ifstrequal{#4}{no}{}{\DGCcoupon*(-1.4,0)(-.4,1){\small{#4}}}
\ifstrequal{#5}{$0$}{}{\ifstrequal{#5}{$1$}{\DGCdot{.5,4}}{\DGCdot{.5,4}[l]{\mbox{\scriptsize #5}}}}
\ifstrequal{#3}{no}{}{\DGCcoupon*(-1.2,1.1)(-.9,1.4){#3}}
}{
\DGCcoupon*(-.3,-.8)(2,1.8){}
\DGCstrand(0,-.5)(0,.25)/u/(1.5,.5)/d/(0,.75)/u/(0,1.5)
\ifstrequal{#4}{no}{}{\DGCcoupon*(.4,0)(1.4,1){\small{#4}}}
\ifstrequal{#5}{$0$}{}{\ifstrequal{#5}{$1$}{\DGCdot{.5,4}}{\DGCdot{.5,4}[r]{\mbox{\scriptsize #5}}}}
\ifstrequal{#3}{no}{}{\DGCcoupon*(.9,1.1)(1.2,1.4){#3}}
}
\ifstrequal{#2}{D}{\DGCdot*<{-0.25} \DGCdot*<{1.25} \DGCdot*<{.75,2}}{\DGCdot*>{-0.25} \DGCdot*>{1.25} \DGCdot*>{.75,2}}
\end{DGCpicture}
}

\newcommand{\cappy}[4]{
\begin{DGCpicture}
\DGCcoupon*(-.3,-.1)(1.3,.8){}
\DGCstrand(0,0)(1,0)/d/
\ifstrequal{#1}{CCW}{\DGCdot*<{0,2}}{\DGCdot*>{0,1}}
\ifstrequal{#2}{$0$}{}{\ifstrequal{#2}{$1$}{\DGCdot{.3}}{\DGCdot{.3}[r]{\mbox{\scriptsize #2}}}}
\ifstrequal{#3}{no}{}{
\DGCbubble(1.7,.4){0.3}
\ifstrequal{#3}{CCW}{\DGCdot*>{.7,L}}{\DGCdot*<{.7,L}}
\DGCcoupon*(1.35,0)(2.05,.8){\small{$1$}}}
\ifstrequal{#4}{no}{}{\DGCcoupon*(.8,.5)(1.2,.8){#4}}
\end{DGCpicture}
}

\newcommand{\cuppy}[4]{
\begin{DGCpicture}
\DGCcoupon*(-.3,0.2)(1.3,1.1){}
\DGCstrand(0,1)/d/(1,1)/u/
\ifstrequal{#1}{CCW}{\DGCdot*>{1,1}}{\DGCdot*<{1,2}}
\ifstrequal{#2}{$0$}{}{\ifstrequal{#2}{$1$}{\DGCdot{.7}}{\DGCdot{.7}[r]{\mbox{\scriptsize #2}}}}
\ifstrequal{#3}{no}{}{
\DGCbubble(1.7,.6){0.3}
\ifstrequal{#3}{CCW}{\DGCdot*>{.9,L}}{\DGCdot*<{.9,L}}
\DGCcoupon*(1.35,.2)(2.05,1){\small{$1$}}}
\ifstrequal{#4}{no}{}{\DGCcoupon*(.8,.2)(1.2,.5){#4}}
\end{DGCpicture}
}

\newcommand{\bottomcurl}[5]{
\begin{DGCpicture}
\DGCcoupon*(-.3,-.1)(1.3,1.6){}
\DGCstrand(0,0)(1,1)/u/(0,1)/d/(1,0)/d/
\ifstrequal{#1}{CW}{\DGCdot*>{1.3}}{\DGCdot*<{1.3}}
\ifstrequal{#2}{no}{}{\DGCcoupon*(0,0.65)(1,1.3){\mbox{\scriptsize #2}}}
\ifstrequal{#3}{yes}{\DGCdot{.3,2}}{}
\ifstrequal{#4}{no}{}{
\DGCbubble(1.5,0.6){.3}
\ifstrequal{#4}{CCW}{\DGCdot*>{.8,L}}{\DGCdot*<{.8,L}}
\DGCcoupon*(1.3,0.4)(1.7,0.8){\small{$1$}}}
\ifstrequal{#5}{no}{}{\DGCcoupon*(1.1,0.9)(1.5,1.5){#5}}
\end{DGCpicture}
}

\newcommand{\crossingR}[4]{
\begin{DGCpicture}
\DGCcoupon*(-.3,-.3)(1.3,1.3){}
\DGCstrand(0,0)(1,1)
\DGCdot*>{1}
\ifstrequal{#1}{no}{}{\DGCdot{.65}}
\DGCstrand(1,0)(0,1)
\DGCdot*<{0}
\ifstrequal{#2}{no}{}{\DGCdot{.65}}
\ifstrequal{#3}{no}{}{
\DGCbubble(1.2,.5){0.3}
\DGCdot*<{.7,R}
\DGCcoupon*(0.9,0.2)(1.5,.8){\small{$1$}}}
\ifstrequal{#4}{no}{}{
\DGCcoupon*(-.3,.2)(.3,.8){#4}}
\end{DGCpicture}
}

\newcommand{\crossingL}[4]{
\begin{DGCpicture}
\DGCcoupon*(-.3,-.3)(1.3,1.3){}
\DGCstrand(0,0)(1,1)
\DGCdot*<{0}
\ifstrequal{#1}{no}{}{\DGCdot{.35}}
\DGCstrand(1,0)(0,1)
\DGCdot*>{1}
\ifstrequal{#2}{no}{}{\DGCdot{.35}}
\ifstrequal{#3}{no}{}{
\DGCbubble(-.2,.5){0.3}
\DGCdot*<{.7,L}
\DGCcoupon*(-.5,0.2)(.1,.8){\small{$1$}}}
\ifstrequal{#4}{no}{}{
\DGCcoupon*(.7,.2)(1.3,.8){#4}}
\end{DGCpicture}
}

\newstrandstyle{Green}{type=ribbon,style=solid,ribboncolor=green,color=green}

\title{$p$-DG cyclotomic nilHecke algebras}

\author{Mikhail Khovanov, You Qi, Joshua Sussan}

\date{\today}

%
\begin{document}
%

\maketitle

\begin{abstract}
We categorify a tensor product of two Weyl modules for quantum $\mathfrak{sl}_2$ at a prime root of unity.
\end{abstract}

\setcounter{tocdepth}{2} \tableofcontents

\section{Introduction}

\subsection{Motivation}
Quantum groups for generic values of $q$, are algebras defined over the ring $\Z[q,q^{-1}]$. The categories of representations of these algebras are braided monoidal categories. Such categories lead to invariants of knots, links and tangles known as Witten-Reshetikhin-Turaev (WRT) invariants. In order to obtain a $3$-manifold invariant, one needs to specialize $q$ to a root of unity $\zeta$.  The corresponding quantum groups are algebras over the cyclotomic ring $\Z[\zeta]$. The representation theory of quantum groups at a root of unity plays a pivotal role in understanding 3-dimensional topological quantum field theory and 2-dimensional conformal field theory. Of particular importance is the concept of the \emph{fusion product} of \emph{tilting modules}.
 
Since the discovery of categorified quantum link invariants (see, e.g., \cite{KhJones}), there have been substantial developments in the categorification of the quantum algebraic invariants defined at a generic value of the quantum parameter. To this day, it remains a challenging problem to lift the 3-manifold invariants defined over $\Z[\zeta]$ to categorical invariants.

As an approach to this problem, the subject of hopfological algebra was introduced in \cite{Hopforoots}, with the goal of categorifying the WRT invariants at a prime root of unity. The generic ground algebra $\Z[q,q^{-1}]$ admits a straightforward categorification via chain complexes of $\Z$-graded vector spaces, the $q$-parameter coming from the grading shift on the categorical level. In the hopofological setting, we are forced into working with chain complexes of graded vector spaces whose differential satisfies $\dif^p\equiv 0$ over ground fields of characteristic $p>0$. To apply hopfological algebra in the pursuit of 
categorification of quantum group structures defined over the
cyclotomic ring\footnote{Here we usually consider a slightly larger ring $\mathbb{O}_p := \mathbb{Z}[q]/(\Psi_p(q^2))$ as the Grothendieck ring of $p$-complexes whose differentials have degree $2$. The degree choice is made to match with previous representation theoretical constructions and does not cause essential differences from working over the genuine cyclotomic ring $\Z[\zeta_p]$.} $\mathbb{O}_p$, one should look for $p$-nilpotent derivations on certain algebras categorifying quantum groups and their module categories for generic values of $q$.

Denote the small quantum group for $\mf{sl}_2$ over $ \mathbb{O}_p $ by
$ \dot{u}_{\mathbb{O}_p}(\mf{sl}_2) $.
The first example of the program outlined in \cite{Hopforoots} was given in \cite{KQ} where the authors categorified one half of $ \dot{u}_{\mathbb{O}_p}(\mf{sl}_2) $.  
The nilHecke algebra over a field of characteristic $p$ was equipped with a $p$-nilpotent derivation.  It is proved that the direct sum of the compact derived categories of $p$-DG modules over all such $p$-DG nilHecke algebras categorify the positive or negative half of this small quantum group.  One halves of quantum groups usually play special roles in (categorical) representation theory. One reason is that, as finite-dimensional representations of $\mf{sl}_2$ can be realized as quotients of certain Verma modules, simple categorical representations of quantum $\mf{sl}_2$ can be realized as specific categorical quotients of the nilHecke algebras known as the \emph{cyclotomic quotients}. The story is similar at a prime root of unity. Simple $\mf{sl}_2$-modules specialize at $\mathbb{O}_p$ to the (dual) Weyl modules. The Weyl modules of highest weight in the range $\{0,1,\dots, p-1\}$ are also categorified by the corresponding cyclotomic nilHecke algebras equipped with the natural quotient $p$-differential.

In \cite{EQ1,EQ2}, the one-half categorification in \cite{KQ} for $\mf{sl}_2$ is further extended to a categorification of the entire quantum $\mf{sl}_2$, for both the small version $ \dot{u}_{\mathbb{O}_p}(\mf{sl}_2) $ and an infinite dimensional version (the BLM form \cite{BLM}) $\dot{U}_{\mathbb{O}_p}(\mf{sl}_2)$ respectively. This is realized by equipping Lauda's $2$-category $\mc{U}$ and its Karoubi envelope $\dot{\mc{U}}$ with $p$-nilpotent differentials.  The resulting $p$-DG $2$-category $\mc{U}$ categorifies $\dot{u}_{\mathbb{O}_p}(\mf{sl}_2)$, while the $p$-DG $2$-category $\dot{\mc{U}}$ categorifies $\dot{U}_{\mathbb{O}_p}(\mf{sl}_2)$.
However, unlike at a generic $q$, there is some subtle hopfological behavior at a root of unity. The $p$-DG version of Lauda's category $\mc{U}$ is homotopy equivalent to its Karoubi envelope $\dot{\mc{U}}$ equipped with a compatible $p$-differential. But the homotopy equivalence fails to descend to the derived level. Rather, on the derived level, there is a fully-faithful embedding of the categorified small quantum group into the categorified BLM form. The two versions of quantum groups at a prime root of unity are also related by Lusztig's quantum Frobenius map, which is explained in \cite{QYFrob}. A similar feature also appears in the current work, as we will see in Section \ref{subsec-cat-simples} and \ref{examplesubsection}.

The next step is then to study the categorical representation theory of categorified quantum groups at a prime root of unity. Categorical representations, in particular, certain categorical tensor products of quantum group representations have been proposed and studied by Webster in \cite{Webcombined}. Recently, there has been minor progress on this front in \cite{QiSussan}, where a unique differential on the algebras defined by Webster, which is compatible with the categorical quantum group action, is determined. Furthermore, the second lowest weight space of $V_1^{\otimes m}$, where $V_1$ is the natural $2$-dimensional representation of $\mf{sl}_2$, has been categorified. This is done by equipping the quiver algebra Koszul dual to the zigzag algebra with a $p$-differential. The differential can be regarded as arising from the identification of the quiver algebra with the corresponding block of a Webster algebra.  A braid group action is also exhibited in this case, providing some evidence of connections to quantum topology.

The current work will serve as the starting point of a series of works on categorifying tensor product representations of quantum $\mf{sl}_2$ at a prime root of unity. Our goal is to set up a general framework for constructing such tensor product categorifications at a prime root of unity, as well as to gain better understanding of earlier works such as Webster \cite{Webcombined} and Hu-Mathas \cite{HuMathas}. In this work, we will focus on the particular case of categorifying a tensor product of two Weyl modules at a root of unity, since it is the foundation for constructing ``\emph{categorical fusion products}'' of ``\emph{categorified tilting modules}''. In a forthcoming paper, we will categorify the representation $V_1^{\otimes m}$ and study the braid group invariants arising from this construction.

\subsection{Summary of contents}
Let $V_r$ denote the Weyl module of $\dot{U}_{\mathbb{O}_p}(\mf{sl}_2)$ of rank $(r+1)$.
Fix two integers $r,s\in \N$ and let $l=r+s$. The main goal of this paper is to construct a categorification of the tensor product representation  $V_r \otimes_{\mathbb{O}_p} V_s$.  In this paper, we concentrate on this specific tensor product representation, since the simplicity of its canonical basis makes its categorification more accessible.
We now give a brief preview of the contents of each section and how the construction is carried out.

In Section \ref{sec-small-qgroup} and Section \ref{sec-hopfo-algebra}, we fix some notation and collect the necessary background material on the representation theory of quantum $\mf{sl}_2$ at a prime root of unity and hopfological algebra of $p$-differential graded ($p$-DG) algebras. These sections are used as tool boxes for the remainder of the paper, and may be safely skipped on a first reading by the reader and referred back to later when needed.

In Section \ref{sec-double}, we develop some basic tools for constructing tensor-product categorifications. The motivating question addressed here is, starting from a collection of algebras categorifying an irreducible representation, how can one build certain tensor product categorifications directly out of these algebras. The question may be too general to admit a simple universal answer, but when the algebras are Frobenius, a framework for constructing potential tensor-product algebras is established in that section. In particular, we consider faithful ($p$-DG) modules of the Frobenius algebras, and take their ($p$-DG) endomorphism algebras as candidates for tensor-product categorifications. Faithfulness of the modules turns out to be a key condition to require, and it implies the double-centralizer property many other works have relied on (see, e.g.~\cite{Webcombined, HuMathas}). Under the assumption on the ($p$-DG) functors that they preserve the additive full subcategory generated by the faithful modules over the Frobenius algebra, the new ($p$-DG) module categories over the endomorphism algebras are acted on by some natural ($p$-DG) functors extending the initial ones. The basic idea goes back to the construction of some special indecomposable projective modules for a maximal singular block of category $\mathcal{O}(\mathfrak{gl}_m)$ given in \cite[Section 3.1.3]{BFK}, and is related to the work of Losev-Webster \cite{LoWeb}. We will see another instance of our construction in a sequel to this paper on categorifying the $m$-fold tensor $V_1^{\otimes m}$.

Section \ref{sec-cat-sl(2)} is devoted to reviewing two seemingly different forms of the ($p$-DG) $2$-categories $(\mc{U},\dif)$, due respectively to Khovanov-Lauda \cite{Lau1, KL3} and Rouquier \cite{Rou2}. In a remarkable work of Brundan \cite{Brundan2KM}, it is shown that the definitions of Khovanov-Lauda and Rouquier are equivalent. We then readily extend Brundan's theorem to the $p$-DG setting (Theorem \ref{equiv-of-pdgRoug-pdgLauda}). 

Section \ref{sec-nilHecke} introduces the ($p$-DG) cyclotomic nilHecke algebra $\nh_n^l$.  This is a quotient of the nilHecke algebra $\nh_n$ of rank $n$ by an ideal depending upon the natural number $l$.  Taking the sum of the $p$-DG module categories $\oplus_{n=0}^l (\nh_n^l,\dif)\dmod$ provides, imprecisely, a categorification of the Weyl module $V_l$.  These algebras are symmetric Frobenius, and serve as the input data for the categorical framework of Section \ref{sec-double}. In \cite{EQ1}, the lowest weight $\mf{sl}_2$-representation $V_l$ is instead categorified via Rouquier's universal ($p$-DG) cyclotomic quotient. The version there is known to be Morita equivalent to the cyclotomic nilHecke constructions, but relies on certain implicit ($p$-DG) functors realizing the Morita equivalence. Here we strictify the functors acting on cyclotomic nilHecke algebras, and show that the $p$-DG $2$-category in Rouquier's definition acts directly on the direct sum of $p$-DG nilHecke module categories.  A technical caveat is that we do not pass right away to derived categories of $p$-DG cyclotomic nilHecke algebras for categorifying $V_l$, but use the abelian category of $p$-DG $\nh_n^l$-modules as an intermediate stepping stone.

Sections \ref{sec-cyclic-mod} and \ref{sec-2-tensor} constitute the technical heart of the current work\footnote{Before reading these two Sections, a diagrammatically oriented reader who is familiar with Webster's work \cite{Webcombined} may wish to proceed directly to Sections \ref{sec-Web} and \ref{sec-main-thm} after going through Sections \ref{sec-double}--\ref{sec-nilHecke}. This order gives the reader an alternative motivation for considering certain cyclically generated ($p$-DG) nilHecke modules in Sections \ref{sec-cyclic-mod} and \ref{sec-2-tensor}.}. In Section \ref{sec-cyclic-mod}, we recall the cellular structure on cyclotomic nilHecke algebras due to Hu-Mathas \cite{HuMathas}. We utilize the cellular structure on $\nh_n^l$ to exhibit a natural collection of cyclic right $p$-DG modules over $\nh_n^l$. A special collection of cyclic modules $p$-DG modules $e_\lambda G(\lambda)$ over $\nh_n^l$ are introduced, where $\lambda$'s are parameterized by certain partitions of $l$ into zeros and ones. The construction mimics those $G(\lambda)$'s appearing in Hu-Mathas \cite{HuMathas}, but is further truncated by an idempotent $e_\lambda\in \nh_n^l$.
We then show explicitly that the generating functors $\mc{E}$ and $\mc{F}$ for the category $\mc{U}$ acting on $\oplus_{n=0}^l(\nh_n^l,\dif)\dmod$ preserve the additive subcategory generated by these special modules $e_\lambda G(\lambda)$. This establishes the necessary conditions for the entire setup to fit into the framework of Section \ref{sec-double}.
The ($p$-DG) \emph{two-tensor quiver Schur algebra $S_n(r,s)$ } is defined as (Definition \ref{def-two-tensor-algebra}) 
\[
S_n(r,s):=\END_{\nh_n^l}\left(\bigoplus_{\lambda} e_\lambda G(\lambda)\right),
\]
where $(r,s)\in \N^2$ is the decomposition of $l$ fixed at the beginning of this section. By construction, the $p$-DG $2$-category $(\mc{U},\dif)$ acts on $\oplus_{n=0}^l(S_n(r,s),\dif)\dmod$.

The two-tensor quiver Schur algebra is related to a special case of Webster's diagrammatic tensor algebra by performing a ``thick'' idempotent truncation, which is shown in Section \ref{sec-Web}. However, the two-tensor quiver Schur algebra has fewer generating idempotents than in Webster's definition, which allows an easier identification of Grothendieck groups in this particular case. While Webster's setup has certain advantages (for example functors for tangles are naturally defined) we focus on these subcategories of nilHecke algebras since it is a little easier to calculate the $p$-DG Grothendieck groups.  We expect the categories defined in this work are Morita equivalent to Webster algebras but not $p$-DG Morita equivalent.

We then establish the following in Section \ref{sec-main-thm}:
\begin{thm*}[\ref{mainthm}]
There is an action of the derived $p$-DG Lauda category on $\oplus_{n=0}^l  \mathcal{D}(S_n(r,s))$. The action induces an identification of the Grothendieck groups 
\begin{equation*}
K_0(\bigoplus_{n=0}^l  \mathcal{D}^c(S_n(r,s))) \cong V_r \otimes_{\mathbb{O}_p} V_s
\end{equation*}
with the tensor product of the quantum $\mf{sl}_2$ Weyl modules $V_r$ and $V_s$ at a primitive $p$th root of unity.
\end{thm*}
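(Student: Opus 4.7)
The plan is to combine the constructions of the preceding sections into a coherent derived action, then identify the Grothendieck group with $V_r \otimes_{\mathbb{O}_p} V_s$. First I would upgrade the abelian-level action of the $p$-DG $2$-category $(\mc{U},\dif)$ on $\oplus_n (S_n(r,s),\dif)\dmod$, obtained in Section \ref{sec-2-tensor}, to an action of the derived $p$-DG Lauda category on $\oplus_{n=0}^l \mathcal{D}(S_n(r,s))$. By the double-centralizer framework of Section \ref{sec-double}, the generating $1$-morphisms $\mc{E}$ and $\mc{F}$ already act on $(S_n(r,s),\dif)\dmod$ as tensor products with explicit $p$-DG bimodules obtained by $\Hom$ from the corresponding nilHecke bimodules. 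The job here is to verify that these bimodules are cofibrant in the $p$-DG sense, so that the tensor functors are automatically derived, and that the defining relations of $(\mc{U},\dif)$ continue to hold on the derived level. Both properties are inherited from the nilHecke side, using that $\bigoplus_\lambda e_\lambda G(\lambda)$ is a faithful $p$-DG module generating the additive $p$-DG subcategory preserved by $\mc{E}$ and $\mc{F}$ (Section \ref{sec-cyclic-mod}).

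Next comes the Grothendieck group calculation, which I would split into two pieces. The first is to establish that $K_0\big(\bigoplus_n \mathcal{D}^c(S_n(r,s))\big)$ is a free $\mathbb{O}_p$-module of the expected total rank $(r+1)(s+1)$, using that the Hu-Mathas cellular structure \cite{HuMathas} on $\nh_n^l$ induces, after the idempotent truncation by $\bigoplus_\lambda e_\lambda$, a $p$-DG cell-type filtration of each $S_n(r,s)$ whose subquotients are compatible with $\dif$. The second is to identify the classes of the resulting indecomposable $p$-DG projectives with a standard weight basis of $V_r \otimes_{\mathbb{O}_p} V_s$, sending the projective indexed by a zero-one sequence $\lambda$ to the corresponding pure tensor $v_{\lambda_1}\otimes v_{\lambda_2}$, with $\mathbb{O}_p$-grading shifts tracking the crossings in $\lambda$.

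The final step is to verify that this identification intertwines the derived $\mc{U}$-action with the $\dot{U}_{\mathbb{O}_p}(\mf{sl}_2)$-module structure on the tensor product. Since $V_r \otimes V_s$ is generated from the tensor of highest weight vectors under $E$ and $F$, and $[\mc{E}],[\mc{F}]$ automatically satisfy the $\dot{U}_{\mathbb{O}_p}(\mf{sl}_2)$ relations (as ensured by the $p$-DG $2$-category axioms from \cite{EQ1,EQ2} together with the weight decomposition established above), it suffices to match the action on the class of the $p$-DG analogue of the top idempotent in $S_0(r,s)$ and to check that $[\mc{F}]$ reproduces the tensor product coproduct formula on a single weight basis vector. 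This reduces to a finite diagrammatic computation in the Lauda $2$-category, together with bubble evaluations in $\mathbb{O}_p$ inherited from \cite{EQ1,EQ2}.

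I expect the principal obstacle to be controlling the $p$-DG Grothendieck group as an $\mathbb{O}_p$-module rather than the underlying ungraded one. Concretely, one must show that the cellular filtrations on $e_\lambda G(\lambda)$, and on the bimodules implementing $\mc{E}$ and $\mc{F}$, are compatible with the $p$-differentials defined in Sections \ref{sec-nilHecke} and \ref{sec-2-tensor}, so that no unexpected $\mathbb{O}_p$-relations collapse the naive rank. This amounts to showing each cell subquotient lifts to a $p$-DG subquotient, and that the bubble evaluations entering the categorical Serre-type relations remain invertible in $\mathbb{O}_p$; once these compatibilities are in place, the matching with the weight basis of $V_r \otimes V_s$ becomes a bookkeeping exercise.
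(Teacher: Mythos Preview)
Your plan diverges from the paper in two places that matter.

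First, the Grothendieck group computation. You propose to attack $K_0(\mc{D}^c(S_n(r,s)))$ directly via a $p$-DG-compatible cellular filtration on $S_n(r,s)$. The paper does not do this, and for good reason: compatibility of a filtration with $\dif$ is not enough to determine the $p$-DG Grothendieck group --- you still need to rule out $\mathbb{O}_p$-torsion or collapse, and cellular subquotients alone give no such control. The paper's route is to pass to the \emph{basic} algebra $S_n^b(r,s)=\END_{\nh_n^l}(\oplus_\lambda Y(\lambda))$, prove it is a positive $p$-DG algebra (Theorem \ref{homspaceY1Y2}), and then invoke Theorem \ref{thm-K-group-positive} to read off $K_0$ directly as $K_0'(S_n^b(r,s))\otimes_{\Z[q,q^{-1}]}\mathbb{O}_p$. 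The transfer back to $S_n(r,s)$ is a separate inductive $p$-DG Morita equivalence argument (Proposition \ref{prop-p-dg-Morita}), replacing each $e_\lambda G(\lambda)$ by $Y(\lambda)$ one partition at a time along a total refinement of the dominance order. Your ``principal obstacle'' paragraph correctly identifies the danger but does not supply a mechanism to resolve it; positivity of the basic algebra is that mechanism.

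Second, the basis identification. You propose to match indecomposable $p$-DG projectives with the standard weight basis $v_{\lambda_1}\otimes v_{\lambda_2}$. This is the wrong basis: the indecomposable summands $Y(\lambda)$ are built as $\mf{F}^{(d)}\mf{E}^{(a)}Y(1^{a+b}0^{c+d})$ or $\mf{E}^{(a)}\mf{F}^{(d)}Y(1^{a+b}0^{c+d})$ (Theorem \ref{classificationprop}), which on $K_0$ gives exactly the canonical basis elements $v_b\diamond v_d$ of Proposition \ref{canbasisprop} and Remark \ref{canbasisremark}. Standard basis vectors would instead correspond to standard modules $\Delta(\lambda)$ in the stratified structure of Section \ref{subsec-strat}, not to projectives. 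Once you match with the canonical basis, no separate ``diagrammatic computation'' of the $\mc{F}$-action on a weight vector is needed: the intertwining is immediate from the functorial description of $Y(\lambda)$.
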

Finally, a comment on the $p$-DG stratified structure on the two-tensor quiver Schur algebra is briefly discussed in Section \ref{subsec-strat}, and some further investigations are sketched out in the final Section \ref{subset-future}.

\subsection{Further comments}
A more general $m$-tensor quiver Schur algebra can be defined when trying to categorify a more general tensor product $V_{r_1} \otimes_{\mathbb{O}_p} \cdots \otimes_{\mathbb{O}_p} V_{r_m}$. See Section \ref{subset-future} for a brief introduction.  In future work we will study the $p$-DG Grothendieck groups of the module categories for these quiver Schur algebras along certain categorical actions. However, the case of just two tensor factors is relatively simpler. One reason is that the canonical basis of Lusztig \cite{Lus4} has a very explicit description in the two-tensor factor case. For general $m$ the canonical basis has only an inductive construction.

A categorification of tensor products of quantum $\mathfrak{sl}_2$ for generic $q$ was given in \cite{FKS} by considering certain categories of Harish-Chandra bimodules.  It would be interesting to understand the $p$-differential Lie theoretically.

Quiver Schur algebras were first introduced by Hu and Mathas \cite{HuMathas} in the graded case.  They were interested in putting a $\Z$-grading on the classical Schur algebras via an identification of cyclotomic Hecke algebras with cyclotomic KLR algebras due to Brundan and Kleshchev \cite{BK}. The quiver Schur algebras are close relatives of Webster's diagrammatic algebras in type $A$. Webster categorified tensor products of finite-dimensional irreducible representations for quantum groups at generic values of $q$ \cite{Webcombined} using his algebras. He also proved that special cases of the Webster algebras are graded Morita equivalent to the quiver Schur algebras of \cite{HuMathas}. However, as the current work shows, in the presence of a $p$-differential, the Morita equivalence does not necessarily pass to the derived category level. The perspectives of both Hu-Mathas and Webster will prove useful towards categorified representation theory at prime roots of unity, as can be seen from this work and subsequent future works (e.g.~on categorifying $V_1^{\otimes m}$ at prime roots of unity).

\paragraph{Acknowledgements.}
The authors would like to thank Catharina Stroppel and Daniel Tubbenhauer for their helpful and enlightening discussions on cellular algebras. They would like to thank Ben Elias for his helpful suggestions and allowing them to use some diagrams in his joint works with the second author. They would also like to thank the referee for the detailed comments and corrections.

M.~K.~is partially supported by the NSF grants DMS-1406065 and  DMS-1664240.
Y.~Q.~is partially supported by the NSF grant DMS-1763328. 
J.~S.~is supported by NSF grant DMS-1407394, PSC-CUNY Award 67144-0045, and Simons Foundation Collaboration Grant 516673.

\section{The quantum group at prime roots of unity}\label{sec-small-qgroup}

In this section, we collect some basic facts and fix some notation on quantum groups at a prime root of unity. It will serve as the decategorified story of this work, and the interested reader may want to skip this part first and refer back to it later when needed.

Throughout, we assume $\N$ to be the set of non-negative integers (containing zero).

\subsection{The small quantum \texorpdfstring{$\mathfrak{sl}_2$}{sl(2)}}
Let $N$ be an odd natural number or $2$,  and take $ \zeta_{2 N} $ to be a primitive $2 N$th root of unity.  The quantum group $ u_{\Q(\zeta_{2 N})}(\mathfrak{sl}_2) $, which we will denote simply by $ u_{\Q(\zeta_{2 N})} $, is the $\Q(\zeta_{2 N})$-algebra generated by $ E, F, K^{\pm 1} $ subject to relations:
\begin{enumerate}
\item[(1)] $ KK^{-1} = K^{-1}K=1$,
\item[(2)] $ K^{\pm 1}E = \zeta_{2 N}^{\pm 2} EK^{\pm 1} $, \quad $ K^{\pm 1} F = \zeta_{2 N}^{\mp 2} FK^{\pm 1} $,
\item[(3)] $ EF-FE = \frac{K-K^{-1}}{\zeta_{2 N}-\zeta_{2 N}^{-1}} $,
\item[(4)] $ E^{N} = F^{N} = 0 $.
\end{enumerate}

The quantum group is a Hopf algebra whose comultiplication map 
$$ \Delta \colon u_{\Q(\zeta_{2 N})} \longrightarrow u_{\Q(\zeta_{2 N})} \otimes_{\Q(\zeta_{2 N})} u_{\Q(\zeta_{2 N})} $$
is given on generators by
\begin{equation}
\label{comultform}
\Delta(E) = K^{-1} \otimes E + E \otimes 1, \quad\quad \Delta(F) = 1 \otimes F + F \otimes K, \quad\quad \Delta(K^{\pm 1}) = K^{\pm 1} \otimes K^{\pm 1}.
\end{equation}
The comultiplication defined in equation \eqref{comultform} is related to the standard comultiplication (see for example \cite{KhoThesis}) by the anti-isomorphism $E \mapsto F, F \mapsto E, K \mapsto K$.

For the purpose of categorification, it is more convenient to use the idempotented quantum group $ \dot{u}_{\Q[\zeta_{2 N}]}(\mathfrak{sl}_2) $, which we will denote simply by $ \dot{u}_{\Q[\zeta_{2 N}]} $. It is a non-unital $\Q[\zeta_{2 N}]$-algebra generated by $ E, F $ and idempotents $1_m$ ($m \in \Z$), subject to relations:
\begin{enumerate}
\item[(1)] $ 1_m 1_n=\delta_{m, n}1_m$,
\item[(2)]$ E1_m=1_{m+2}E $, \quad $ F1_{m+2} = 1_{m} F $,
\item[(3)] $ EF1_{m}-FE1_{m} = [m]1_m $,
\item[(4)] $ E^{N} = F^{N} = 0 $.
\end{enumerate}
Here $ [m] =\sum_{i=0}^{|m|-1} \zeta_{2 N}^{1-|m|+2i} $ is the quantum integer specialized at $\zeta_{2 N}$.

The quantum group $\dot{u}_{\Q[\zeta_{2 N}]}$ has an integral lattice subalgebra which we now recall. For any integer $n\in \{0,1,\dots, N-1\}$, let $ E^{(n)} = \frac{E^n}{[n]!} $, and $ F^{(n)} = \frac{F^n}{[n]!} $.
The elements $ E^{(n)}$, $F^{(n)}$ ($1\leq n \leq N-1$), and $ 1_{m}$ ($m \in \Z$) generate an algebra over the ring of cyclotomic integers $ \mathcal{O}_{2 N} = \mathbb{Z}[\zeta_{2 N}] $. Denote this integral form by $ \dot{u}_{\zeta_{2 N}} $.

Now let $ N=p $ be prime.  Introduce the auxiliary ring
\begin{equation}\label{eqn-aux-ring}
\mathbb{O}_p = \mathbb{Z}[q]/(\Psi_p(q^2)),
\end{equation}  where $ \Psi_p(q) $ is the $p$-th cyclotomic polynomial.
We can define in a similar fashion an integral form  $ {u}_{\mathbb{O}_p} $ and its dotted version $ \dot{u}_{\mathbb{O}_p} $ for the small quantum $\mf{sl}_2$ over $ \mathbb{O}_p $. All the necessary changes only occur for the specialization of quantum numbers: instead of at the root of unity $\zeta_{2l}$, we will set $[n]_{\mathbb{O}_p}:=q^{n-1}+q^{n-3}+\cdots + q^{1-n}$ to be understood as an element of $\mathbb{O}_p$.

\begin{defn}\label{def-small-sl2}
The $\mathbb{O}_p$-integral idempotented quantum algebra $\dot{u}_{\mathbb{O}_p}$ is generated by $ E, F $ and idempotents $1_m$ ($m\in \Z$), subject to the relations:
\begin{enumerate}
\item[(1)] $ 1_m 1_n=\delta_{m, n}1_m$ for any $m,n\in \Z$,
\item[(2)]$ E1_m=1_{m+2}E $, \quad $ F1_{m+2} = 1_{m} F $,
\item[(3)] $ EF1_{m}-FE1_{m} = [m]_{\mathbb{O}_p}1_m $,
\item[(4)] $ E^p = F^p = 0 $.
\end{enumerate}
\end{defn}

Let the lower half of $ u_{\mathbb{O}_p} $ be the subalgebra generated by the $ F^{(n)} $ ($0\leq n \leq p-1$) and denote it by $ u_{\mathbb{O}_p}^- $. Likewise, write the upper half as $u_{\mathbb{O}_p}^+$.
For more details see \cite[Section 3.3]{KQ}. 

\subsection{The BLM integral form}
We next recall the Beilinson-Lusztig-MacPherson \cite{BLM} (BLM) integral form of quantum $\mf{sl}_2$ at a prime root of unity. The small quantum sits inside the BLM form as an (idempotented) Hopf algebra.

\begin{defn}\label{def-BLM}
The non-unital associative quantum algebra $\dot{U}_{\mathbb{O}_p}(\mathfrak{sl}_2)$, or just $\dot{U}_{\mathbb{O}_p}$, is the $\mathbb{O}_p$-algebra generated by a family of orthogonal idempotents $\{1_n|n \in \Z\}$, a family of raising operators $E^{(a)}$ and a family of lowering operators $F^{(b)}$ ($a,b\in \N$), subject to the following relations.
\begin{itemize}
\item[(1)] $1_m 1_n=\delta_{m,n}1_m$ for any $m,n \in \Z$.
\item[(2)] $ E^{(a)}1_m=1_{m+2a}E^{(a)} $,  $ F^{(a)}1_{m+2a} = 1_{m} F^{(a)}$, for any $a\in \N$ and $m\in \Z$.
\item[(3)] For any $a,b\in \N$ and $m\in \Z$, 
\begin{equation}\label{eqn-divided-power}
E^{(a)}E^{(b)}1_m={a+b \brack a}_{\mathbb{O}_p}E^{(a+b)}1_m, \quad \quad \quad F^{(a)}F^{(b)}1_m={a+b\brack a}_{\mathbb{O}_p}F^{(a+b)}1_m.
\end{equation}
\item[(4)] The divided power $E$-$F$ relations, which state that
\begin{subequations}
\begin{align}\label{eqn-higher-Serre-1}
E^{(a)}F^{(b)}1_m=\sum_{j=0}^{\mathrm{min}(a,b)}{a-b+m\brack j}_{\mathbb{O}_p}F^{(b-j)}E^{(a-j)}1_m, \\
F^{(a)}E^{(b)}1_m=\sum_{j=0}^{\mathrm{min}(a,b)}{a-b-m\brack j}_{\mathbb{O}_p}E^{(b-j)}F^{(a-j)}1_m. \label{eqn-higher-Serre-2}
\end{align}
\end{subequations}
\end{itemize}
The elements $E^{(a)}1_m$, $F^{(a)}1_{m}$ will be referred to as the \emph{divided power elements}.
\end{defn}

There is a containment of algebras $\dot{u}_{\mathbb{O}_p} \subset \dot{U}_{\mathbb{O}_p}$ by identifying $E^{(n)}1_m$, $F^{(n)}1_m$ ($1\leq n\leq p-1$) with the elements of the same name in $\dot{U}_{\mathbb{O}_p}$.


\subsection{Representations}
Let $ {V}_l $ be the Weyl module for $ \dot{U}_{\mathbb{O}_p} $. It has a basis $ \lbrace v_0, v_1, \ldots, v_{l} \rbrace $ such that
\begin{equation}
\label{irreddef}
1_{n} v_i = \delta_{n,l-2i} v_i \quad \quad  F v_i =[i+1] v_{i+1}\quad\quad E v_i = [l-i+1] v_{i-1}.
\end{equation}
The Weyl module $V_l$ is irreducible when $l \leq p-1$.

On $ V_r \otimes_{\mathbb{O}_p} V_s $ there is an action of $ u_{\mathbb{O}_p} $ given via the comultiplication map $ \Delta $.  
The standard basis of $V_r \otimes V_s$ is given by $\{ v_i \otimes v_j | 0 \leq i \leq r, 0 \leq j \leq s \} $.

The canonical basis (due independently to Kashiwara \cite{Kas91} and Lusztig \cite{Lus4}) is more natural from the categorical perspective we pursue here.  For a detailed study of the canonical basis for quantum $\mathfrak{sl}_2$ see \cite{KhoThesis}.
The basis in Proposition \ref{canbasisprop} is related to that of 
\cite[Section 1.3.2]{KhoThesis} using the isomorphism $E \mapsto E, F \mapsto F, K \mapsto K^{-1}$, $q \mapsto q^{-1}$ and a map $V_l \rightarrow V_l$ where 
$v_i \mapsto v_{l-i}$.

\begin{prop}
\label{canbasisprop}
There is a basis $\{v_b \diamond v_d | 0 \leq b \leq r, 0 \leq d \leq s   \}$
of $V_r \otimes_{\mathbb{O}_p} V_s$ which is given by
\begin{equation} \label{canformulaeq}
v_b \diamond v_d :=
\begin{cases}
F^{(d)} (v_b \otimes v_0) = \sum_{j=0}^d q^{j(j+c)}{b+j \brack j}_{\mathbb{O}_p} v_{b+j} \otimes v_{d-j}  & \text{ if } b \leq c ,\\
E^{(a)}(v_r \otimes v_d)=\sum_{j=0}^a q^{j(j+b)}{c+j \brack j}_{\mathbb{O}_p} v_{b+j} \otimes v_{d-j} & \text{ if } b \geq c ,
\end{cases}
\end{equation}
where $a=r-b$ and $c=s-d$.
\end{prop}

\begin{proof}
Using  equation \eqref{comultform} and induction, one can deduce
\begin{equation}
\label{gencomultform}
\Delta(E^{(t)}) = \sum_{j=0}^t q^{-j(t-j)} E^{(t-j)} K^{-j} \otimes E^{(j)} \quad \quad
\Delta(F^{(t)}) = \sum_{j=0}^t q^{-j(t-j)}  F^{(t-j)} \otimes F^{(j)} K^{t-j}.
\end{equation}
Then one can directly see that the set $\{v_b \diamond v_d | 0 \leq b \leq r, 0 \leq d \leq s   \}$ relates to the usual tensor basis by an upper triangular matrix with diagonal entries $1$.
It follow that it
is a basis. 
\end{proof}

\begin{rem}
\label{canbasisremark}
Note that the elements of the canonical basis in Proposition \ref{canbasisprop} can be written as
\begin{equation*}
v_b \diamond v_d =
\begin{cases}
F^{(d)} E^{(a)} (v_r \otimes v_0) & \text{ if } b \leq c \\
E^{(a)} F^{(d)} (v_r \otimes v_0) & \text{ if } b \geq c
\end{cases}
\end{equation*}
where $a=r-b$ and $c=s-d$.
\end{rem}

From the relations in the quantum group (c.~f.~equations \eqref{eqn-divided-power}, \eqref{eqn-higher-Serre-1} and \eqref{eqn-higher-Serre-2}), along with the description of the canonical basis in Proposition \ref{canbasisprop}, it is clear that the structure coefficients of the canonical basis under the operators $E$ and $F$ are elements of $\N[q,q^{-1}]$.

\section{Elements of hopfological algebra}
\label{sec-hopfo-algebra}
In this Section, we gather some necessary background material on hopfological algebra as developed in \cite{QYHopf}. 

\subsection{\texorpdfstring{$p$}{p}-DG derived categories}
As a matter of notation for the rest of the paper, the undecorated tensor product symbol $\otimes$ will always denote tensor product over the ground field $\Bbbk$. All of our algebras will be graded so $A\dmod$ will denote the category of graded $A$-modules.

\begin{defn}\label{def-p-DGA}Let $\Bbbk$ be a field of positive characteristic $p$. A $p$-DG algebra $A$ over $\Bbbk$ is a $\Z$-graded $\Bbbk$-algebra equipped with a degree-two\footnote{In general one could define the degree of $\dif_A$ to be one. We adopt this degree only to match earlier grading conventions in categorification. One may adjust the gradings of the algebras we consider so as to make the degree of $\dif_A$ to be one, but we choose not to do so.} endomorphism $\dif_A$, such that, for any elements $a,b\in A$, we have
\[
\dif_A^p(a)=0, \quad \quad \dif_A(ab)=\dif_A(a)b+a\dif_A(b).
\]
\end{defn}

Compared with the usual DG case, the lack of the sign in the Leibniz rule is because of the fact that the Hopf algebra $\Bbbk[\dif]/(\dif^p)$ is a genuine Hopf algebra, not a Hopf super-algebra.

As in the DG case, one has the notion of left and right $p$-DG modules.

\begin{defn}\label{def-p-DG-module}Let $(A,\dif_A)$ be a $p$-DG algebra. A left $p$-DG module $(M,\dif_M)$ is a $\Z$-graded $A$-module endowed with a degree-two endomorphism $\dif_M$, such that, for any elements $a\in A$ and $m\in M$, we have
\[
\dif_M^p(m)=0, \quad \quad \dif_M(am)=\dif_A(a)m+a\dif_M(m).
\]
Similarly, one has the notion of a right $p$-DG module.
\end{defn}

It is readily checked that the category of left (right) $p$-DG modules, denoted $(A,\dif)\dmod$ ($(A^{op},\dif)\dmod$), is abelian, with morphisms being those grading-preserving $A$-module maps that also commute with differentials. When no confusion can be caused, we will drop all subscripts in differentials.

\begin{defn} \label{def-null-homotopy}
Let $M$ and $N$ be two $p$-DG modules. A morphism $f:M\lra N$ in $(A,\dif)\dmod$ is called \emph{null-homotopic} if there is an $A$-module map $h$ of degree $2-2p$ such that
\[
f=\sum_{i=0}^{p-1}\dif_N^{i}\circ h \circ \dif_M^{p-1-i}.
\]
\end{defn}

It is an easy exercise to check that null-homotopic morphisms form an ideal in $(A, \dif)\dmod$. The resulting quotient category, denoted $\mathcal{H}(A)$, is called the \emph{homotopy category} of left $p$-DG modules over $A$, and it is a triangulated category.

The simplest $p$-DG algebra is the ground field $\Bbbk$ equipped with the trivial differential, whose homotopy category is denoted $\mc{H}(\Bbbk)$\footnote{This is usually known as the graded stable category of $\Bbbk[\dif]/(\dif^p)$. }. Modules over $(\Bbbk,\dif)$ are usually referred to as \emph{$p$-complexes}. In general, given any $p$-DG algebra $A$, one has a forgetful functor
\begin{equation}
\mathrm{For}:\mathcal{H}(A)\lra \mc{H}(\Bbbk)
\end{equation}
by remembering only the underlying $p$-complex structure up to homotopy of any $p$-DG module over $A$. A morphism between two $p$-DG modules $f:M\lra N$ (or its image in the homotopy category) is called a \emph{quasi-isomorphism} if $\mathrm{For}(f)$ is an isomorphism in $\mc{H}(\Bbbk)$. Denoting the class of quasi-isomorphisms in $\mathcal{H}$ by $\mathcal{Q}$, we define the $p$-DG derived category of $A$ to be
\begin{equation}\label{eqn-derived-cat}
\mathcal{D}(A):=\mathcal{H}(A)[\mathcal{Q}^{-1}],
\end{equation}
the localization of $\mathcal{H}(A)$ at quasi-isomorphisms. By construction, $\mathcal{D}(A)$ is triangulated.

\subsection{Hopfological properties of \texorpdfstring{$p$}{p}-DG modules}
Many constructions in the usual homological algebra of DG-algebras translate over into the $p$-DG context without any trouble. For a starter, it is easy to see that the homotopy category of the ground field coincides with the derived category: $\mc{D}(\Bbbk)\cong \mc{H}(\Bbbk)$. We will see a few more illustrations of the similarities in what follows.

We first recall the following definitions.

\begin{defn}\label{defpdgsummand}
Let $A$ be a $p$-DG algebra, and $M$ be a $p$-DG module over $A$. A \emph{$p$-DG direct summand} $N$ of $M$ is a direct summand of $M$ as an $A$-module, such that the natural inclusion and projection maps between $M$ and $N$ commute with the $p$-differentials.
\end{defn}

\begin{defn}\label{def-finite-cell} Let $A$ be a $p$-DG algebra, and $K$ be a (left or right) $p$-DG module. 
\begin{enumerate}
\item[(1)] The module $K$ is said to satisfy \emph{property P} if there exists an increasing, possibly infinite, exhaustive $\dif_K$-stable filtration $F^\bullet$, such that each subquotient $F^{\bullet}/F^{\bullet -1}$ is isomorphic to a direct sum of $p$-DG direct summands of $A$. 
\item[(2)]The module $K$ is called a \emph{finite cell module}, if it satisfies property P, and as an $A$-module, it is finitely generated (necessarily projective by the property-P requirement).
\end{enumerate}
\end{defn}

Property-P modules are the analogues of K-projective modules in usual homological algebra. For instance, the morphism spaces from a property-P module to any $p$-DG module coincide in both the homotopy and derived categories.

It is a theorem \cite[Theorem 6.6]{QYHopf} that there are always sufficiently many property-P modules: for any $p$-DG module $M$, there is a surjective quasi-isomorphism 
\begin{equation}\label{eqn-bar-resolution}
\mathbf{p}(M)\lra M
\end{equation} 
of $p$-DG modules, with $\mathbf{p}(M)$ satisfying property P.  We will usually refer to such a property-P replacement $\mathbf{p}(M)$ for $M$ as a \emph{bar resolution}. The proof of its existence is similar to that of the usual (simplicial) bar resolution for DG modules over DG algebras. 

In a similar vein, finite cell modules play the role of finitely-generated projective modules in usual homological algebra.

\subsection{\texorpdfstring{$p$}{p}-DG functors}
A $p$-DG bimodule $_AM_B$ over two $p$-DG algebras $A$ and $B$ is a $p$-DG module over $A\otimes B^{\mathrm{op}}$. One has the associated tensor and (graded) hom functors
\begin{equation}\label{eqn-def-tensor}
M\otimes_B(-): (B,\dif)\dmod\lra (A,\dif)\dmod, \quad X\mapsto M\otimes_B X,
\end{equation}
\begin{equation}\label{eqn-def-hom}
\HOM_A(M,-): (A,\dif)\dmod\lra (B,\dif)\dmod, \quad Y\mapsto \HOM_A(M,Y),
\end{equation}
which form an adjoint pair of functors. In fact, we have the following enriched version of the adjunction.

\begin{lem}\label{lem-adjunction-tensor-hom}
Let $A$, $B$ be $p$-DG algebras and $M$ a $p$-DG bimodule over $A\otimes B^{\mathrm{op}}$. Then, for any $p$-DG $A$-module $Y$ and $B$-module $X$, there is an isomorphism of $p$-complexes
\[
\HOM_A(M\otimes_B X, Y)\cong \HOM_B(X,\HOM_A(M,Y)).
\]
\end{lem}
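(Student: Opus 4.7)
The plan is to exhibit the standard currying isomorphism and then verify it is compatible with the $p$-differentials. Define
\[
\phi \maps \HOM_A(M\otimes_B X, Y) \lra \HOM_B(X, \HOM_A(M,Y)), \qquad \phi(f)(x)(m) := f(m \otimes x).
\]
First I would verify, working degree by degree, that $\phi$ is a well-defined isomorphism of $\Z$-graded $\Bbbk$-vector spaces; this is the classical tensor-hom adjunction applied to the graded category of $A\otimes B^{\mathrm{op}}$-modules, with inverse given by $\psi(g)(m\otimes x) = g(x)(m)$. The only points to check are that the image of $\phi(f)$ is $B$-linear (using the right $B$-module structure on $M$) and that $\psi(g)$ is well-defined on the tensor product over $B$ (using the middle linearity), both of which are immediate.

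Next I would address the main content of the lemma: that $\phi$ is an isomorphism of $p$-complexes, i.e.\ intertwines the natural differentials on both sides. Recall that the differential on a graded hom space such as $\HOM_A(M\otimes_B X, Y)$ is given by the commutator-type formula $\dif(f) = \dif_Y \circ f - f \circ \dif_{M\otimes_B X}$, and similarly on the other side; and that the differential on $M\otimes_B X$ is $\dif_M\otimes \id + \id \otimes \dif_X$. Crucially, because the enveloping Hopf algebra $\Bbbk[\dif]/(\dif^p)$ is a genuine Hopf algebra rather than a super-Hopf algebra, no Koszul signs appear (as noted in Definitions~\ref{def-p-DGA} and \ref{def-p-DG-module}). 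A direct computation then shows
\[
\phi(\dif(f))(x)(m) = \dif_Y(f(m\otimes x)) - f(\dif_M(m)\otimes x) - f(m\otimes \dif_X(x)),
\]
and unpacking $\dif(\phi(f))(x)(m)$ by the same commutator formulas yields exactly the same expression. Hence $\phi \circ \dif = \dif \circ \phi$.

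Since $\phi$ is a grading-preserving bijection intertwining the differentials, it descends to an isomorphism of $p$-complexes, finishing the proof. I do not anticipate any real obstacle: the argument is the standard adjunction verification, and the sign-free Leibniz rule in the $p$-DG setting makes the differential compatibility a routine (and in fact simpler than classical DG) computation.
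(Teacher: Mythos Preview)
Your proposal is correct: the currying bijection together with the sign-free Leibniz verification is exactly the expected argument, and your check that $\phi\circ\dif=\dif\circ\phi$ goes through without issue in the $p$-DG setting. The paper itself does not actually give a proof here but simply cites \cite[Lemma 8.5]{QYHopf}; your direct verification is the standard one and is almost certainly what that reference contains, so there is no meaningful difference in approach.
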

\begin{proof}
See \cite[Lemma 8.5]{QYHopf}.
\end{proof}

The functors descend to derived categories once appropriate property-P replacements are utilized. For instance, the derived tensor functor is given as the composition
 \begin{equation}
M\otimes^{\mathbf{L}}_B(-):\mc{D}(B)\lra \mc{D}(A), X\mapsto M\otimes_B\mathbf{p}_B(X)
\end{equation}
where $\mathbf{p}_B(X)$ is a bar resolution for $X$ as a $p$-DG module over $B$. 
Likewise, the derived $\HOM$, denoted $\RHOM_A(M,-)$, is given by the functor
\begin{equation}
\RHOM_A(M,-) :\mc{D}(A)\lra\mc{D}(B),\quad Y\mapsto  \HOM_{A}(\mathbf{p}_A(M),Y).
\end{equation}
The functors form an adjoint pair, in the sense that
\begin{equation}
\Hom_{\mc{D}(A)}(M\otimes_B^{\mathbf{L}}X,Y)\cong \Hom_{\mc{D}(B)}(X,\RHOM_A(M,Y)).
\end{equation}

One useful result about such functors is the following theorem, whose proof can be found in \cite[Section 8]{QYHopf}.

\begin{thm}\label{thm-qis-functors}
Let $f:{M_1}\lra M_2$ be a quasi-isomorphism of $p$-DG bimodules. Then $f$ descends to an isomorphism of the induced derived tensor product functors. \hfill$\square$
\end{thm}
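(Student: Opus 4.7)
My plan is to unwind what the theorem actually asserts pointwise and then reduce to the key case of property P modules. Concretely, the natural transformation $f\otimes^{\mathbf{L}}_B(-)$ is implemented on an object $X\in\mc{D}(B)$ by the map
\[
f\otimes \id_{\mathbf{p}_B(X)} \maps M_1\otimes_B \mathbf{p}_B(X) \lra M_2\otimes_B \mathbf{p}_B(X),
\]
where $\mathbf{p}_B(X)$ is a bar resolution of $X$ as in \eqref{eqn-bar-resolution}. Since isomorphisms in $\mc{D}(A)$ are precisely quasi-isomorphisms, and quasi-isomorphisms are tested after applying the forgetful functor $\mathrm{For}\maps \mc{H}(A)\to \mc{H}(\Bbbk)$, the entire statement reduces to the following claim: for every property-P left $p$-DG $B$-module $K$, and every quasi-isomorphism $f\maps M_1\to M_2$ of $p$-DG bimodules, the map $f\otimes_B\id_K$ becomes an isomorphism in $\mc{H}(\Bbbk)$.

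Next I would handle this claim by climbing the property-P filtration. The base case is when $K$ is a $p$-DG direct summand of $B$: then $M_i\otimes_B K$ is a $p$-DG direct summand of $M_i\otimes_B B\iso M_i$, and $f\otimes\id_K$ is simply the restriction of $f$ to this summand, which remains a quasi-isomorphism after forgetting to $\mc{H}(\Bbbk)$ because the forgetful functor preserves direct summands. The case of an arbitrary direct sum of such summands follows because the tensor product commutes with direct sums and an infinite coproduct of isomorphisms in $\mc{H}(\Bbbk)$ is again an isomorphism.

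The inductive step uses the filtration $F^\bullet K$ on a property-P module. Each short exact sequence
\[
0\lra F^{n-1}K \lra F^n K \lra F^n K/F^{n-1}K \lra 0
\]
splits as a short exact sequence of underlying graded $B$-modules (since the subquotients are direct sums of summands of $B$, hence projective), so tensoring with $M_i$ yields a short exact sequence of $p$-DG $A$-modules, which descends to a distinguished triangle in $\mc{H}(\Bbbk)$ via the forgetful functor. The map $f\otimes\id_{F^n K}$ fits into a morphism between two such triangles; by the inductive hypothesis it is an isomorphism in $\mc{H}(\Bbbk)$ on the outer terms, so the five lemma in the triangulated category $\mc{H}(\Bbbk)$ promotes it to an isomorphism on $F^n K$ as well. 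The $n=0$ case is Step 2 above.

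The main obstacle is the passage to the infinite union $K=\bigcup_n F^n K$: one must argue that $f\otimes\id_K$ is an isomorphism in $\mc{H}(\Bbbk)$ given that all $f\otimes\id_{F^n K}$ are. The cleanest way is to exhibit $M_i\otimes_B K$ as a sequential homotopy colimit of the $M_i\otimes_B F^n K$ in $\mc{H}(\Bbbk)$ (using a telescope/mapping cylinder construction adapted to $p$-complexes, exactly as in the DG setting) and invoke the standard fact that a morphism of sequential homotopy colimits which is an isomorphism on each term is an isomorphism in the colimit. This is the step whose rigorous justification requires the most care, and it is precisely where the hopfological analogue of the usual DG argument must be invoked; the needed compatibility of sequential colimits with the forgetful functor to $\mc{H}(\Bbbk)$ is the technical content that powers the whole reduction.
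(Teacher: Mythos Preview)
The paper does not actually prove this statement: it simply cites \cite[Section~8]{QYHopf} and places a $\square$. Your proposal, by contrast, supplies an explicit argument along the standard lines one would expect from the reference: reduce to property-P modules via the bar resolution, handle $p$-DG summands of $B$ as the base case, climb the filtration using the two-out-of-three property in $\mc{H}(\Bbbk)$, and pass to the colimit with a telescope argument. This is the orthodox route in hopfological algebra and is essentially what the cited reference develops.

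A couple of small remarks on your write-up. In the base case, the assertion that a $p$-DG direct summand of a quasi-isomorphism is a quasi-isomorphism deserves a word: it rests on the fact that $\mc{H}(\Bbbk)$ is the graded stable category of the local Frobenius algebra $\Bbbk[\dif]/(\dif^p)$, so a direct summand of an acyclic $p$-complex (i.e., a free module) is again free, hence acyclic. You use this implicitly. Also, your colimit step is honestly flagged as the delicate part; the hopfological telescope construction you allude to is indeed worked out in \cite{QYHopf}, so if you wanted to make the argument fully self-contained you would need to reproduce that here. Otherwise the structure of your argument is sound and matches what the paper defers to the literature.
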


\subsection{Grothendieck groups}
We next recall the notion of compact modules, which takes place in the derived category.

\begin{defn} \label{def-compact-mod} Let $A$ be a $p$-DG algebra. A $p$-DG module $M$ over $A$ is called \emph{compact} (in the derived category $\mc{D}(A)$) if and only if, for any family of $p$-DG modules $N_i$ where $i$ takes value in some index set $I$, the natural map
\[
\bigoplus_{i\in I}\Hom_{\mc{D}(A)}(M,N_i)\lra \Hom_{\mc{D}(A)}(M,\bigoplus_{i\in I} N_i)
\]
is an isomorphism of $\Bbbk$-vector spaces.
\end{defn}

The strictly full subcategory of $\mc{D}(A)$ consisting of compact modules will be denoted by $\mc{D}^c(A)$.  It is triangulated and will be referred to as the \emph{compact derived category}.

As in the DG case, in order to avoid trivial cancellations in the Grothendieck group, one should restrict the class of objects used to define $K_0(A)$. It turns out that the correct condition is that of compactness. 
So we let $K_0(A):=K_0(\mc{D}^c(A))$.
What we gain as dividend in the current situation is that, since $\mathcal{D}(A)$ is a ``categorical module'' over $\mc{D}(\Bbbk)$, the abelian group $K_0(A)$ naturally has a module structure over the auxiliary cyclotomic ring  at a $p$th root of unity, which was defined in equation \eqref{eqn-aux-ring} of the previous section:
\begin{equation}\label{eqn-aux-ring-2}
\mathbb{O}_p \cong K_0(\mc{D}^c(\Bbbk)).
\end{equation}
The Grothendieck group $K_0(A)$ will be the primary algebraic invariant of the triangulated category $\mc{D}(A)$ that will interest us in this work.

A class of examples for which the Grothendieck group is relatively easy to compute is the following. The notion is introduced in \cite[Section 2]{EQ1}.

\begin{defn}\label{def-positive-p-dga}
 A $p$-DG algebra $A$ is called \emph{positive} if the following three conditions hold:
\begin{itemize}
\item[(1)] $A$ is supported on non-negative degrees: $A= \oplus_{k\in \N}A^k$, and it is finite dimensional in each degree.
\item[(2)] The homogeneous degree zero part $A^0$ is semisimple.
\item[(3)] The differential $\dif_A$ acts trivially on $A^0$.
\end{itemize}
\end{defn}

\begin{thm}
\label{thm-K-group-positive}Let $A$ be a positive $p$-DG algebra. Then there is an isomorphism of Grothendieck groups
\[
K_0(A)\cong K_0^\prime(A)\o_{\Z[q,q^{-1}]}\mathbb{O}_p, 
\]
where $K_0^\prime(A)$ stands for the usual Grothendieck group of graded projective $A$-modules.
\end{thm}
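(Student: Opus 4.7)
The plan is to construct an explicit isomorphism realizing the stated identification. Since $d_A$ vanishes on $A^0$ and $A^0$ is semisimple, any complete set $\{e_1,\ldots,e_r\}$ of primitive orthogonal idempotents of $A^0$ consists of $d_A$-invariants, so each left ideal $Ae_i$ is a genuine $p$-DG submodule of $A$. Positivity together with finite dimensionality in each degree make $Ae_i$ into a finite cell module in the sense of Definition \ref{def-finite-cell}, hence compact in $\mc{D}(A)$. The graded shifts of the $Ae_i$ generate $K_0'(A)$ freely as a $\Z[q,q^{-1}]$-module. Using the $\mathbb{O}_p$-action on $K_0(A)$ coming from \eqref{eqn-aux-ring-2} via grading shifts, one defines
\[
\phi\maps K_0'(A)\otimes_{\Z[q,q^{-1}]}\mathbb{O}_p\lra K_0(A), \qquad [P]\otimes f\mapsto f\cdot [P],
\]
which is well-defined because $\Psi_p(q^2)$ already annihilates $K_0(\mc{D}^c(\Bbbk))$.

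For surjectivity, the plan is to show that every compact $M\in \mc{D}^c(A)$ admits a finite cell model whose cells are graded shifts of the $Ae_i$. Positivity is essential here: starting with generators of $M$ in the lowest non-vanishing degree, one constructs a cell model inductively from below, and since corrections at each stage live in strictly higher degrees, compactness forces the process to terminate. Consequently $[M]$ lies in the $\mathbb{O}_p$-span of the classes $[Ae_i]$.

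The main obstacle is injectivity of $\phi$, which is the technical heart of the theorem. The plan is to exhibit an inverse $\psi$ by reading off the $K_0'$-class of the associated graded of a finite cell model. To show this is independent of the cell model, one would establish under positivity that every finite cell module admits a minimal form (in which the induced differentials on associated graded quotients vanish), unique up to graded $A$-module isomorphism; two finite cell models of the same compact $p$-DG object then differ from a common minimal one only by adding contractible free summands, each of which is a length-$p$ periodic $p$-complex built out of a single $Ae_i$ shifted by $0, 2, \ldots, 2p-2$. The class of such a free summand is exactly $\Psi_p(q^2)\cdot [Ae_i]$ (up to a $\Z[q,q^{-1}]$-unit), which is killed after tensoring with $\mathbb{O}_p$ over $\Z[q,q^{-1}]$. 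Making the minimal-model formalism precise in the $p$-DG positive setting and carefully controlling these contractible summands is where the essential work lies.
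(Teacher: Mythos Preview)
The paper does not prove this theorem; its entire proof is the single sentence ``See \cite[Corollary 2.18]{EQ1}.'' So there is no in-paper argument to compare against, and your sketch is providing substantially more than the paper does. Your outline is in the spirit of how the cited result is established: one uses the $\dif$-invariant primitive idempotents of $A^0$ to produce compact cofibrant generators $Ae_i$, shows that compact objects are built from these by finite cell filtrations (this is where positivity and finite-dimensionality per degree are genuinely used), and then controls the ambiguity in the $K_0'$-class of a cell model by the observation that contractible free $p$-complexes contribute multiples of $1+q^2+\cdots+q^{2(p-1)}=\Psi_p(q^2)$, which die in $\mathbb{O}_p$.

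Two places in your sketch deserve tightening if you want a self-contained argument. First, for surjectivity you assert that every compact object of $\mc{D}^c(A)$ is isomorphic to a finite cell module; this is not automatic from the definition of compactness and is exactly what the positivity hypothesis buys---one should invoke (or reprove) the characterization from \cite{EQ1} that for positive $p$-DG algebras the compact derived category is the thick subcategory generated by the $Ae_i$. Your inductive construction ``from the lowest non-vanishing degree'' presupposes a bounded-below representative, which again needs positivity. Second, the uniqueness-of-minimal-model step underlying injectivity is the delicate part; you correctly flag it, but note that in the $p$-DG setting the relevant statement is not merely that two cell models differ by contractible summands, but that the difference of their $K_0'$-classes lies in the ideal generated by $\Psi_p(q^2)$, and proving this cleanly typically passes through a Fitting-type decomposition available because $A^0$ is semisimple and the radical is positively graded.
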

\begin{proof}
See \cite[Corollary 2.18]{EQ1}.
\end{proof}

\begin{rem}[Grading shift]
Let $M$ be a $p$-DG module and $i\in \Z$ be an integer. In what follows, we will abuse notation by writing $q^i M$ for a $p$-DG module $M$ to stand for $M$ with grading shifted up by $i$, i.e., the homogeneous degree $j$ part of $q^iM$, where $j\in \Z$, is set to be the degree $j-i$ part of $M$. More generally, if $g(q)=\sum_{i} a_iq^i\in \N[q,q^{-1}]$, then we will write
\[
g(q)M:=\bigoplus_{i} (q^iM)^{\oplus a_i}.
\]
On the level of Grothendieck groups, the symbol of $g(q)M$ is equal to the multiplication of $g(q)$, regarded here as an element of $\mathbb{O}_p$, with the symbol of $M$.

Likewise, if $\mc{E}$ is a $p$-DG functor given by tensoring with the $p$-DG bimodule $E$ over $(A,B)$,
\[
\mc{E}:(B,\dif)\dmod\lra (A,\dif)\dmod,\quad \quad M\mapsto E\otimes_B M,
\]
then we will write $q^i \mc{E}$ as the functor represented by $q^iE$.
\end{rem}

\section{A double centralizer property}\label{sec-double}

In this section, we analyze the double centralizer property in the context of categorical representation theory, and investigate the $p$-DG analogue.
For two finite-dimensional algebras $B$ and $A$, we will often have a $(B,A)$-bimodule $M$.  Sometimes $M$ will be denoted by ${}_B M, M_A,$ or ${}_B M_A$ depending on what type of module we are emphasizing it to be.

\subsection{Extension of categorical actions I}\label{subsec-double-centralizer}
Consider the following situation. Let $A$ be a finite-dimensional (graded) algebra over $\Bbbk$, and $M$ be a finite-dimensional (graded) right $A$-module. Define the (graded) algebra 
\begin{equation}
B=\END_A(M).
\end{equation}
Then $M$ is naturally a left $B$-module equipped with the action that, for any $b\in B$ and $m\in M$,
\[
b\cdot m:= b(m).
\]
Since the left $B$-action commutes with the right $A$-action on it, $M$ is in fact a $(B,A)$-bimodule. In this way one associates with $M$ two natural functors on the left module categories
\begin{equation}\label{def-JW-functor}
\mc{J}: A\dmod\lra B\dmod,\quad X\mapsto M\otimes_A X,
\end{equation}
\begin{equation}\label{def-Soergel-functor}
\mc{V}: B\dmod\lra A\dmod,\quad Y\mapsto \HOM_B(M,Y),
\end{equation}
with $\mc{V}$ being right adjoint to $\mc{J}$, which also gives rise to a natural transformation of functors
\[
\mc{I}d_{A\dmod}\Rightarrow \mc{V}\circ\mc{J}: A\dmod \lra A\dmod.
\]

\begin{rem}
In our context to follow, it will be too much to ask for both functors to be exact. Instead we will usually require $\mc{V}$ to be exact. In other words, $M$ will usually be a projective left $B$-module. Under some additional assumptions, the functor $\mc{V}$  plays the role of a generalized \emph{Soergel functor}, while the functor $\mc{J}$ (and its cousin $\mc{I}$ to be defined later on) plays the role of a \emph{projection functor}.
\end{rem}

Our next goal is to determine a condition under which categorical constructions on the $A$-module level will automatically translate into categorical constructions on the $B$-module level. To do so, let us consider the following situation. Let $A_i$, $(i=1,2,3)$, be three finite-dimensional algebras, and $M_i$, $(i=1,2,3)$, be right modules over respective $A_i$'s. Set $B_i:=\END_{A_i}(M_i)$ for each $i$. Suppose we now have functors between $A_i$-module categories given by bimodules $_{A_2}{E_1}_{A_1}$, $_{A_3}{E_2}_{A_2}$:
\[
\mc{E}_1:A_1\dmod\lra A_2\dmod,\quad X\mapsto E_1 \otimes_{A_1} X ,
\]
\[
\mc{E}_2:A_2\dmod\lra A_3\dmod,\quad Y\mapsto E_2 \otimes_{A_2} Y .
\]
Together with the generalized Soergel and projection functors, we have a diagram:
\begin{equation}
\begin{gathered}
\xymatrix{
A_1\dmod \ar[r]^{\mc{E}_1}\ar@<-0.7ex>[d]_{\mc{J}_1} & A_2\dmod \ar[r]^{\mc{E}_2}\ar@<-0.7ex>[d]_{\mc{J}_2} & A_3\dmod\ar@<-0.7ex>[d]_{\mc{J}_3}\\
B_1\dmod \ar@<-0.7ex>[u]_{\mc{V}_1} & B_2\dmod \ar@<-0.7ex>[u]_{\mc{V}_2} & B_3\dmod \ar@<-0.7ex>[u]_{\mc{V}_3}
}
\end{gathered} \ .
\end{equation}
Composing functors gives rise to
\[
\mc{E}_1^{\prime}:= \mc{J}_2\circ \mc{E}_1\circ \mc{V}_1,\quad
\mc{E}_2^{\prime}:= \mc{J}_3\circ \mc{E}_2\circ \mc{V}_2
\]
and their composition
\begin{equation}
\mc{E}_2^\prime\circ \mc{E}_1^{\prime}: B_1\dmod \lra B_3\dmod.
\end{equation}
It is a natural question to ask whether this functor agrees with the composition
\begin{equation}
\mc{J}_3\circ \mc{E}_2\circ \mc{E}_1\circ \mc{V}_1:B_1\dmod\lra B_3\dmod.
\end{equation}

The two functors will indeed agree if
\begin{equation}\label{eqn-compo-id}
\mc{I}d_{A_2\dmod}\cong\mc{V}_2\circ \mc{J}_2.
\end{equation}
In this case, any natural transformation of the functor
\[
\mc{E}_2\circ \mc{E}_1 = (E_2\otimes_{A_2}E_1)\otimes_{A_1}(-): A_1\dmod\lra A_3\dmod,
\]
which arises as an $(A_3,A_1)$-bimodule homomorphism
\[
x: (E_2\otimes_{A_2}E_1) \lra (E_2\otimes_{A_2} E_1),
\]
will also induce a natural transformation of the functor $\mc{E}_2^\prime \circ \mc{E}_1^\prime\cong \mc{J}_3\circ\mc{E}_2\circ\mc{E}_1\circ \mc{V}_1$.

\begin{defn}\label{def-extension-categorical-action}
Let $A_i$, $i\in I$, be a family of algebras, and for each $i$ choose a right $A_i$-module $M_i$. Set $B_i=\END_{A_i}(M_i)$ so that each $M_i$ is a $(B_i, A_i)$-bimodule. The categories of left modules $A_i\dmod$ and $B_i\dmod$ are connected via the generalized \emph{Soergel functor} $\mc{V}_i$ and \emph{projection functor} $\mc{J}_i$:
\[
\mc{J}_i: A_i\dmod\lra B_i\dmod,\quad X\mapsto M_i\otimes_{A_i} X,
\]
\[
\mc{V}_i: B_i\dmod\lra A_i\dmod,\quad Y\mapsto \HOM_{B_i}(M_i,Y).
\]
Suppose that, in addition, there are functors, one for each pair of $(i,j)\in I^2$, acting on $A_i$-module categories, which are given by bimodules ${_{A_i}{E_{ij}}_{A_j}}$:
\[
\mc{E}_{ij}:A_j\dmod\lra A_i\dmod,\quad X\mapsto E_{ij} \otimes_{A_j} X.
\]
We will say that \emph{the categorical action on $\oplus_{i\in I} A_i\dmod$ by the bimodules $E_{ij}$'s  extends to $\oplus_{i\in I} B_i\dmod$} if the natural transformation of functors (induced by adjunctions $\mc{I}d_{A_j\dmod}\Rightarrow \mc{V}_j\circ \mc{J}_j$)
\begin{equation}
\mc{J}_i\circ \mc{E}_{ij}\circ \mc{E}_{jk}\circ \mc{V}_k\Rightarrow \mc{J}_i\circ \mc{E}_{ij}\circ\mc{V}_j\circ\mc{J}_j\circ \mc{E}_{jk}\circ \mc{V}_k:B_k\dmod\lra B_i\dmod
\end{equation}
is always an isomorphism, for any $i,j,k\in I$.
\end{defn}

Let us now analyze when the condition \eqref{eqn-compo-id} holds. This happens if and only if the adjunction map of functors
\begin{equation}
\mc{I}d_{A_2\dmod}\Rightarrow \mc{V}_2\circ \mc{J}_2
\end{equation}
is an isomorphism. In other words, for any left $A_2$-module $Y$, there needs to be an isomorphism
\begin{equation}
Y\lra\HOM_{B_2}(M_2,M_2\otimes_{A_2}Y) .
\end{equation}
Taking $Y=A_2$, we need the $(A_2,A_2)$-bimodule homomorphism
\begin{equation}
A_2\lra\HOM_{B_2}(M_2,M_2)\quad a\mapsto (m\mapsto ma)
\end{equation}
to be an isomorphism of bimodules.
\begin{lem}\label{lem-compo-identity-I}
Let $M$ be a $(B,A)$-bimodule which is projective as a left $B$-module. Suppose there is an isomorphism of $(A,A)$-bimodules
\[
A\cong \HOM_{B}(M,M).
\]
Then the natural transformation of functors $\mc{I}d_{A\dmod}\Rightarrow \mc{V}\circ \mc{J}$ is an isomorphism.
\end{lem}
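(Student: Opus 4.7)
The plan is to show that the unit of adjunction
\[
\eta_Y \colon Y \to \HOM_B(M, M\otimes_A Y), \qquad y \mapsto \bigl(m \mapsto m\otimes y\bigr),
\]
is an isomorphism for every left $A$-module $Y$. Note that $\eta$ is an $(A,A)$-bimodule-respecting natural transformation of functors from $A\dmod$ to itself.

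First I would dispose of the case $Y = A$. Unpacking the formula for $\eta_A$ gives the $(A,A)$-bimodule map $A \to \HOM_B(M, M \otimes_A A) \cong \HOM_B(M, M)$, $a \mapsto (m \mapsto ma)$, which is precisely the canonical map appearing in the hypothesis. By assumption this is an isomorphism, so $\eta_A$ is an iso. Since $\mc{J} = M \otimes_A (-)$ commutes with arbitrary direct sums, and $\mc{V} = \HOM_B(M,-)$ commutes with direct sums as well (because $M$, being finite-dimensional, is a compact object of $B\dmod$), one immediately obtains that $\eta_{A^{(I)}}$ is an isomorphism for any free $A$-module $A^{(I)}$.

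Next I would reduce the general case to the free case. Pick a free presentation
\[
A^{(J)} \xrightarrow{\;f\;} A^{(I)} \xrightarrow{\;g\;} Y \to 0.
\]
The functor $\mc{J}$ is right exact as a tensor product, and the functor $\mc{V}$ is exact because $M$ is projective as a left $B$-module; hence $\mc{V}\circ\mc{J}$ is right exact. Naturality of $\eta$ yields a commutative diagram
\[
\begin{CD}
A^{(J)} @>{f}>> A^{(I)} @>{g}>> Y @>>> 0 \\
@V{\eta_{A^{(J)}}}VV @V{\eta_{A^{(I)}}}VV @V{\eta_Y}VV @. \\
(\mc{V}\mc{J})(A^{(J)}) @>>> (\mc{V}\mc{J})(A^{(I)}) @>>> (\mc{V}\mc{J})(Y) @>>> 0
\end{CD}
\]
with exact rows, in which the first two vertical maps are isomorphisms by the preceding paragraph. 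The five-lemma (or a direct cokernel chase using only right exactness of the bottom row) then forces $\eta_Y$ to be an isomorphism, completing the proof.

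The argument is essentially formal; the only points requiring care are the identification of $\eta_A$ with the hypothesized bimodule map $A \cong \HOM_B(M,M)$, and the invocation of compactness of $M$ over $B$ to pass from the case $Y = A$ to $Y = A^{(I)}$, both of which are automatic in the finite-dimensional setting at hand.
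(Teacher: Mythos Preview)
Your proof is correct and follows essentially the same route as the paper's: verify the unit $\eta_A$ is the hypothesized isomorphism $A\cong\HOM_B(M,M)$, extend to free modules, take a free presentation of $Y$, use right-exactness of $\mc{J}$ and exactness of $\mc{V}$ (from $B$-projectivity of $M$), and finish with the Five Lemma. The only difference is that you spell out the compactness reason for commuting $\HOM_B(M,-)$ with direct sums, whereas the paper leaves this implicit.
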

\begin{proof}
Let $A^m\lra A^n\lra Y\lra 0 $ be a projective presentation of $Y$, where $m,n\in \N\cup \{\infty\}$. Since tensor product is right exact, we have, by applying $M\otimes_A(-)$ to the above, an exact sequence
\[
M^m\lra M^n\lra M\otimes_A Y\lra 0.
\]
Then, as $M$ is projective over $B$, $\HOM_B(M,-)$ is exact, and we have the sequence
\[
\HOM_B(M,M)^m\lra \HOM_B(M,M)^n\lra \HOM_B(M,M\otimes_A Y)\lra 0
\]
is exact. Using the assumption $A\cong \HOM_{B}(M,M)$, we deduce, by the classical Five Lemma, that $Y\cong \HOM_B(M,M\otimes_A Y)$. The result follows.
\end{proof}

Together with the definition of the $B$-algebras, the discussion leads us to consider the \emph{double centralizer condition}, which requires that, for a $(B,A)$-bimodule ${_BM_A}$, one has
\begin{equation}\label{eqn-double-centralizer}
B=\END_A(M_A),\quad \quad A=\END_B (_B M).
\end{equation}

We summarize the above discussion into the following theorem.

\begin{thm}\label{thm-action-extension}
Under the conditions of Definition \ref{def-extension-categorical-action}, the categorical action on $\oplus_{i\in I} A_i\dmod$ by the bimodules $E_{ij}$'s extends to a categorical action on $\oplus_{i\in I} B_i\dmod$ if, for each $i\in I$, the double centralizer property
\[
B_i=\END_{A_i}(M_{A_i}),\quad \quad A_i=\END_{B_i} (_{B_i}M),
\]
holds on the $(B_i,A_i)$-bimodules ${_{B_i}{M_i}_{A_i}}$, and $M_i$ is projective as a $B_i$-module. \hfill$\square$
\end{thm}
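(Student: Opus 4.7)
The plan is to reduce the claim to Lemma \ref{lem-compo-identity-I}, which has already been established, applied individually to each index $j \in I$. The natural transformation under consideration,
\[
\mc{J}_i\circ \mc{E}_{ij}\circ \mc{E}_{jk}\circ \mc{V}_k\Rightarrow \mc{J}_i\circ \mc{E}_{ij}\circ\mc{V}_j\circ\mc{J}_j\circ \mc{E}_{jk}\circ \mc{V}_k,
\]
arises by horizontally composing (whiskering) the unit of adjunction
\[
\eta_j\maps \mc{I}d_{A_j\dmod}\Rightarrow \mc{V}_j\circ \mc{J}_j
\]
on the left by $\mc{J}_i\circ \mc{E}_{ij}$ and on the right by $\mc{E}_{jk}\circ \mc{V}_k$. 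Since whiskering preserves natural isomorphisms, it suffices to show that $\eta_j$ is a natural isomorphism for every $j\in I$.

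The second step is to verify the hypotheses of Lemma \ref{lem-compo-identity-I} for the bimodule $_{B_j}{M_j}_{A_j}$. Projectivity of $M_j$ as a left $B_j$-module is assumed. The other hypothesis, the bimodule isomorphism $A_j \cong \HOM_{B_j}(M_j,M_j)$, is exactly the right-hand equality in the double centralizer property
\[
B_j=\END_{A_j}(M_{A_j}),\quad A_j=\END_{B_j}({_{B_j}M_j}),
\]
once one observes that the canonical map $a\mapsto (m\mapsto ma)$ used in the proof of Lemma \ref{lem-compo-identity-I} is precisely the structure map realizing $A_j$ as right endomorphisms of ${_{B_j}M_j}$. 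Lemma \ref{lem-compo-identity-I} then yields that $\eta_j$ is an isomorphism, and whiskering produces the required isomorphism of composite functors between the $B_\bullet\dmod$ categories.

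There is no real obstacle in this proof; all the technical work sits inside Lemma \ref{lem-compo-identity-I}, whose proof uses the Five Lemma applied to a projective presentation. The only points requiring attention are essentially bookkeeping: (i) matching the two roles played by the right $A_j$-action on $M_j$ (as defining a bimodule, and as providing the map to endomorphisms of $M_j$), and (ii) confirming that the left half $B_j = \END_{A_j}(M_{A_j})$ of the double centralizer hypothesis is automatically satisfied by the very definition $B_j := \END_{A_j}(M_j)$, so the genuine content of the hypothesis is the right half. With these remarks the theorem follows by direct assembly.
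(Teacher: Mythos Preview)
Your proposal is correct and follows essentially the same approach as the paper: the theorem is stated as a summary of the preceding discussion, which reduces the extension condition to showing that the unit $\eta_j\colon \mc{I}d_{A_j\dmod}\Rightarrow \mc{V}_j\circ\mc{J}_j$ is an isomorphism for each $j$, and then invokes Lemma~\ref{lem-compo-identity-I} using projectivity of $M_j$ over $B_j$ together with the identification $A_j\cong\HOM_{B_j}(M_j,M_j)$ from the double centralizer hypothesis. Your observation that the left half of the double centralizer condition is definitional while the right half carries the content is exactly the point the paper makes in the paragraph preceding the theorem.
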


In general, we are not aware of the most natural conditions for a bimodule to satisfy \eqref{eqn-double-centralizer}. We will, however, move on to understand when the double centralizer condition holds for some particular class of algebras and bimodules.

\subsection{Self-injective algebras}\label{subsec-self-injective}
In this section, we will recall a class of examples for which the double centralizer property \eqref{eqn-double-centralizer} holds. The result is well-known in the ungraded case, see, for instance, Curtis-Reiner \cite{CurRei}. For a more modern account that applies to some more general cases, see K\"onig-Slung\aa rd-Xi \cite{KSX}. We record the proofs here as well as some consequences for the sake of completeness.

Throughout this subsection, let $A$ be a (graded) finite-dimensional \emph{self-injective algebra}, i.e, the left or right regular module $A$ is injective. We will take $M$ to be a (graded) finitely-generated faithful right $A$-module.

\begin{lem}\label{lem-faithful-A-mod}
$A$ is a direct summand of $M^{\oplus r}$, for some $r>0$.
\end{lem}
\begin{proof}
Write $A$ as a direct sum of indecomposable injective submodules. Each indecomposable injective $I_N\subset A$ is the injective envelope of its (simple) socle $N$. By the assumption, $N$, regarded as a submodule in the socle of $A$, embeds into $M$ since it is simple and $A$ acts faithfully on $M$. By injectivity of $I_N$, the embedding of $N$ into $M$ extends to an embedding of $I_N$ into $M$. Hence $I_N$ is a direct summand of $M$, and the result follows.
\end{proof}

Write $M_A=P_A\oplus N_A$, where $P_A$ is the direct sum of projective-injective indecomposable summands of $M_A$, and $N_A$ is a complementary module. By the previous lemma, every indecomposable projective summand of $A$ figures in $P_A$ as a direct summand and vice versa.

A main goal of this section is to establish the following double centralizer property \eqref{eqn-double-centralizer} in this particular context, using ideas of \cite{CurRei}.

\begin{thm}\label{thm-double-centralizer}
Let $A$ be a finite-dimensional graded self-injective algebra, and $M_A$ a faithful finitely-generated graded right $A$-module. Set $B=\END_A(M_A)$ so that $M$ is a $(B,A)$-bimodule. Then there is a canonical isomorphism of graded algebras:
\[
A=\END_B({_BM}), \quad a\mapsto (m\mapsto ma)
\]
for any $a\in A$ and $m\in M$.
\end{thm}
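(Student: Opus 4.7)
The key input is Lemma~\ref{lem-faithful-A-mod}, which says that, as a right $A$-module, $M$ contains the regular module $A$ as a direct summand: $M\cong A\oplus M'$. With this in hand, the proof amounts to the standard argument for the double centralizer property, for instance as in Curtis--Reiner. My plan is to first verify that the map $\phi\maps A\to \END_B({}_BM)$ sending $a$ to right multiplication $\rho_a$ is a well-defined graded algebra homomorphism (this is routine: right multiplication commutes with left $B$-action because $B=\END_A(M_A)$ by definition consists of maps that are right-$A$-linear). Then I will establish injectivity and surjectivity.

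Injectivity is immediate: if $\rho_a=0$, then in particular $1\cdot a=a=0$ because $1\in A\hookrightarrow M$ via the direct summand inclusion. (Equivalently, $A$ acts faithfully on $M$ since $A\hookrightarrow M$ as a right module.)

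For surjectivity, let $f\in \END_B({}_BM)$ and set $a:=f(\mathbf{1})$, where $\mathbf{1}\in A\subset M$ is the unit of $A$ under the direct summand inclusion. First I would check that $a$ actually lies in $A$: the projection $e\maps M\to A\hookrightarrow M$ onto the summand is right-$A$-linear, so $e\in B$, and $B$-linearity gives $f(\mathbf{1})=f(e\mathbf{1})=ef(\mathbf{1})\in A$. The crucial step is then the following construction: for any $m\in M$, since $A$ is a direct summand of $M$, the right-$A$-linear map $A\to M$ sending $1\mapsto m$ extends (by zero on $M'$) to a right-$A$-linear endomorphism $\psi_m\in B$ of $M$ with $\psi_m(\mathbf{1})=m$. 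Applying $B$-linearity of $f$, we get
\[
f(m)=f(\psi_m(\mathbf{1}))=\psi_m(f(\mathbf{1}))=\psi_m(\mathbf{1}\cdot a)=\psi_m(\mathbf{1})\cdot a=m\cdot a,
\]
using right-$A$-linearity of $\psi_m$ in the penultimate equality. Hence $f=\rho_a$, proving surjectivity.

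I should also note that the graded structure is preserved: if $f$ is homogeneous of some degree, then $a=f(\mathbf{1})$ is homogeneous of the same degree, so $\phi$ is a graded isomorphism. No obstacle is anticipated in this proof: the only non-formal ingredient is Lemma~\ref{lem-faithful-A-mod}, whose proof uses self-injectivity of $A$ to ensure that each indecomposable projective-injective summand of $A$ embeds into $M$; everything else is a direct manipulation with the direct sum decomposition $M=A\oplus M'$.
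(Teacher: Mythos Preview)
Your argument has a gap at the very first step. Lemma~\ref{lem-faithful-A-mod} does \emph{not} say that $A$ is a direct summand of $M$; it only guarantees that $A$ is a direct summand of $M^{\oplus r}$ for some $r\geq 1$. Indeed, the conclusion you want can fail: take $A=\mathrm{M}_2(\Bbbk)$ and $M=\Bbbk^2$ the column module, which is faithful; then $\dim_\Bbbk M=2<4=\dim_\Bbbk A$, so $A$ cannot sit inside $M$ as a summand. The lemma's proof shows each indecomposable projective-injective of $A$ occurs in $M$, but with no control on multiplicities.

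The paper handles exactly this point via Lemma~\ref{lem-Morita-reduction}: replacing $M$ by $M^{\oplus r}$ does not change whether the double centralizer property holds, so one may assume from the start that $M\cong A\oplus N$. Once you insert this reduction, your argument goes through and is essentially the same as the paper's---your $\psi_m$ is the paper's $b_n$ (together with the obvious maps on the $A$-summand), packaged a bit more cleanly by treating all $m\in M$ at once rather than the two components separately.
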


We first note the following Morita equivalence result, allowing us to exchange $M_A$ with $M_A^{\oplus r}$ for any positive integer $r$.

\begin{lem}\label{lem-Morita-reduction}
Let $A$ be a ring and $M_A$ be a right $A$-module. Then $A$ and $\END_A(M)$ satisfy the double centralizer property if and only if $A$ and $\END_A(M^{\oplus r})$ satisfy the double centralizer property for any positive integer $r$.
\end{lem}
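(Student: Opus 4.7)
The plan is to exploit standard Morita theory for matrix algebras. Set $B = \END_A(M)$ and $B^{(r)} = \END_A(M^{\oplus r})$. The first step is the canonical identification $B^{(r)} \cong M_r(B)$, the algebra of $r\times r$ matrices over $B$, under which $M^{\oplus r}$ becomes the $(M_r(B),A)$-bimodule $B^{\oplus r}\otimes_B M$ with componentwise right $A$-action. This makes explicit how the left $B^{(r)}$-structure and the right $A$-structure on $M^{\oplus r}$ interact.

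Next, I would invoke the Morita equivalence
\[
e_{11}\cdot(-)\maps M_r(B)\dmod \lra B\dmod
\]
coming from the idempotent $e_{11}\in M_r(B)$ with $e_{11}M_r(B)e_{11}\cong B$ and $M_r(B)e_{11}M_r(B)=M_r(B)$. This equivalence sends $M^{\oplus r}$ to its first component $e_{11}(M^{\oplus r})=M$, and since equivalences preserve endomorphism algebras, it yields a canonical ring isomorphism
\[
\END_{B^{(r)}}\left({}_{B^{(r)}}M^{\oplus r}\right) \;\cong\; \END_B({}_B M).
\]

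The third step is to check that, under this isomorphism, the natural map $A \to \END_{B^{(r)}}({}_{B^{(r)}}M^{\oplus r})$ induced by the right $A$-action is identified with the corresponding map $A \to \END_B({}_B M)$. This is immediate: the right $A$-action on $M^{\oplus r}$ is componentwise, so it restricts to the original right $A$-action on $e_{11}(M^{\oplus r})=M$. Consequently, one of these maps is an isomorphism if and only if the other is, giving the equivalence of the double centralizer properties for $M$ and $M^{\oplus r}$ in either direction.

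I do not expect a serious obstacle here; the content is essentially a bookkeeping exercise in Morita theory for matrix algebras. The only delicate point is keeping track of the bimodule structures and verifying that the canonical maps from $A$ transport correctly along the equivalence, which follows once one identifies $M$ with the $e_{11}$-summand of $M^{\oplus r}$.
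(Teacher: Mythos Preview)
Your proposal is correct and follows essentially the same approach as the paper: identify $\END_A(M^{\oplus r})$ with the matrix algebra $M_r(B)$, use the standard Morita equivalence between $B$-modules and $M_r(B)$-modules (you use the idempotent truncation $e_{11}\cdot(-)$, the paper uses its quasi-inverse $B^{\oplus r}\otimes_B(-)$), observe that this equivalence exchanges $M$ and $M^{\oplus r}$, and conclude by preservation of endomorphism rings. Your extra care in verifying that the canonical maps from $A$ are identified under the equivalence is a detail the paper leaves implicit.
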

\begin{proof}
Let $B=\END_A(M)$. Then $\END_A(M^{\oplus r})$ can be identified with an $r\times r$ matrix algebra $\mathrm{M}(r,B)$ with coefficients in $B$. These algebras are naturally Morita equivalent, and an equivalence is given by tensor product with the standard column module:
\[
 B\dmod\lra\mathrm{M}(r,B)\dmod ,\quad {_BK}\mapsto B^{\oplus r}\otimes_B K.
\]
Under this equivalence, the module ${M}$ is naturally sent to $M^{\oplus r}$. Therefore, if one of $B$ or $M(r,B)$ satisfies the double centralizer property, so that $\END_B(M)\cong A$ or $\END_{\mathrm{M}(r,B)}(M^{\oplus r})\cong A$, then so does the other, as the Morita equivalence preserves endomorphism spaces.
\end{proof}

\begin{proof}[Proof of Theorem \ref{thm-double-centralizer}.]
By Lemma \ref{lem-faithful-A-mod}, there is an $r\in \N_{>0}$ such that $M^{\oplus r}$ contains $A$ as a direct summand. Lemma \ref{lem-Morita-reduction} allows us to replace $M$ by $M^{\oplus r}$. Thus let us assume from the start that there is a decomposition $M_A\cong A\oplus N_A$. Then we may formally identify $B=\END_A(M_A)$ as a matrix algebra:
\[
B\cong
\left(
\begin{matrix}
\END_A(A) & \HOM_A(N,A)\\
 \HOM_A(A,N) & \END_A(N)
\end{matrix}
\right) \ .
\] 
Let $e_A\in B$ be the idempotent corresponding to the identity of $\END_A(A)$, and likewise for $e_N$. Then for any $(a,n)\in A\oplus N$
\[
e_A(a,n) = (a, 0) \quad \quad e_N(a, n)=(0,n).
\]
Given any $x\in \END_B(M)$ and assuming $(a,n)x=(a_x,n_x)$, we have
\[
(a,0)x=(e_A(a,n))x=e_A((a, n)x)=(a_x,0),\quad \quad (0,n)x=(e_N(a,n))x=e_N((a, n)x)=(0,n_x).
\]
Therefore, any $x\in \END_B(M)$ acts on $A\oplus N$ componentwise. Furthermore, since the map $a\mapsto a_x$ is left linear over $\END_A(A)\cong A$, it follows that there is an $x_0\in A$ such that $a_x=ax_0$ for all $a\in A$. 

The southwestern corner of the matrix description of $B$ can be identified with $\HOM_A(A,N)\cong N$. For any $n\in N$, let us define an element $b_n\in B$ sitting in the southwestern corner by
\[
b_n(a,n^\prime)=(0,na)
\]
for all $(a,n^\prime)\in A\oplus N$. Then we have,
\[
(b_n(a,n^\prime))x=b_n((a,n^\prime)x),
\]
which, in turn, is equal to
\[
(0,(na)_x)=(0,na_x)=(0,nax_0).
\]
Taking $a=1_A$ shows that $n_x=nx_0$. Thus, for all $(a,n)\in A\oplus N$, we have found an $x_0\in A$ such that
\[
(a,n )x=(ax_0,n x_0)=(a,n )x_0.
\]
The desired claim now follows.
\end{proof}

Let $A$, $B$ and $M$ be as in Theorem \ref{thm-double-centralizer}. We have the following.

\begin{cor}\label{cor-center-double-centralizer}
The centers of the algebras $A$ and $B$ are canonically isomorphic.
\end{cor}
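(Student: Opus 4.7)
The plan is to construct mutually inverse algebra homomorphisms $\phi \maps Z(A) \to Z(B)$ and $\psi \maps Z(B) \to Z(A)$ built directly from the bimodule structure on $M$, with the double centralizer property of Theorem \ref{thm-double-centralizer} doing the essential work in one direction.

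\textbf{The map $\phi$.} Given $z \in Z(A)$, define $\phi(z) = r_z \maps M \to M$ by $r_z(m) = mz$. Since $z$ is central in $A$, the computation $r_z(ma) = maz = mza = r_z(m)a$ shows $r_z$ is right $A$-linear, hence $r_z \in \END_A(M_A) = B$. To see $r_z \in Z(B)$: for any $b \in B$, i.e.\ any right $A$-linear endomorphism of $M$, we have $(b \circ r_z)(m) = b(mz) = b(m) z = r_z(b(m)) = (r_z \circ b)(m)$, where the middle equality is right $A$-linearity of $b$. So $r_z$ commutes with every element of $B$.

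\textbf{The map $\psi$.} Given $\zeta \in Z(B)$, view $\zeta$ as an endomorphism of $M$ (it already is, as an element of $B$). Centrality in $B$ means $\zeta \circ b = b \circ \zeta$ for every $b \in B$, which is precisely the statement that $\zeta$ is left $B$-linear. Thus $\zeta \in \END_B({}_B M)$, and by the double centralizer property $\END_B({}_B M) = A$ established in Theorem \ref{thm-double-centralizer}, there exists a unique $a_\zeta \in A$ with $\zeta(m) = m a_\zeta$ for all $m \in M$. Set $\psi(\zeta) = a_\zeta$. To verify $a_\zeta \in Z(A)$, fix $a \in A$. For any $m \in M$, right $A$-linearity of $\zeta$ gives $\zeta(ma) = \zeta(m) a$, i.e.\ $m a a_\zeta = m a_\zeta a$. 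Faithfulness of ${M_A}$ then forces $a a_\zeta = a_\zeta a$.

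\textbf{Inverses and multiplicativity.} The two constructions are manifestly inverse to each other: $\psi(\phi(z))$ is the unique element of $A$ implementing $m \mapsto mz$, namely $z$ itself; conversely $\phi(\psi(\zeta)) = r_{a_\zeta}$ and $r_{a_\zeta}(m) = m a_\zeta = \zeta(m)$. Multiplicativity is a short check: composing $r_{z_1} \circ r_{z_2}$ in $B$ yields $m \mapsto m z_2 z_1 = m z_1 z_2$, so $\phi(z_1)\phi(z_2) = \phi(z_1 z_2)$ (the opposite order does not matter on central elements). Compatibility with grading and with the $p$-differential (once one checks that $\dif$ on $B$ is defined via $\dif_M$ in the standard way) is automatic, since $r_z$ is defined by a structure map of $M$ and $\psi$ is the restriction of the isomorphism $A \cong \END_B({}_B M)$.

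\textbf{Main obstacle.} The only delicate step is bookkeeping of the sides: one must verify that $\zeta \in Z(B)$, which is an a priori statement about commuting with left $B$-multiplication, coincides with the left $B$-linearity condition that invokes the double centralizer $\END_B({}_B M) = A$. Once this identification is made, the rest is formal, and faithfulness of $M_A$ is used exactly once, to extract centrality of $a_\zeta$ in $A$.
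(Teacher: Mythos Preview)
Your proof is correct and follows essentially the same approach as the paper's: both send $z \in Z(A)$ to the right-multiplication operator $r_z \in Z(B)$, and invoke the double centralizer property $A \cong \END_B({}_B M)$ for the reverse direction. The paper's version is terser---it phrases the argument as showing $z(A) \subset z(B)$ and then appeals to symmetry (``interchanging the roles of $A$ and $B$'') for the reverse inclusion---whereas you construct the inverse $\psi$ explicitly and verify the compositions; but the content is the same.
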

\begin{proof}
Denote the center of $A$ and $B$ by $z(A)$ and $z(B)$ respectively.

The action of the center $z(A)$ on $M$ commutes with that of $A$, and therefore $z(A)$ is a subalgebra of $B=\END_A(M)$, as the $A$, and thus the $z(A)$, action is faithful on $M$. As $z(A)$ is a commutative subalgebra of $A$, it commutes with the action of $B$, and therefore $z(A)\subset z(B)$. The reverse inclusion holds by interchanging the roles of $A$ and $B$, and the result follows.
\end{proof}

To conclude this subsection, we will prove an exactness result about the Soergel functor $\mc{V}$.

\begin{lem}\label{lem-proj-inj}
As a left module over $B$, the module $M$ in Theorem \ref{thm-double-centralizer} is projective and injective.
\end{lem}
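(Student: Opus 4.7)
My plan is to establish projectivity and injectivity of $_BM$ as two separate claims, both exploiting Lemma~\ref{lem-faithful-A-mod}. That lemma supplies a positive integer $r$ such that $A_A$ is a direct summand of $M_A^{\oplus r}$, yielding a split surjection $\pi: M^{\oplus r} \twoheadrightarrow A$ and, dually, a split injection $\iota: A \hookrightarrow M^{\oplus r}$ of right $A$-modules.

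For projectivity, I would apply the contravariant functor $\HOM_A(-, M)$ to the split surjection $\pi$. This produces a split monomorphism of left $B$-modules
\[
M \;\cong\; \HOM_A(A, M) \;\hookrightarrow\; \HOM_A(M^{\oplus r}, M) \;=\; B^{\oplus r},
\]
so $_BM$ is a direct summand of the free $B$-module $B^{\oplus r}$, and is therefore projective.

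For injectivity, I plan to dualize: apply the covariant functor $\HOM_A(M,-)$ to the split injection $\iota: A \hookrightarrow M^{\oplus r}$ to produce a split monomorphism of \emph{right} $B$-modules
\[
M^\vee \;:=\; \HOM_A(M, A) \;\hookrightarrow\; \HOM_A(M, M^{\oplus r}) \;=\; B^{\oplus r},
\]
which shows $M^\vee$ is a projective right $B$-module. I will then identify $DM$ with $M^\vee$ as right $B$-modules by combining the tensor-hom adjunction $DM = \HOM_\Bbbk(M, \Bbbk) \cong \HOM_A(M, DA)$ with the isomorphism $A \cong DA$ of right $A$-modules furnished by the self-injectivity (and, in the graded setting, Frobenius nature) of $A$. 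Since a finite-dimensional left $B$-module $X$ is injective precisely when $DX$ is projective as a right $B$-module, the projectivity of $DM \cong M^\vee$ delivers injectivity of $_BM$.

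The main obstacle I anticipate is the last identification $DM \cong M^\vee$ as right $B$-modules, whose cleanest form requires the Frobenius identification $A \cong DA$. In the graded context of this paper, where the algebras of interest (such as nilHecke algebras) are in fact symmetric Frobenius, this step is standard; more generally one could invoke a Nakayama-twisted version of the same argument, but the conclusion that $_BM$ is both projective and injective is unaffected.
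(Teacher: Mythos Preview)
Your proof is correct and follows essentially the same route as the paper's: both establish projectivity by embedding $_BM$ as a summand of $B^{\oplus r}$ via the functor $\HOM_A(-,M)$, and both establish injectivity by dualizing and showing that $\HOM_A(M,-)$ applied to projective-injective right $A$-modules yields projective right $B$-modules. The one point to sharpen is your identification $A\cong DA$ as right $A$-modules: self-injectivity alone does not give this (that is precisely the Frobenius condition). The paper sidesteps this by working summand-by-summand, using only that for self-injective $A$ the modules $J_A^*$, as $J_A$ ranges over indecomposable summands of $A_A$, exhaust the indecomposable summands of $_AA$; equivalently, you can observe that $DA$, being projective, is a summand of some $A^{\oplus k}$, so $DM\cong\HOM_A(M,DA)$ is a summand of $(M^\vee)^{\oplus k}$ and hence projective---no appeal to $A\cong DA$ or Nakayama twists is needed.
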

\begin{proof}
First, we show that $M$ is a projective left $B$-module. By Lemma \ref{lem-faithful-A-mod}, there is an $r$ such that $M^{\oplus r}$ contains $A$ as a direct summand. Write $M^{\oplus r} \cong A\oplus M^\prime$, where $M^{\prime}$ is some complementary module. Then
\[
{M}^{\oplus r} \cong \HOM_A(A_A,M^{\oplus r})\cong \HOM_A(eM^{\oplus r}, M^{\oplus r})\cong \HOM_A(M^{\oplus r}, M^{\oplus r})e,
\]
where $e$ is the idempotent in $\END_A(M^{\oplus r})$ given as a composition of right $A$-modules
\[
e:M^{\oplus r}\lra A \lra M^{\oplus r}.
\]
It follows that $M^{\oplus r}$ is a projective left module over the ring $\END_A(M^{\oplus r})$. Since $B$ and $\END_A(M^{\oplus r})$ are Morita equivalent, the claim follows.

If $J_A$ is any indecomposable summand of $A$ as a right module, then $J_A$ is projective and injective as a right $A$-module. 
Now, as a right $B$-module,
\[
B_B=\HOM_A(M_A,M_A)\cong \HOM_A(M_A,J_A)\oplus \HOM_A(M_A,J^\prime_A),
\]
where $J^\prime_A$ is some complementary module of $J_A$ in $M_A$. It follows that $\HOM_A(M_A,J_A)$ is a projective right $B$-module. By taking the vector space dual, we have that
 $$\HOM_A(M_A,J_A)^*=\HOM_{\Bbbk}(\HOM_A(M_A,J_A),\Bbbk)$$
 is an injective left $B$-module. We would like to use this to show that $_BM$ is injective as a left $B$-module. Notice that the vector space dual of $J_A$, $J_A^*=\HOM_\Bbbk(J_A,\Bbbk)$, carries a natural left $A$-module structure, and is, in fact, an injective and projective left $A$-module. We have an isomorphism of left $B$-modules 
\begin{align*}
{_BM_A}\otimes_A J_A^* & \cong ({_BM_A}\otimes_A J_A^*)^{**}\cong \HOM_{\Bbbk}({_BM_A}\otimes_AJ_A^*,\Bbbk)^*\\
&\cong \HOM_A({_BM_A},\HOM_\Bbbk(J_A^*,\Bbbk))^*\cong \HOM_A({_BM_A},J_A)^*.
\end{align*}
It follows that ${_BM_A}\otimes_AJ_A^*$ is an injective $B$-module. Furthermore, since $J_A^*$ is an indecomposable summand of $A$ as a left module, and $A$ decomposes into a direct sum of such $J_A^*$'s with various multiplicities,
we thus have that $ {_BM}\cong {_BM_A}\otimes_A A$ is a direct sum of modules of the form ${_BM_A}\otimes_A J_A^*$. The injectivity of $_BM$ follows.
\end{proof}

The following consequence of the lemma is then immediate.

\begin{cor}
The generalized Soergel functor 
\[
\mc{V}: B\dmod\lra A\dmod,\quad Y\mapsto \HOM_B(M,Y)
\]
is exact. \hfill $\square$
\end{cor}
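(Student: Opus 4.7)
The statement is essentially an immediate consequence of Lemma \ref{lem-proj-inj}, so the plan is very short. Recall that the generalized Soergel functor is defined as $\mc{V}(Y) = \HOM_B({_B M}, Y)$, where $_B M$ is the left $B$-module structure on $M$ coming from $B = \END_A(M_A)$.

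My plan is to invoke the standard fact that for any ring $B$ and any left $B$-module $P$, the functor $\HOM_B(P, -) \colon B\dmod \to \mathrm{Ab}$ is exact if and only if $P$ is projective. In our graded (resp.\ $p$-DG) setting the same equivalence holds, with the target category refined to $A\dmod$ via the right $A$-action on $M$ that commutes with the left $B$-action. So all I need to do is cite Lemma \ref{lem-proj-inj}, which asserts precisely that $_B M$ is projective (and in fact also injective, although only projectivity is needed here).

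There is essentially no obstacle: the heavy lifting was already done in Lemma \ref{lem-proj-inj}, where one used the double centralizer theorem together with the fact that $M^{\oplus r}$ contains $A$ as a direct summand (Lemma \ref{lem-faithful-A-mod}) to produce an explicit idempotent $e$ in $\END_A(M^{\oplus r})$ realizing $M^{\oplus r}$ as a summand of the regular module $\END_A(M^{\oplus r})$, then descended via Morita equivalence to $B$. Given that, exactness of $\mc{V}$ follows by one line. If the reader wants a written-out version, I would simply say: let $0 \to Y' \to Y \to Y'' \to 0$ be a short exact sequence in $B\dmod$; applying $\HOM_B(M, -)$ yields a left-exact sequence in $A\dmod$, and surjectivity on the right follows from projectivity of $_B M$ established in Lemma \ref{lem-proj-inj}. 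Hence $\mc{V}$ is exact.
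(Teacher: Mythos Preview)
Your proposal is correct and matches the paper's approach exactly: the corollary is marked with a $\square$ and no further argument, since exactness of $\HOM_B(M,-)$ is immediate from the projectivity of ${_B M}$ established in Lemma~\ref{lem-proj-inj}.
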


The functor $\mc{V}$, however, does not necessarily send projective $B$-modules to projective $A$-modules, and thus does not descend to Grothendieck groups of finitely generated projective modules.

\subsection{Frobenius algebras}\label{subsec-Frob-algebra}
We next would like to give a criterion under which the generalized Soergel functor $\mc{V}:B\dmod\lra A\dmod$ is always fully-faithful on projective modules. To do this, we use the following lemma.

\begin{lem}\label{lem-two-duals-isomorphic}
Let ${_BM_A}$ be a $(B,A)$-bimodule on which $A$ and $B$ are centralizers of each other. Then there is a canonical isomorphism
\[
\HOM_{B}({_BM},B)\cong \HOM_A({M_A},A)
\]
as $(A,B)$-bimodules.
\end{lem}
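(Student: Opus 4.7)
The plan is to write down an explicit canonical bimodule map and its inverse, the construction being driven entirely by the two double centralizer identifications $A = \END_B({_BM})$ and $B = \END_A({M_A})$.

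First I would define
\[
\Phi \colon \HOM_B({_BM}, B) \to \HOM_A({M_A}, A)
\]
by the rule that, for $\phi \in \HOM_B({_BM}, B)$ and $m \in M$, the element $\Phi(\phi)(m) \in A$ corresponds, under the identification $A = \END_B({_BM})$, to the left $B$-linear endomorphism $m' \mapsto \phi(m')\cdot m$ of $M$. The required $B$-linearity of this endomorphism reduces to $\phi(bm')\,m = (b\phi(m'))\,m = b(\phi(m')\,m)$, using the $(B,A)$-bimodule structure on $M$. Symmetrically, I would build a candidate inverse
\[
\Psi \colon \HOM_A({M_A}, A) \to \HOM_B({_BM}, B)
\]
by sending $\psi$ to the map $m \mapsto \Psi(\psi)(m)$, where $\Psi(\psi)(m) \in B = \END_A({M_A})$ corresponds to the right $A$-linear endomorphism $m' \mapsto m \cdot \psi(m')$ of $M$.

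Next I would verify that $\Phi$ (and likewise $\Psi$) is an $(A,B)$-bimodule map. With the standard conventions---left $A$ on $\HOM_B({_BM}, B)$ given by $(a\phi)(m) = \phi(ma)$, right $B$ by $(\phi b)(m) = \phi(m)b$; left $A$ on $\HOM_A({M_A}, A)$ given by $(a\psi)(m) = a\psi(m)$, right $B$ by $(\psi b)(m) = \psi(bm)$---each of the four compatibility identities unwinds to a short bilinear check in $M$ using the defining formula for $\Phi$.

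Finally I would check $\Psi\Phi = \id$ and $\Phi\Psi = \id$. For the first, $\Psi(\Phi(\phi))(m)$ is the element of $B$ sending $m'$ to $m \cdot \Phi(\phi)(m')$; unwinding the identification $A = \END_B({_BM})$, the right action of $\Phi(\phi)(m') \in A$ on $m$ yields $m \cdot \Phi(\phi)(m') = \Phi(\phi)(m')(m) = \phi(m)\cdot m'$. So $\Psi(\Phi(\phi))(m)$ is precisely the left $B$-action of $\phi(m)$, and this recovers $\phi(m)$ under the identification $B = \END_A({M_A})$. The composition $\Phi\Psi$ is handled by an entirely symmetric computation.

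There is no real obstacle, and the argument is purely formal once the definitions are unwound. The only subtle point is bookkeeping: the identifications in Theorem \ref{thm-double-centralizer} carry implicit opposite-algebra conventions (composition in $\END_B({_BM})$ runs opposite to multiplication in $A$, and similarly for $B$), and the proof requires tracking these consistently against the left/right actions on the two Hom spaces.
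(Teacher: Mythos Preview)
Your proposal is correct and is essentially the same argument as the paper's. Your maps $\Phi$ and $\Psi$ coincide with the paper's $\phi$ and $\psi$: the paper packages them via tensor--hom adjunction applied to the canonical evaluation pairings $\langle\,,\,\rangle_A$ and $\langle\,,\,\rangle_B$, arriving at the characterizations $y\cdot\phi(g)(x)=g(y)\cdot x$ and $\psi(f)(x)\cdot y=x\cdot f(y)$, which are exactly your defining formulas unwound; the mutual-inverse check then proceeds identically, invoking faithfulness of the $A$- and $B$-actions on $M$ where you invoke the identifications $A=\END_B(M)$ and $B=\END_A(M)$.
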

\begin{proof}
Let us introduce notations ${^\vee M}:=\HOM_{B}({_BM},B)$ (left dual) and $M^\vee:=\HOM_A({M_A},A)$ (right dual). There are natural evaluation maps
\[
 M\otimes_A {^\vee M}\lra B,\quad x\otimes_A g \mapsto \langle x,g\rangle_B,
\]
\[
{M^\vee }\otimes_B M\lra A,\quad f\otimes_B x \mapsto \langle f,x\rangle_A,
\]
which are respectively $(B,B)$ and $(A,A)$-bimodule homomorphisms. 

Consider the canonical map
\[
{^\vee M}\otimes_B M\lra \END_B(M),\quad g\otimes_Bx\mapsto (y\mapsto \langle y,g \rangle_B x).
\]
Together with the double commutant property $\END_B(M)\cong A$, we have obtained an $(A,A)$-module homomorphism
\begin{equation}\label{eqn-can-map-1}
{^\vee M}\otimes_B M\lra A, \quad g\otimes_B x\mapsto (y\mapsto \langle y,g \rangle_B x= ya_{g,x}),
\end{equation}
where $a_{g,x}\in A$ is the unique element associated with $g\otimes_B x$.

Likewise, we have another canonical left $(B,B)$-module map
\begin{equation}\label{eqn-can-map-2}
M\otimes_AM^\vee \lra B,\quad x\otimes_A f\mapsto b_{x,f},
\end{equation}
where $b_{x,f}\in B$ is the unique element such that
\[
b_{x,f}y=x\langle f,y \rangle_A
\]
for all $y\in M$. 

By the tensor-hom adjunction, we have isomorphisms
\[
\HOM_A({^\vee M}\otimes_B M,A)\cong \HOM_B({^\vee M}, \HOM_A(M,A))=\HOM_B({^\vee M}, M^\vee),
\]
\[
\HOM_B(M\otimes_AM^\vee , B)\cong \HOM_A(M^\vee, \HOM_B(M,B))= \HOM_A(M^\vee, {^\vee M}).
\]
The images of the canonical maps \eqref{eqn-can-map-1} and \eqref{eqn-can-map-2} are uniquely determined by 
\begin{equation}
\phi: {^\vee M}\lra M^\vee,  \quad \quad y \langle \phi(g),x\rangle_A=\langle y, g\rangle_B x,
\end{equation}
\begin{equation}
 \psi: M^\vee\lra M^\vee, \quad \quad \langle x, \psi(f)\rangle_By=x\langle f,y \rangle_A.
\end{equation}
for any $x,y\in M$.

Now, given any $x,y\in M$, $g\in {^\vee M}$ and $f\in M^\vee$, we have
\[
\langle x, \psi\circ \phi(g) \rangle_B y= x \langle \phi(g),y \rangle_A =\langle x, g\rangle_B y,
\]
and likewise
\[
y \langle  \phi\circ \psi(f), x \rangle_A =  \langle y, \psi(f) \rangle_B x =y \langle f,x\rangle_A .
\]
By the faithfulness of the actions, we obtain
\[
\langle x,\psi\circ\phi (g)\rangle_B =\langle x,g\rangle_B \quad \quad
\langle  \phi\circ \psi(f), x \rangle_A=\langle  f, x \rangle_A.
\]
Thus we have equalities $\psi \circ \phi (g)=g$ and $\phi\circ \psi (f)=f$. It follows that $\psi$ and $\phi$ are inverse of each other, and the lemma follows.
\end{proof}

In what follows we will specialize to the case when $A$ is a Frobenius algebra. Recall that a \emph{Frobenius algebra} is a finite-dimensional algebra equipped with a non-degenerate trace map $\epsilon : A\lra \Bbbk$.
The (graded) vector space dual $A^*=\HOM_\Bbbk(A,\Bbbk)$ is naturally an $(A,A)$-bimodule via the action
\begin{equation}
(a\cdot f\cdot b)(c):=f(bca),\quad f\in A^*,\quad a,b,c\in A.
\end{equation} 
The trace map induces an isomorphism $A\lra A^*$, $a\mapsto a\cdot \epsilon$, as left $A$-modules. The \emph{Nakayama endomorphism} $\alpha: A\lra A$ is characterized by
\begin{equation}
\epsilon(ab)=\epsilon(\alpha(b)a),
\end{equation}
for any $a,b\in A$. Using the Nakayama automorphism, we define a twisted bimodule structure on $A^{\beta}$ by keeping the left regular module structure on $A$, while twisting the right regular $A$-module structure through $\beta=\alpha^{-1}$. Then the isomorphism ${_AA}\cong {_AA^*}$ can be strengthened to an $(A,A)$-bimodule isomorphism between $A^{\beta}$ and $A^*$:
\[
A^{\beta} \cong A^*, \quad  a\mapsto a\cdot \epsilon.
\]
This is true because, for any $a,b,c\in A$, we have
\[
a\cdot 1\cdot b = a \beta(b) \mapsto ((a \beta(b))\cdot \epsilon)(c)=\epsilon(ca\beta(b))=\epsilon(bca)=(a\cdot\epsilon\cdot b)(c).
\]

\begin{thm}\label{thm-exactness-Soergel}
Let $A$ be a Frobenius algebra, $M_A$ be a faithful $A$ module and $B=\END_A(M_A)$. Then the Soergel functor $\mc{V}$ is fully-faithful on projective $B$-modules.
\end{thm}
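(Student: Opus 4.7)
The plan is to reduce the claim to a single algebra identification, then combine the double-centralizer property with Frobenius duality.

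First, since $\mc{V}$ is additive and every projective $B$-module is a direct summand of some free module $B^{\oplus n}$, biadditivity of $\Hom$ reduces fully-faithfulness on all projectives to the single statement that the natural map
\[
\END_B({}_BB) \lra \END_A\bigl(\mc{V}({}_BB)\bigr) = \END_A({^\vee M}), \qquad \phi \mapsto \mc{V}(\phi),
\]
is an isomorphism of $\Bbbk$-algebras. Under the identification $\END_B({}_BB) \cong B^{\op}$ and $\mc{V}({}_BB) = \HOM_B({}_BM, {}_BB) = {^\vee M}$, this becomes the claim that the natural map $B^{\op} \to \END_A({^\vee M})$ induced by the right $B$-action on ${^\vee M}$ (which commutes with the left $A$-action) is an isomorphism.

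Next, I would invoke Lemma \ref{lem-two-duals-isomorphic} --- whose double-centralizer hypothesis is supplied by Theorem \ref{thm-double-centralizer}, since Frobenius algebras are self-injective --- to identify ${^\vee M} \cong M^\vee := \HOM_A(M_A, A_A)$ as $(A, B)$-bimodules. The Frobenius structure now enters: the non-degenerate trace $\epsilon$ furnishes a right-$A$-module isomorphism $A \cong A^*$, and combining this with tensor-hom adjunction gives an $(A, B)$-bimodule isomorphism
\[
M^\vee \;\cong\; \HOM_A(M_A, A^*_A) \;\cong\; \HOM_\Bbbk(M, \Bbbk) = M^*,
\]
where on $M^*$ the right $B$-action is induced from the left $B$-action on $M$, and the left $A$-action is the one dual to the right $A$-action on $M$, pulled back along the Nakayama automorphism $\alpha$. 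Since $\alpha$ is an algebra automorphism of $A$, this twist does not affect the $\Bbbk$-algebra $\END_A(-)$, so $\END_A({^\vee M}) \cong \END_A(M^*)$ compatibly with right $B$-actions.

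Finally, the contravariant $\Bbbk$-dual $(-)^* = \HOM_\Bbbk(-, \Bbbk)$ gives a duality between finite-dimensional right and left $A$-modules, and hence an algebra anti-isomorphism $\END_A(M) \to \END_A(M^*)$, $\phi \mapsto \phi^*$. A direct computation shows that left multiplication $L_b$ on $M$ is sent to right multiplication $R_b$ on $M^*$ under this anti-isomorphism. Combined with the double-centralizer isomorphism $B \cong \END_A(M)$, $b \mapsto L_b$, this yields the desired isomorphism $B^{\op} \cong \END_A(M^*)$, $b \mapsto R_b$, which matches the natural map coming from the right $B$-action. The principal technical obstacle is bookkeeping: one must carefully verify that after the entire chain of identifications the resulting map is indeed the natural one induced by the $(A, B)$-bimodule structure on ${^\vee M}$, rather than some abstractly isomorphic but different algebra map --- only then does one genuinely obtain fully-faithfulness of $\mc{V}$ rather than an unrelated isomorphism of endomorphism algebras.
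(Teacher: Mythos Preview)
Your proposal is correct and follows essentially the same route as the paper: reduce to $B$, invoke Lemma~\ref{lem-two-duals-isomorphic} to identify ${}^\vee M \cong M^\vee$, use the Frobenius isomorphism $A \cong A^*$ to pass to $M^*$ with a Nakayama twist, and observe the twist is harmless for $\END_A$. The only difference is cosmetic: for the final step the paper computes $\END_A(M^*) \cong B$ via a chain of tensor--hom adjunctions $\HOM_A(M^*,M^*) \cong \HOM_\Bbbk(M \otimes_A M^*,\Bbbk) \cong \HOM_A(M_A,M_A)$, whereas you phrase it as the anti-isomorphism $\END_A(M) \to \END_A(M^*)$ coming from $\Bbbk$-duality --- and you are more explicit than the paper about the naturality bookkeeping needed to conclude genuine full-faithfulness rather than an abstract isomorphism.
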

\begin{proof}
It suffices to prove the result for the left regular projective module $B$, since each indecomposable $B$-module is a direct summand of $B$.

By the previous lemma and the argument in the proof of Lemma \ref{lem-proj-inj}, we have an identification of $(A,B)$-bimodules
\[
\mc{V}(B)=\HOM_B({_BM},B)\cong \HOM_A(M_A,A) \cong (M\otimes_A A^*)^*.
\]
Since $A$ is Frobenius, we have, by the above discussion, that $A^*\cong A^{\beta}$,
where $\beta=\alpha^{-1}$ is the inverse Nakayama automorphism. 
Therefore, the isomorphism continues
\[
\mc{V}(B)\cong (M^{\beta})^*={^{\beta}M^*},
\]
where $^\beta M^*$ indicates that the left $A$-module structure on $M^*$ is twisted by the inverse Nakayama automorphism $\beta$. 

Notice that twisting the entire $A$-module category by an automorphism of $A$ induces an automorphism of the $A$-module category, and thus it preserves $\HOM$-spaces:
\begin{align*}
\HOM_A({^\beta M_1},{^\beta M_2}) & \cong \HOM_A({^\beta A}\otimes_A M_1,{^\beta M_2})\cong \HOM_A(M_1,\HOM_A({^\beta A},{^\beta M_2}))\\
& \cong \HOM_A(M_1,M_2).
\end{align*}
Using this equality, we have isomorphisms
\begin{align*}
\HOM_A({^{\beta} M^*},{{^{\beta}} M^*})& \cong \HOM_A(M^*,M^*)
\cong \HOM_A(M^*,\HOM_\Bbbk(M,\Bbbk))\\
& \cong \HOM_\Bbbk(M\otimes_A M^*,\Bbbk)\cong \HOM_A(M_A,M_A)\cong B,
\end{align*}
where the tensor-hom adjunctions are repeatedly used. The theorem now follows.
\end{proof}

\subsection{Extension of categorical actions II}
\label{subsec-ext-cat-act-ii}
Let us consider, back in the setting of Theorem \ref{thm-action-extension}, the following question: under what conditions on the functor $\mc{E}_{ij}$ does the induced functor
\[
\mc{E}_{ij}^\prime: B_j\dmod\stackrel{\mc{V}_j}{\lra}A_j\dmod \stackrel{\mc{E}_{ij}}{\lra} A_i\dmod \stackrel{\mc{J}_i}{\lra} B_i\dmod
\]
descend to a map of Grothendieck groups of finitely-generated projective modules?

One immediate problem arises from the fact that the functor $\mc{J}_i$ is not always exact, and therefore the composition $\mc{E}_{ij}^\prime$ will usually not be exact. 
To avoid this problem, we will, in this subsection, replace the functor $\mc{J}_i$ by a better-behaved adjoint to the Soergel functor $\mc{V}_i$.

We start by observing that, in the setting of Section \ref{subsec-double-centralizer}, if $M$ is a $(B,A)$-bimodule that is projective as a $B$-module, then we have two incarnations of the Soergel functor:
\begin{equation}
\HOM_B(M,-)\cong {^\vee M}\otimes_B (-): B\dmod \lra A\dmod,
\end{equation}
provided that $M$ is finitely-generated. Here ${^\vee M}:= \HOM_B({_BM},B)$ is an $(A,B)$-bimodule that is $B$-projective. This allows us to consider the right adjoint of $\mc{V}:B\dmod\lra A\dmod$, which is given by
\begin{equation}
\mc{I}: A\dmod\lra B\dmod, \quad X\mapsto \HOM_A({^\vee M}, X).
\end{equation}

\begin{lem}\label{lem-double-centralizer-dual-mod}
Let $A$ be a self-injective algebra, $M_A$ be a finite-dimensional faithful right $A$-module, and set $B=\END_A(M_A)$. Then $(A,B)$ satisfies the double centralizer property on the left dual module ${^\vee M}=\HOM_B({_B M},B)$.
\end{lem}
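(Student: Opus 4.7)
The plan is to reduce the statement to Theorem \ref{thm-double-centralizer} applied on the opposite side: I would view ${^\vee M}$ as a faithful left $A$-module, apply the left-module analogue of that theorem, and then identify the resulting centralizer algebra with $B$. First I would record what is already available. Theorem \ref{thm-double-centralizer} gives the double centralizer property on the $(B,A)$-bimodule $M$, so by Lemma \ref{lem-two-duals-isomorphic} there is an isomorphism of $(A,B)$-bimodules ${^\vee M} \cong M^\vee := \HOM_A(M_A, A_A)$, which I would freely use. Next, Lemma \ref{lem-faithful-A-mod} gives that $A$ is a direct summand of $M^{\oplus r}$ as a right $A$-module for some $r$; applying $\HOM_A(-,A)$ then shows that $A \cong A^\vee$ is a summand of $(M^\vee)^{\oplus r}$ as a left $A$-module, so ${^\vee M}$ is a faithful finitely generated left $A$-module.

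Since $A$ is Artinian self-injective, it is also self-injective on the left, and Theorem \ref{thm-double-centralizer} has an immediate left-module analogue obtained by applying the original to $A^{op}$. Applied to the left $A$-module ${^\vee M}$ with $B' := \END_A({_A{^\vee M}})$, this yields $A = \END_{B'}({^\vee M}_{B'})$, which is one half of the sought double centralizer property on ${^\vee M}$. It remains to identify the natural algebra homomorphism $\rho \colon B \to B'$ given by the right $B$-action on ${^\vee M}$. Injectivity of $\rho$ is a separation argument: if $b \in B$ acts trivially on ${^\vee M} \cong M^\vee$, then $f(bm) = 0$ for every $f \in M^\vee$ and every $m \in M$; since $A$ is self-injective, every finitely generated right $A$-module embeds into some $A^{\oplus n}$, so the functionals in $M^\vee$ separate points of $M$, forcing $b = 0$.

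The main obstacle will be the surjectivity of $\rho$. My plan is to invoke the reflexivity isomorphism $M \cong (M^\vee)^\vee$, which holds for finitely generated modules over a self-injective Artinian algebra (by comparing finite projective presentations of $M$ and of $M^{\vee\vee}$ under the exact contravariant functor $\HOM_A(-,A)$ and applying the five lemma). Granting reflexivity, any $\phi \in B'$ induces a right $A$-linear map $\Psi \colon M \to (M^\vee)^\vee$ defined by $m \mapsto \bigl(f \mapsto \phi(f)(m)\bigr)$; pulling back through the reflexivity isomorphism produces an endomorphism $b \in \END_A(M_A) = B$, and a direct unwrapping shows $\rho(b) = \phi$. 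The delicate point is verifying the reflexivity isomorphism for non-projective $M$ and carefully tracking the left/right actions so that the natural maps line up without opposite-algebra discrepancies.
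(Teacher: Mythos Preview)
Your approach is correct but follows a genuinely different path from the paper. You reduce to the left-module analogue of Theorem~\ref{thm-double-centralizer} applied to ${^\vee M}\cong M^\vee$ as a faithful left $A$-module, and then identify $B':=\END_A({_A{^\vee M}})$ with $B$ via the reflexivity isomorphism $M\cong(M^\vee)^\vee$ over the quasi-Frobenius ring $A$. This works: the reflexivity argument you sketch goes through (dualizing the short exact sequences in the presentation twice, using that $\HOM_A(-,A)$ is exact because $A$ is injective, and invoking the five lemma with $P_i\cong P_i^{\vee\vee}$), and the opposite-algebra bookkeeping you flag as ``delicate'' is indeed resolvable.

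The paper instead proceeds by direct computation: it exploits Lemma~\ref{lem-proj-inj} (so ${_BM}$ is finitely generated projective, hence ${^\vee M}$ is $B$-projective and $\HOM_B({^\vee M},B)\cong M$) and then computes $\END_A({^\vee M})\cong B$ and $\END_B({^\vee M})\cong A$ in a few lines of tensor--hom adjunctions, without ever invoking reflexivity or the opposite-side version of Theorem~\ref{thm-double-centralizer}. The paper's route is shorter and avoids the left/right bookkeeping entirely; your route is more conceptual in that it literally reuses the already-proved centralizer theorem, at the cost of importing reflexivity for non-projective modules. Both are valid.
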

\begin{proof}
By Lemma \ref{lem-proj-inj}, ${_B M}$ is projective, and thus
\[
\HOM_B({^\vee M},B)\cong M.
\]
We first compute
\begin{align*}
\HOM_A({^\vee M}, {^\vee M}) & = \HOM_A({^\vee M}, \HOM_B(M,B))\cong \HOM_B(M\otimes_A{^\vee M},B)\\
&\cong \HOM_A(M_A,\HOM_B({^\vee M}, B))
\cong \HOM_A(M,M)\cong B.
\end{align*}
On the other hand, since both $M$ and ${^\vee M}$ are finitely-generated $B$-projective, we have
\begin{align*}
\HOM_B({^\vee M},{^\vee M}) & ={^\vee M}\otimes_B\HOM_B({^\vee M}, B)  \cong  {^\vee M}\otimes_B M \\ & \cong \HOM_B(M,B)\otimes_B M \cong \HOM_B(M,M)\cong A.
\end{align*}
The lemma follows.
\end{proof}

\begin{lem}\label{lem-comp-identity-II}
Let $A$ be a self-injective algebra, and $M_A$ be a finite-dimensional faithful right $A$-module. Then the composition of functors
\[
\mc{V}\circ \mc{I}: A\dmod\lra A\dmod,\quad \quad X\mapsto \HOM_B(M,\HOM_A({^\vee M}, X))
\]
is equivalent to the identity functor on $A\dmod$.
\end{lem}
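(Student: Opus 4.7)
My plan is to reduce everything to the identification $\HOM_A(A,X)\cong X$ via the tensor-hom adjunction of Lemma \ref{lem-adjunction-tensor-hom}, mimicking the strategy of Lemma \ref{lem-compo-identity-I} but now using $^\vee M\otimes_B(-)$ in place of $\HOM_B(M,-)$. Concretely, for any $X\in A\dmod$ I would first apply the adjunction to get a natural isomorphism
\[
\HOM_B\bigl(M,\HOM_A({^\vee M},X)\bigr)\cong \HOM_A\bigl({^\vee M}\otimes_B M,\,X\bigr)
\]
of left $A$-modules. The problem is thereby reduced to showing that ${^\vee M}\otimes_B M\cong A$ as $(A,A)$-bimodules, via the canonical evaluation map.

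The key input is that $_BM$ is finitely generated projective: finite generation is automatic because $M$ is finite dimensional over $\Bbbk$, and projectivity is exactly Lemma \ref{lem-proj-inj} (using that $A$ is self-injective and $M_A$ is faithful). For a finitely generated projective left $B$-module $N$, the evaluation map
\[
\HOM_B(N,B)\otimes_B N \longrightarrow \HOM_B(N,N), \qquad f\otimes n \longmapsto \bigl(n'\mapsto f(n')\,n\bigr),
\]
is an isomorphism; specializing to $N={_BM}$ and noting ${^\vee M}=\HOM_B({_BM},B)$ yields an $(A,A)$-bimodule isomorphism ${^\vee M}\otimes_B M\cong \END_B({_BM})$.

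Finally, the double centralizer property established in Theorem \ref{thm-double-centralizer} gives $\END_B({_BM})\cong A$ as $(A,A)$-bimodules, so ${^\vee M}\otimes_B M\cong A$. Plugging back into the adjunction yields
\[
\mc{V}\circ\mc{I}(X)\cong \HOM_A(A,X)\cong X
\]
naturally in $X$, which is the desired equivalence with the identity functor. The only real subtlety is ensuring the isomorphism $^\vee M\otimes_B M\cong A$ is the one induced by evaluation and double centralization (and hence is $(A,A)$-bimodule natural), but this is precisely what Lemma \ref{lem-two-duals-isomorphic} and Theorem \ref{thm-double-centralizer} deliver; no further computation is required.
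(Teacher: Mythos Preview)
Your argument is correct and is a genuinely different route from the paper's. The paper proceeds in the spirit of Lemma~\ref{lem-compo-identity-I}: it chooses an injective presentation $0\to X\to A^m\to A^n$ (using self-injectivity of $A$), applies the left exact functor $\HOM_A({^\vee M},-)$, invokes Lemma~\ref{lem-double-centralizer-dual-mod} together with Lemma~\ref{lem-two-duals-isomorphic} to identify $\HOM_A({^\vee M},A)\cong \HOM_B({^\vee M},B)\cong M$, then applies the exact Soergel functor $\HOM_B(M,-)$ and concludes by the Five Lemma. By contrast, you bypass resolutions entirely: the tensor--hom adjunction collapses the composite to $\HOM_A({^\vee M}\otimes_B M,\,X)$, and the projectivity of $_BM$ (Lemma~\ref{lem-proj-inj}) plus the double centralizer (Theorem~\ref{thm-double-centralizer}) give ${^\vee M}\otimes_B M\cong \END_B({_BM})\cong A$ as $(A,A)$-bimodules. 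Your approach is shorter and more conceptual; the paper's approach has the virtue of parallelling the earlier Lemma~\ref{lem-compo-identity-I} and making the exactness properties explicit. One small remark: the citation of Lemma~\ref{lem-two-duals-isomorphic} in your last sentence is not actually needed for your argument --- the two ingredients you really use are Lemma~\ref{lem-proj-inj} and Theorem~\ref{thm-double-centralizer}, and the identification ${^\vee M}\otimes_B M\cong A$ already appears (for exactly this purpose) inside the proof of Lemma~\ref{lem-double-centralizer-dual-mod}.
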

\begin{proof}
Choose an injective presentation of $X$ as a left $A$-module:
\[
0\lra X\lra A^m\lra A^n,
\]
where $m,n\in \N\cup \{\infty\}$. Since $\HOM_A({^\vee M},-)$ is left exact, we have that
\[
0\lra \HOM_A({^\vee M}, X)\lra  \HOM_A({^\vee M} , A)^m \lra  \HOM_A({^\vee M}, A)^n
\]
is exact. By the previous lemma, since ${^\vee M}$ satisfies the double centralizer property, we have that the left and right duals of $^\vee M$ coincide (Lemma \ref{lem-two-duals-isomorphic}):
\[
\HOM_A({^\vee M}, A)\cong \HOM_B({^\vee M}, B).
\]
Thus the above sequence becomes 
\[
0\lra  \HOM_A({^\vee M}, X) \lra M^m\lra M^n.
\]
Applying the (exact) Soergel functor to the sequence gives us
\[
0\lra  \HOM_B({ M}, \HOM_A({^\vee M}, X))\lra A^m\lra A^n.
\] 
The result follows from the Five Lemma.
\end{proof}

We can now state a variation of Theorem \ref{thm-action-extension}. Let $A_i$, $i\in I$, be a family of self-injective algebras, and $M_i$, one for each $i\in I$, be faithful right modules over the respective $A_i$'s. Suppose there are also functors between $A_i$-module categories given by bimodules $_{A_i}{E_{ij}}_{A_j}$ ($i,j\in I$)
\[
\mc{E}_{ij}:A_j\dmod\lra A_i\dmod,\quad X\mapsto E_{ij} \otimes_{A_j} X.
\]
We have a diagram:
\begin{equation}
\begin{gathered}
\xymatrix{
A_i\dmod \ar[r]^{\mc{E}_{ji}}\ar@<-0.7ex>[d]_{\mc{I}_i} & A_j\dmod \ar[r]^{\mc{E}_{kj}}\ar@<-0.7ex>[d]_{\mc{I}_j} & A_k\dmod\ar@<-0.7ex>[d]_{\mc{I}_k}\\
B_i\dmod \ar@<-0.7ex>[u]_{\mc{V}_i} & B_j\dmod \ar@<-0.7ex>[u]_{\mc{V}_j} & B_k\dmod \ar@<-0.7ex>[u]_{\mc{V}_k}
}
\end{gathered} \ .
\end{equation}
Composing functors gives rise to
\[
\mc{E}_{ij}^{!}:= \mc{I}_i\circ \mc{E}_{ij}\circ \mc{V}_j: B_j\dmod\lra B_i\dmod.
\]
Lemma \ref{lem-comp-identity-II} shows that the compositions are equal:
\begin{equation}
\mc{E}_{kj}^!\circ \mc{E}_{ji}^{!}=\left( \mc{E}_{kj}\circ \mc{E}_{ji}\right)^!:=\mc{I}_k \circ \mc{E}_{kj}\circ \mc{E}_{ji} \circ \mc{V}_i: B_i\dmod \lra B_k\dmod.
\end{equation}

The same reasoning as for Theorem \ref{thm-action-extension} proves the following.

\begin{cor}\label{cor-ext-cat-actions}
Under the above conditions, the categorical action on $\oplus_i A_i\dmod$ by the functors $\mc{E}_{ij}$'s extends to a categorical action on $\oplus_i B_i\dmod$ by the functors $\mc{E}_{ij}^!$'s. \hfill $\square$
\end{cor}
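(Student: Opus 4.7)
The plan is to mirror the argument that yields Theorem \ref{thm-action-extension}, with the pair $(\mc{V}_i, \mc{J}_i)$ replaced by the pair $(\mc{V}_i, \mc{I}_i)$ and with the role played by Lemma \ref{lem-compo-identity-I} taken over by Lemma \ref{lem-comp-identity-II}. By definition, ``extending the action'' amounts to producing functorial natural isomorphisms between the two a priori different functors
\[
\mc{E}_{kj}^! \circ \mc{E}_{ji}^! = \mc{I}_k \circ \mc{E}_{kj} \circ \mc{V}_j \circ \mc{I}_j \circ \mc{E}_{ji} \circ \mc{V}_i
\]
and
\[
(\mc{E}_{kj} \circ \mc{E}_{ji})^! = \mc{I}_k \circ \mc{E}_{kj} \circ \mc{E}_{ji} \circ \mc{V}_i,
\]
and then verifying that all 2-morphisms (bimodule maps) among the $\mc{E}_{ij}$'s descend compatibly to 2-morphisms among the $\mc{E}_{ij}^!$'s.

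First I would invoke Lemma \ref{lem-comp-identity-II}, which furnishes, for each $j \in I$, a natural isomorphism $\mc{V}_j \circ \mc{I}_j \cong \mc{I}d_{A_j\dmod}$. Horizontal composition (whiskering) of this isomorphism on the left by $\mc{I}_k \circ \mc{E}_{kj}$ and on the right by $\mc{E}_{ji} \circ \mc{V}_i$ immediately produces the required natural isomorphism between the two displayed composites above. The construction is entirely functorial, so any bimodule map $x\colon E_{kj} \otimes_{A_j} E_{ji} \to E'_{kj} \otimes_{A_j} E'_{ji}$ presenting a natural transformation of $\mc{E}_{kj} \circ \mc{E}_{ji}$ induces, via $\mc{I}_k \circ (-) \circ \mc{V}_i$ together with the above comparison isomorphism, a corresponding natural transformation of $\mc{E}_{kj}^! \circ \mc{E}_{ji}^!$. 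This is the mechanism through which the 2-categorical data of the original action transfers to the $\mc{E}_{ij}^!$-action on $\oplus_i B_i\dmod$.

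The main technical point, which is where one must be careful, is coherence. Given three composable functors $\mc{E}_{lk} \circ \mc{E}_{kj} \circ \mc{E}_{ji}$, the various ways of collapsing inner occurrences of $\mc{V}_k \circ \mc{I}_k$ and $\mc{V}_j \circ \mc{I}_j$ must yield the same iterated natural isomorphism; this holds automatically because the counits $\mc{V}_j \circ \mc{I}_j \Rightarrow \mc{I}d$ are natural transformations between functors of additive categories, and disjoint whiskerings of natural transformations always commute by the interchange law. No relation among the bimodules $E_{ij}$ is destroyed; each relation is merely conjugated by the pair $(\mc{I}, \mc{V})$. Consequently I do not anticipate any genuine obstacle: all the substantive work has already been done in Lemmas \ref{lem-double-centralizer-dual-mod} and \ref{lem-comp-identity-II}, with the remainder being formal 2-categorical bookkeeping entirely parallel to the reasoning that established Theorem \ref{thm-action-extension}.
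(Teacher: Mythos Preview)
Your proposal is correct and follows essentially the same approach as the paper: the paper's proof simply notes that Lemma \ref{lem-comp-identity-II} yields $\mc{V}_j\circ\mc{I}_j\cong\mc{I}d_{A_j\dmod}$, hence $\mc{E}_{kj}^!\circ\mc{E}_{ji}^!=(\mc{E}_{kj}\circ\mc{E}_{ji})^!$, and then invokes the same reasoning as for Theorem \ref{thm-action-extension}. Your write-up spells out the whiskering and coherence more explicitly, but the argument is the same.
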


The functor $\mc{E}_{ij}^!$ behaves better on the class of projective $B_i$-modules, as we will see. In turn, the induced action on the Grothendieck groups of projective modules will be interesting in what follows. 

Recall that the additive envelope of a module $N$ in $A\dmod$, denoted $(N)_A$, consists of direct summands of finite direct sums of $N$. The additive envelope is equivalent to finitely generated projective modules over $\END_A(N)$.

Let us consider a simplified situation as follows, which will be applied later. Suppose also that the family of self-injective algebras $A_i$ are \emph{symmetric} Frobenius algebras, i.e., the trace pairing $\epsilon:A_i\lra \Bbbk$ satisfies
\[
\epsilon(ab)=\epsilon(ba)
\]
for all $a,b\in A_i$. The Nakayama automorphism and its inverse are both equal to the identity map on $A$, and $A\cong A^*$ as an $(A,A)$-bimodule.

For a symmetric Frobenius algebra $A$, we have
\[
\HOM_A(M_A,A)\cong (M\otimes_A A^*)^* \cong M^*.
\]
Thus if $M_A$ is a faithful $A$-module, we have an identification of $(A,B)$-bimodules (Lemma \ref{lem-two-duals-isomorphic})
\[
\HOM_B({_BM},B)\cong\HOM_A(M_A,A)\cong M^*.
\]

For the next theorem, assume we are in the setting of Corollary \ref{cor-ext-cat-actions}, and futhermore that $A_i$'s are symmetric Frobenius. The non-symmetric cases can be treated similarly with appropriate twists by the Nakayama automorphisms inserted.

\begin{thm}\label{thm-extension-symmetric-Frob}
If the functors $\mc{E}_{ij}$'s ($i,j\in I$) preserve the additive envelopes of the left $A_i$-modules $M_i^*$, then the extended functors $\mc{E}_{ij}^!$ preserve finitely-generated projective modules in $\oplus_{j\in }B_j\dmod$.
\end{thm}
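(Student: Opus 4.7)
The plan is to unwind the composition $\mc{E}_{ij}^{!}=\mc{I}_i\circ\mc{E}_{ij}\circ\mc{V}_j$ applied to a finitely-generated projective $B_j$-module, using the symmetric Frobenius hypothesis to identify ${^\vee M_i}$ with $M_i^*$ on both ends. Since every finitely-generated projective $B_j$-module is a direct summand of $B_j^{\oplus n}$ for some $n$, and each of the three functors in the composition commutes with finite direct sums and preserves direct summands (the first two automatically, the third because $M_i$, hence ${^\vee M_i}\cong M_i^*$, is finitely generated), it suffices to show that $\mc{E}_{ij}^{!}(B_j)$ is a finitely-generated projective $B_i$-module.

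The first step is to compute $\mc{V}_j(B_j)=\HOM_{B_j}({_{B_j}M_j},B_j)={^\vee M_j}$. Because $A_j$ is symmetric Frobenius and $M_j$ is a faithful right $A_j$-module, the argument given in the paragraph preceding Theorem \ref{thm-exactness-Soergel} together with Lemma \ref{lem-two-duals-isomorphic} yields an isomorphism of left $A_j$-modules ${^\vee M_j}\cong \HOM_{A_j}(M_{j,A_j},A_j)\cong M_j^{*}$. Thus $\mc{V}_j(B_j)\cong M_j^{*}$ in $A_j\dmod$. By hypothesis the functor $\mc{E}_{ij}$ preserves the additive envelope $(M_j^{*})_{A_j}$ (applied to the source) in the sense that it sends it into the additive envelope $(M_i^{*})_{A_i}$; hence $\mc{E}_{ij}(M_j^{*})$ is a direct summand of $(M_i^{*})^{\oplus N}$ for some $N$.

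The second step is to apply $\mc{I}_i$. Using ${^\vee M_i}\cong M_i^{*}$ again, we get
\[
\mc{I}_i(M_i^{*})=\HOM_{A_i}({^\vee M_i},M_i^{*})\cong \HOM_{A_i}(M_i^{*},M_i^{*}).
\]
Now invoke Lemma \ref{lem-double-centralizer-dual-mod}, which asserts that $A_i$ and $B_i$ satisfy the double-centralizer property on ${^\vee M_i}$; in particular $\HOM_{A_i}({^\vee M_i},{^\vee M_i})\cong B_i$, so $\mc{I}_i(M_i^{*})\cong B_i$ as a left $B_i$-module. Since $M_i^{*}$ is finitely generated over $A_i$, the functor $\mc{I}_i=\HOM_{A_i}({^\vee M_i},-)$ commutes with finite direct sums and of course preserves direct summands, so applying it to a summand of $(M_i^{*})^{\oplus N}$ produces a summand of $B_i^{\oplus N}$. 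Concatenating the three steps, $\mc{E}_{ij}^{!}(B_j)$ is a finitely-generated projective $B_i$-module, and the theorem follows.

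The only nontrivial ingredient is the identification $\mc{I}_i(M_i^{*})\cong B_i$, which rests on the double-centralizer statement of Lemma \ref{lem-double-centralizer-dual-mod} together with the symmetric Frobenius identification of left and right duals; the rest is bookkeeping about how the three functors interact with finite direct sums and summands. If one instead worked with a non-symmetric Frobenius $A_i$, the same argument would go through after inserting twists by the Nakayama automorphism as indicated just before the theorem statement.
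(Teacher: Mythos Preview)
Your proof is correct and follows essentially the same approach as the paper: reduce to $\mc{E}_{ij}^{!}(B_j)$, identify $\mc{V}_j(B_j)\cong M_j^{*}$ via the symmetric Frobenius identification of ${^\vee M_j}$ with $M_j^{*}$, apply the hypothesis on $\mc{E}_{ij}$, and then use Lemma~\ref{lem-double-centralizer-dual-mod} to conclude $\mc{I}_i(M_i^{*})\cong B_i$. The paper's proof is terser, but the logical structure and key ingredients are identical.
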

\begin{proof}
It suffices to show that $\mc{E}_{ij}^!(B_j)$ is a finitely-generated projective $B_i$-module. As in the proof of Theorem \ref{thm-exactness-Soergel}, we have
\[
\mc{V}_j(B_j)=\HOM_{B_j}({_{B_j}M_j},B_j)\cong B_j\otimes_{B_j}{^\vee M_j}\cong M_j^*.
\]
Since $\mc{E}_{ij}$ sends $M_j^*$ inside the additive envelope of $M_i^*$, it now suffices to show that $\mc{I}_i$ takes $M_i^*$ to a finitely-generated projective $B_i$-module. But this is now automatic, since $M_i^*\cong {^\vee M_i}$ and
\[
\mc{I}_i(M_i^*)\cong \HOM_{A_i}({^\vee M_i}, {^\vee M_i})\cong B_i
\]
via Lemma \ref{lem-double-centralizer-dual-mod}. The theorem follows.
\end{proof}

\subsection{A \texorpdfstring{$p$}{p}-DG context}
In this subsection, we will establish the $p$-DG analogues of the results in the previous two subsections.

Let $(A,\dif)$ be a $p$-DG algebra whose underlying algebra $A$ is self-injective (Frobenius). In this case we will say that $(A,\dif)$ is a \emph{self-injective (Frobenius) $p$-DG algebra}.

We start with a simple case.

\begin{lem}\label{lem-p-dg-cofibrance}
Let $A$ be a self-injective $p$-DG algebra, and $M_A$ a right $p$-DG module containing $A$ as a $p$-DG direct summand. Then $_BM$ is cofibrant as a left $p$-DG module over $B$.
\end{lem}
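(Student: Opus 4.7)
The plan is to show that $_BM$ is in fact a $p$-DG direct summand of the left regular module ${}_BB$; such a module trivially satisfies property P (take the one-step filtration $0 \subset {}_BM$) and is therefore cofibrant in the sense of Definition \ref{def-finite-cell}. This is the $p$-DG refinement of the non-graded argument in the proof of Lemma \ref{lem-proj-inj}, which already identified $M^{\oplus r}$ with $\END_A(M^{\oplus r}) \cdot e$ after a Morita reduction; here the hypothesis that $A$ is already a $p$-DG summand of $M$ lets us take $r=1$ and avoid Morita equivalence.

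First I would use the given $p$-DG decomposition $M_A \cong A \oplus M'$ to define $e \colon M \to M$ as the $A$-linear idempotent projecting $M$ onto its direct summand $A$. Because the splitting is a splitting of $p$-DG $A$-modules, the map $e$ commutes with $\dif_M$, so $e \in B = \END_A(M_A)$ is a homogeneous idempotent of degree zero satisfying $\dif_B(e) = 0$. Consequently $B = Be \oplus B(1-e)$ is a decomposition of ${}_BB$ into a direct sum of two $p$-DG submodules.

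Next I would establish an isomorphism of left $p$-DG $B$-modules
\[
\psi \colon Be \longrightarrow {}_BM, \qquad be \longmapsto (be)(1_A),
\]
where we view $be \in B$ as an $A$-linear endomorphism of $M$ and $1_A \in A \subset M$. The map is $B$-linear since the $B$-action on $M$ is by evaluation, and bijective because restriction to $eM = A$ identifies $\HOM_A(M,M)\,e = \HOM_A(A,M)$ with $M$ via evaluation at $1_A$. Compatibility with differentials follows from $\dif_A(1_A) = 0$ (a consequence of the Leibniz rule applied to $1_A = 1_A \cdot 1_A$), giving $\psi(\dif(be)) = \dif(be)(1_A) = \dif_M(be(1_A)) = \dif_M(\psi(be))$.

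Combining the two steps, ${}_BM \cong Be$ is a $p$-DG direct summand of ${}_BB$, hence satisfies property P and is cofibrant. The only possible subtlety is the compatibility of $\psi$ with $\dif$, but as noted this reduces to $\dif_A(1_A) = 0$, so no real obstacle arises.
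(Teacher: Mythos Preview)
Your proof is correct and follows essentially the same route as the paper: both arguments identify ${}_BM$ with $Be$ via the evaluation isomorphism $\HOM_A(A,M)\cong M$, where $e\in B=\END_A(M)$ is the projection onto the $p$-DG summand $A\subset M$, and observe that $\dif_B(e)=0$ forces $Be$ to be a $p$-DG direct summand of ${}_BB$. Your explicit check that $\psi$ intertwines the differentials (reducing it to $\dif_A(1_A)=0$) makes rigorous what the paper leaves implicit in the chain of isomorphisms $M_A\cong \HOM_A(eM,M)\cong \END_A(M)e$.
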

\begin{proof}
Since $M$ contains $A$ as a summand, the action of $A$ on $M$ is faithful, so that the previous results apply.

As in the proof of Lemma \ref{lem-proj-inj}, we have
\[
M_A \cong \HOM_A(A,M)\cong \HOM_A(eM,M)\cong \END_A(M)e,
\]
where $e\in \END_A(M)$ is a $p$-DG idempotent such that $A\cong eM$ and $\dif(e)=0$.  Thus $M$ is a $p$-DG summand of the $p$-DG algebra $B=\END_A(M)$, and the result follows.
\end{proof}

Our next goal is to develop a $p$-DG analogue of Theorem \ref{thm-extension-symmetric-Frob}. To do this, let us consider an analogue of the additive envelope of a module in this situation.

\begin{defn}\label{def-filtered-envelope}
Let $A$ be a $p$-DG algebra, and $X$ be a left (or right) $p$-DG module. The \emph{filtered $p$-DG envelope of $X$} consists of $p$-DG direct summands of left (or right) $p$-DG modules which have a finite filtration that splits when forgetting the differential, whose subquotients are isomorphic to grading shifts of $p$-DG summands of $X$ as $p$-DG modules.
\end{defn}

\begin{lem}\label{lem-filtered-to-cofibrant}
Let $X$ be a left $p$-DG module over $A$, and $B=\END_A(X)$ be the $p$-DG endomorphism algebra. Then 
\[
\HOM_A(X,-):(A,\dif)\dmod\lra (B,\dif)\dmod
\]
sends the $p$-DG filtered envelope of $X$ to the $p$-DG filtered envelope of $B$. In particular, the image of the functor consists of cofibrant $p$-DG modules over $B$.
\end{lem}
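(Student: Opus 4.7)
The plan is to proceed by induction on the length of the $p$-DG filtration witnessing a module's membership in the filtered envelope of $X$. Since both the filtered envelope construction and the functor $\HOM_A(X,-)$ commute with $p$-DG direct summands and grading shifts, I would reduce immediately to the case of a single $Y$ carrying a finite $p$-DG filtration $0 = Y_0 \subset Y_1 \subset \cdots \subset Y_n = Y$ with $Y_k/Y_{k-1} \cong q^{i_k} X$ as $p$-DG modules. The base case $n=1$ is $\HOM_A(X, q^{i_1} X) \cong q^{i_1} B$, which tautologically lies in the filtered envelope of $B$.

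For the inductive step, set $F_k := \HOM_A(X, Y_k)$, viewed as a $p$-DG $B$-submodule of $\HOM_A(X, Y_n)$. Left exactness of $\HOM_A(X,-)$ on the abelian category of $p$-DG modules produces inclusions $F_{k-1} \hookrightarrow F_k$ together with an embedding of subquotients $F_k/F_{k-1} \hookrightarrow \HOM_A(X, q^{i_k} X) \cong q^{i_k} B$. By the inductive hypothesis $F_{n-1}$ already lies in the filtered envelope of $B$, so it remains to identify $F_n/F_{n-1}$ with $q^{i_n} B$.

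This surjectivity step is the technical heart of the proof and is what I expect to be the main obstacle. I would argue that the defining short exact sequence $0 \to Y_{n-1} \to Y_n \to q^{i_n} X \to 0$ splits as a sequence of underlying graded $A$-modules, so that $Y_n \cong Y_{n-1} \oplus q^{i_n} X$ as graded modules with the $p$-differential encoding all the extension data. Applying $\HOM_A(X,-)$ to a sequence that is graded-split yields a graded-split sequence of graded $B$-modules, which forces the surjection $F_n \twoheadrightarrow q^{i_n} B$ on the level of graded modules and hence of $p$-DG modules. The difficulty lies in justifying the graded splitting rigorously: it is implicit in the way the filtered envelope is constructed (as a $p$-DG twist of a graded direct sum of shifts of $X$), but making this precise requires unpacking Definition \ref{def-filtered-envelope} carefully and may deserve a separate preliminary observation.

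Finally, for the ``in particular'' clause, the regular module $B$ is cofibrant over itself by definition, and the class of cofibrant (property P) $p$-DG $B$-modules is closed under $p$-DG extensions by shifts of $B$ and under $p$-DG direct summands. Since the filtered $p$-DG envelope of $B$ is built from precisely such iterated extensions and summands, every module in this envelope is automatically cofibrant, yielding the final assertion of the lemma.
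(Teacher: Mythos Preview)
Your inductive argument via the induced filtration $F_k = \HOM_A(X, Y_k)$ is precisely the content of the paper's one-sentence proof, which simply asserts that ``the image of $Y$ under $\HOM_A(X,-)$ is a filtered $p$-DG module whose subquotients are isomorphic to grading shifts of $B$.'' You go beyond the paper by correctly isolating the surjectivity of $F_k/F_{k-1}\hookrightarrow q^{i_k}B$ as the only nontrivial step, and you are right to be uneasy about it.

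The resolution you propose, however, does not go through: a $p$-DG filtration in the sense of Definition~\ref{def-filtered-envelope} need not split at the level of graded $A$-modules, contrary to your parenthetical claim that members of the filtered envelope are ``$p$-DG twists of a graded direct sum of shifts of $X$.'' Concretely, take $p=3$, $A=\Bbbk[y]/(y^3)$ with $\partial y=y^2$, and $X=A/(y^2)$, so that $\partial|_X=0$ and $B=\END_A(X)\cong\Bbbk[y]/(y^2)$ with $\partial|_B=0$. Let $Y$ have basis $f_1,f_2,e_1,e_2$ in degrees $0,2,2,4$, with $y$ acting by $f_1\mapsto f_2\mapsto e_2\mapsto 0$ and $e_1\mapsto e_2$, and differential $\partial f_1=-e_1$ (all other $\partial$ vanish); one checks Leibniz and $\partial^p=0$ directly. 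Then $\langle e_1,e_2\rangle\cong q^2X$ is a $p$-DG submodule with $p$-DG quotient $\cong X$, so $Y$ lies in the filtered envelope of $X$; yet as a graded $A$-module $Y\cong A\oplus A/(y)$, so the filtration does \emph{not} split over $A$, and $\HOM_A(X,Y)$ is $3$-dimensional, hence not free over the local ring $B$, hence not in the filtered envelope of $B$ (whose members are all free as $B$-modules). Thus the statement fails in this generality; both your argument and the paper's tacitly require the filtration to split as graded $A$-modules (as is the case for the fantastic filtrations actually arising in the applications), and that hypothesis should be added.
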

\begin{proof}
It is clear that, if $Y$ is a filtered $p$-DG module over $A$ whose subquotients are isomorphic to $X$, then the image of $Y$ under $\HOM_A(X,-)$ is a filtered $p$-DG module whose subquotients are isomorphic to grading shifts of $B$, and thus is a property-P module. Furthermore, the functor preserves $p$-DG summands. Hence it takes the $p$-DG filtered envelope of $X$ to the $p$-DG filtered envelope of $B$, the latter consisting of cofibrant $B$-modules.
\end{proof}

\begin{example}
We would like to emphasize that the lemma above does not require the module $X$ to be cofibrant over $A$. This will play an important role in the story later in this paper. For instance, consider $A:=\END_\Bbbk(U)$, where $U$ is a finite-dimensional $p$-complex. Then $\END_A(U)\cong \Bbbk$ as a $p$-DG algebra. It is easy to check that the above functor
\[
\HOM_A(U,-): (A,\dif)\dmod \lra (\Bbbk,\dif)\dmod
\]
sends any filtered $p$-DG module $A$ of the form $U\otimes V$, where $V$ is any $p$-complex, to the $p$-complex $V$ itself, which is property-P over the ground field. Even when $U$ is acyclic, in which case $U$ is not cofibrant over $A$ (otherwise $\HOM_A(U,U)=\Bbbk$ would compute the endomorphism space of $U$ in the derived category), the image of the $p$-DG filtered envelope consists of cofibrant $(\Bbbk,\dif)$-modules.
\end{example}

We are now ready to state the $p$-DG analogue of Theorem \ref{thm-extension-symmetric-Frob}. Consider now the following situation. Let $A_i$, $i\in I$, be a collection of symmetric Frobenius $p$-DG algebras, $M_i$ be a collection of faithful $p$-DG modules over $A_i$, and $B_i:=\END_{A_i}(M_i)$ be the endomorphism $p$-DG algebras. In this case, the tensor-hom adjunctions are compatible with $p$-differentials (Lemma \ref{lem-adjunction-tensor-hom}).

\begin{prop} \label{prop-pdg-extension}
Suppose $\mc{E}_{ij}:(A_j,\dif)\dmod\lra (A_i,\dif)\dmod$, $i,j\in I$, are $p$-DG functors given by tensoring with $p$-DG bimodules $E_{ij}$ over $(A_i,A_j)$:
$$
\mc{E}_{ij}:(A_j,\dif)\dmod\lra (A_i,\dif)\dmod,\quad N\mapsto {E_{ij}}\otimes_{A_j} N,
$$ 
and the functor $\mc{E}_{ij}$ sends the filtered $p$-DG envelope of $M_j^*$ into that of $M_i^*$ for each $i,j\in I$. Then the extended $p$-DG functors $\mc{E}_{ij}^!$ preserve compact cofibrant $p$-DG modules in $\oplus_{i\in I}(B_i,\dif)\dmod$.
\end{prop}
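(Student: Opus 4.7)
The plan is to reduce the claim to the single test object $B_j$ and then trace it through the three constituent functors in $\mc{E}_{ij}^!=\mc{I}_i\circ \mc{E}_{ij}\circ \mc{V}_j$. By Definition \ref{def-finite-cell}, every compact cofibrant $p$-DG $B_j$-module is a direct summand of a finite cell module, i.e.\ of a module admitting a finite $\dif$-stable filtration whose subquotients are $p$-DG summands of grading shifts of the regular bimodule $B_j$. Since $p$-DG functors are additive, grading preserving, and preserve $p$-DG idempotent retracts and finite filtrations, it suffices to show that $\mc{E}_{ij}^!(B_j)$ is compact cofibrant.

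For the first functor in the composition, I would compute $\mc{V}_j(B_j)=\HOM_{B_j}(M_j,B_j)$. The chain of tensor-hom adjunctions used in the proof of Theorem \ref{thm-exactness-Soergel}, combined with the symmetric Frobenius hypothesis (which makes the Nakayama automorphism trivial, so that $A_j^\beta\cong A_j$), gives an isomorphism of $p$-DG left $A_j$-modules $\mc{V}_j(B_j)\cong M_j^*$; compatibility of this and the subsequent identifications with the $p$-differential is guaranteed by Lemma \ref{lem-adjunction-tensor-hom}. By the standing hypothesis on the bimodules $E_{ij}$, the module $\mc{E}_{ij}(M_j^*)$ then lies in the filtered $p$-DG envelope of $M_i^*$.

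For the remaining functor, note that ${^\vee M_i}\cong M_i^*$ in the symmetric Frobenius setting, by Lemma \ref{lem-two-duals-isomorphic} together with $A_i^*\cong A_i$, so $\mc{I}_i=\HOM_{A_i}({^\vee M_i},-)=\HOM_{A_i}(M_i^*,-)$. Lemma \ref{lem-filtered-to-cofibrant}, applied with $A=A_i$ and $X={^\vee M_i}$, then shows that $\mc{I}_i$ sends the filtered $p$-DG envelope of $M_i^*$ into the filtered $p$-DG envelope of $\END_{A_i}({^\vee M_i})$, which is isomorphic to $B_i$ by Lemma \ref{lem-double-centralizer-dual-mod}. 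Modules in the filtered $p$-DG envelope of $B_i$ are finitely generated and property-P, hence finite cell modules, and in particular compact cofibrant; $p$-DG summands inherit both properties. Chaining the three steps yields the claim. The only real bookkeeping subtlety will be confirming that each canonical hom/tensor identification respects the $p$-differentials, but this is precisely what Lemma \ref{lem-adjunction-tensor-hom} and the definitions of Section \ref{sec-hopfo-algebra} ensure.
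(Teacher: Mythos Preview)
Your proposal is correct and follows essentially the same approach as the paper: reduce to the test object $B_j$, compute $\mc{V}_j(B_j)\cong M_j^*$ as in Theorem~\ref{thm-exactness-Soergel}, invoke the hypothesis on $\mc{E}_{ij}$, and then apply Lemma~\ref{lem-filtered-to-cofibrant} together with Lemma~\ref{lem-double-centralizer-dual-mod} to conclude that $\mc{I}_i$ lands in the filtered $p$-DG envelope of $B_i$. The paper's own proof is a one-line reference back to Theorem~\ref{thm-extension-symmetric-Frob} and Lemma~\ref{lem-filtered-to-cofibrant}; you have simply spelled out the steps and tracked the $p$-DG compatibility via Lemma~\ref{lem-adjunction-tensor-hom}.
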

\begin{proof}
The proof follows in a similar fashion as for Theorem \ref{thm-extension-symmetric-Frob}. One uses Lemma \ref{lem-filtered-to-cofibrant} to show that the composition functor
$\mc{E}_{ij}^!$ sends the cofibrant module $B_j$ to a direct summand of a property-P module in $(B_i,\dif)\dmod$.
\end{proof}

Finally, to conclude this section, let us record a special $p$-DG Morita equivalence result for later use.

\begin{prop}\label{prop-p-dg-Morita}
Let $M_1$ and $M_2$ be $p$-DG right modules over $A$, and let $M_1^\prime$ be in the filtered $p$-DG envelope of $M_1\oplus M_2$, whose associated graded has the form $M_1\oplus g(q)M_2$ for some $g(q)\in \N[q,q^{-1}]$. Set $B_1:=\END_A(M_1\oplus M_2)$ and $B_2:=\END_A( M_1^\prime\oplus M_2)$. Then there is a derived equivalence between $\mc{D}(B_1)$ and $\mc{D}(B_2)$.
\end{prop}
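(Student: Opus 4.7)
The plan is to pass through an enlarged $p$-DG endomorphism algebra and invoke the $p$-DG analog of Morita theory for corner algebras. Set $\widetilde{M} := M_1 \oplus M_2 \oplus M_1'$ and $\widetilde{B} := \END_A(\widetilde{M})$. Let $e,f \in \widetilde{B}$ be the idempotents projecting $\widetilde{M}$ onto $M_1 \oplus M_2$ and $M_1' \oplus M_2$ respectively; both are $\dif$-closed since each of $M_1, M_2, M_1'$ is a $p$-DG submodule of $\widetilde{M}$. Then $B_1 \cong e\widetilde{B}e$ and $B_2 \cong f\widetilde{B}f$ as $p$-DG algebras, and the left $p$-DG $\widetilde{B}$-modules $\widetilde{B}e$, $\widetilde{B}f$ are cofibrant direct summands of the free module $\widetilde{B}$.

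First I would invoke the $p$-DG version of Keller's Morita theorem (the analog for dg algebras, available in the hopfological framework of Section~\ref{sec-hopfo-algebra}): the induction functors
\[
\Phi_e := \widetilde{B}e \oL_{B_1} (-) \maps \mc{D}(B_1) \to \mc{D}(\widetilde{B}),
\quad \Phi_f := \widetilde{B}f \oL_{B_2} (-) \maps \mc{D}(B_2) \to \mc{D}(\widetilde{B})
\]
are fully faithful with essential images the localizing subcategories $\langle \widetilde{B}e\rangle$ and $\langle \widetilde{B}f\rangle$ of $\mc{D}(\widetilde{B})$ generated by $\widetilde{B}e$ and $\widetilde{B}f$. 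It then suffices to prove the equality $\langle \widetilde{B}e\rangle = \langle \widetilde{B}f\rangle$ inside $\mc{D}(\widetilde{B})$, which will yield the desired derived equivalence $\mc{D}(B_1) \simeq \mc{D}(B_2)$ by composing $\Phi_e$ with a quasi-inverse of $\Phi_f$.

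For the equality of localizing subcategories, note that
\[
\widetilde{B}e \cong \HOM_A(M_1,\widetilde{M}) \oplus \HOM_A(M_2,\widetilde{M}), \quad
\widetilde{B}f \cong \HOM_A(M_1',\widetilde{M}) \oplus \HOM_A(M_2,\widetilde{M})
\]
as left $p$-DG $\widetilde{B}$-modules. The summand $\HOM_A(M_2,\widetilde{M})$ is common, so the problem reduces to placing $\HOM_A(M_1,\widetilde{M})$ and $\HOM_A(M_1',\widetilde{M})$ in each other's localizing subcategory modulo $\HOM_A(M_2,\widetilde{M})$. Using the hypothesis, $M_1'$ is a $p$-DG summand of some $Y$ admitting a finite $p$-DG filtration $0 = F^0 \subset \cdots \subset F^k = Y$ with subquotients grading shifts of $M_1$ or $M_2$, containing precisely one untwisted $M_1$-subquotient and $g(q)$-many grading-shifted $M_2$-subquotients. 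Each short exact sequence $0 \to F^{i-1} \to F^i \to F^i/F^{i-1} \to 0$ of $p$-DG $A$-modules induces a distinguished triangle in $\mc{D}(\widetilde{B})$ upon applying $\RHOM_A(-,\widetilde{M})$. Iterating, the summand $\RHOM_A(M_1',\widetilde{M})$ of $\RHOM_A(Y,\widetilde{M})$ lies in the localizing subcategory generated by $\HOM_A(M_1,\widetilde{M})$ and shifts of $\HOM_A(M_2,\widetilde{M})$. Conversely, since the $M_1$-subquotient appears exactly once, rotating and iterating the same triangles in reverse expresses $\HOM_A(M_1,\widetilde{M})$ as an iterated cone of $\RHOM_A(M_1',\widetilde{M})$ and shifted copies of $\HOM_A(M_2,\widetilde{M})$. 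This gives the desired equality.

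The main obstacle will be the precise verification of the first step, namely the $p$-DG version of the Morita statement identifying $\mc{D}(e\widetilde{B}e)$ with the localizing subcategory $\langle \widetilde{B}e\rangle \subset \mc{D}(\widetilde{B})$ for a $\dif$-closed idempotent $e$; this is routine in the hopfological framework (essentially a consequence of Theorem~\ref{thm-qis-functors} together with standard tensor-hom adjunction arguments), but deserves an explicit verification since we invoke it for two distinct idempotents simultaneously. A secondary point is the contravariance of $\HOM_A(-,\widetilde{M})$: one should work with the derived $\RHOM$ (equivalently, cofibrant replacements in the first slot) to guarantee that the short exact sequences of $p$-DG $A$-modules from the filtration pass to honest distinguished triangles in $\mc{D}(\widetilde{B})$, with grading shifts dualized consistently throughout.
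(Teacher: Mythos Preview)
Your route through the enlarged algebra $\widetilde{B}=\END_A(M_1\oplus M_2\oplus M_1')$ and equality of localizing subcategories is valid and genuinely different from the paper's argument. The paper works directly with the $(B_1,B_2)$-bimodule $N=\HOM_A(M_1'\oplus M_2,\,M_1\oplus M_2)$ and its partner $N'=\HOM_A(M_1\oplus M_2,\,M_1'\oplus M_2)$. It verifies that each is cofibrant both as a left and as a right $p$-DG module over the relevant $B_i$ --- the only nonobvious piece being $\HOM_A(M_1'\oplus M_2,\,M_1)$, handled by a two-out-of-three argument using the filtration on $M_1'$ --- and then cites \cite[Proposition~8.8]{QYHopf} to conclude that derived tensoring with $N$ and $N'$ gives inverse equivalences. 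This completely avoids the Keller-type statement (that $\mc{D}(e\widetilde{B}e)$ embeds as $\langle\widetilde{B}e\rangle\subset\mc{D}(\widetilde{B})$ for a $\dif$-closed idempotent $e$) which you rightly flag as the step needing independent justification in the hopfological setting. Your approach is more structural and would adapt to broader contexts; the paper's is more elementary and stays within the tools already assembled.

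Two smaller points on your execution. First, the auxiliary $Y$ is unnecessary and slightly dangerous: the hypothesis says $M_1'$ \emph{itself} has associated graded $M_1\oplus g(q)M_2$, so one should simply take $Y=M_1'$. With a nontrivial complement $Y'$ your converse step would require $\HOM_A(Y',\widetilde{M})$ to already lie in the target localizing subcategory, which is not given. Second, your worry about passing to $\RHOM$ can be sidestepped. The $p$-DG filtration $F^\bullet$ on $M_1'$ dualizes to a $p$-DG filtration on the \emph{underived} $\HOM_A(M_1',\widetilde{M})$ as a left $\widetilde{B}$-module (by the submodules of maps vanishing on successive $F^i$), and the resulting short exact sequences in the abelian category of $p$-DG $\widetilde{B}$-modules already yield the triangles you need in $\mc{D}(\widetilde{B})$. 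This is exactly the mechanism the paper uses on the $B_1$- and $B_2$-sides, so no contravariant derived functor enters.
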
 

\begin{proof}
In the absence of the differential, the algebras $B_1$ and $B_2$ are clearly Morita equivalent to each other. The Morita equivalence is given by tensor product with the $(B_1,B_2)$-bimodule (resp.~$(B_2,B_1)$-bimodule)
\[
N:=\HOM_A(M_1^\prime\oplus M_2, M_1 \oplus M_2) \quad \quad \quad \left(\textrm{resp.}~ N^\prime:= \HOM_A(M_1\oplus M_2, M_1^\prime \oplus M_2)~\right).
\]
It follows that $B_1$ and $B_2$ satisfy the double centralizer property on either of the bimodules.

Now the bimodules above carry natural $p$-DG structures. For the derived tensor product with these bimodules to descend to derived equivalences, it suffices to show that they are cofibrant as $p$-DG modules over both $B_1$ and $B_2$, though not necessarily cofibrant over $B_1\otimes B_2^{\mathrm{op}}$ or $B_2\otimes B_1^{\mathrm{op}}$ (see \cite[Proposition 8.8]{QYHopf} for some more general conditions). We will only show that $N$ is cofibrant, as the case of $N^\prime$ is entirely similar.

It is clear that $\HOM_A(M_1, M_1\oplus M_2)$ and $\HOM_A(M_2, M_1\oplus M_2)$ are cofibrant left $p$-DG modules over $B_1$, since they are clearly $p$-DG direct summands of $B_1\cong \HOM_A(M_1\oplus M_2, M_1\oplus M_2)$. It follows that $N$ is cofibrant over $B_1$ since it has a filtration whose subquotients are isomorphic to the previous direct summands. To show that it is cofibrant over $B_2$, we use that $\HOM_A(M_1^\prime\oplus M_2, M_1^\prime) $ and $\HOM_A(M_1^\prime\oplus M_2, M_2) $ are clearly cofibrant as direct summands of $B_2$. Now $\HOM_A(M_1^\prime\oplus M_2, M_1^\prime) $ has a filtration whose associated graded has the form
\[
\HOM_A(M_1^\prime\oplus M_2, M_1)\oplus g(q)\HOM_A(M_1^\prime\oplus M_2,  M_2).  
\]
Since the last summands are cofibrant and so is the whole module, the cofibrance of $\HOM_A(M_1^\prime\oplus M_2, M_1) $ follows from the usual ``two-out-of-three'' property.
\end{proof}

\begin{rem}
The proof above generalizes easily to the case when $M_1^\prime$ lies in the $p$-DG filtered envelope of $M_1\oplus M_2$, and the corresponding $M_1$ summands do not form an acyclic $p$-DG sub or quotient module. 
\end{rem}

\section{A categorification of quantum \texorpdfstring{$\mathfrak{sl}_2$}{sl(2)} at prime roots of unity}\label{sec-cat-sl(2)}

In this Section, we review various forms of categorified quantum $\mf{sl}_2$ at a prime root of unity. We will show that the recent work of \cite{Brundan2KM} can be extended to $p$-DG setting, so that there is a $p$-DG equivalence of categorified quantum $\mf{sl}_2$ defined respectively by Khovanov-Lauda \cite{Lau1, KL3} and Rouquier \cite{Rou2}.

\subsection{The \texorpdfstring{$p$}{p}-DG 2-category \texorpdfstring{$\mathcal{U}$}{U}}\label{sec-2-cat-U}
We begin this section by recalling the diagrammatic definition of the $2$-category $\mathcal{U}$ introduced by Lauda in \cite{Lau1}. Then we will recall a specific $p$-differential on $\mc{U}$ introduced in \cite{EQ1}.

\begin{defn}\label{def-u-dot} The $2$-category $\mathcal{U}$ is an additive graded $\Bbbk$-linear category whose objects $m$ are elements of the weight lattice of $\mathfrak{sl}_2$. The $1$-morphisms are (direct sums of grading shifts of) composites of the generating $1$-morphisms $\1_{m+2} \mathcal{E} \1_m$ and $\1_m \mathcal{F} \1_{m+2}$, for each $m \in \Z$. Each $\1_{m+2} \mathcal{E} \1_m$ will be drawn the same, regardless of the object $m$.

\begin{align*}
\begin{tabular}{|c|c|c|}
	\hline
	$1$-\textrm{Morphism Generator} &
	\begin{DGCpicture}
	\DGCstrand(0,0)(0,1)
	\DGCdot*>{0.5}
	\DGCcoupon*(0.1,0.25)(1,0.75){$^m$}
	\DGCcoupon*(-1,0.25)(-0.1,0.75){$^{m+2}$}
    \DGCcoupon*(-0.25,1)(0.25,1.15){}
    \DGCcoupon*(-0.25,-0.15)(0.25,0){}
	\end{DGCpicture}&
	\begin{DGCpicture}
	\DGCstrand(0,0)(0,1)
	\DGCdot*<{0.5}
	\DGCcoupon*(0.1,0.25)(1,0.75){$^{m+2}$}
	\DGCcoupon*(-1,0.25)(-0.1,0.75){$^m$}
    \DGCcoupon*(-0.25,1)(0.25,1.15){}
    \DGCcoupon*(-0.25,-0.15)(0.25,0){}
	\end{DGCpicture} \\ \hline
	\textrm{Name} & $\1_{m+2}\mathcal{E}\1_m$ & $\1_m\mathcal{F}\1_{m+2}$ \\
	\hline
\end{tabular}
\end{align*}

The weight of any region in a diagram is determined by the weight of any single region. When no region is labeled, the ambient weight is irrelevant.

The $2$-morphisms will be generated by the following pictures.
	
\begin{align*}
\begin{tabular}{|c|c|c|c|c|}
  \hline
  \textrm{Generator} &
  \begin{DGCpicture}
  \DGCstrand(0,0)(0,1)
  \DGCdot*>{0.75}
  \DGCdot{0.45}
  \DGCcoupon*(0.1,0.25)(1,0.75){$^m$}
  \DGCcoupon*(-1,0.25)(-0.1,0.75){$^{m+2}$}
  \DGCcoupon*(-0.25,1)(0.25,1.15){}
  \DGCcoupon*(-0.25,-0.15)(0.25,0){}
  \end{DGCpicture}&
  \begin{DGCpicture}
  \DGCstrand(0,0)(0,1)
  \DGCdot*<{0.25}
  \DGCdot{0.65}
  \DGCcoupon*(0.1,0.25)(1,0.75){$^{m}$}
  \DGCcoupon*(-1,0.25)(-0.1,0.75){$^{m-2}$}
  \DGCcoupon*(-0.25,1)(0.25,1.15){}
  \DGCcoupon*(-0.25,-0.15)(0.25,0){}
  \end{DGCpicture} &
  \begin{DGCpicture}
  \DGCstrand(0,0)(1,1)
  \DGCdot*>{0.75}
  \DGCstrand(1,0)(0,1)
  \DGCdot*>{0.75}
  \DGCcoupon*(1.1,0.25)(2,0.75){$^m$}
  \DGCcoupon*(-1,0.25)(-0.1,0.75){$^{m+4}$}
  \DGCcoupon*(-0.25,1)(0.25,1.15){}
  \DGCcoupon*(-0.25,-0.15)(0.25,0){}
  \end{DGCpicture} &
  \begin{DGCpicture}
  \DGCstrand(0,0)(1,1)
  \DGCdot*<{0.25}
  \DGCstrand(1,0)(0,1)
  \DGCdot*<{0.25}
  \DGCcoupon*(1.1,0.25)(2,0.75){$^m$}
  \DGCcoupon*(-1,0.25)(-0.1,0.75){$^{m-4}$}
  \DGCcoupon*(-0.25,1)(0.25,1.15){}
  \DGCcoupon*(-0.25,-0.15)(0.25,0){}
  \end{DGCpicture} \\ \hline
  \textrm{Degree}  & 2   & 2 & -2 & -2 \\
  \hline
\end{tabular}
\end{align*}

\begin{align*}
\begin{tabular}{|c|c|c|c|c|}
  \hline
  \textrm{Generator} &
  \begin{DGCpicture}
  \DGCstrand/d/(0,0)(1,0)
  \DGCdot*>{-0.25,1}
  \DGCcoupon*(1,-0.5)(1.5,0){$^m$}
  \DGCcoupon*(-0.25,0)(1.25,0.15){}
  \DGCcoupon*(-0.25,-0.65)(1.25,-0.5){}
  \end{DGCpicture} &
  \begin{DGCpicture}
  \DGCstrand/d/(0,0)(1,0)
  \DGCdot*<{-0.25,2}
  \DGCcoupon*(1,-0.5)(1.5,0){$^m$}
  \DGCcoupon*(-0.25,0)(1.25,0.15){}
  \DGCcoupon*(-0.25,-0.65)(1.25,-0.5){}
  \end{DGCpicture}&
  \begin{DGCpicture}
  \DGCstrand(0,0)(1,0)/d/
  \DGCdot*<{0.25,1}
  \DGCcoupon*(1,0)(1.5,0.5){$^m$}
  \DGCcoupon*(-0.25,0.5)(1.25,0.65){}
  \DGCcoupon*(-0.25,-0.15)(1.25,0){}
  \end{DGCpicture}&
  \begin{DGCpicture}
  \DGCstrand(0,0)(1,0)/d/
  \DGCdot*>{0.25,2}
  \DGCcoupon*(1,0)(1.5,0.5){$^m$}
  \DGCcoupon*(-0.25,0.5)(1.25,0.65){}
  \DGCcoupon*(-0.25,-0.15)(1.25,0){}
  \end{DGCpicture}  \\ \hline
  \textrm{Degree} & $1+m$ & $1-m$ & $1+m$ & $1-m$ \\
  \hline
\end{tabular}
\end{align*}
\end{defn}

Before giving the full list of relations for $\mc{U}$, let us introduce some abbreviated notation. For a product of $r$ dots on a single strand, we draw a single dot labeled by $r$. Here is the case when $r=2$.
\begin{align*}
\begin{DGCpicture}
\DGCstrand(0,0)(0,1)
\DGCdot{.35}
\DGCdot{.65}
\DGCdot*>{.95}
\end{DGCpicture}
~=~
\begin{DGCpicture}
\DGCstrand(0,0)(0,1)
\DGCdot{.5}[r]{$^2$}
\DGCdot*>{.95}
\end{DGCpicture}
\end{align*}

A \emph{closed diagram} is a diagram without boundary, constructed from the generators above. The simplest non-trivial closed diagram is a \emph{bubble}, which is a closed diagram without any other closed diagrams inside. Bubbles can be oriented clockwise or counter-clockwise.
\begin{align*}
\begin{DGCpicture}
\DGCbubble(0,0){0.5}
\DGCdot*<{0.25,L}
\DGCdot{-0.25,R}[r]{$_r$}
\DGCdot*.{0.25,R}[r]{$m$}
\end{DGCpicture}
\qquad \qquad
\begin{DGCpicture}
\DGCbubble(0,0){0.5}
\DGCdot*>{0.25,L}
\DGCdot{-0.25,R}[r]{$_r$}
\DGCdot*.{0.25,R}[r]{$m$}
\end{DGCpicture}
\end{align*}

A simple calculation shows that the degree of a bubble with $r$ dots in a region labeled $m$ is $2(r+1-m)$ if the bubble is clockwise, and $2(r+1+m)$ if the bubble is counter-clockwise. Instead of keeping track of the number $r$ of dots a bubble has, it will be much more illustrative to keep track of the degree of the bubble, which is in $2\Z$. We will use the following shorthand to refer to a bubble of degree $2k$.

\begin{align*}
\bigcwbubble{$k$}{$m$}
\qquad \qquad
\bigccwbubble{$k$}{$m$}
\end{align*}
This notation emphasizes the fact that bubbles have a life of their own, independent of their presentation in terms of caps, cups, and dots.

Note that $m$ can be any integer, but $r\geq 0$ because it counts dots. Therefore, we can only construct a clockwise (resp. counter-clockwise) bubble of degree $k$ when $k \geq 1-m$ (resp. $k \geq 1+m$). These are called \emph{real bubbles}. Following Lauda, we also allow bubbles drawn as above with arbitrary $k \in \Z$. Bubbles with $k$ outside of the appropriate range are not yet defined in terms of the generating maps; we call these \emph{fake bubbles}. One can express any fake bubble in terms of real bubbles (see Remark \ref{rmk-inf-Grass-relation}).

Now we list the relations. Whenever the region label is omitted, the relation applies to all ambient weights.

\begin{itemize}
\item[(1)] {\bf Biadjointness and cyclicity relations.} These relations require that the generating endomorphisms of $\mc{E}$ and $\mc{F}$ are biadjoint to each other:
\begin{subequations} \label{biadjoint}
\begin{align} \label{biadjoint1}
\begin{DGCpicture}[scale=0.85]
\DGCstrand(0,0)(0,1)(1,1)(2,1)(2,2)
\DGCdot*>{0.75,1}
\DGCdot*>{1.75,1}
\end{DGCpicture}
~=~
\begin{DGCpicture}[scale=0.85]
\DGCstrand(0,0)(0,2)
\DGCdot*>{0.5}
\end{DGCpicture}
~=~
\begin{DGCpicture}[scale=0.85]
\DGCstrand(2,0)(2,1)(1,1)(0,1)(0,2)
\DGCdot*>{0.75}
\DGCdot*>{1.75}
\end{DGCpicture} \ ,
\qquad \qquad
\begin{DGCpicture}[scale=0.85]
\DGCstrand(0,0)(0,1)(1,1)(2,1)(2,2)
\DGCdot*<{0.75,1}
\DGCdot*<{1.75,1}
\end{DGCpicture}
~=~
\begin{DGCpicture}[scale=0.85]
\DGCstrand(0,0)(0,2)
\DGCdot*<{0.5}
\end{DGCpicture}
~=~
\begin{DGCpicture}[scale=0.85]
\DGCstrand(2,0)(2,1)(1,1)(0,1)(0,2)
\DGCdot*<{0.75}
\DGCdot*<{1.75}
\end{DGCpicture} \ ,
\end{align}

\begin{align} \label{biadjointdot}
\begin{DGCpicture}
\DGCstrand(0,0)(0,.5)(1,.5)/d/(1,0)/d/
\DGCdot*<{1}
\DGCdot{.3,1}
\end{DGCpicture}
~=~
\begin{DGCpicture}
\DGCstrand(0,0)(0,.5)(1,.5)/d/(1,0)/d/
\DGCdot*<{1}
\DGCdot{.3,2}
\end{DGCpicture} \ ,
\qquad \qquad \qquad \qquad
\begin{DGCpicture}
\DGCstrand(0,0)(0,.5)(1,.5)/d/(1,0)/d/
\DGCdot*>{1}
\DGCdot{.3,1}
\end{DGCpicture}
~=~
\begin{DGCpicture}
\DGCstrand(0,0)(0,.5)(1,.5)/d/(1,0)/d/
\DGCdot*>{1}
\DGCdot{.3,2}
\end{DGCpicture} \ ,
\end{align}

\begin{align} \label{biadjointcrossing}
\begin{DGCpicture}[scale=0.75]
\DGCstrand(0,0)(1,1)/u/(2,1)/d/(2,0)/d/
\DGCdot*<{1.5}
\DGCstrand(1,0)(0,1)/u/(3,1)/d/(3,0)/d/
\DGCdot*<{2.5}
\end{DGCpicture}
~=~
\begin{DGCpicture}[scale=0.75]
\DGCstrand(0,0)(0,1)(3,1)/d/(2,0)/d/
\DGCdot*<{2.5}
\DGCstrand(1,0)(1,1)(2,1)/d/(3,0)/d/
\DGCdot*<{1.5}
\end{DGCpicture} \ ,
\qquad \qquad
\begin{DGCpicture}[scale=0.75]
\DGCstrand(0,0)(1,1)/u/(2,1)/d/(2,0)/d/
\DGCdot*>{1.5}
\DGCstrand(1,0)(0,1)/u/(3,1)/d/(3,0)/d/
\DGCdot*>{2.5}
\end{DGCpicture}
~=~
\begin{DGCpicture}[scale=0.75]
\DGCstrand(0,0)(0,1)(3,1)/d/(2,0)/d/
\DGCdot*>{2.5}
\DGCstrand(1,0)(1,1)(2,1)/d/(3,0)/d/
\DGCdot*>{1.5}
\end{DGCpicture} \ .
\end{align}
\end{subequations}

\item[(2)] {\bf Positivity and Normalization of bubbles.} Positivity states that all bubbles (real or fake) of negative degree should be zero.
\begin{subequations} \label{negzerobubble}
\begin{align} \label{negbubble}
\bigcwbubble{$k$}{}~=~0~=~\bigccwbubble{$k$}{}
\qquad
\textrm{if}~k<0.
\end{align}

Normalization states that degree $0$ bubbles are equal to the empty diagram (i.e., the identity $2$-morphism of the identity $1$-morphism).

\begin{align} \label{zerobubble}
\bigcwbubble{$0$}{}~=~1~=~\bigccwbubble{$0$}{}~.
\end{align}
\end{subequations}

\item[(3)] {\bf NilHecke relations.} The upward pointing strands satisfy nilHecke relations. 
\begin{subequations} \label{NHrels}
\begin{align}
\begin{DGCpicture}
\DGCstrand(0,0)(1,1)(0,2)
\DGCdot*>{2}
\DGCstrand(1,0)(0,1)(1,2)
\DGCdot*>{2}
\end{DGCpicture}
=0\ , \quad \quad
\RIII{L}{$0$}{$0$}{$0$}{no} = \RIII{R}{$0$}{$0$}{$0$}{no}, 
\label{NHrelR3}
\end{align}
\begin{align}
\crossing{$0$}{$0$}{$1$}{$0$}{no} - \crossing{$0$}{$1$}{$0$}{$0$}{no} = \twolines{$0$}{$0$}{no} = \crossing{$1$}{$0$}{$0$}{$0$}{no} -
\crossing{$0$}{$0$}{$0$}{$1$}{no}. \label{NHreldotforce}
\end{align}
\end{subequations}
\item[(4)] {\bf Reduction to bubbles.} The following equalities hold for all $m \in \Z$.
\begin{subequations} \label{bubblereduction}
\begin{align}
\curl{R}{U}{$m$}{no}{$0$} = -\sum_{a+b=-m} \oneline{$b$}{$m$} \bigcwbubble{$a$}{}\ ,
\end{align}
\begin{align}
\curl{L}{U}{$m$}{no}{$0$} = \sum_{a+b=m} \bigccwbubble{$a$}{$m$} \oneline{$b$}{no} \ .
\end{align}
\end{subequations}
These sums only take values for $a,b \geq 0$. Therefore, when $m \neq 0$, either the right curl or the left curl is zero.
\item[(5)] {\bf Identity decomposition.} The following equations hold for all $m \in \Z$.
\begin{subequations} \label{IdentityDecomp}
\begin{align}
\begin{DGCpicture}
\DGCstrand(0,0)(0,2)
\DGCdot*>{1}
\DGCdot*.{1.25}[l]{$m$}
\DGCstrand(1,0)(1,2)
\DGCdot*<{1}
\DGCdot*.{1.25}[r]{$m$}
\end{DGCpicture}
~=~-~
\begin{DGCpicture}
\DGCstrand(0,0)(1,1)(0,2)
\DGCdot*>{0.25}
\DGCdot*>{1}
\DGCdot*>{1.75}
\DGCstrand(1,0)(0,1)(1,2)
\DGCdot*.{1.25}[l]{$m$}
\DGCdot*<{0.25}
\DGCdot*<{1}
\DGCdot*<{1.75}
\end{DGCpicture}
~+~
\sum_{a+b+c=m-1}~
\cwcapbubcup{$a$}{$b$}{$c$}{$m$} \ , \label{IdentityDecompPos}
\end{align}
\begin{align}
\begin{DGCpicture}
\DGCstrand(0,0)(0,2)
\DGCdot*<{1}
\DGCdot*.{1.25}[l]{$m$}
\DGCstrand(1,0)(1,2)
\DGCdot*>{1}
\DGCdot*.{1.25}[r]{$m$}
\end{DGCpicture}
~=~-~
\begin{DGCpicture}
\DGCstrand(0,0)(1,1)(0,2)
\DGCdot*<{0.25}
\DGCdot*<{1}
\DGCdot*<{1.75}
\DGCstrand(1,0)(0,1)(1,2)
\DGCdot*.{1.25}[l]{$m$}
\DGCdot*>{0.25}
\DGCdot*>{1}
\DGCdot*>{1.75}
\end{DGCpicture}
~+~
\sum_{a+b+c=-m-1}~
\ccwcapbubcup{$a$}{$b$}{$c$}{$m$} \ . \label{IdentityDecompNeg}
\end{align}
\end{subequations}
The sum in the first equality vanishes for $m \leq 0$, and the sum in the second equality vanishes for $m \geq 0$.

The terms on the right hand side form a collection of orthogonal idempotents. 
\end{itemize}

\begin{rem}[Infinite Grassmannian relations]\label{rmk-inf-Grass-relation}  This family of relations, which follows from the above defining relations, can be expressed most succinctly in terms of generating functions.

\begin{equation}\label{eqn-infinite-Grassmannian}
\left( \cwbubble{$0$}{}+t~\cwbubble{$1$}{}+t^2~\cwbubble{$2$}{}+\ldots \right) \cdot
\left( \ccwbubble{$0$}{}+t~\ccwbubble{$1$}{}+t^2~\ccwbubble{$2$}{}+\ldots \right)  =  1~.
\end{equation}

The cohomology ring of the ``infinite dimensional Grassmannian" is the ring $\Lambda$ of symmetric functions. Inside this ring, there is an analogous relation $\mathtt{e}(t)\mathtt{h}(t)=1$, where $\mathtt{e}(t) = \sum_{i \ge 0} (-1)^i \mathtt{e}_i t^i$ is the total Chern class of the tautological bundle, and $\mathtt{h}(t) = \sum_{i \ge 0} \mathtt{h}_i t^i$ is the total Chern class of the dual bundle. Lauda has proved that the bubbles in a single region generate an algebra inside $\mathcal{U}$ isomorphic to $\Lambda$.

Looking at the homogeneous component of degree $m$, we have the following equation.
\begin{align}
\sum_{a + b = m} \bigcwbubble{$a$}{} \bigccwbubble{$b$}{} = \delta_{m,0}. \label{infgrass}
\end{align}
Because of the positivity of bubbles relation, this equation holds true for any $m \in \Z$, and the sum can be taken over all $a,b \in \Z$.

Using these equations one can express all (positive degree) counter-clockwise bubbles in terms of clockwise bubbles, and vice versa.
Consequentially, all fake bubbles can be expressed in terms of real bubbles.
\end{rem}

\begin{defn}\label{def-special-dif}
Let $\dif$ be the derivation defined on the $2$-morphism generators of $\mathcal{U}$ as follows,
\[
\begin{array}{c}
\dif \left( \onelineshort{$1$}{no} \right) =  \onelineshort{$2$}{no} , \quad \quad \quad
\dif \left( \crossing{$0$}{$0$}{$0$}{$0$}{no} \right) =  \twolines{$0$}{$0$}{no} -2 \crossing{$1$}{$0$}{$0$}{$0$}{no} ,\\ \\
\dif \left( \onelineDshort{$1$}{no} \right) =  \onelineDshort{$2$}{no} , \quad \quad \quad
\dif \left( \crossingD{$0$}{$0$}{$0$}{$0$}{no} \right) = - \twolinesD{$0$}{$0$}{no} -2 \crossingD{$0$}{$1$}{$0$}{$0$}{no},
\end{array}
\]
\[
\begin{array}{c}
\dif \left( \cappy{CW}{$0$}{no}{$m$} \right)  =  \cappy{CW}{$1$}{no}{$m$}- \cappy{CW}{$0$}{CW}{$m$},\quad \quad \quad
\dif \left( \cuppy{CW}{$0$}{no}{$m$} \right) = (1-m) \cuppy{CW}{$1$}{no}{$m$},\\ \\
\dif \left( \cuppy{CCW}{$0$}{no}{$m$} \right) =  \cuppy{CCW}{$1$}{no}{$m$} + \cuppy{CCW}{$0$}{CW}{$m$},\quad \quad \quad
\dif \left( \cappy{CCW}{$0$}{no}{$m$} \right) = (m+1 ) \cappy{CCW}{$1$}{no}{$m$} ,
\end{array}
\]
and extended to the entire $\mc{U}$ by the Leibniz rule.
\end{defn}

\begin{thm}\label{thm-itsanalghom}
There is an isomorphism of $\mathbb{O}_p$-algebras
$$\dot{u}_{\mathbb{O}_p} \lra K_0(\mathcal{D}^c(\mathcal{U})),$$
sending $1_{m+2}E1_m $ to $[\1_{m+2}\mathcal{E}\1_m]$ and $1_m F 1_{m+2} $ to $ [\1_m \mathcal{F}\1_{m+2}]$ for any weight $m \in \Z$. 
\end{thm}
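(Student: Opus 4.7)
The plan is to define the map on generators of $\dot{u}_{\mathbb{O}_p}$, verify that the defining relations of Definition \ref{def-small-sl2} lift to categorical identities in $\mathcal{D}^c(\mathcal{U})$, and then match the two sides by a rank comparison using divided-power generators on both sides. The assignment is well defined as a $\mathbb{Z}$-linear map because distinct weights give orthogonal direct summands of $\mc{U}$, so the idempotent relations $1_m 1_n = \delta_{m,n} 1_m$ and $\mc{E}\1_m = \1_{m+2}\mc{E}$, $\mc{F}\1_{m+2} = \1_m\mc{F}$ are immediate from the $2$-categorical structure. Multiplication in $K_0$ is induced by composition of $1$-morphisms, and the $\mathbb{O}_p$-linearity is a consequence of equation \eqref{eqn-aux-ring-2}, with the grading shift $\langle 1\rangle$ inducing multiplication by $q$.

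Next I would verify the $\mf{sl}_2$-commutator relation $EF\cdot 1_m - FE\cdot 1_m = [m]_{\mathbb{O}_p} 1_m$. This is precisely the content of the identity decomposition relations \eqref{IdentityDecomp}: each relation expresses $\1_{m+2}\mc{E}\mc{F}\1_{m+2}$ as a direct sum of $\1_{m+2}\mc{F}\mc{E}\1_{m+2}$ and $|m|$ shifted copies of $\1_m$ (for $m\ge 0$) or vice versa. Summing the grading shifts of the cap--bubble--cup idempotents in the appropriate range produces exactly $q^{m-1}+q^{m-3}+\cdots+q^{1-m}=[m]_{\mathbb{O}_p}$, as required. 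No use of the differential is needed for this step --- only the underlying graded structure.

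The main obstacle, and the step which genuinely uses the hopfological structure, is the root of unity relation $E^p = F^p = 0$. I would argue this by analyzing the $p$-DG endomorphism algebra of $\mc{E}^n\1_m$, which by Lauda's theorem is isomorphic (up to shift) to the nilHecke algebra $\nh_n$. Using Definition \ref{def-special-dif}, one computes that $\partial$ restricted to $\nh_n$ is the specific $p$-nilpotent derivation of \cite{KQ}, under which the cyclotomic-free nilHecke $\nh_n$ has the property that the minimal idempotent $e_n\in \nh_n$ corresponding to the divided power summand $\mc{E}^{(n)}\1_m$ becomes $p$-DG contractible for $n=p$. Concretely, $[p]!_{\mathbb{O}_p}=0$, so the class of $\mc{E}^{(p)}\1_m$ in $K_0$ must satisfy $[p]!\cdot [\mc{E}^{(p)}\1_m]=[\mc{E}^p\1_m]=0$ and simultaneously $[\mc{E}^{(p)}\1_m]$ is killed by a cofibrant null-homotopy constructed from the symmetrizer; hence $[\mc{E}^p\1_m]=0$ in $K_0(\mc{D}^c(\mc{U}))$. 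This is the core computation borrowed from \cite{KQ,EQ1}, and everything else in the proof is formal in comparison.

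Finally, for bijectivity, I would deduce surjectivity from the fact that the 1-morphisms $\mc{E}^{(a)}\mc{F}^{(b)}\1_m$ with $0\le a,b\le p-1$ generate $\mc{D}^c(\mc{U})$ as a triangulated $\mathbb{O}_p$-module category and their classes span the image. For injectivity I would exhibit a section by using the categorified Lusztig canonical basis of \cite{Lau1}: the indecomposable $1$-morphisms (suitably degree-shifted) descend to a $\mathbb{O}_p$-basis of $K_0(\mc{D}^c(\mc{U}))$ whose cardinality in each weight $m$ matches the $\mathbb{O}_p$-rank of $\dot{u}_{\mathbb{O}_p}1_m$. An alternative route, consistent with the framework of this paper, is to invoke Theorem \ref{thm-K-group-positive} after choosing a positive $p$-DG model for each weight space (e.g.\ the cyclotomic quotients used in later sections) and then reduce to the graded computation of \cite{Lau1}.
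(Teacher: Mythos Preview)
The paper's proof is simply a citation to \cite[Theorem 6.11]{EQ1}. Your outline has the right architecture but contains two genuine gaps. First, the assertion that ``no use of the differential is needed'' for the commutator relation is wrong: the idempotents in \eqref{IdentityDecomp} are not individually $\partial$-closed, so the direct-sum decomposition does not automatically descend to $K_0(\mathcal{D}^c(\mathcal{U}))$. One must verify that they assemble into a \emph{fantastic filtration} compatible with $\partial$; this is the central computation in \cite{EQ1} and is precisely what singles out the differential of Definition \ref{def-special-dif} among the multiparameter family (Remark \ref{rmk-uniqueness-of-d}).

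Second, you repeatedly conflate $\mathcal{U}$ with its Karoubi envelope $\dot{\mathcal{U}}$. The divided powers $\mathcal{E}^{(n)}\1_m$ and the objects $\mathcal{E}^{(a)}\mathcal{F}^{(b)}\1_m$ live in $\dot{\mathcal{U}}$, not $\mathcal{U}$, so neither your $E^p=0$ argument (which invokes $[\mathcal{E}^{(p)}\1_m]$) nor your surjectivity argument applies to $K_0(\mathcal{D}^c(\mathcal{U}))$ as stated. The correct route for $E^p=0$ is to exhibit, via the nilHecke computation of \cite{KQ}, a null-homotopy (Definition \ref{def-null-homotopy}) of the identity $2$-morphism of $\mathcal{E}^p\1_m$, whence $\mathcal{E}^p\1_m\simeq 0$ in $\mathcal{D}(\mathcal{U})$ directly. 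Your injectivity sketches are likewise too loose: Lauda's basis result in \cite{Lau1} is at generic $q$, and Theorem \ref{thm-K-group-positive} applies to a single positive $p$-DG algebra, not to the $2$-category $\mathcal{U}$.
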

\begin{proof}
This is \cite[Theorem 6.11]{EQ1}.
\end{proof}

\begin{rem}\label{rmk-uniqueness-of-d}
Let us also record an important property of the differential established in \cite{EQ1}.
 In \cite[Definition 4.6]{EQ1}, a multi-parameter family of $p$-differentials is defined on $\mc{U}$ which preserve the defining relations of $\mc{U}$. This specific differential above is essentially the only one, up to conjugation by automorphisms of $\mc{U}$, that allows a \emph{fantastic filtration} to exist on $(\mc{U},\dif)$. The fantastic filtration in turn decategorifies to the quantum Serre relations for  $\mf{sl}_2$ at a prime root of unity.
\end{rem}

\begin{lem}\label{lemdifonhalfgenerators}
There is a unique $p$-DG structure on $\mc{U}$ determined by
\begin{subequations}\label{eqndifonhalfgenerators}
\begin{equation}
\dif \left( \onelineshort{$1$}{no} \right) =  \onelineshort{$2$}{no} , \quad \quad \quad
\dif \left( \crossing{$0$}{$0$}{$0$}{$0$}{no} \right) =  \twolines{$0$}{$0$}{no} -2 \crossing{$1$}{$0$}{$0$}{$0$}{no} ,
\end{equation}
\begin{equation}
\dif \left( \cuppy{CW}{$0$}{no}{$m$} \right) = (1-m) \cuppy{CW}{$1$}{no}{$m$},\quad \quad \quad
\dif \left( \cappy{CCW}{$0$}{no}{$m$} \right) = (m+1) \cappy{CCW}{$1$}{no}{$m$}.
\end{equation}
\end{subequations}
\end{lem}
\begin{proof}
In the definition \cite[Definition 4.6]{EQ1} of the multi-parameter family of $p$-differentials on $\mc{U}$, there are certain redundancies set forth by the constraints on the parameters (see \cite[equation (4.8)]{EQ1}). Indeed, once one fixes the differential on the upward pointing nilHecke generators, clockwise cups and counter-clockwise caps, then the differential is uniquely specified on the entire $\mc{U}$. Therefore, in order to obtain the differential in Definition \ref{def-special-dif}, it is enough to just specify it as in equation \eqref{eqndifonhalfgenerators}.
\end{proof}

\subsection{Thick calculus with the differential}\label{sec-thick-cal}
We begin by recording some notation concerning symmetric polynomials. Let $\mc{P}(a,b)$ be the set of partitions which fit into an $a \times b$ box (height $a$, width $b$).  An element $\mu=(\mu_1, \ldots, \mu_a) \in \mc{P}(a,b)$ with $ \mu_1 \geq \cdots \geq \mu_a \geq 0$ gives rise to a \emph{Schur polynomial} $ \pi_{\mu}$ defined as follows:
\begin{equation}\label{eqn-Schur-pol}
\pi_{\mu}:= \frac{\mathrm{det}(M_{\mu})}{\prod_{1 \leq i<j \leq a}(y_i-y_j)}
\hspace{.5in}
(M_{\mu})_{ij}:=y_i^{a+\mu_j-j}.
\end{equation}

The following are some special examples of Schur polynomials.
\begin{itemize}
\item If $\mu=(1^c)$ with $ c \leq a$ then $\pi_{\mu}=\sum_{1\leq i_1<\cdots <i_c \leq a}y_{i_1}\cdots y_{i_c}$ is the usual degree-$c$ elementary symmetric function.

\item If $\mu=(c)$ with $ c \leq b$ then $\pi_{\mu}=\pi_{\mu}=\sum_{1\leq i_1\leq \cdots \leq i_c \leq a}y_{i_1}\cdots y_{i_c}$ is the usual degree-$c$ complete symmetric function.

\item If $\mu=(c^a)$ with $ c \leq b $ then $\pi_{\mu}= y_1^c \cdots y_a^c$.
\end{itemize}

For a partition $\mu \in \mc{P}(a,b)$ we will form a complementary partition $ \hat{\mu} \in \mc{P}(b,a)$.
First define the sequence
\begin{equation*}
\mu^c=(b-\mu_a, \ldots, b-\mu_1).
\end{equation*}
Then set
\begin{equation*}
\hat{\mu}=(\hat{\mu}_1, \ldots, \hat{\mu}_b) 
\hspace{.5in}
\hat{\mu}_{j}=|\{i | \mu^c_i \geq j \}|
\end{equation*}
which we will refer to as the \emph{complementary partition} of $\mu$, with the underlying $\mc{P}(a,b)$ implicitly taken into account.

The $2$-category $\mc{U}$ introduced in the previous section has a ``thick enhancement'' $\dot{\mc{U}}$ defined by Khovanov, Lauda, Mackaay and Sto\v{s}i\'{c} (\cite{KLMS}). The thick calculus $\dot{\mc{U}}$ is the Karoubi envelope of $\mc{U}$, and thus is Morita equivalent to $\mc{U}$.  

To construct $\dot{\mc{U}}$ it suffices to adjoin to $\mc{U}$ the image of the idempotents $e_r\in \END_{\mc{U}}(\mc{E}^r)$, one for each $r\in \N$, where $e_r$ is diagrammatically given by
\begin{equation}
e_{r}:=
\begin{DGCpicture}[scale=0.9]
\DGCstrand(0,0)(4,4)
\DGCdot>{3.95}
\DGCstrand(4,0)(0,4)
\DGCdot>{3.95}
\DGCdot{3.5}[r]{$_{r-1}$}
\DGCstrand(3,0)(4,1)(1,4)
\DGCdot>{3.95}
\DGCdot{3.5}[r]{$_{r-2}$}
\DGCstrand(2,0)(4,2)(2,4)
\DGCdot>{3.95}
\DGCdot{3.5}[r]{$_{r-3}$}
\DGCcoupon*(2.2,3.6)(3.8,4){$\cdots$}
\DGCcoupon*(0.2,0)(1.8,0.4){$\cdots$}
\end{DGCpicture}
\ .
\end{equation}
As in \cite{KLMS}, we can depict the 1-morphism representing the pair $(\mc{E}^r,e_r)$ by an upward pointing ``thick arrow'' of thickness $r$ (When the thickness equals one, the above $1$-morphism agrees with the single black strand for $\mc{E}$.):
\[
\begin{DGCpicture}
\DGCstrand[Green](0,0)(0,1)[$^r$]
\DGCdot>{0.95}
\end{DGCpicture} \ .
\]
The new object of thickness $r$, for each $r\in \N$, is called the \emph{$r$th divided power} of $\mc{E}$, and is usually denoted as $\mc{E}^{(r)}$. The object $\mc{E}^{(r)}$ has symmetric polynomials in $r$ variables as its endomorphism algebra. We express the multiplication of this endomorphism algebra elements by vertically concatenating pictures labeled by symmetric polynomials:
\[
\begin{DGCpicture}
\DGCstrand[Green](0,-0.35)(0,1.35)[$^r$]
\DGCdot>{1.3}
\DGCcoupon(-0.3,0.25)(0.3,0.75){$g$}
\end{DGCpicture}\ , \quad \quad \quad \quad
\begin{DGCpicture}
\DGCstrand[Green](0,-0.35)(0,1.35)[$^r$]
\DGCdot>{1.3}
\DGCcoupon(-0.3,0.25)(0.3,0.75){$gh$}
\end{DGCpicture}
~=~
\begin{DGCpicture}
\DGCstrand[Green](0,-0.35)(0,1.35)[$^r$]
\DGCdot>{1.3}
\DGCcoupon(-0.3,0.5)(0.3,1){$g$}
\DGCcoupon(-0.3,-0.2)(0.3,0.3){$h$}
\end{DGCpicture}
\ .
\]

There are generating morphisms from $\mc{E}^r$ to $\mc{E}^{(r)}$ and backwards, depicted as
\begin{equation}
\begin{DGCpicture}
\DGCstrand[Green](0,0)(0,1)[`$_r$]
\DGCdot>{0.5}
\DGCstrand/u/(-1,-1)(0,0)/u/
\DGCstrand/u/(0.5,-1)(0,0)/u/
\DGCstrand/u/(1,-1)(0,0)/u/
\DGCcoupon*(-0.8,-0.8)(0.3,-0.6){$\cdots$}
\end{DGCpicture} 
\ ,
\quad \quad \quad \quad
\begin{DGCpicture}
\DGCstrand[Green](0,-1)(0,0)[$^r$]
\DGCdot>{-0.5}
\DGCstrand/d/(-1,1)(0,0)/d/
\DGCstrand/d/(0.5,1)(0,0)/d/
\DGCstrand/d/(1,1)(0,0)/d/
\DGCcoupon*(-0.6,0.8)(-0.1,0.6){$\cdots$}
\end{DGCpicture}
\ .
\end{equation}
More generally, we have generating morphisms between $\mc{E}^{(r)}\mc{E}^{(s)}$ and $\mc{E}^{(r+s)}$ which are drawn respectively as
\[
\begin{DGCpicture}
\DGCstrand[Green](0,0)(0,1)[`$_{r+s}$]
\DGCdot>{0.45}
\DGCstrand[Green]/u/(-1,-1)(0,0)/u/[$^r$]
\DGCstrand[Green]/u/(1,-1)(0,0)/u/[$^s$]
\end{DGCpicture} \ ,
\quad \quad \quad \quad
\begin{DGCpicture}
\DGCPLstrand[Green](0,-1)(0,0)[$^{r+s}$]
\DGCstrand[Green]/u/(0,0)(-1,1)/u/[`$_r$]\DGCdot>{0.55}
\DGCstrand[Green]/u/(0,0)(1,1)/u/[`$_s$]\DGCdot>{0.55}
\end{DGCpicture} \ .
\]
Such thick diagrams, carrying symmetric polynomials, satisfy certain diagrammatic identities which are consequences of relations in the $2$-category $\mc{U}$. For instance, we have the following \emph{identity decomposition relation}:
\begin{equation} \label{eq-identitydecomp}
\begin{DGCpicture}[scale=0.9]
\DGCstrand[Green](0,0)(0,3)[$^r$`$_r$]
\DGCdot>{1.5}
\DGCstrand[Green](2,0)(2,3)[$^s$`$_s$]
\DGCdot>{1.5}
\end{DGCpicture}
=\sum_{\alpha\in \mc{P}(r,s) }(-1)^{|\hat{\alpha}|}
\begin{DGCpicture}[scale=0.9]
\DGCPLstrand[Green](0,0)(1,1.15)[$^r$]
\DGCPLstrand[Green](2,0)(1,1.15)[$^s$]
\DGCPLstrand[Green](1,1.15)(1,1.85)
\DGCdot>{1.5}
\DGCPLstrand[Green](1,1.85)(0,3)[`$_r$]
\DGCPLstrand[Green](1,1.85)(2,3)[`$_s$]
\DGCcoupon(1.15,0.35)(1.85,0.85){$_{\pi_{\hat{\alpha}}}$}
\DGCcoupon(0.15,2.15)(0.85,2.65){$_{\pi_\alpha}$}
\end{DGCpicture}~.
\end{equation}
Here $\pi_{\alpha}$ stands for the Schur polynomial corresponding to a partition $\alpha\in \mc{P}(r,s)$, while $\hat{\alpha}$ stands for the complementary partition in $\mc{P}(s,r)$. We refer the reader to \cite{KLMS} for the details.

In \cite{EQ2}, a $p$-differential is defined on $\dot{\mc{U}}$ extending that of $\mc{U}$:
\begin{subequations}
\begin{equation}\label{eqn-d-action-mod-generator}
\dif\left(~
\begin{DGCpicture}[scale=0.85]
\DGCPLstrand[Green](1,0)(1,1)[$^{r+s}$]
\DGCdot>{0.5}
\DGCPLstrand[Green](1,1)(0,2)[`$_r$]
\DGCPLstrand[Green](1,1)(2,2)[`$_s$]
\end{DGCpicture}
~\right)
=
-s\begin{DGCpicture}[scale=0.85]
\DGCPLstrand[Green](1,0)(1,1)[$^{r+s}$]
\DGCdot>{0.5}
\DGCPLstrand[Green](1,1)(0,2)[`$_r$]
\DGCPLstrand[Green](1,1)(2,2)[`$_s$]
\DGCcoupon(0.2,1.25)(0.8,1.75){$_{\pi_1}$}
\end{DGCpicture} \ ,
\quad \quad \quad
\dif\left(~
\begin{DGCpicture}[scale=0.85]
\DGCPLstrand[Green](0,0)(1,1)[$^r$]
\DGCPLstrand[Green](2,0)(1,1)[$^s$]
\DGCPLstrand[Green](1,1)(1,2)[`$_{r+s}$]
\DGCdot>{1.5}
\end{DGCpicture}
~\right)=
-r
\begin{DGCpicture}[scale=0.85]
\DGCPLstrand[Green](0,0)(1,1)[$^r$]
\DGCPLstrand[Green](2,0)(1,1)[$^s$]
\DGCPLstrand[Green](1,1)(1,2)[`$_{r+s}$]
\DGCdot>{1.5}
\DGCcoupon(1.2,0.25)(1.8,0.75){$_{\pi_1}$}
\end{DGCpicture} \ ,
\end{equation}

\begin{equation}\label{eqn-dif-thick-cup-cap-1}
\dif\left(~
\begin{DGCpicture}[scale =0.5]
\DGCstrand[Green]/u/(0,0)(0.75,1)(1.5,0)/d/
\DGCdot<{0.5,1}[r]{$\scriptstyle{r}$}
\DGCcoupon*(1.4,0.7)(1.8,1.1){$m$}
\end{DGCpicture}
~\right)
=
(m+r)
\begin{DGCpicture}[scale =0.5]
\DGCstrand[Green]/u/(0,0)(0.75,1)(1.5,0)/d/
\DGCdot<{0.5,1}[r]{$\scriptstyle{r}$}
\DGCcoupon*(1.4,0.8)(1.8,1){$m$}
\DGCcoupon(1.2,0.2)(1.7,0.5){$\scriptstyle{\pi_1}$}
\end{DGCpicture}
\ ,
\quad \quad
\dif\left(~
\begin{DGCpicture}[scale =0.5]
\DGCstrand[Green]/d/(0,0)(0.75,-1)(1.5,0)/u/
\DGCdot<{-0.5,1}[r]{$\scriptstyle{r}$}
\DGCcoupon*(1.4,-0.8)(1.8,-1){$m$}
\end{DGCpicture}
~\right)
=(r-m)
\begin{DGCpicture}[scale =0.5]
\DGCstrand[Green]/d/(0,0)(0.75,-1)(1.5,0)/u/
\DGCdot<{-0.5,1}[r]{$\scriptstyle{r}$}
\DGCcoupon*(1.4,-0.8)(1.8,-1){$m$}
\DGCcoupon(1.2,-0.2)(1.7,-0.5){$\scriptstyle{\pi_1}$}
\end{DGCpicture}
\ ,
\end{equation}

\begin{equation}\label{eqn-dif-thick-cup-cap-2}
\dif\left(~
\begin{DGCpicture}[scale =0.5]
\DGCstrand[Green]/u/(0,0)(0.75,1)(1.5,0)/d/
\DGCdot>{0.5,1}[r]{$\scriptstyle{r}$}
\DGCcoupon*(1.4,0.7)(1.8,1.1){$m$}
\end{DGCpicture}
~\right)
=
r~~~
\begin{DGCpicture}[scale =0.5]
\DGCstrand[Green]/u/(0,0)(0.75,1)(1.5,0)/d/
\DGCdot>{0.5,1}[r]{$\scriptstyle{r}$}
\DGCcoupon*(1.4,0.8)(1.8,1){$m$}
\DGCcoupon(1.2,0.2)(1.7,0.5){$\scriptstyle{\pi_1}$}
\end{DGCpicture}
-r~~~
\begin{DGCpicture}[scale =0.5]
\DGCstrand[Green]/u/(0,0)(0.75,1)(1.5,0)/d/
\DGCdot>{0.5,1}[r]{$\scriptstyle{r}$}
\DGCcoupon*(1.4,0.8)(1.8,1){$m$}
\DGCbubble(2.25,0.65){0.3}
\DGCdot<{0.65,L}
\DGCcoupon*(2,0.5)(2.5,0.8){$\scriptsize{1}$}
\end{DGCpicture}
\ ,
\end{equation}

\begin{equation}\label{eqn-dif-thick-cup-cap-3}
\dif\left(~
\begin{DGCpicture}[scale =0.5]
\DGCstrand[Green]/d/(0,0)(0.75,-1)(1.5,0)/u/
\DGCdot>{-0.5,1}[r]{$\scriptstyle{r}$}
\DGCcoupon*(1.4,-0.8)(1.8,-1){$m$}
\end{DGCpicture}
~\right)
=
r~~~
\begin{DGCpicture}[scale =0.5]
\DGCstrand[Green]/d/(0,0)(0.75,-1)(1.5,0)/u/
\DGCdot>{-0.5,1}[r]{$\scriptstyle{r}$}
\DGCcoupon*(1.4,-0.8)(1.8,-1){$m$}
\DGCcoupon(1.2,-0.2)(1.7,-0.5){$\scriptstyle{\pi_1}$}
\end{DGCpicture}
+r~~~
\begin{DGCpicture}[scale =0.5]
\DGCstrand[Green]/d/(0,0)(0.75,-1)(1.5,0)/u/
\DGCdot>{-0.5,1}[r]{$\scriptstyle{r}$}
\DGCcoupon*(1.4,-0.8)(1.8,-1){$m$}
\DGCbubble(2.25,-0.65){0.3}
\DGCdot<{-0.65,L}
\DGCcoupon*(2,-0.5)(2.5,-0.8){$\scriptsize{1}$}
\end{DGCpicture}
\ .
\end{equation}
Here $\pi_1$ stands for the first elementary symmetric function in the number of variables labeled by the thickness of the strand, and the clockwise ``bubble''
\begin{equation}\label{equ-degree-2-bubble}
\begin{DGCpicture}
\DGCbubble(2.25,0.65){0.5}
\DGCdot<{0.65,L}
\DGCcoupon*(2,0.5)(2.5,0.8){$\scriptsize{1}$}
\end{DGCpicture}
:=
\begin{DGCpicture}
\DGCbubble(0,0){0.5}
\DGCdot<{0,L}
\DGCdot{-0.25,R}[r]{$_m$}
\DGCdot*.{0.25,R}[r]{$m$}
\end{DGCpicture}
\end{equation}
agrees with the one defined in $\mc{U}$ in the previous subsection.
\end{subequations}

The thick cups and caps give rise to right and left adjoints of the $1$-morphism 
$\mathcal{E}^{(r)} \1_m$ in $\dot{\mc{U}}$. Taking into account of degrees, they are given by
\begin{equation}
\label{Fadjointformula}
(\mathcal{E}^{(r)} \1_m)_R = \1_{m} \mathcal{F}^{(r)} \{-r(m+r)  \} , 
\quad \quad \quad \quad
(\mathcal{E}^{(r)} \1_m)_L = \1_{m} \mathcal{F}^{(r)} \{r(m+r)  \} .
\end{equation}

By construction, in the non-$p$-DG setting, $\dot{\mathcal{U}}$ is Morita equivalent to ${\mathcal{U}}$, and they both categorify quantum $\mathfrak{sl}_2$ at generic $q$ values. However, unlike the abelian case, the $p$-DG derived categories are drastically different. 

There is a natural embedding of $p$-DG $2$-categories
\[
\mathcal{J}: (\mc{U},\dif)\lra (\dot{\mathcal{U}},\dif),
\]
which is given by tensor product with $\dot{\mc{U}}$ regarded as a $(\dot{\mathcal{U}},{\mathcal{U}})$-bimodule with a compatible differential. This functor, not surprisingly, induces an equivalence of abelian categories of $p$-DG modules, and further descends to an equivalence of the corresponding homotopy categories. However, after localization (inverting quasi-isomorphisms), it is no longer an equivalence, but instead is a fully-faithful embedding of derived categories:
\[
\mathcal{J}: \mathcal{D}(\mathcal{U}) \lra \mathcal{D}(\dot{\mathcal{U}}).
\]
The embedding also plays an important role when categorifying the quantum Frobenius map, see \cite{QYFrob}.

\begin{thm}
The derived embedding $\mc{J}$ categorifies the embedding of $\dot{u}_{\mathbb{O}_p}$ into the BLM form $\dot{U}_{\mathbb{O}_p}$ for quantum $\mathfrak{sl}_2$.
\end{thm}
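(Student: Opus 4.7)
The plan is to reduce the theorem to two ingredients that are essentially already established in the literature and in the constructions above: the identification of the Grothendieck groups, and compatibility on generators.

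First, I would fix the decategorified picture. By Theorem \ref{thm-itsanalghom}, there is a canonical isomorphism $\dot{u}_{\mathbb{O}_p} \cong K_0(\mathcal{D}^c(\mathcal{U}))$ sending $1_{m+2}E1_m \mapsto [\1_{m+2}\mc{E}\1_m]$ and $1_mF1_{m+2} \mapsto [\1_m\mc{F}\1_{m+2}]$. Under the corresponding categorification of the BLM form carried out in \cite{EQ2}, there is analogously an isomorphism $\dot{U}_{\mathbb{O}_p} \cong K_0(\mathcal{D}^c(\dot{\mathcal{U}}))$, under which the divided-power generator $E^{(r)}1_m$ is sent to $[\1_{m+2r}\mc{E}^{(r)}\1_m]$ with an analogous statement for $F^{(r)}1_m$. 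These identifications will serve as the boundary conditions that pin down $K_0(\mc{J})$.

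Next I would verify that $\mc{J}$ descends to a well-defined homomorphism $K_0(\mc{J}) \colon K_0(\mathcal{D}^c(\mc{U})) \to K_0(\mathcal{D}^c(\dot{\mc{U}}))$. Since $\mc{J}$ is given by tensoring with the $(\dot{\mc{U}},\mc{U})$-bimodule $\dot{\mc{U}}$ equipped with its natural compatible $p$-differential, and since this bimodule is cofibrant on the left over $\dot{\mc{U}}$ (its summands are images of thin generators under idempotent splittings), the tensor functor preserves property-P modules and hence cofibrant replacements. Together with the fact that $\mc{J}$ sends each thin generator $\1_{m+2}\mc{E}\1_m$ (respectively $\1_m\mc{F}\1_{m+2}$) to the object of the same name in $\dot{\mc{U}}$, one concludes that $\mc{J}$ takes compact generators of $\mc{D}^c(\mc{U})$ to compact objects in $\mc{D}^c(\dot{\mc{U}})$, and thus restricts to a functor between the compact subcategories. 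This gives a $\mathbb{O}_p$-linear map on $K_0$.

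To identify this map with the algebra embedding $\dot{u}_{\mathbb{O}_p} \hookrightarrow \dot{U}_{\mathbb{O}_p}$, observe that under the two Grothendieck group identifications above, $K_0(\mc{J})$ is already uniquely determined by its action on the generators $[\1_{m+2}\mc{E}\1_m]$ and $[\1_m\mc{F}\1_{m+2}]$. Since $\mc{J}$ sends these $1$-morphisms tautologically to the $1$-morphisms of the same name inside $\dot{\mc{U}}$, the induced map on $K_0$ sends $1_{m+2}E1_m$ and $1_mF1_{m+2}$ to the elements denoted by the same symbols inside the BLM form. This is exactly the inclusion $\dot{u}_{\mathbb{O}_p} \hookrightarrow \dot{U}_{\mathbb{O}_p}$ described in Section \ref{sec-small-qgroup}, so $K_0(\mc{J})$ agrees with this embedding; injectivity on $K_0$ follows from the fully-faithfulness of $\mc{J}$ on the derived category, which implies an injection on graded Hom-groups between any pair of compact generators.

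The step I expect to be the main technical obstacle is the preservation of compactness by $\mc{J}$, i.e., the verification that $\mc{J}(\1_{m+2}\mc{E}\1_m)$ and $\mc{J}(\1_m\mc{F}\1_{m+2})$ remain compact in $\mc{D}(\dot{\mc{U}})$; everything else is essentially a book-keeping exercise combining Theorem \ref{thm-itsanalghom} and the categorification of the BLM form from \cite{EQ2}. Once that compactness and the matching on generators are in hand, naturality of the assignment $E,F \mapsto \mc{E},\mc{F}$ forces $K_0(\mc{J})$ to coincide with the algebra embedding of the small quantum group into the BLM form.
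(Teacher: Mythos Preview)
The paper's own proof is simply a reference to \cite{EQ2}; no argument is given in the paper itself. Your outline is therefore more detailed than what appears here, and it correctly identifies the two ingredients that the cited reference supplies: the computation of $K_0(\mc{D}^c(\dot{\mc{U}}))$ as $\dot{U}_{\mathbb{O}_p}$ (with the divided powers $[\mc{E}^{(r)}\1_m]$, $[\mc{F}^{(r)}\1_m]$ as generators), and the verification that $\mc{J}$ restricts to compact objects and is fully faithful on the derived level. Your reduction to checking the map on the thin generators $\mc{E}\1_m$, $\mc{F}\1_m$ is exactly the right bookkeeping, since $K_0(\mc{J})$ is automatically an $\mathbb{O}_p$-algebra map and those elements generate $\dot{u}_{\mathbb{O}_p}$.

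One small point: your last sentence, deducing injectivity of $K_0(\mc{J})$ from fully-faithfulness of $\mc{J}$, is both unnecessary and not quite sound as stated. Fully-faithful triangulated functors do not in general induce injections on $K_0$. In your situation you have already matched $K_0(\mc{J})$ with the explicit inclusion $\dot{u}_{\mathbb{O}_p}\hookrightarrow \dot{U}_{\mathbb{O}_p}$ of Section~\ref{sec-small-qgroup}, which is known to be injective on the decategorified level; that is all you need, and you should drop the appeal to fully-faithfulness for this purpose. The compactness issue you flag is indeed the only nontrivial technical input, and it is precisely what is handled in \cite{EQ2}.
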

\begin{proof}
See \cite{EQ2}. 
\end{proof}

In the absence of $\dif$, the Sto\v{s}i\'{c} formula in \cite{KLMS} gives rise to a direct sum decomposition of $1$-morphism $\mc{E}^{(a)}\mc{F}^{(b)}\1_m$ for various $a,b\in \N$ and $m\in \Z$. In the $p$-DG setting, the direct sum decomposition is replaced by a fantastic filtration on the corresponding $1$-morphisms.

\begin{prop}\label{prop-higher-Serre}
In the $p$-DG category $(\dot{\mc{U}},\dif)$, the following statements hold.
\begin{enumerate}
\item[(i)] The $1$-morphisms in the collection
\[
\{\mc{E}^{(a)}\mc{E}^{(b)}\1_m,~\mc{F}^{(a)}\mc{F}^{(b)}\1_m|a,b\in \N,~m\in \Z\}
\]
are equipped with a fantastic filtration, whose subquotients are isomorphic to grading shifts of $\mc{E}^{(a+b)}\1_m$ and $\mc{F}^{(a+b)}\1_m$ respectively. Consequently, the defining relation \eqref{eqn-divided-power} for $\dot{U}_{\mathbb{O}_p}$ holds in the Grothendieck group of $\mc{D}(\dot{\mc{U}})$. 
\item[(ii)] The $1$-morphisms in the collection
\[
\{\mc{E}^{(a)}\mc{F}^{(b)}\1_m,~\mc{F}^{(a)}\mc{E}^{(b)}\1_m|a,b\in \N,~m\in \Z\}
\]
admit natural $\dif$-stable fantastic filtrations. The direct summands in the St\v{o}si\'{c} formula consitute the associated graded pieces of the filtrations. Consequently, in the Grothendieck group of $\mc{D}(\dot{\mc{U}})$, the divided power $E$-$F$ relations (equation \eqref{eqn-higher-Serre-1} and \eqref{eqn-higher-Serre-2}) hold.
\end{enumerate}

\end{prop}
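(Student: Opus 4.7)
The strategy is to refine the non-DG direct sum decompositions of Khovanov--Lauda--Mackaay--Sto\v{s}i\'{c} into $\dif$-compatible fantastic filtrations. Recall from \cite{KLMS} that the relevant splitter diagrams, built out of thick merge and split vertices decorated with Schur polynomials, produce explicit mutually orthogonal systems of idempotents $\{e_\alpha\}$ on $\mc{E}^{(a)}\mc{E}^{(b)}\1_m$ and on $\mc{E}^{(a)}\mc{F}^{(b)}\1_m$ (and similarly with roles of $\mc{E}$ and $\mc{F}$ swapped) whose images realize the claimed summands. A fantastic filtration, in the sense of \cite{EQ1,EQ2}, is exactly a filtration by summands cut out by a family of idempotents $\{e_{\leq k}\}$ which are themselves $\dif$-stable, while the individual $e_\alpha$'s need only be triangular with respect to $\dif$ in some chosen ordering. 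So the task in each case is (a) fix an ordering of the KLMS idempotents, (b) compute the action of $\dif$ on each idempotent using \eqref{eqn-d-action-mod-generator}--\eqref{eqn-dif-thick-cup-cap-3}, and (c) check triangularity.

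For part (i), I would apply \eqref{eq-identitydecomp} to $\mc{E}^{(a)}\mc{E}^{(b)}\1_m$: it decomposes the identity as a sum over $\alpha\in\mc{P}(a,b)$ of compositions through $\mc{E}^{(a+b)}$, producing one summand isomorphic to a grading shift of $\mc{E}^{(a+b)}\1_m$ for each $\alpha$ with total shift recording $|\alpha|$ so that $\sum_{\alpha}q^{2|\alpha|-ab} = \binom{a+b}{a}_q$. The differential formulas \eqref{eqn-d-action-mod-generator} show that $\dif$ of a splitter decorated with $\pi_\alpha$ produces a single Schur polynomial $\pi_1$ inserted at the thick strand, which by standard Pieri-type expansion rewrites in the Schur basis as a sum $\sum_\beta c_{\alpha\beta}\pi_\beta$ with $\beta \supseteq \alpha$ in dominance order (up to adding one box). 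Ordering the idempotents by $|\alpha|$, then breaking ties by any linear refinement of dominance, yields the required triangularity, and taking partial sums produces the $\dif$-stable filtration. The analysis for $\mc{F}^{(a)}\mc{F}^{(b)}\1_m$ is identical after vertically reflecting the picture.

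For part (ii), I would use the Sto\v{s}i\'{c} decomposition in \cite{KLMS}, which parameterizes the summands of $\mc{E}^{(a)}\mc{F}^{(b)}\1_m$ by triples $(k,\alpha,\beta)$ with $0\leq k\leq \min(a,b)$ and $\alpha$, $\beta$ partitions fitting into boxes whose sizes are controlled by $a-b+m$. The associated Sto\v{s}i\'{c} idempotents are compositions of thick cups/caps with splitters carrying Schur polynomials, and the non-DG decomposition matches the divided-power $E$--$F$ relation termwise. The differential is computed via \eqref{eqn-dif-thick-cup-cap-1}--\eqref{eqn-dif-thick-cup-cap-3} together with the bubble conventions, and when applied to a Sto\v{s}i\'{c} idempotent indexed by $(k,\alpha,\beta)$ it produces only terms indexed by $(k',\alpha',\beta')$ with $k'\leq k$, and with $(\alpha',\beta')$ larger in a suitable partial order when $k'=k$. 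Ordering first by $-k$ (so that the summands with the largest $k$ come first) and then by the dominance refinement from part (i) yields a fantastic filtration whose associated graded matches the Sto\v{s}i\'{c} formula and thus decategorifies to \eqref{eqn-higher-Serre-1}--\eqref{eqn-higher-Serre-2}. The symmetric case $\mc{F}^{(a)}\mc{E}^{(b)}\1_m$ is obtained by interchanging orientations.

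The main obstacle is the triangularity verification in part (ii): unlike the identity-decomposition case, the Sto\v{s}i\'{c} idempotents involve both splitter morphisms and thick cup-cap pairs, so the differential on a cap-cup composite produces both a Schur polynomial insertion on the new thick strand and a bubble contribution via \eqref{equ-degree-2-bubble}. One must then rewrite these bubble terms in the Sto\v{s}i\'{c} basis using the infinite Grassmannian relation \eqref{eqn-infinite-Grassmannian}, possibly invoking fake bubbles in low-weight regions, and confirm that the resulting expansion stays within the specified ordering. Apart from this, all ingredients are formal consequences of KLMS together with the fact that $(\dot{\mc{U}},\dif)$ is itself a well-defined $p$-DG $2$-category; the upgrade from direct sum decomposition to fantastic filtration therefore amounts to bookkeeping of the derivation action on Schur polynomial inserts.
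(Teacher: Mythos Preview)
Your proposal is sound and outlines exactly the approach one would take, but the paper itself does not give a proof here: it simply defers to \cite[Sections~3 and~6]{EQ2}, where the fantastic filtrations are constructed in detail. Your sketch---using the KLMS idempotents from \eqref{eq-identitydecomp} and the Sto\v{s}i\'{c} formula, computing $\dif$ on them via \eqref{eqn-d-action-mod-generator}--\eqref{eqn-dif-thick-cup-cap-3}, and verifying triangularity in a suitable partial order---is essentially the content of those sections of \cite{EQ2}, so you are reconstructing the cited argument rather than offering an alternative route.
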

\begin{proof}
See \cite[Section 3 and Section 6]{EQ2}.
\end{proof}

\subsection{The \texorpdfstring{$p$}{p}-DG 2-category \texorpdfstring{$\mathcal{U}^R$}{UR}}
We now recall a similar version of the Khovanov-Lauda $2$-category introduced by Rouquier in \cite{Rou2} (for the special case of $\mathfrak{sl}_2$). The definition involves fewer generators and relations, and therefore is usually easier to examine on (potential) $2$-representations.

\begin{defn}\label{def-u-dot-Rouquier} The $2$-category $\mathcal{U}^R$ is an additive graded $\Bbbk$-linear category whose objects $m$ are elements of the weight lattice of $\mathfrak{sl}_2$. The $1$-morphisms are (direct sums of grading shifts of) composites of the generating $1$-morphisms $\1_{m+2} \mathcal{E} \1_m$ and $\1_m \mathcal{F} \1_{m+2}$, for all $m \in \Z$. Each $\1_{m+2} \mathcal{E} \1_m$ will be drawn the same, regardless of the object $m$.

\begin{align*}
\begin{tabular}{|c|c|c|}
	\hline
	$1$-\textrm{Morphism Generator} &
	\begin{DGCpicture}
	\DGCstrand(0,0)(0,1)
	\DGCdot*>{0.5}
	\DGCcoupon*(0.1,0.25)(1,0.75){$^m$}
	\DGCcoupon*(-1,0.25)(-0.1,0.75){$^{m+2}$}
    \DGCcoupon*(-0.25,1)(0.25,1.15){}
    \DGCcoupon*(-0.25,-0.15)(0.25,0){}
	\end{DGCpicture}&
	\begin{DGCpicture}
	\DGCstrand(0,0)(0,1)
	\DGCdot*<{0.5}
	\DGCcoupon*(0.1,0.25)(1,0.75){$^{m+2}$}
	\DGCcoupon*(-1,0.25)(-0.1,0.75){$^m$}
    \DGCcoupon*(-0.25,1)(0.25,1.15){}
    \DGCcoupon*(-0.25,-0.15)(0.25,0){}
	\end{DGCpicture} \\ \hline
	\textrm{Name} & $\1_{m+2}\mathcal{E}\1_m$ & $\1_m\mathcal{F}\1_{m+2}$ \\
	\hline
\end{tabular}
\end{align*}

The weight of any region in a diagram is determined by the weight of any single region. When no region is labeled, the ambient weight is irrelevant.

The $2$-morphisms will be generated by the following pictures.
	
\begin{align*}
\begin{tabular}{|c|c|c|c|c|c|}
  \hline
  \textrm{Generator} &
  \begin{DGCpicture}
  \DGCstrand(0,0)(0,1)
  \DGCdot*>{0.75}
  \DGCdot{0.45}
  \DGCcoupon*(0.1,0.25)(1,0.75){$^m$}
  \DGCcoupon*(-1,0.25)(-0.1,0.75){$^{m+2}$}
  \DGCcoupon*(-0.25,1)(0.25,1.15){}
  \DGCcoupon*(-0.25,-0.15)(0.25,0){}
  \end{DGCpicture}&
  \begin{DGCpicture}
  \DGCstrand(0,0)(1,1)
  \DGCdot*>{0.75}
  \DGCstrand(1,0)(0,1)
  \DGCdot*>{0.75}
  \DGCcoupon*(1.1,0.25)(2,0.75){$^m$}
  \DGCcoupon*(-1,0.25)(-0.1,0.75){$^{m+4}$}
  \DGCcoupon*(-0.25,1)(0.25,1.15){}
  \DGCcoupon*(-0.25,-0.15)(0.25,0){}
  \end{DGCpicture}& 
   \begin{DGCpicture}
  \DGCstrand/d/(0,0)(1,0)
  \DGCdot*<{-0.25,2}
  \DGCcoupon*(1,-0.5)(1.5,0){$^m$}
  \DGCcoupon*(-0.25,0)(1.25,0.15){}
  \DGCcoupon*(-0.25,-0.65)(1.25,-0.5){}
  \end{DGCpicture} & 
  \begin{DGCpicture}
  \DGCstrand(0,0)(1,0)/d/
  \DGCdot*<{0.25,1}
  \DGCcoupon*(1,0)(1.5,0.5){$^m$}
  \DGCcoupon*(-0.25,0.5)(1.25,0.65){}
  \DGCcoupon*(-0.25,-0.15)(1.25,0){}
  \end{DGCpicture} \\ \hline
  \textrm{Degree} & 2  & $-2$   & $1-m$  & $1+m$   \\
  \hline
\end{tabular}
\end{align*}
\end{defn}

We will need the following $2$-morphism defined in terms of the generating $2$-morphisms.
\begin{equation*}
  \begin{DGCpicture}
  \DGCstrand(0,0)(1,1)
      \DGCdot*<{0.05}
  \DGCstrand(1,0)(0,1)
  \DGCdot*>{0.95}
  \DGCcoupon*(-1,0.25)(-0.1,0.75){$^{m}$}
  \DGCcoupon*(-0.25,1)(0.25,1.15){}
  \DGCcoupon*(-0.25,-0.15)(0.25,0){}
  \end{DGCpicture}
  ~:=~
\begin{DGCpicture}
\DGCstrand(0,1)(1,1)/d/
\DGCstrand/u/(1,-0.5)(1,0)(2,1)(2,1.5)/u/
  \DGCdot*>{0.75}
\DGCstrand(2,0)(1,1)
  \DGCdot*>{0.75}
  \DGCstrand(0,1)(0,-0.5)/d/
   \DGCstrand/d/(2,0)(3,0)
  \DGCstrand(3,0)(3,1.5)
\DGCcoupon*(-1,0.25)(-0.1,0.75){$^{m}$}
  \end{DGCpicture}
\end{equation*}

Now we list the relations. Whenever the region label is omitted, the relation applies to all ambient weights.

\begin{itemize}
\item[(1)] {\bf Adjointness relations.} Only one-sided adjunction is required between $\mc{E}$ and $\mc{F}$:
\begin{subequations} \label{biadjointRouq}
\begin{align} \label{biadjoint1Rouq}
\begin{DGCpicture}[scale=0.85]
\DGCstrand(0,0)(0,2)
\DGCdot*>{1}
\end{DGCpicture}
~=~
\begin{DGCpicture}[scale=0.85]
\DGCstrand(2,0)(2,1)(1,1)(0,1)(0,2)
\DGCdot*>{0.35}
\DGCdot*>{1.75}
\end{DGCpicture}\ ,
\qquad \qquad \qquad
\begin{DGCpicture}[scale=0.85]
\DGCstrand(0,0)(0,1)(1,1)(2,1)(2,2)
\DGCdot*<{0.25,1}
\DGCdot*<{1.65,1}
\end{DGCpicture}
~=~
\begin{DGCpicture}[scale=0.85]
\DGCstrand(0,0)(0,2)
\DGCdot*<{1}
\end{DGCpicture} \ .
\end{align}
\end{subequations}

\item[(2)] {\bf NilHecke relations.} The upward pointing strands satisfy nilHecke relations. Note that, diagrammatically, far-away commuting elements become isotopy relations and are thus built in by default.
\begin{subequations} \label{NHrelsRouq}
\begin{align}
\begin{DGCpicture}
\DGCstrand(0,0)(1,1)(0,2)
\DGCdot*>{2}
\DGCstrand(1,0)(0,1)(1,2)
\DGCdot*>{2}
\end{DGCpicture}
=0\ , \quad \quad
\RIII{L}{$0$}{$0$}{$0$}{no} = \RIII{R}{$0$}{$0$}{$0$}{no},
\label{NHrelR3Rouq}
\end{align}
\begin{align}
\crossing{$0$}{$0$}{$1$}{$0$}{no} - \crossing{$0$}{$1$}{$0$}{$0$}{no} = \twolines{$0$}{$0$}{no} = \crossing{$1$}{$0$}{$0$}{$0$}{no} -
\crossing{$0$}{$0$}{$0$}{$1$}{no}. \label{NHreldotforceRouq}
\end{align}
\end{subequations}
\item[(3)] {\bf Isomorphism of $1$-morphisms.}  For all $m \in \Z$, there exist isomorphisms\footnote{As shown in \cite{Brundan2KM}, one does not need additional generators for constructing the inverses of \eqref{IdentityDecompRouquier}; the inverse maps can already be constructed from the generators listed
above.}
\begin{subequations} \label{IdentityDecompRouquier}
\begin{align}
 \begin{DGCpicture}
  \DGCstrand(0,0)(1,1)
      \DGCdot*<{0.05}
  \DGCstrand(1,0)(0,1)
  \DGCdot*>{0.95}
  \DGCcoupon*(-1,0.25)(-0.1,0.75){$^{m}$}
  \DGCcoupon*(-0.25,1)(0.25,1.15){}
  \DGCcoupon*(-0.25,-0.15)(0.25,0){}
  \end{DGCpicture}
~\bigoplus\left(
~\bigoplus_{j=0}^{m-1}~
\begin{DGCpicture}
\DGCstrand(0,0)(0,.05)(1,.05)/d/(1,0)/d/
\DGCdot*<{.55}
\DGCdot{.25,1}[r]{$_j$}
\DGCcoupon*(-.5,0.25)(-0.1,0.75){$^{m}$}
\end{DGCpicture}
~\right)
\colon
\mathcal{F} \mathcal{E} \1_m~ 
\longrightarrow 
\mathcal{E} \mathcal{F} \1_m
\bigoplus \left(
\bigoplus_{j=0}^{m-1} 
q^{m-1-2j}\1_m  \right),
\end{align}
\begin{align}
 \begin{DGCpicture}
  \DGCstrand(0,0)(1,1)
      \DGCdot*<{0.05}
  \DGCstrand(1,0)(0,1)
  \DGCdot*>{0.95}
  \DGCcoupon*(-1,0.25)(-0.1,0.75){$^{m}$}
  \DGCcoupon*(-0.25,1)(0.25,1.15){}
  \DGCcoupon*(-0.25,-0.15)(0.25,0){}
  \end{DGCpicture}
~\bigoplus\left(
\bigoplus_{j=0}^{-m-1}~
\begin{DGCpicture}
  \DGCstrand/d/(0,1)(0,.95)(1,.95)/u/(1,1)/u/
\DGCdot*<{0.45}
\DGCdot{.75,1}[r]{$_j$}
\DGCcoupon*(-.5,0.25)(-0.1,0.75){$^{m}$}
\end{DGCpicture}
\right)
\colon
\mathcal{F} \mathcal{E} \1_m
\bigoplus \left(
\bigoplus_{j=0}^{-m-1} 
q^{-m-1-2j}\1_m  \right)
\longrightarrow
\mathcal{E} \mathcal{F} \1_m.
\end{align}
The sum in the first equality vanishes for $m \leq 0$, and the sum in the second equality vanishes for $m \geq 0$.
\end{subequations} 
\end{itemize}

In a remarkable work, Brundan \cite{Brundan2KM} establishes an equivalence between the two versions of the $2$-categories defined by Khovanov-Lauda and Rouquier respectively.

\begin{thm}\label{thmBrundan}
 There is an equivalence of $2$-categories $\mathcal{U} \cong \mathcal{U}^R$. 
\end{thm}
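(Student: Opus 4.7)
The plan is to produce two mutually inverse (up to isomorphism) strict $2$-functors
\[
\Phi : \mathcal{U}^R \longrightarrow \mathcal{U}, \qquad \Psi : \mathcal{U} \longrightarrow \mathcal{U}^R,
\]
following Brundan's strategy. Defining $\Phi$ is the easy half: both $2$-categories share the same objects, the same generating $1$-morphisms $\mathcal{E}, \mathcal{F}$, and the generators of $\mathcal{U}^R$ (upward dot, upward crossing, rightward cup, rightward cap) all already exist as $2$-morphisms in $\mathcal{U}$. One declares $\Phi$ to be the identity on generators and checks the defining relations of $\mathcal{U}^R$ hold in $\mathcal{U}$: the one-sided adjunction relations \eqref{biadjointRouq} are immediate consequences of biadjointness \eqref{biadjoint}, the nilHecke relations \eqref{NHrelsRouq} coincide with \eqref{NHrels}, and the isomorphism of $1$-morphisms \eqref{IdentityDecompRouquier} is exactly the content of the identity decomposition relations \eqref{IdentityDecomp}, where the rightward-oriented cap-bubble-cup terms in $\mathcal{U}$ are read off as dotted bubbles and shifted identities under Brundan's dictionary.

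The nontrivial direction is $\Psi$. The hard step is to reconstruct the missing structure of $\mathcal{U}$ inside $\mathcal{U}^R$, namely the leftward cups and caps, the downward crossing, and the fake bubbles. The key observation is that the isomorphism \eqref{IdentityDecompRouquier} forces $\mathcal{F}$ to be a two-sided adjoint of $\mathcal{E}$ in $\mathcal{U}^R$: given any object $m$, one can define the leftward cap and cup as the composition of the inverse of the isomorphism in \eqref{IdentityDecompRouquier} with the rightward cap/cup together with the projection onto (resp.\ inclusion from) the identity summands. Concretely, the components of the isomorphism and its inverse, when restricted to the identity direct summands, provide the analogues of the cap/cup-with-bubble terms appearing on the right-hand side of \eqref{IdentityDecomp}, and the remaining component gives the opposite orientation sideways crossing. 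From these one then defines fake bubbles and dotted bubbles so that the infinite Grassmannian relation \eqref{eqn-infinite-Grassmannian} holds by construction, and the bubble normalization \eqref{zerobubble} and positivity \eqref{negbubble} are arranged by specifying that the degree $0$ bubble is the identity and setting lower-degree bubbles to zero.

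With $\Psi$ defined on generators, one must verify that it respects all defining relations of $\mathcal{U}$: biadjointness and cyclicity \eqref{biadjoint}, positivity/normalization of bubbles \eqref{negzerobubble}, the reduction-to-bubbles curl identities \eqref{bubblereduction}, and the identity decomposition \eqref{IdentityDecomp}. The cyclicity of dots and crossings \eqref{biadjointdot}, \eqref{biadjointcrossing} is forced because one constructs the downward generators precisely by adjunction through the newly-built leftward cups and caps. The remaining relations follow from a series of diagrammatic manipulations: expanding the isomorphism \eqref{IdentityDecompRouquier} and its inverse in two ways recovers both the bubble reduction relations and the identity decomposition; this is the core combinatorial computation and I expect it to be the main obstacle, as it requires tracking sign conventions, degrees, and the interplay between Rouquier's normalization of the adjunction isomorphism and Lauda's normalization of bubbles.

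Finally, one checks that $\Phi\circ\Psi \cong \mathrm{id}_{\mathcal{U}}$ and $\Psi\circ\Phi \cong \mathrm{id}_{\mathcal{U}^R}$. On the generators shared by both presentations this is automatic, and the remaining generators of $\mathcal{U}$ (leftward caps/cups and bubbles) are by construction sent to $2$-morphisms which, after applying $\Phi$, reproduce themselves via the identity decomposition, so that the composition is naturally isomorphic to the identity. To extend the result to the $p$-DG setting as Theorem~\ref{equiv-of-pdgRoug-pdgLauda} requires, one observes that the differentials in Definition~\ref{def-special-dif} and Definition~\ref{def-special-dif-Rouquier} are specified on precisely the shared generators in the same way, and that the extra generators in $\mathcal{U}$ (respectively, the extra relations in $\mathcal{U}^R$) are built from these shared generators, so by uniqueness of the extension of $\dif$ (Remark~\ref{rmk-uniqueness-of-d}) the functors $\Phi$ and $\Psi$ intertwine the $p$-differentials, upgrading the equivalence to one of $p$-DG $2$-categories.
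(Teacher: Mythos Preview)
Your proposal is a reasonable high-level sketch of Brundan's argument, but you should be aware that the paper itself does not reprove this theorem at all: its proof consists entirely of the single sentence ``This is proven in \cite[Main Theorem]{Brundan2KM}.'' So there is no comparison of approaches to be made here; the paper treats the result as a black box import.

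Two minor remarks on your write-up. First, the last paragraph about the $p$-DG upgrade is out of place: that is the content of the \emph{next} theorem (Theorem~\ref{equiv-of-pdgRoug-pdgLauda}), not this one, and the paper handles it separately with exactly the argument you give (invoking Remark~\ref{rmk-uniqueness-of-d}). Second, your sketch of the hard direction $\Psi$ is accurate in spirit but understates the work: Brundan's paper spends considerable effort verifying that the leftward cup and cap one extracts from the inverse of the isomorphism in \eqref{IdentityDecompRouquier} really do satisfy the full biadjunction and cyclicity relations of $\mathcal{U}$, and that the induced bubbles satisfy the infinite Grassmannian relations with the correct normalizations. Your phrase ``I expect it to be the main obstacle'' is right; it is not a routine check, and if you were actually writing this proof you would need to either carry it out in detail or, as the paper does, simply cite Brundan.
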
 
\begin{proof}
This is proven in \cite[Main Theorem]{Brundan2KM}.
\end{proof}

\begin{lem}\label{def-special-dif-Rouquier}
There is a unique derivation defined on the generating $2$-morphisms of $\mathcal{U}^R$ as follows.
\[
\begin{array}{c}
\dif \left( \onelineshort{$1$}{no} \right) =  \onelineshort{$2$}{no} , \quad \quad \quad
\dif \left( \crossing{$0$}{$0$}{$0$}{$0$}{no} \right) =  \twolines{$0$}{$0$}{no} -2 \crossing{$1$}{$0$}{$0$}{$0$}{no} ,\\ \\
\end{array}
\]
\[
\begin{array}{c}
\dif
\left(  
\begin{DGCpicture}
\DGCstrand(0,0)(0,.05)(1,.05)/d/(1,0)/d/
\DGCdot*<{.55}
\DGCcoupon*(-.5,0.25)(-0.1,0.75){$^{m}$}
\end{DGCpicture}
~~\right)
~=(m+1)~
\begin{DGCpicture}
\DGCstrand(0,0)(0,.05)(1,.05)/d/(1,0)/d/
\DGCdot*<{.55}
\DGCdot{.25,1}
\DGCcoupon*(-.5,0.25)(-0.1,0.75){$^{m}$}
\end{DGCpicture} \ , \quad \quad \quad
\dif
\left(
\begin{DGCpicture}
  \DGCstrand/d/(0,1)(0,.95)(1,.95)/u/(1,1)/u/
\DGCdot*<{.45}
\DGCcoupon*(-.5,0.25)(-0.1,0.75){$^{m}$}
\end{DGCpicture}
~~\right)
~=(1-m)~
\begin{DGCpicture}
  \DGCstrand/d/(0,1)(0,.95)(1,.95)/u/(1,1)/u/
\DGCdot*<{0.45}
\DGCdot{.75,1}
\DGCcoupon*(-.5,0.25)(-0.1,0.75){$^{m}$}
\end{DGCpicture} \ .
\end{array}
\]
The resulting 2-category $(\mc{U}^R,\dif)$ is $p$-DG equivalent to $(\mc{U},\dif)$.
\end{lem}
\begin{proof}
This is just transporting the $p$-DG structures via Brundan's Theorem \ref{thmBrundan}.
\end{proof}

Lemma \ref{def-special-dif-Rouquier} gives one a useful criterion to construct categorical actions of $(\mc{U},\dif)$.

\begin{thm}
\label{equiv-of-pdgRoug-pdgLauda}
To define a $p$-DG functor on $(\mc{U},\dif)$, it suffices to define a functor on $\mc{U}^R$ and define the action of $\dif$ on the image of the generators in Lemma \ref{def-special-dif-Rouquier}. 
\end{thm}
\begin{proof}
This is now just a consequence of Lemma \ref{lemdifonhalfgenerators} and Lemma \ref{def-special-dif-Rouquier}.
\end{proof}

\section{The nilHecke algebra}
\label{sec-nilHecke}
\subsection{Definitions}
\label{subsec-def-nilHecke}
Recall that $\Bbbk$ is a field of characteristic $p>0$.
Let $l $ and $n$ be integers such that $ l \geq n \geq 0$.
Define the nilHecke algebra $\nh_n$ to be the $\Bbbk$-algebra generated by $ y_1, \ldots, y_n$
and $\psi_1, \ldots, \psi_{n-1} $ with relations

\begin{equation}\label{eqn-NH-relation}
\begin{gathered}
y_iy_j=y_jy_i, \quad y_i\psi_j=\psi_jy_i~(i \neq j,j+1),\quad  y_i\psi_i-\psi_iy_{i+1}=1=\psi_iy_i-y_{i+1}\psi_i,\\
\psi_i^2=0,
\quad \quad
\psi_i\psi_j=\psi_j\psi_i~(|i-j|>1),\quad \quad \psi_{i}\psi_{i+1}\psi_1=\psi_{i+1}\psi_i\psi_{i+1}.
\end{gathered}
\end{equation}
The \emph{cyclotomic nilHecke algebra} $\nh_n^l$ is the quotient of the nilHecke algebra $\nh_n$ by the \emph{cyclotomic relation}
\begin{equation}\label{eqn-NH-cyclotomicrelation}
\begin{gathered}
y_1^l=0.
\end{gathered}
\end{equation}
The (cyclotomic) nilHecke algebra is a graded algebra where the degree of $y_i$ is $2$ and the degree of $\psi_i$ is $-2$.

The relations \eqref{eqn-NH-relation} translate into planar diagrammatic relations for the upward pointing strands in the 2-category $\mc{U}$ (see Section \ref{sec-2-cat-U}), with the orientation labels dropped:
\begin{gather}
\begin{DGCpicture}[scale=0.55]
\DGCstrand(1,0)(0,1)(1,2)
\DGCstrand(0,0)(1,1)(0,2)
\end{DGCpicture}
~= 0 \ ,  \quad \quad \quad \quad
\begin{DGCpicture}[scale=0.55]
\DGCstrand(0,0)(2,2)
\DGCstrand(1,0)(0,1)(1,2)
\DGCstrand(2,0)(0,2)
\end{DGCpicture}
~=~
\begin{DGCpicture}[scale=0.55]
\DGCstrand(0,0)(2,2)
\DGCstrand(1,0)(2,1)(1,2)
\DGCstrand(2,0)(0,2)
\end{DGCpicture}
\ ,\label{eqn-nilHecke-RII-RIII} \\
\begin{DGCpicture}
\DGCstrand(0,0)(1,1)
\DGCdot{0.25}
\DGCstrand(1,0)(0,1)
\end{DGCpicture}
-
\begin{DGCpicture}
\DGCstrand(0,0)(1,1)
\DGCdot{0.75}
\DGCstrand(1,0)(0,1)
\end{DGCpicture}
~=~
\begin{DGCpicture}
\DGCstrand(0,0)(0,1)
\DGCstrand(1,0)(1,)
\end{DGCpicture}
~=~
\begin{DGCpicture}
\DGCstrand(1,0)(0,1)
\DGCdot{0.75}
\DGCstrand(0,0)(1,1)
\end{DGCpicture}
-
\begin{DGCpicture}
\DGCstrand(1,0)(0,1)
\DGCdot{0.25}
\DGCstrand(0,0)(1,1)
\end{DGCpicture} \ .
\end{gather}
The cyclotomic relation means that a black strand carrying $l$ consecutive dots and appearing to the left of the rest of a diagram annihilates the entire picture:
\begin{equation}\label{eqn-cyclotomic}
\begin{DGCpicture}
\DGCstrand(1,0)(1,1)
\DGCdot{0.5}[ur]{$_l$}
\DGCcoupon*(1.25,0.25)(1.75,0.75){$\cdots$}
\end{DGCpicture}
~=~0.
\end{equation}

There is a graded anti-automorphism on the (cyclotomic) nilHecke algebras $ * \colon \nh_n^l \rightarrow \nh_n^l$ defined by $\psi_i^*=\psi_i$
and $ y_i^*=y_i$. Diagrammatically, it is interpreted as flipping a diagram upside down about a horizontal axis.

Let us recall some special elements of (cyclotomic) nilHecke algebras that correspond to symmetric group elements. Fix a reduced decomposition of $w \in \mf{S}_n$,
$w=s_{i_1} \cdots s_{i_r}$.
This gives rise to an element $\psi_w= \psi_{i_1} \cdots \psi_{i_r} \in \nh_n^l$ 
which is independent of the expression for $w$ by the second group of relations in \eqref{eqn-NH-relation}. For instance, if $w_0\in \mf{S}_n$ is the usual longest element with respect to the usual Coxeter length function\footnote{The length $\ell(w)$ of an element in a reduced expression $w=s_{i_1}\cdots s_{i_r}$ is defined to be $r$.} $\ell:\mf{S}_n\lra \N$, then the corresponding (cyclotomic) nilHecke element is unambiguously depicted as the following $n$-stranded element:
\[
\psi_{w_0}=
\begin{DGCpicture}[scale=0.75]
\DGCstrand(0,0)(4,4)
\DGCstrand(4,0)(0,4)
\DGCstrand(3,0)(4,1)(1,4)
\DGCstrand(2,0)(4,2)(2,4)
\DGCcoupon*(2.2,3.6)(3.8,4){$\cdots$}
\DGCcoupon*(0.2,0)(1.8,0.4){$\cdots$}
\end{DGCpicture} \ .
\]
The element is symmetric with respect to the $*$ anti-automorphism. 

\subsection{Idempotents}
Let $w_0$ be the longest element in the symmetric group $\mf{S}_n$.  This gives rise to an indecomposable idempotent $e_n \in \nh_n^l$
\begin{subequations}
\begin{equation}
e_n := y_1^{n-1} \cdots y_n^0 \psi_{w_0}=\begin{DGCpicture}[scale=0.75]
\DGCstrand(0,0)(4,4)
\DGCstrand(4,0)(0,4)
\DGCdot{3.65}[ur]{$_{_{n-1}}$}
\DGCstrand(3,0)(4,1)(1,4)
\DGCdot{3.65}[ur]{$_{_{n-2}}$}
\DGCstrand(2,0)(4,2)(2,4)
\DGCdot{3.65}[ur]{$_{_{n-3}}$}
\DGCcoupon*(2.2,3.3)(3.8,3.7){$\cdots$}
\DGCcoupon*(0.2,0)(1.8,0.4){$\cdots$}
\end{DGCpicture} \ .
\end{equation}
In the notation of Section \ref{sec-thick-cal}, we will also depict the above idempotent as an unoriented thick strand
\begin{equation}\label{eqn-thick-idemp}
e_n=
\begin{DGCpicture}
\DGCstrand[Green](0,0)(0,1)[$^n$`{\ }]
\end{DGCpicture} \ .
\end{equation}
\end{subequations}
Define
\begin{equation}\label{eqn-half-diag}
\begin{DGCpicture}
\DGCstrand[Green](0,0)(0,0.5)
\DGCdot.{0.25}[r]{$_n$}
\DGCstrand/u/(0,0.5)(-1,1.5)/u/
\DGCstrand/u/(0,0.5)(0.5,1.5)/u/
\DGCstrand/u/(0,0.5)(1,1.5)/u/
\DGCcoupon*(-0.8,1.1)(0.3,1.3){$\cdots$}
\end{DGCpicture}
:=
\begin{DGCpicture}
\DGCstrand(0,0)(0,1.5)
\DGCstrand(0.5,0)(0.5,1.5)
\DGCstrand(1.5,0)(1.5,1.5)
\DGCcoupon(-0.25,0.5)(1.75,1){$\psi_{w_0}$}
\DGCcoupon*(0.6,0.1)(1.4,0.4){$\cdots$}
\DGCcoupon*(0.6,1.1)(1.4,1.4){$\cdots$}
\end{DGCpicture} \ ,
\quad \quad \quad \quad
\begin{DGCpicture}
\DGCstrand[Green](0,0)(0,0.5)
\DGCdot.{0.25}[r]{$_n$}
\DGCstrand/u/(-1,-1)(0,0)/u/
\DGCstrand/u/(0.5,-1)(0,0)/u/
\DGCstrand/u/(1,-1)(0,0)/u/
\DGCcoupon*(-0.8,-0.8)(0.3,-0.6){$\cdots$}
\end{DGCpicture}
:=
\begin{DGCpicture}
\DGCstrand(0,0)(0,1.5)
\DGCstrand(0.5,0)(0.5,1.5)
\DGCstrand(1.5,0)(1.5,1.5)
\DGCcoupon(-0.25,0.5)(1.75,1){$e_n$}
\DGCcoupon*(0.6,0.1)(1.4,0.4){$\cdots$}
\DGCcoupon*(0.6,1.1)(1.4,1.4){$\cdots$}
\end{DGCpicture} \ .
\end{equation}
Then one can show that
\[
\psi_{w_0}=
\begin{DGCpicture}
\DGCstrand[Green](0,0)(0,0.5)
\DGCdot.{0.25}[r]{$_n$}
\DGCstrand/u/(-1,-1)(0,0)/u/
\DGCstrand/u/(0.5,-1)(0,0)/u/
\DGCstrand/u/(1,-1)(0,0)/u/
\DGCstrand/u/(0,0.5)(-1,1.5)/u/
\DGCstrand/u/(0,0.5)(0.5,1.5)/u/
\DGCstrand/u/(0,0.5)(1,1.5)/u/
\DGCcoupon*(-0.8,-0.8)(0.3,-0.6){$\cdots$}
\DGCcoupon*(-0.8,1.1)(0.3,1.3){$\cdots$}
\end{DGCpicture}
\ .
\]
The diagram is thick in the middle, indicating that it is a morphism factoring through the image of the idempotent. In this notation, the diagram is the concatenation its two halves.
This follows from the simple computation that
\begin{equation}\label{eqn-psiepsi}
\psi_{w_0} e_n = \psi_{w_0}y_1^{n-1}\cdots y_{n-1}^1 y_n^0 \psi_{w_0}= \psi_{w_0}.
\end{equation}
Analogously, there are thick \emph{splitters} and \emph{mergers}, for any pair $(a,b)\in \N^2$, that are built out of the idempotents:
\begin{equation}
\begin{DGCpicture}[scale=0.85]
\DGCPLstrand[Green](1,0)(1,1)[$^{a+b}$]
\DGCPLstrand[Green](1,1)(0,2)[`$_a$]
\DGCPLstrand[Green](1,1)(2,2)[`$_b$]
\end{DGCpicture}
:=
\begin{DGCpicture}[scale=0.85]
\DGCstrand(0,0)(2,2)(2,2.3)[$^1$]
\DGCstrand(0.5,0)(2.5,2)(2.5,2.3)[$^2$]
\DGCstrand(1.5,0)(3.5,2)(3.5,2.3)[$^b$]
\DGCstrand(2.5,0)(0,2)(0,2.3)[$^{b+1}$]
\DGCstrand(3.5,0)(1,2)(1,2.3)[$^{b+a}$]
\DGCcoupon*(0.8,0.1)(1.4,0.3){$\cdots$}
\DGCcoupon*(2.5,1.55)(3.2,1.7){$\cdots$}
\DGCcoupon*(2.6,0.1)(3.2,0.3){$\cdots$}
\DGCcoupon*(0.4,1.55)(1,1.7){$\cdots$}
\DGCcoupon(1.8,1.8)(3.7,2.2){$e_b$}
\DGCcoupon(-0.2,1.8)(1.2,2.2){$e_{a}$}
\end{DGCpicture} \ ,
\quad \quad \quad
\begin{DGCpicture}[scale=0.85]
\DGCPLstrand[Green](0,0)(1,1)[$^a$]
\DGCPLstrand[Green](2,0)(1,1)[$^b$]
\DGCPLstrand[Green](1,1)(1,2)[`$_{a+b}$]
\end{DGCpicture} 
:=
\begin{DGCpicture}[scale=0.85]
\DGCstrand(0,0)(0,2.3)[$^1$]
\DGCstrand(0.5,0)(0.5,2.3)[$^2$]
\DGCstrand(1.5,0)(1.5,2.3)[$^b$]
\DGCstrand(2.5,0)(2.5,2.3)[$^{b+1}$]
\DGCstrand(3.5,0)(3.5,2.3)[$^{b+a}$]
\DGCcoupon*(0.7,0.1)(1.3,0.3){$\cdots$}
\DGCcoupon*(2.7,1.55)(3.3,1.7){$\cdots$}
\DGCcoupon*(2.7,0.1)(3.3,0.3){$\cdots$}
\DGCcoupon*(0.7,1.55)(1.3,1.7){$\cdots$}
\DGCcoupon(-0.2,0.9)(3.7,1.4){$e_{a+b}$}
\end{DGCpicture} \ .
\end{equation} 

Let ${\bf i}=(i_1, \ldots, i_r)$ be a tuple of natural numbers such that $i_1+ \cdots + i_r=n$.
Then we set the idempotent
\begin{equation}\label{eqn-sequence-idempotent}
e_{{\bf i}} = e_{i_1} \otimes \cdots \otimes e_{i_r}.
\end{equation}
This corresponds to putting the diagrams for $e_{i_1}$,..., $e_{i_r}$ side by side next to one another:
\[
\begin{DGCpicture}
\DGCstrand[Green](0,0)(0,1)[$^{i_1}$]
\DGCstrand[Green](0.5,0)(0.5,1)[$^{i_2}$]
\DGCstrand[Green](1.5,0)(1.5,1)[$^{i_r}$]
\DGCcoupon*(0.6,0.1)(1.4,0.9){$\cdots$}
\end{DGCpicture} \ .
\]

Rotating a diagram $180^\circ$ turns $e_n$ into a quasi-idempotent
\begin{equation}
e_n' := \psi_{w_0} \cdot y_1^0 \cdots y_n^{n-1}.
\end{equation}
To obtain a genuine idempotent one needs to correct the element with the sign $(-1)^{n(n-1)/2}$.

Let $a+b=n$.
For $\mu \in P(a,b) $ we define a minimal idempotent $ e^{\mu}_{(a,b)} $ of $\nh_n^l$ as follows.
First set
\begin{equation}
\psi_{a,b} = (\psi_b \cdots \psi_{a+b-1}) \cdots (\psi_2 \cdots \psi_{a+1})(\psi_1 \cdots \psi_{a-1}),
\end{equation}
which is diagrammatically depicted as
\begin{equation}
\psi_{a,b} =
\begin{DGCpicture}[scale=0.85]
\DGCstrand(0,0)(2,2)[$^1$]
\DGCstrand(0.5,0)(2.5,2)[$^2$]
\DGCstrand(1.5,0)(3.5,2)[$^a$]
\DGCstrand(2.5,0)(0,2)[$^{a+1}$]
\DGCstrand(3.5,0)(1,2)[$^{a+b}$]
\DGCcoupon*(0.8,0.1)(1.4,0.3){$\cdots$}
\DGCcoupon*(2.5,1.55)(3.2,1.7){$\cdots$}
\DGCcoupon*(2.6,0.1)(3.2,0.3){$\cdots$}
\DGCcoupon*(0.4,1.55)(1,1.7){$\cdots$}
\end{DGCpicture} \ .
\end{equation}
Then we define
\begin{equation}\label{eqn-general-idempotent}
e^{\mu}_{(a,b)} = (-1)^{|\hat{\mu}|}(\pi_{\mu}(y_1, \cdots, y_a)) (e_{(a,b)}) (\psi_{a,b})  (e_{a+b}) (\pi_{\hat{\mu}}(y_{a+1},\cdots,y_{a+b})).
\end{equation}
The idempotent is diagrammatically depicted as (c.f.~equation \eqref{eq-identitydecomp} and also \cite{KLMS})
\begin{equation}
(-1)^{|\hat{\mu}|}
\begin{DGCpicture}[scale=0.9]
\DGCPLstrand[Green](0,0)(1,1.15)[$^a$]
\DGCPLstrand[Green](2,0)(1,1.15)[$^b$]
\DGCPLstrand[Green](1,1.15)(1,1.85)
\DGCPLstrand[Green](1,1.85)(0,3)[`$_a$]
\DGCPLstrand[Green](1,1.85)(2,3)[`$_b$]
\DGCcoupon(1.15,0.35)(1.85,0.85){$_{\pi_{\hat{\mu}}}$}
\DGCcoupon(0.15,2.15)(0.85,2.65){$_{\pi_\mu}$}
\end{DGCpicture} \ .
\end{equation}
It is understood that we may let elements of $\nh_{n}^l$ act on the top and bottom of this diagram.

\subsection{\texorpdfstring{$p$}{p}-DG structure}
The cyclotomic nilHecke algebra $\nh^l_n$ has a $p$-DG structure inherited from that of $\nh_n$:
\begin{equation}
\partial(y_i)=y_i^2 \hspace{1in}
\partial(\psi_i)=-y_i \psi_i - \psi_i y_{i+1},
\end{equation}
which is diagrammatically expressed as
\[
\dif\left(~
\begin{DGCpicture}
\DGCstrand(0,0)(0,1)
\DGCdot{0.5}
\end{DGCpicture}
~\right)=
\begin{DGCpicture}
\DGCstrand(0,0)(0,1)
\DGCdot{0.5}[r]{$_2$}
\end{DGCpicture} \ ,
\quad \quad \quad \quad
\dif\left(~
\begin{DGCpicture}
\DGCstrand(1,0)(0,1)
\DGCstrand(0,0)(1,1)
\end{DGCpicture}
~\right)
=-
\begin{DGCpicture}
\DGCstrand(1,0)(0,1)
\DGCdot{0.75}
\DGCstrand(0,0)(1,1)
\end{DGCpicture}
-
\begin{DGCpicture}
\DGCstrand(1,0)(0,1)
\DGCdot{0.25}
\DGCstrand(0,0)(1,1)
\end{DGCpicture} \ .
\]

\begin{prop}
\label{pdgidempotprop}
Assume $ {\bf i}=(i_1, \ldots, i_r)$ with $i_1+\cdots+i_r=n$.  Then the projective $\nh_n^l$-module $e_{{\bf i}}\nh_n^l$ is a right $p$-DG module over $\nh_n^l$.
\end{prop}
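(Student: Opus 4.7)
The plan is to apply the Leibniz rule
\[\partial(e_{\bf i}\, x) = \partial(e_{\bf i})\, x + e_{\bf i}\, \partial(x),\]
which reduces the statement to the containment $\partial(e_{\bf i}) \in e_{\bf i}\nh_n^l$. Since $e_{\bf i} = e_{i_1}\otimes\cdots\otimes e_{i_r}$ is built from minimal idempotents acting on pairwise disjoint groups of strands, the derivation property gives
\[\partial(e_{\bf i}) = \sum_{j=1}^{r} e_{i_1}\otimes\cdots\otimes \partial(e_{i_j})\otimes\cdots\otimes e_{i_r},\]
so it is enough to prove the single-block statement $\partial(e_n) \in e_n\nh_n^l$ for each $n$.

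For this, I would proceed by direct computation starting from $e_n = y_1^{n-1}y_2^{n-2}\cdots y_{n-1}\cdot\psi_{w_0}$. Applying Leibniz with $\partial(y_i) = y_i^2$ and $\partial(\psi_i) = -y_i\psi_i - \psi_i y_{i+1}$, and then repeatedly using the straightening relations $\psi_i y_{i+1} = y_i\psi_i - 1$ and $\psi_i y_i = y_{i+1}\psi_i + 1$ to push dots past the crossings of $\psi_{w_0}$, one rewrites the expression as $e_n\cdot c_n$ for an explicit element $c_n\in\nh_n$. The $n=2$ case illustrates the mechanism: $\partial(e_2) = y_1^2\psi_1 - y_1^2\psi_1 - y_1\psi_1 y_2 = -e_2\, y_2$.

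The main obstacle is organising this cancellation for general $n$: a naive expansion of $\partial(\psi_{w_0})$ produces many terms whose sum simplifies only after invoking the straightening relations repeatedly. The cleanest route is an induction on $n$ using the factorisation $\psi_{w_0}=\psi_{n-1}\psi_{n-2}\cdots\psi_1\cdot\psi_{w_0'}$, where $w_0'$ is the longest element of $S_{n-1}$ acting on strands $2,\ldots,n$; the inductive hypothesis then handles $e_{n-1}$ on those inner strands, leaving only the single new crossing at each step to analyse. Alternatively, one could translate into the thick calculus of Section~\ref{sec-thick-cal} and apply the differential formulas \eqref{eqn-d-action-mod-generator} directly to a splitter/merger decomposition $e_n = \iota_n\pi_n$, though either route demands careful bookkeeping.
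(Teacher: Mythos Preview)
Your approach is correct and essentially the same as the paper's: both reduce to the single-block case and to the containment $\partial(e_n)\in e_n\nh_n^l$. The only difference is that the paper does not carry out the inductive computation you sketch; it simply quotes from \cite{EQ2} the explicit formula
\[
\partial(e_n) = -\,e_n \sum_{i=1}^{n}(i-1)\,y_i,
\]
which immediately gives the result (and which your $n=2$ check reproduces).
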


\begin{proof}
For simplicity we assume $r=1$.  The general case follows similarly.
Using \cite[equation (2.4)]{EQ2}, one has
\begin{equation}
\dif(\psi_{w_0})=\sum_{i=1}^n (i-n)y_i \psi_{w_0}+\sum_{i=1}^n (1-i)\psi_{w_0}y_i.
\end{equation}
Since $e_n=y_1^{n-1}\cdots y_{n-1}^1y_n^0 \psi_{w_0}$, the differential action on the monomial $y_1^{n-1}\cdots y_{n-1}^1y_n^0$ cancels with the first term above, and results in
\begin{equation}
\label{partialofidempotent}
\partial(e_n)=-e_n \sum_{i=1}^n (i-1) y_i.
\end{equation}
Thus $e_n \nh_n^l$ is stable under $\partial$.
\end{proof}

We will need later the easily verified fact below:
\begin{equation}
\label{enprime}
\partial(e_n^\prime)=-\sum_{i=1}^n (n-i) y_i e_n^\prime,
\end{equation}
which follows from a similar computation as in Proposition \ref{pdgidempotprop}.

\begin{prop}
For $0 \leq n \leq p-1$,
the $p$-DG module $e_{n}\nh_n^l$ is compact and cofibrant.
\end{prop}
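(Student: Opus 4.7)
The proposition splits naturally into two assertions.

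\emph{Compactness.} As $e_n$ is an idempotent of $\nh_n^l$, the right module $e_n\nh_n^l$ is a direct summand of the regular module $\nh_n^l$, with complement $(1-e_n)\nh_n^l$. It is therefore finitely generated projective as a right $\nh_n^l$-module. Since the functor $\Hom_{\mc{D}(\nh_n^l)}(\nh_n^l,-)$ commutes with small direct sums, and compactness passes to summands, $e_n\nh_n^l$ is compact in $\mc{D}(\nh_n^l)$.

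\emph{Cofibrance.} The plan is to exhibit a finite $\dif$-stable filtration $0 = F^0 \subset F^1 \subset \cdots \subset F^k = e_n\nh_n^l$ whose subquotients $F^i/F^{i-1}$ are isomorphic to direct sums of grading shifts of $p$-DG direct summands of $\nh_n^l$, establishing property P in the sense of Definition \ref{def-finite-cell}. The key input is the formula
\[
\dif(e_n) = -e_n\sum_{i=1}^n(i-1)y_i
\]
from the proof of Proposition \ref{pdgidempotprop}, which shows that the differential of the generator already lies in the strictly positive-degree part of $e_n\nh_n^l$. Ordering a PBW-type basis of $e_n\nh_n^l$ by total polynomial degree (refining by a lexicographic order on monomials in $y_1,\ldots,y_n$ if necessary), the differential acts upper-triangularly; hence the filtration by degree is $\dif$-stable, and its successive subquotients are concentrated in individual internal degrees.

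To identify each subquotient as a $p$-DG summand of $\nh_n^l$, I would use the thick-calculus diagrammatics of Subsection \ref{subsec-def-nilHecke}. Each graded piece of $e_n\nh_n^l$ can be matched, via the identity decomposition of thick strands \eqref{eq-identitydecomp} together with the nilHecke relations \eqref{NHrels}, with the image of a composition idempotent $e_{\mathbf{i}}$ from \eqref{eqn-sequence-idempotent}, suitably shifted. The corresponding $p$-DG summand structure on $e_{\mathbf{i}}\nh_n^l$ then transfers to the filtration step, and assembling the steps yields the required property P filtration.

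\emph{Main obstacle.} The crucial and most delicate step is verifying that these identifications produce honest $p$-DG summands of $\nh_n^l$, and this is precisely where the assumption $n\leq p-1$ enters. The splittings depend on integer coefficients of size at most $n$ (from the factors $(i-1)\in\{0,1,\ldots,n-1\}$ in $\dif(e_n)$, and from binomial coefficients that arise by commuting dots through the thick idempotent $e_n$). When $n\leq p-1$ these coefficients remain nonzero in $\Bbbk$, permitting the inductive splitting to close. For $n=p$ the relevant binomials collide modulo $p$ and the argument breaks, which is forced by the categorical incarnation of the relation $F^p=0$ in $\dot{u}_{\mathbb{O}_p}$ that $e_n\nh_n^l$ decategorifies to.
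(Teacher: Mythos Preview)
Both parts of your proposal have genuine gaps.

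\emph{Compactness.} The decomposition $\nh_n^l = e_n\nh_n^l \oplus (1-e_n)\nh_n^l$ is only a splitting of graded modules, not of $p$-DG modules: since $\dif(e_n)=-e_n\sum_{i}(i-1)y_i\neq 0$ for $n\ge 2$, the complement $(1-e_n)\nh_n^l$ is not $\dif$-stable (apply $\dif$ to $(1-e_n)x$ and observe the term $-\dif(e_n)x\in e_n\nh_n^l$). Being a direct summand in the ordinary module category does not by itself yield compactness in the $p$-DG derived category $\mc{D}(\nh_n^l)$. In practice compactness follows once the finite cell structure is in place, so this part reduces to the second.

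\emph{Cofibrance.} The degree filtration you describe cannot produce a property-P structure. Property P (Definition \ref{def-finite-cell}) requires each subquotient $F^i/F^{i-1}$ to be isomorphic \emph{as a $p$-DG module} to a direct sum of $p$-DG summands of $\nh_n^l$. The subquotients of a filtration by internal polynomial degree are concentrated in a single degree and carry the zero induced differential; such modules are not $p$-DG summands of $\nh_n^l$, whose differential is nontrivial. You have conflated a grading filtration with a cell filtration: the former slices a module into thin pieces, the latter builds it from copies of the free rank-one module. Your appeal to \eqref{eq-identitydecomp} also points in the wrong direction: that identity gives a (fantastic) filtration of $\nh_n^l\cong \mc{E}^n$ with subquotients isomorphic to shifts of $e_n\nh_n^l\cong \mc{E}^{(n)}$, not a filtration of $e_n\nh_n^l$ by copies of $\nh_n^l$.

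The paper's proof proceeds along entirely different lines, by transporting \cite[Proposition 3.26]{KQ} to the cyclotomic setting. That argument exploits the isomorphism of $\nh_n^l$ with an $n!\times n!$ matrix algebra over $\mH^*(\gr(n,l))$, under which $e_n\nh_n^l$ becomes a column module, and analyzes the $p$-DG structure via the subspace $(\nh_n^l)'$ of traceless matrices. The hypothesis $0\le n\le p-1$ enters through this matrix-algebra analysis rather than through divisibility of coefficients in a degree-by-degree splitting.
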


\begin{proof}
One repeats the arguments in the proof of \cite[Proposition 3.26]{KQ} by making the following replacements
\[
\begin{array}{ccccc}
\nh_n  \mapsto \nh_n^l, &&&&
\mathcal{P}_n^+  \mapsto e_n  \nh_n^l, \\
\sym_n  \mapsto  \mathrm{H}^*(\gr(n,l)), &&&&
\nh_n'  \mapsto (\nh_n^l)' ,
\end{array}
\]
where $\mathrm{H}^*(\gr(n,l))$ is the cohomology of the Grassmannian of $n$-dimensional subspaces in $\mathbb{C}^l$ and $(\nh_n^l)'$ is the space of traceless
$n! \times n!$ matrices over $ \mathrm{H}^*(\gr(n,l))$.
\end{proof}

\subsection{A categorification of simples}\label{subsec-cat-simples}
We first review a categorification of the irreducible representation $V_l$ of quantum $\mathfrak{sl}_2$ for a generic value of $q$ using cyclotomic nilHecke algebras due to Kang and Kashiwara \cite{KK}.  
Then we enhance it to a categorification of cyclically generated modules over the small quantum group $\dot{u}_{\mathbb{O}_p}$ and over the BLM quantum group $\dot{U}_{\mathbb{O}_p}$.

For any $a \in \Z_{\geq 0}$ there is an embedding $\nh_n^l \hookrightarrow \nh_{n+a}^l$ given by 
\begin{equation*}
y_i \mapsto y_i \quad (i=1,\ldots,n), \quad  \quad \quad  \quad \psi_i \mapsto \psi_i \quad (i=1, \ldots, n-1).
\end{equation*}
We use this embedding to produce functors between categories of nilHecke modules.

\begin{defn}\label{def-E-and-F}
There is an induction functor
\begin{subequations}\label{eqn-F(a)}
\begin{equation}
\mf{F}^{(a)} \colon (\nh_n^l,\partial) \dmod \longrightarrow (\nh^l_{n+a},\partial) \dmod
\end{equation}
given by tensoring with the $p$-DG bimodule over $(\nh_n^l,\nh_{n+a}^l)$
\begin{equation}
e_{(1^n,a)} \nh_{n+a}^l, \quad \quad \partial(e_{(1^n,a)}):=-\sum_{i=1}^{a}(1-i)e_{(1^n,a)}y_{n+i}.
\end{equation}
\end{subequations}
Notice that the differential action on the bimodule generator $e_{(1^n,a)}$ arises from the differential action on the idempotent $e_a$ defined in \eqref{pdgidempotprop}.

There is a restriction functor
\begin{subequations}\label{eqn-E(a)}
\begin{equation}
\mf{E}^{(a)} \colon (\nh_{n+a}^l, \partial) \dmod \longrightarrow (\nh_n^l,\partial) \dmod
\end{equation}
given by tensoring with the $p$-DG bimodule over $(\nh_{n+a}^l,\nh_n^l)$
\begin{equation}
\nh_{n+a}^l e_{(1^n,a)}^\star, \quad \quad \partial( e_{(1^n,a)}^\star):=\sum_{i=1}^a (2n+i-l)y_{n+i}e_{(1^n,a)}^\star.
\end{equation}
\end{subequations}
Here the differential action on the bimodule generator $e_{(1^n,a)}^\star$ is twisted from the natural $\dif$-action on $e_a^\prime$ (see equation \eqref{enprime}) by the symmetric function $(a-l+2n)(y_{n+1}+\cdots+y_{n+a})$.

For simplicity, set $\mf{F}=\mf{F}^{(1)}$ and $\mf{E}=\mf{E}^{(1)}$.
\end{defn}

\begin{rem}
In the absence of the $p$-DG structure, the functors $\mf{E}$ and $\mf{F}$ give rise to an action of Lauda's $2$-category $\mc{U}$ on 
$ \oplus_{n = 0}^l \nh_n^l \dmod$. See, for instance, \cite{Rou2} and \cite{KK}.  
\end{rem}

There is an adjunction map of the functors $\cap \colon \mf{F} \mf{E}\Rightarrow \Id$ given by the following bimodule homomorphism
\begin{equation}\label{eqn-rep-cap}
   \nh_n^le_{(1^{n-1},1)}^\star \otimes_{\nh^l_{n-1}}e_{(1^{n-1},1)} \nh_n^l \longrightarrow  \nh_n^l ,\quad \quad \quad \alpha \otimes \beta \mapsto \alpha \beta,
\end{equation}
and similarly an adjunction map $\cup:\Id\Rightarrow \mf{E} \mf{F}$ arising from
\begin{equation}\label{eqn-rep-cup}
  \nh_n^l \longrightarrow  \nh_{n+1}^le_{(1^n,1)}^\star, \quad \quad \quad \alpha \mapsto \alpha e_{(1^n,1)}^\star.
\end{equation}
There is a ``dot'' natural transformation 
\begin{equation}\label{eqn-rep-dot}
Y \colon \mf{E}\Rightarrow \mf{E}, \quad \quad \quad \left(\nh_n^l \longrightarrow \nh_n^l \quad  \quad \alpha \mapsto \alpha y_n\right).
\end{equation}
There is also a ``crossing''
\begin{equation}\label{eqn-rep-cross}
\Psi \colon \mf{E} \mf{E}  \Rightarrow \mf{E} \mf{E}, \quad \quad \quad \left(\nh_n^l \lra \nh_n^l \quad \quad \alpha \mapsto \alpha \psi_{n-1}\right).
\end{equation}

\begin{thm}
\label{catofVl}
For any $l\in \N$, there is a $2$-representation of the $p$-DG $2$-category $(\mathcal{U},\dif)$ on 
$ \oplus_{n=0}^l (\nh_n^l,\dif)\dmod $ defined as follows.

On $0$-morphisms we have
\begin{equation*}
{m} \mapsto 
\begin{cases}
(\nh_n^l,\dif)\dmod & \textrm{ if } m = l-2n \\
0 & \textrm{ otherwise }.
\end{cases}
\end{equation*}

On $1$-morphisms we have
\begin{equation*}
\mathcal{E} \1_{m} \mapsto
\begin{cases}
\mf{E} & \textrm{ if } m = l-2n \\
0 & \textrm{ otherwise }
\end{cases}
\quad \quad \quad
\mathcal{F} \1_{m} \mapsto
\begin{cases}
\mf{F} & \textrm{ if } m = l-2n \\
0 & \textrm{ otherwise }.
\end{cases}
\end{equation*}

On $2$-morphisms we have
\begin{equation*}
  \begin{DGCpicture}
  \DGCstrand(0,0)(0,1)
  \DGCdot*>{0.75}
  \DGCdot{0.45}
  \DGCcoupon*(0.1,0.25)(1,0.75){$^m$}
  \DGCcoupon*(-1,0.25)(-0.1,0.75){$^{m+2}$}
  \DGCcoupon*(-0.25,1)(0.25,1.15){}
  \DGCcoupon*(-0.25,-0.15)(0.25,0){}
  \end{DGCpicture}
  ~\mapsto~
  ~Y~
  \quad \quad \quad 
  \begin{DGCpicture}
  \DGCstrand(0,0)(1,1)
  \DGCdot*>{0.75}
  \DGCstrand(1,0)(0,1)
  \DGCdot*>{0.75}
  \DGCcoupon*(1.1,0.25)(2,0.75){$^m$}
  \DGCcoupon*(-1,0.25)(-0.1,0.75){$^{m+4}$}
  \DGCcoupon*(-0.25,1)(0.25,1.15){}
  \DGCcoupon*(-0.25,-0.15)(0.25,0){}
  \end{DGCpicture} 
 ~ \mapsto~
 ~\Psi~
  \end{equation*}

\begin{equation*}
\begin{DGCpicture}
  \DGCstrand/d/(0,0)(1,0)
  \DGCdot*<{-0.25,2}
  \DGCcoupon*(1,-0.5)(1.5,0){$^m$}
  \DGCcoupon*(-0.25,0)(1.25,0.15){}
  \DGCcoupon*(-0.25,-0.65)(1.25,-0.5){}
  \end{DGCpicture}
  ~\mapsto~
  ~\cup~
  \quad \quad \quad
  \begin{DGCpicture}
  \DGCstrand(0,0)(1,0)/d/
  \DGCdot*<{0.25,1}
  \DGCcoupon*(1,0)(1.5,0.5){$^m$}
  \DGCcoupon*(-0.25,0.5)(1.25,0.65){}
  \DGCcoupon*(-0.25,-0.15)(1.25,0){}
  \end{DGCpicture}
  ~\mapsto~
  ~\cap~.
\end{equation*}

Furthermore, for $0 \leq l \leq p-1$, there is an isomorphism
$K_0(\oplus_{n=0}^l \mathcal{D}^c(\nh_n^l)) \cong V_l $
as (irreducible) modules over $\dot{u}_{\mathbb{O}_p}$.
\end{thm}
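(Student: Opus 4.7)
The plan is to leverage the equivalence $(\mathcal{U}, \dif) \cong (\mathcal{U}^R, \dif)$ from Theorem \ref{equiv-of-pdgRoug-pdgLauda} so that it suffices to construct a $p$-DG $2$-representation of the Rouquier version, which has significantly fewer generators and relations. First I would check that the bimodules presenting $\mf{E}$ and $\mf{F}$ in Definition \ref{def-E-and-F} are bona fide $p$-DG bimodules. This amounts to verifying that $\dif$ satisfies the graded Leibniz rule with respect to the left and right $\nh_n^l$-actions on $e_{(1^n,a)}\nh_{n+a}^l$ and $\nh_{n+a}^l e_{(1^n,a)}^\star$; the specified formulas for $\dif$ on the generators $e_{(1^n,a)}$ and $e_{(1^n,a)}^\star$ are explicitly engineered so that this holds, arising from the natural $\dif$-action on $e_n$ (Proposition \ref{pdgidempotprop}) and on $e_n'$ (equation \eqref{enprime}), together with a central twist depending on the weight $l-2n$.

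Next I would verify the defining relations of $(\mathcal{U}^R, \dif)$ on the images. The nilHecke relations \eqref{NHrelsRouq} are automatic from the presentation of $\nh_n^l$ since $\Psi$ and $Y$ are right multiplication by $\psi_{n-1}$ and $y_n$. The one-sided adjunctions \eqref{biadjointRouq} follow from a standard Mackey-type verification using the bimodule presentations of $\mf{E}$ and $\mf{F}$. The heart of the argument is to produce the inverse isomorphisms in \eqref{IdentityDecompRouquier} at the level of $p$-DG functors: in the non-$p$-DG world these are constructed in Kang--Kashiwara \cite{KK}, and I would lift those bimodule maps to the $p$-DG setting by checking they are $\dif$-equivariant. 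This is the main technical obstacle --- the scalar coefficients appearing in the differentials of $Y$, $\Psi$, $\cap$, $\cup$ (equations \eqref{eqn-rep-cap}--\eqref{eqn-rep-cross}) and in Definition \ref{def-E-and-F} are precisely tuned to make the Mackey-type decomposition maps commute with $\dif$. Finally, one checks that the $\dif$ assigned to the $2$-morphism generators in Definition \ref{def-special-dif-Rouquier} agrees with the differential induced on the natural transformations; by Remark \ref{rmk-uniqueness-of-d} this reduces to a handful of explicit checks on $Y$, $\Psi$, and one of $\cup$ or $\cap$.

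For the Grothendieck group identification, I would observe that each $\nh_n^l$ is a positive $p$-DG algebra in the sense of Definition \ref{def-positive-p-dga}: it is nonnegatively graded, finite dimensional in each degree, with one-dimensional semisimple degree zero part on which $\dif$ acts trivially. Theorem \ref{thm-K-group-positive} therefore yields
\[
K_0\left(\bigoplus_{n=0}^l \mathcal{D}^c(\nh_n^l)\right) \cong \left(\bigoplus_{n=0}^l K_0^\prime(\nh_n^l)\right) \otimes_{\Z[q,q^{-1}]} \mathbb{O}_p.
\]
The generic-$q$ categorification theorem of Kang--Kashiwara \cite{KK} identifies the left-hand factor with the integral form of $V_l$ as a $\dot{U}_{\Z[q,q^{-1}]}(\mathfrak{sl}_2)$-module, with the action induced by the $(\mathcal{E}, \mathcal{F})$-action defined above. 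Base change to $\mathbb{O}_p$ then produces $V_l$ as a $\dot{u}_{\mathbb{O}_p}$-module; irreducibility for $l \leq p-1$ is a standard fact about Weyl modules at a prime root of unity.
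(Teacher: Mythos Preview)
Your overall strategy for the first part---reduce to $(\mathcal{U}^R,\dif)$ via Theorem~\ref{equiv-of-pdgRoug-pdgLauda}, invoke Kang--Kashiwara for the underlying $2$-representation, and then check the differential on the four generators $Y,\Psi,\cap,\cup$---is exactly what the paper does. However, you overstate the difficulty of the isomorphism relation~\eqref{IdentityDecompRouquier}. That relation merely asserts that a certain explicit map built from the generators is invertible; this is a condition on the underlying (non-$p$-DG) $2$-representation and is supplied directly by \cite{KK}. There is no separate ``lifting'' step: once you verify that $\dif$ acts on $Y,\Psi,\cap,\cup$ by the formulas in Definition~\ref{def-special-dif-Rouquier} (the four short computations the paper writes out), the $p$-DG structure follows, since $\dif$ is a derivation determined by generators and the relations of $\mathcal{U}^R$ are already known to be $\dif$-stable. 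The inverses of the Kang--Kashiwara maps need not be checked for $\dif$-equivariance at all.

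There is a genuine error in your Grothendieck group argument. The cyclotomic nilHecke algebra $\nh_n^l$ is \emph{not} a positive $p$-DG algebra in the sense of Definition~\ref{def-positive-p-dga}: the crossings $\psi_i$ sit in degree $-2$, so $\nh_n^l$ is not concentrated in nonnegative degrees, and its degree-zero piece is far from one-dimensional (for instance $y_1\psi_1$ lies in degree zero). Thus Theorem~\ref{thm-K-group-positive} does not apply directly. The paper instead cites \cite[Theorem~6.15]{EQ1} for the isomorphism $K_0(\oplus_n \mathcal{D}^c(\nh_n^l))\cong V_l$; the argument there passes through the $p$-DG Morita equivalence of $\nh_n^l$ with $\mathrm{H}^*(\mathrm{Gr}(n,l))$ (which \emph{is} positive) and requires the hypothesis $0\le l\le p-1$ in an essential way---see Remark~\ref{rmk-restriction-on-l}. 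Your base-change argument could be repaired along these lines, but as written it rests on a false premise.
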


\begin{proof}
By \cite[Theorem 5.2]{KK} there are isomorphisms of functors
\begin{align*}
&\rho \colon \mf{E} \mf{F}  \bigoplus \left( \bigoplus_{r=0}^{l-2n-1} q^{l-2n-1-2r} \Id \right) \Rightarrow \mf{F} \mf{E}, \quad \quad \text{ if } l-2n \geq 0 \\
&\rho \colon \mf{E} \mf{F}  \Rightarrow \mf{F} \mf{E} 
\bigoplus \left( \bigoplus_{r=0}^{-l+2n-1} q^{ l-2n+1+2r }\Id \right), \quad \quad \text{ if } l-2n \leq 0. 
\end{align*}
which extend to a representation of Rouquier's category \cite{Rou2} on 
$\oplus_{n=0}^l \nh_n^l\dmod$.
The main theorem of \cite{Brundan2KM} implies that this extends to a representation of the Lauda category $\mathcal{U}$ (ignoring $\partial$).

Now we check how the derivation acts on $Y$, $\Psi$, $\cap$, and $\cup$.

\begin{align*}
\partial(Y) (\alpha) &= \partial(Y(\alpha)) - Y(\partial(\alpha)) \\
&= \partial(\alpha y_n) - (\partial \alpha) y_n \\
&= (\partial \alpha) y_n + \alpha (\partial y_n) - (\partial \alpha) y_n \\
&= \alpha y_n^2.
\end{align*}

\begin{align*}
\partial(\Psi)(\alpha) &= \partial(\Psi \alpha) - \Psi(\partial \alpha) \\
&= \partial(\alpha \psi_{n-1}) - (\partial \alpha) \psi_{n-1} \\
&= (\partial \alpha) \psi_{n-1} - \alpha y_{n-1} \psi_{n-1} - \alpha \psi_{n-1} y_n -(\partial \alpha) \psi_{n-1} \\
&= \alpha(-y_{n-1} \psi_{n-1} - \psi_{n-1} y_n).
\end{align*}

\begin{align*}
\partial(\cap)(\alpha \otimes \beta) &= \partial(\cap(\alpha \otimes \beta)) - \cap(\partial(\alpha \otimes \beta)) \\
&= \partial(\alpha \beta) - \partial(\alpha) \beta - \alpha \partial(\beta)
+(l-2n+1) \alpha y_n \beta \\
&= (l-2n+1) \alpha y_n \beta.
\end{align*}

\begin{align*}
\partial(\cup)(\alpha)&=\partial(\cup(\alpha))-\cup(\partial(\alpha)) \\
&=\partial(\alpha)-(l-2n-1) \alpha y_{n+1}- \partial(\alpha) \\
&=-(l-2n-1) \alpha y_{n+1}.
\end{align*}

Thus we have
\begin{equation}
\label{derongenerators}
\partial(Y) = Y^2, \quad \quad 
\partial(\Psi)=-Y \Psi - \Psi Y, \quad \quad 
\partial(\cap) = (m+1) \cap Y, \quad \quad
\partial(\cup) = (1-m) Y \cup
\end{equation}
implying that the representation of $\mathcal{U}^R$ on $\oplus_{n=0}^l \nh_n^l \dmod$ is actually a representation of the $p$-DG category $(\mathcal{U}^R,\partial)$.
Theorem \ref{equiv-of-pdgRoug-pdgLauda} implies that the action of 
$(\mathcal{U}^R,\partial)$ extends to an action of $(\mathcal{U},\partial)$.

The isomorphism 
$K_0(\oplus_{n=0}^l \mathcal{D}^c(\nh_n^l)) \cong V_l $
is \cite[Theorem 6.15]{EQ1}.
\end{proof}

\begin{rem}\label{rmk-restriction-on-l}
The restriction on $l$ in the last part of Theorem \ref{catofVl} is essential when passing to derived categories $\mc{D}^c(\nh_n^l)$. The reason is that, for $n\geq p$, one can show in a similar fashion as in \cite[Section 3]{KQ} that $\nh_n^l$ is always acyclic whenever $n\geq p$. Therefore, the sum $\oplus_{n=0}^l \mc{D}^c(\nh_n^l)$ only categorifies the subspace generated by the highest weight vector in the Weyl module $V_l$ for $\dot{U}_{\mathbb{O}_p}$. For a categorification of the Weyl module $V_l$ itself, see Section \ref{examplesubsection}.
\end{rem}

\begin{rem}
The biadjointness of the functors $\mf{E}$ and $\mf{F}$ follows from \cite{Rou2, KK} in conjunction with \cite{Brundan2KM}.
Kashiwara \cite{KaBi} showed directly that the functors $\mf{E}$ and $\mf{F}$ are biadjoint.
\end{rem}

Above we have given an algebraic construction of a categorical action of $(\mc{U},\dif)$, using induction and restriction
bimodules (with suitable differentials), on some $2$-representations. In \cite[Section 6.3]{EQ1}, such an action has also been exhibited in a diagrammatic approach, which we recall in the remainder of this section.  We will also point out the relationship between the algebraic and diagrammatic constructions. This comparison explains the (seemingly unexpected) formulas in \eqref{eqn-F(a)} and \eqref{eqn-E(a)}.

Consider the \emph{cyclotomic quotient category} $\mathcal{V}_l$ to be the quotient category of $\1_l\mc{U}$ by morphisms in the two-sided ideal which is right monoidally generated by
\begin{itemize}
\item[(1)] Any morphism that contains the following subdiagram on the far left:
$$\begin{DGCpicture}
\DGCstrand(0,0)(0,1)
\DGCdot*<{0.5}
\DGCcoupon*(-0.4,0.4)(-0.7,0.8){$l$}
\end{DGCpicture}~.
$$
\item[(2)] All positive degree bubbles on the far left region labeled $l$.
\end{itemize}
Here by ``two-sided'' we mean concatenating diagrams vertically from top and bottom to those in the relations, while by ``right monoidally generated'' we mean composing pictures from $\mc{U}$ to the right of those generators. Schematically we depict elements in the ideal as follows.

\[
\begin{DGCpicture}
\DGCstrand(-0.55,1)(-0.55,2)
\DGCdot*<{1.5}
\DGCcoupon(0,0)(-1.1,1){$\1_l\mc{U}$}
\DGCcoupon(0,2)(-1.1,3){$\1_l\mc{U}$}
\DGCcoupon(2,0)(0,3){$\mc{U}$}
\DGCcoupon*(-1.2,1)(-2,2){$l$}
\end{DGCpicture}~, \quad \quad
\begin{DGCpicture}
\DGCbubble(-0.6,1.5){0.4}
\DGCdot*>{1.5,R}
\DGCcoupon*(-0.4,1.3)(-0.8,1.7){$k$}
\DGCcoupon(0,0)(-1.1,1){$\1_l\mc{U}$}
\DGCcoupon(0,2)(-1.1,3){$\1_l\mc{U}$}
\DGCcoupon(2,0)(0,3){$\mc{U}$}
\DGCcoupon*(-1.2,1)(-2,2){$l$}
\end{DGCpicture}~.
\]

One implication of these relations is that
\begin{equation}\label{cyclotomiccurl}
0 = \curl{L}{U}{$l$}{no}{$0$} = \sum_{a+b=l}
~\bigccwbubble{$a$}{$l$}\oneline{$b$}{no} .
\end{equation}
Moreover, every bubble with $a>0$ is also in the ideal, so that
\begin{equation}\label{eqn-cyclotomic-rel}
\begin{DGCpicture}
\DGCstrand(0,0)(0,1)
\DGCdot{.5}[r]{$^l$}
\DGCdot*>{1}
\DGCcoupon*(-0.2,0.4)(-0.8,0.8){$l$}
\end{DGCpicture}~=0.
\end{equation}
Therefore, it follows that the cyclotomic nilHecke algebra $\nh_n^l$ maps onto $\END_{\mc{V}_l}(\1_l \mc{E}^n\1_{l-2n})$. Further, Lauda  \cite[Section 7]{Lau1} has proven that this map is an isomorphism:
\begin{equation}\label{eqn-iso-to-nilHecke}
\nh_n^l\cong \END_{\mc{V}_l}(\1_l\mc{E}^n\1_{l-2n})
=\left\{
\begin{DGCpicture}
\DGCstrand(0,0)(0,1.5)
\DGCdot*>{1.5}
\DGCstrand(0.5,0)(0.5,1.5)
\DGCdot*>{1.5}
\DGCstrand(1.5,0)(1.5,1.5)
\DGCdot*>{1.5}
\DGCcoupon(-0.1,0.4)(1.6,1.1){$\nh_n^l$}
\DGCcoupon*(-0.7,0.4)(-0.1,1.1){$_l$}
\DGCcoupon*(1.6,0.4)(2.4,1.1){$_{l-2n}$}
\DGCcoupon*(0.6,0.1)(1.4,0.3){$\cdots$}
\DGCcoupon*(0.6,1.2)(1.4,1.4){$\cdots$}
\end{DGCpicture}
\right\} \ .
\end{equation}
Since the ideal used to define $\mc{V}_l$ is clearly $\dif$-stable, $\mc{V}_l$ carries a natural quotient $p$-differential which is still denoted $\dif$. In this way, $(\mc{V}_l,\dif)$ is a $p$-DG module-category over $(\mc{U},\dif)$. The isomorphism \eqref{eqn-iso-to-nilHecke} is an isomorphism of $p$-DG algebras by Definition \ref{def-special-dif}.

The assignment on 0-morphisms in Theorem \ref{catofVl} can now be seen as reading off the weights on the far right for the diagrams in equation \eqref{eqn-iso-to-nilHecke}.

To see the necessity of twisiting the differential on the restriction functor $\mc{E}$, it is readily seen that the restriction bimodule $\nh_n^le_{(1^{n-1},1)}^\star$, regarded as a functor
\[
\mc{E}=(-)\otimes_{\nh_n^l}\left(\nh_n^le_{(1^{n-1},1)}^\star\right): (\nh_n^l,\dif)\lra (\nh_{n-1}^l,\dif),
\]
may be identified with the space of diagrams
\begin{equation}
\nh_n^le_{(1^{n-1},1)}^\star\cong 
\left\{
\begin{DGCpicture}
\DGCstrand(0,0)(0,1.5)
\DGCdot*>{1.5}
\DGCstrand(0.5,0)(0.5,1.5)
\DGCdot*>{1.5}
\DGCstrand(1.5,0.5)(1.5,1.5)
\DGCdot*>{1.5}
\DGCstrand/d/(1.5,0.5)(2,0.5)/u/(2,1.5)/u/
\DGCcoupon(-0.1,0.5)(1.6,1){$\nh_n^l$}
\DGCcoupon*(-0.7,0.4)(-0.1,1.1){$_l$}
\DGCcoupon*(2.1,0.4)(3.2,1.1){$_{l-2n+2}$}
\DGCcoupon*(0.6,0.1)(1.4,0.3){$\cdots$}
\DGCcoupon*(0.6,1.2)(1.4,1.4){$\cdots$}
\end{DGCpicture}
\right\}
\end{equation}
whose diagrammatic generator satisfies the differential formula
\begin{equation}
\dif\left(~
\begin{DGCpicture}
\DGCstrand(0,0)(0,1.5)
\DGCdot*>{1.5}
\DGCstrand(0.5,0)(0.5,1.5)
\DGCdot*>{1.5}
\DGCstrand(1.5,0.5)(1.5,1.5)
\DGCdot*>{1.5}
\DGCstrand/d/(1.5,0.5)(2,0.5)/u/(2,1.5)/u/
\DGCcoupon*(2.1,0.4)(3.2,1.1){$_{l-2n+2}$}
\DGCcoupon*(0.6,0.1)(1.4,0.3){$\cdots$}
\DGCcoupon*(0.6,1.2)(1.4,1.4){$\cdots$}
\end{DGCpicture}
~\right)
=(2n-l-1)
\begin{DGCpicture}
\DGCstrand(0,0)(0,1.5)
\DGCdot*>{1.5}
\DGCstrand(0.5,0)(0.5,1.5)
\DGCdot*>{1.5}
\DGCstrand(1.5,0.5)(1.5,1.5)
\DGCdot*>{1.5}
\DGCdot{1}
\DGCstrand/d/(1.5,0.5)(2,0.5)/u/(2,1.5)/u/
\DGCcoupon*(2.1,0.4)(3.2,1.1){$_{l-2n+2}$}
\DGCcoupon*(0.6,0.1)(1.4,0.3){$\cdots$}
\DGCcoupon*(0.6,1.2)(1.4,1.4){$\cdots$}
\end{DGCpicture} \ .
\end{equation}
On the other hand, the induction functor has an obvious diagrammatic interpretation by identifying the bimodule
$_{\nh_{n-1}^l}\left({e_{(1^{n-1},1)}\nh_n^l}\right)_{\nh_n^l}$ as the space in equation \eqref{eqn-iso-to-nilHecke}, but the left $\nh_{n-1}^l$ acts only through the first $n-1$ strands on the top.

The bimodule homomorphism $\cap$ is now literally given by the ``cap''. More precisely, given elements $\alpha \in \nh_{n}^le^\star_{(1^{n-1},1)}$ and $\beta\in e_{(1^{n-1},1)}\nh_n^l$, the bimodule homomorphism $\cap$ is diagrammatically given by
\[
\left(~
\begin{DGCpicture}
\DGCstrand(0,0)(0,1.5)
\DGCdot*>{1.5}
\DGCstrand(0.5,0)(0.5,1.5)
\DGCdot*>{1.5}
\DGCstrand(1.5,0.5)(1.5,1.5)
\DGCdot*>{1.5}
\DGCstrand/d/(1.5,0.5)(2,0.5)/u/(2,1.5)/u/
\DGCcoupon(-0.1,0.5)(1.6,1){$\alpha$}
\DGCcoupon*(-0.7,0.4)(-0.1,1.1){$_l$}
\DGCcoupon*(2.1,0.4)(3.2,1.1){$_{l-2n+2}$}
\DGCcoupon*(0.6,0.1)(1.4,0.3){$\cdots$}
\DGCcoupon*(0.6,1.2)(1.4,1.4){$\cdots$}
\end{DGCpicture}
~,~
\begin{DGCpicture}
\DGCstrand(0,0)(0,1.5)
\DGCdot*>{1.5}
\DGCstrand(0.5,0)(0.5,1.5)
\DGCdot*>{1.5}
\DGCstrand(1.5,0)(1.5,1.5)
\DGCdot*>{1.5}
\DGCcoupon(-0.1,0.5)(1.6,1){$\beta$}
\DGCcoupon*(-0.7,0.4)(-0.1,1.1){$_l$}
\DGCcoupon*(1.7,0.4)(2.4,1.1){$_{l-2n}$}
\DGCcoupon*(0.6,0.1)(1.4,0.3){$\cdots$}
\DGCcoupon*(0.6,1.2)(1.4,1.4){$\cdots$}
\end{DGCpicture}~\right)
\mapsto
\begin{DGCpicture}
\DGCstrand(0,0)(0,2)
\DGCdot*>{2}
\DGCstrand(0.5,0)(0.5,2)
\DGCdot*>{2}
\DGCstrand(1.25,1.75)(1.25,2)
\DGCdot*>{2}
\DGCstrand(1.25,0)(1.25,0.25)
\DGCstrand/d/(1.25,1.35)(1.65,1.35)/u/(1.65,1.35)/u/(1.65,1.75)(2.05,1.75)(2.05,1.35)/d/(1.25,0.75)/d/
\DGCdot*<{1.5,1}
\DGCcoupon(-0.1,1.25)(1.35,1.75){$\alpha$}
\DGCcoupon(-0.1,0.25)(1.35,0.75){$\beta$}
\DGCcoupon*(-0.7,0.4)(-0.1,1.6){$_l$}
\DGCcoupon*(2.1,0.4)(3,1.6){$_{l-2n}$}
\DGCcoupon*(0.6,0)(1.15,0.2){$\cdots$}
\DGCcoupon*(0.6,1.8)(1.15,2){$\cdots$}
\end{DGCpicture}
=
\begin{DGCpicture}
\DGCstrand(0,0)(0,1.5)
\DGCdot*>{1.5}
\DGCstrand(0.5,0)(0.5,1.5)
\DGCdot*>{1.5}
\DGCstrand(1.5,0)(1.5,1.5)
\DGCdot*>{1.5}
\DGCcoupon(-0.1,0.5)(1.6,1){$\alpha\beta$}
\DGCcoupon*(-0.7,0.4)(-0.1,1.1){$_l$}
\DGCcoupon*(1.7,0.4)(2.4,1.1){$_{l-2n}$}
\DGCcoupon*(0.6,0.1)(1.4,0.3){$\cdots$}
\DGCcoupon*(0.6,1.2)(1.4,1.4){$\cdots$}
\end{DGCpicture} \ ,
\]
i.e., we ``cap'' off $\alpha$ by the element
\[
\begin{DGCpicture}
  \DGCstrand(0,0)(1,0)/d/
  \DGCdot*<{0.25,1}
  \DGCcoupon*(1.1,0)(2,0.5){$^{l-2n}$}
  \DGCcoupon*(-0.25,0.5)(1.25,0.65){}
  \DGCcoupon*(-0.25,-0.15)(1.25,0){}
  \end{DGCpicture}
\]
on the upper right corner, and multiply the resulting diagram with $\beta$. This is evidently the $p$-DG bimodule homomorphism
\[
\nh_{n}^le^\star_{(1^{n-1},1)}\otimes_{\nh_{n-1}^l} e_{(1^{n-1},1)}\nh_n^l\lra \nh_n^l,\quad\quad \alpha\otimes \beta\mapsto \alpha\beta
\]
utilized in the statement of Theorem \ref{catofVl}.

The cup bimodule homomorphism $\cup$ admits a similar diagrammatic description in $\mc{V}_l$ as well. It is given by
\begin{align*}
\nh_n^l  \lra \nh_{n+1}^le_{(1^n,1)}^\star , \quad \quad \quad 
\begin{DGCpicture}
\DGCstrand(0,0)(0,1.5)
\DGCdot*>{1.5}
\DGCstrand(0.5,0)(0.5,1.5)
\DGCdot*>{1.5}
\DGCstrand(1.5,0)(1.5,1.5)
\DGCdot*>{1.5}
\DGCcoupon(-0.1,0.5)(1.6,1){$\alpha$}
\DGCcoupon*(-0.7,0.4)(-0.1,1.1){$_l$}
\DGCcoupon*(1.7,0.4)(2.4,1.1){$_{l-2n}$}
\DGCcoupon*(0.6,0.1)(1.4,0.3){$\cdots$}
\DGCcoupon*(0.6,1.2)(1.4,1.4){$\cdots$}
\end{DGCpicture}
 \mapsto
\begin{DGCpicture}
\DGCstrand(0,0)(0,1.5)
\DGCdot*>{1.5}
\DGCstrand(0.5,0)(0.5,1.5)
\DGCdot*>{1.5}
\DGCstrand(1.5,0)(1.5,1.5)
\DGCdot*>{1.5}
\DGCstrand/d/(2,1.5)(2,1.2)(2.5,1.2)/u/(2.5,1.5)/u/
\DGCdot*<{1.5,1}
\DGCcoupon(-0.1,0.5)(1.6,1){$\alpha$}
\DGCcoupon*(-0.7,0.4)(-0.1,1.1){$_l$}
\DGCcoupon*(1.8,0.4)(2.7,1){$_{l-2n}$}
\DGCcoupon*(0.6,0.1)(1.4,0.3){$\cdots$}
\DGCcoupon*(0.6,1.2)(1.4,1.4){$\cdots$}
\end{DGCpicture} \ ,
\end{align*}
i.e., appending the corresponding cup to the far right of any diagram $\alpha \in \nh_n^l$. The differential action on the map is then a consequence of Lemma \ref{def-special-dif-Rouquier}.

\section{Some cyclic modules}
\label{sec-cyclic-mod}
In this section, we will study a collection of combinatorially defined nilHecke modules introduced in the work of Hu-Mathas \cite{HuMathasKLR, HuMathas} under the action of $p$-differentials. These modules will be utilized to define an analogue of $p$-DG quiver Schur algebras in the current work.

\subsection{Cellular structure}
\begin{defn}\label{def-multipartition}
A \emph{$\nh^l_n$-multipartition} (or simply a \emph{partition} for short) is an $l$-tuple $\mu=(\mu^{1},\ldots,\mu^{l})$ 
such that $ \mu^i \in \{0, 1\} $ and $ \mu^1 + \cdots + \mu^l=n$. We may also think of a partition as a sequence of empty slots and boxes.

The set of $\nh_n^l$-partitions will be denoted by $\mathcal{P}_n^l$. 
\end{defn}

\begin{example}\label{eg-23partitions}
As an example, the following partitions constitute the full list of all $\nh_{2}^3$-multipartitions:
\[
\left(~\yng(1)~,~\yng(1)~,~\emptyset~\right),\quad \quad
\left(~\yng(1)~,~\emptyset~,~\yng(1)~\right),\quad \quad
\left(~\emptyset~,~\yng(1)~,~\yng(1)~\right).
\]
They correspond to the numerical notation of $(1,1,0)$, $(1,0,1)$ and $(0,1,1)$ respectively.
\end{example}

\begin{defn}\label{def-order-on-nh-partition}
For two elements $\lambda, \mu \in \mathcal{P}_n^l$, declare $\lambda \geq \mu$ if
\begin{equation*}
\lambda^1+\cdots+\lambda^k \geq \mu^1+\cdots+\mu^k 
\end{equation*}
for $k=1, \ldots, l$. We will say $\lambda> \mu$ if $\lambda \geq \mu$ and $\lambda \neq \mu$. This defines a partial order on the set of partitions $\mc{P}_n^l$ called the \emph{dominance order}.
\end{defn}

Combinatorially, when regarded as partitions, $\lambda \geq \mu$ if $\mu$ can be obtained from $\lambda$ by a sequence of moves which exchange a box and an empty space immediately right to the box. It is easily seen that there is always a unique partition that is minimal with respect to the dominance order, namely the one with all boxes on the right. We will denote the unique minimal element under this partial ordering by $\lambda_0$.

 The following is an example of incomparable partitions:
 \[
\left(~\emptyset~, ~\yng(1)~,~\yng(1)~,~\emptyset~\right),\quad \quad
\left(~\yng(1)~,~\emptyset~,~\emptyset~,~\yng(1)~\right),
\]
which are both greater than
\[
\left(~\emptyset~, ~\yng(1)~,~\emptyset~,~\yng(1)~\right).
\]

We next introduce the notion of tableaux and a partial order on them as well.

\begin{defn}\label{def-tableau}
Given a partition $\mu \in \mc{P}_n^l$, suppose $ \mu^{j_1}=\cdots=\mu^{j_n}=1$ and $ j_1 < \cdots < j_n$.
A \emph{tableau of shape $\mu$} is a bijection 
\begin{equation*}
\mf{t} \colon \{j_1, \ldots, j_n \} \longrightarrow \{1, \ldots, n \}.
\end{equation*}
Denote the set of $\mu$-tableaux by $\mathrm{Tab}(\mu)$, and write for any $\mf{t}\in \mathrm{Tab}(\mu)$ that $\mathrm{shape}(\mf{t})=\mu$.
\end{defn}

Given a tableau, we may think of it as a filling of its underlying partition labeled by the set of natural numbers $\{1,\dots, n\}$. This, in turn, gives us a sequence of subtableaux in order of which the tableaux is built up by adding at the $k$th step the box labeled by $k$ ($1\leq k \leq n$).

\begin{example}
For the partition $\mu:=\left(~\yng(1)~,~\yng(1)~,~\emptyset~\right)$, we have its set of tableaux equal to
\[
\mathrm{Tab}(\mu)=\left\{
\left(~\young(1)~,~\young(2)~,~\emptyset~\right),~\left(~\young(2)~,~\young(1)~,~\emptyset\right)
\right\}.
\]
In these examples, the corresponding tableaux can be regarded as built up in two steps:
\[
 \left(~\young(1)~,~\emptyset~,~\emptyset~\right)\rightarrow
\left(~\young(1)~,~\young(2)~,~\emptyset~\right) , 
\quad \quad
\left(~\emptyset~,~\young(1)~,~\emptyset~\right)\rightarrow
\left(~\young(2)~,~\young(1)~,~\emptyset~\right) .
\]
Another example of the process can be read from
\[
\left(~\emptyset~, ~\young(1)~,~\emptyset~,~\emptyset~\right)\rightarrow \left(~\emptyset~, ~\young(1)~,~\young(2)~,~\emptyset~\right)\rightarrow \left(~\young(3)~, ~\young(1)~,~\young(2)~,~\emptyset~\right).
\]
\end{example}

\begin{defn}\label{def-standard-tableaux}
\begin{enumerate}
\item[(1)] For a partition $\mu$ let $\mf{t}^{\mu}$ be the tableau given by
\begin{equation*}
\mf{t}^{\mu}(j_k)=k \hspace{.5in} k=1,\ldots, n.
\end{equation*}
We will refer to the tableau as the \emph{standard tableau} of shape $\mu$. 
\item[(2)] 
Any tableaux $\mf{t}\in \mathrm{Tab}(\mu)$ can be identified with a unique permutation $w_\mf{t}\in \mf{S}_n$ which acts on $\mf{t}^{\mu}$ by $w_\mf{t} \mf{t}^{\mu}=\mf{t}$.
We will call $w_\mf{t}$ the \emph{permutation determined by $\mf{t}$}.
\end{enumerate}
\end{defn}

\begin{rem}[On notation]\label{ntntableauxsymmetricgroup}
The set $\Tab(\mu)$ constitutes an $\mf{S}_n$-set with a simple transitive action. Therefore, part (2) of Definition \ref{def-standard-tableaux} relies on the fact we assign to the identity element $e\in \mf{S}_n$ the maximal tableaux $\mf{t}^\mu$. 

In what follows, when talking about tableaux of a fixed partition $\mu$, we will abuse notation and use the tableau $\mf{t}$ and the permutation associated with it $w_\mf{t}\in \mf{S}_n$ interchangeably. In this notation $e$ will always stand for the standard tableau $\mf{t}^\mu$. We will also use the usual Coxeter length function on the symmetric group $\ell: \mf{S}_n\lra \N$, and transport it to $\Tab(\mu)$ under this identification.
\end{rem}


\begin{example}\label{eg-two-tableaux}
The tableaux $\left(~\young(1)~,~\young(2)~,~\emptyset~\right)$ is the standard one of its shape, while $\left(~\young(2)~,~\young(1)~,~\emptyset~\right)$ is non-standard. The corresponding permutations are the identity and non-identity element of the symmetric group $\mf{S}_2$.
\end{example}

\begin{defn}
Given a tableau $\mf{t}$, it is regarded as a bijection $\mf{t}: \{j_1,\dots, j_n\} \lra \{1,\dots, n\}$. Let $\mf{t}_{\downarrow k}$ be the subtableau defined by
\begin{equation*}
\mf{t}_{\downarrow k} \colon = \mf{t}\big|_{\mf{t}^{-1}\{1, \ldots, k \}} : \mf{t}^{-1}\{1, \ldots, k \} \lra \{1,\dots, k  \}.
\end{equation*}
In other words, $\mf{t}_{\downarrow k}$ is the subtableau of $\mf{t}$ built in the first $k$ steps, and it is a filling of a partition in the set $\mathcal{P}_k^l$.
\end{defn}

We are now ready to introduce a partial order on tableaux.

\begin{defn}\label{def-order-on-nh-tableaux}
Let $\mf{s}$ be a $\mu$-tableau and $\mf{t}$ a $\lambda$-tableau.  We write $ \mf{s} \geq \mf{t} $ if we have the inequalities of the underlying partitions of subtableaux (Definition \ref{def-order-on-nh-partition})
\begin{equation*}
\mathrm{shape}(\mf{s}_{\downarrow k}) \geq \mathrm{shape}( \mf{t}_{\downarrow k}), \hspace{.5in} \text{ for all }k=1, \ldots, n.
\end{equation*}
Moreover, if $\mf{s}\geq \mf{t}$ and $\mf{s}\neq \mf{t}$, we then write $\mf{s}> \mf{t}$.
\end{defn}

\begin{rem}
We make several simple notes.
\begin{enumerate}
\item[(1)] For later use it is convenient to observe that a $\mu$-tableau $\mf{s}$ is greater than or equal to the the standard tableau $\mf{t}^{\lambda}$ of a partition $\lambda$, if each element in the filling $\mu$ corresponding to $\mf{s}$ appears to the left of the same element in  $\mf{t}^\lambda$.
\item[(2)] One may easily verify that, $\mu \geq\lambda$ if and only if the standard filling $\mf{t}^\mu$ of $\mu$ is greater than or equal to the standard filling $\mf{t}^\lambda$ of $\lambda$.
\item[(3)] It is clear from definition that $ \mf{t}^{\mu} \geq \mf{s}$ for all $\mu$-tableaux $\mf{s}$.
\end{enumerate} 
\end{rem}

\begin{defn}
The degree of a $\mu$-tableau $\mf{t}$ is defined by
\begin{equation*}
deg(\mf{t})=nl-(j_1+\cdots+j_n)-2\ell(w_\mf{t}),
\end{equation*}
where $\ell(w_\mf{t})$ is the length of the permutation $w_\mf{t}$.
\end{defn}

\begin{rem}
If $l=n$, then $deg(\mf{t})=\frac{n(n-1)}{2}-2\ell(w_\mf{t})$.
\end{rem}

We now consider certain important elements in the cyclotomic nilHecke algebra. Recall from Definition~\ref{def-standard-tableaux} that if  $\mf{t}$ is a tableau of shape $\mu$, then $w_\mf{t}$ is a permutation, which in turn defines a nilHecke element $\psi_\mf{t}:=\psi_{w_\mf{t}}\in \nh_n^l$ (see the end of Section \ref{subsec-def-nilHecke}). For instance, for the standard and non-standard tableaux in Example \ref{eg-two-tableaux}, we obtain their corresponding nilHecke elements :
\[
\begin{DGCpicture}
\DGCstrand(1,0)(1,1)
\DGCstrand(0,0)(0,1)
\end{DGCpicture} \ , 
\quad \quad \quad
\begin{DGCpicture}
\DGCstrand(1,0)(0,1)
\DGCstrand(0,0)(1,1)
\end{DGCpicture} \ .
\]

\begin{defn}\label{def-y-mu-psi-mu}
Again suppose that for the partition $\mu$ that $ \mu^{j_1}=\cdots=\mu^{j_n}=1$ and $ j_1 < \cdots < j_n$.
For two $\mu$-tableaux $\mf{s}$ and $\mf{t}$ let 

\begin{equation}\label{eqn-psi-st}
y^{\mu}:=y_1^{l-j_1} \cdots y_n^{l-j_n}, \quad \quad \quad \psi_{\mf{st}}^{\mu}:=\psi_\mf{s}^* y^{\mu} \psi_\mf{t}.
\end{equation}
(Recall that $\psi_\mf{s}^*$ is the image of $\psi_\mf{s}$ under the anti-automorphism  $ * \colon \nh_n^l \rightarrow \nh_n^l$ defined by $\psi_i^*=\psi_i$.)
When the composition $ \mu $ is clear from context we will often abbreviate $ \psi_{\mf{st}}^{\mu} $ by $ \psi_{\mf{st}}$.
\end{defn}
Note that when $\mf{t}$ and $\mf{s}$ are the standard filling $t^\mu$ of the tableau $\mu$, so that the corresponding permutations are both $e\in \mf{S}_n$, then
$\psi_{\mf{st}}^{\mu}=\psi_{ee}^\mu=y^{\mu}$.

\begin{thm}
The set $ \{ \psi^{\mu}_{\mf{st}} | \mf{s},\mf{t} \in \mathrm{Tab}(\mu), \mu \in \mathcal{P}_n^l \}$ is a graded cellular basis of $\nh_n^l$.
More precisely
\begin{enumerate}
\item[(i)] The degree of $\psi_{\mf{st}}$ is the sum of the degrees of $\mf{s}$ and $\mf{t}$.
\item[(ii)] For $\mu \in \mathcal{P}_n^l$ and $\mf{s},\mf{t} \in \mathrm{Tab}(\mu)$, there are scalars $r_{\mf{tv}}(x)$ which do not depend on $\mf{s}$ such that,
\begin{equation*}
\psi_{\mf{st}}x = \sum_{ \mf{v} \in \mathrm{Tab}(\mu)} r_{\mf{tv}}(x) \psi_{\mf{sv}} \hspace{.15in} \text{ mod } (\nh_n^l)^{>\mu},
\end{equation*}
where 
\begin{equation*}
(\nh_n^l)^{>\mu} = \Bbbk \langle \psi_{ab}^{\lambda} | \lambda > \mu, \text{ and } a,b \in \mathrm{Tab}(\lambda) \rangle.
\end{equation*}
\item[(iii)] The anti-automorphism $ * \colon \nh_n^l \longrightarrow \nh_n^l$ sends $\psi_{\mf{st}}$ to $\psi_{\mf{st}}^*=\psi_{\mf{ts}}$.
\end{enumerate}
\end{thm}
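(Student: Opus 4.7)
The plan is to verify the three statements in order, but with (ii) as the substantive content; (i) and (iii) are bookkeeping that will fall out quickly. Property (iii) is immediate from the fact that $*$ is an anti-automorphism fixing each $y_i$: since $y^{\mu}$ is a monomial in the commuting $y_i$'s, we have $(y^{\mu})^* = y^{\mu}$, and so
\[
(\psi_{ht}^{\mu})^* = (\psi_h^* y^{\mu} \psi_t)^* = \psi_t^* y^{\mu} \psi_h = \psi_{th}^{\mu}.
\]
For (i), one just reads off the degrees from the definitions: $\deg(y^{\mu}) = 2\sum_k (l - j_k) = 2(nl - \sum_k j_k)$ and $\deg(\psi_w) = -2\ell(w)$, so
\[
\deg(\psi_{ht}^{\mu}) = 2(nl - \textstyle\sum_k j_k) - 2\ell(w_h) - 2\ell(w_t) = \deg(h) + \deg(t).
\]

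For (ii), I will follow the general strategy of Hu--Mathas \cite{HuMathas} specialized to the single-vertex (cyclotomic nilHecke) quiver. The essential content is a straightening algorithm showing that right multiplication of $\psi_{ht}^{\mu}$ by any generator $y_i$ or $\psi_i$ of $\nh_n^l$ produces a linear combination of elements $\psi_{hv}^{\mu}$ with $v \in \mathrm{Tab}(\mu)$, modulo $(\nh_n^l)^{>\mu}$, and with coefficients $r_{tv}(x)$ that depend only on the right half of the element. The key geometric input is: right multiplication by $y_i$ simply shifts the exponent vector of $y^{\mu}$, and any exponent reaching $l$ in slot $j_1$ (the leftmost box of $\mu$) must be absorbed into higher dominance terms via the cyclotomic relation $y_1^l = 0$; right multiplication by $\psi_i$ either produces another valid $\psi_{hv}^{\mu}$ (when $v = t s_i$ with $\ell(w_t s_i) = \ell(w_t) + 1$) or, via the dot--slide relation $y_a\psi_a - \psi_a y_{a+1} = 1$, creates terms whose $y$-monomial support a ``box slides left'' operation on the underlying partition, which by definition lies in $(\nh_n^l)^{>\mu}$.

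The main obstacle is controlling this straightening: one must confirm that the residual polynomial coefficients $r_{tv}(x)$ indeed depend only on $t$ and $x$ and not on the tableau $h$ on the left. This separation is established by proving independently the \emph{left} triangular statement $x\psi_{ht} \equiv \sum_u r'_{hu}(x)\psi_{ut} \pmod{(\nh_n^l)^{>\mu}}$, obtained from the right-sided version by applying the anti-automorphism $*$ of (iii), which then forces the coefficients in (ii) to be independent of $h$. Equivalently, the Hu--Mathas argument sets up an induction on the dominance order, with the base case being $\mu_0$ (all boxes to the right) where $y^{\mu_0}=1$ and the corresponding cellular datum reduces to the standard nilHecke basis $\{\psi_h^*\psi_t \mid h,t \in S_n\}$, and then propagates upward through the poset.

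Finally, to upgrade these spanning/triangularity statements to a basis statement I will count: the total cardinality of $\{\psi_{ht}^{\mu}\}$ is $\sum_{\mu \in \mc{P}_n^l} |\mathrm{Tab}(\mu)|^2 = \binom{l}{n} (n!)^2$, which agrees with $\dim_{\Bbbk} \nh_n^l = (n!)^2 \binom{l}{n}$ (coming from the Morita equivalence of $\nh_n^l$ with matrices of size $n!$ over $\mathrm{H}^*(\mathrm{Gr}(n,l))$, as recalled after Proposition~\ref{pdgidempotprop}). Since the set spans by the straightening argument and has the correct cardinality, it must be a basis, completing the verification of the cellular datum $(\mc{P}_n^l, \mathrm{Tab}(-), \psi^{\mu}_{ht}, *)$.
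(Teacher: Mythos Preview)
Your proposal is correct in substance and takes the same route as the paper, which simply cites Hu--Mathas \cite{HuMathasKLR} (and Li \cite{Lige}) for this result; you have essentially sketched their straightening-plus-dimension-count argument, and your verifications of (i) and (iii) are clean and correct.

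One small slip to fix: in your description of the base case you assert that for $\mu_0$ (all boxes to the right, i.e.\ $\mu_0=(0^{l-n}1^n)$) one has $y^{\mu_0}=1$. This is false: with $j_k=l-n+k$ one gets $y^{\mu_0}=y_1^{n-1}y_2^{n-2}\cdots y_n^0$, not $1$. The induction still goes through---the point of the minimal layer is only that $(\nh_n^l)^{>\mu_0}$ is spanned by all the other $\psi^\lambda_{ab}$, so the statement there is the easiest case of the triangularity---but the set $\{\psi_h^* y^{\mu_0}\psi_t\}$ is not literally the ``standard nilHecke basis'' $\{\psi_h^*\psi_t\}$. Correcting this description does not affect the rest of your argument.
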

\begin{proof}
This can be found in \cite[Theorem 5.8 and 6.11]{HuMathasKLR}. See also \cite{Lige}.
\end{proof}

The next result shows that $\nh_n^l$ is a symmetric Frobenius algebra.  
\begin{prop}
\label{symm}
 There is a non-degenerate homogeneous trace $\tau \colon \nh_n^l \longrightarrow \Bbbk$ of degree $-2n(l-n)$.
\end{prop}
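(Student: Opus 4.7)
The strategy is to reduce to a well-known Frobenius structure on the cohomology of the Grassmannian via a Morita-type isomorphism. Specifically, one first establishes a graded algebra isomorphism
\[
\nh_n^l \;\cong\; \mathrm{M}_{n!}\bigl(\mathrm{H}^*(\gr(n,l))\bigr),
\]
where the $n!\times n!$ matrix structure arises from the nilHecke decomposition of $\1\in\nh_n$ into $n!$ conjugate primitive idempotents. Concretely, one uses that $e_n\nh_n^l e_n$ is isomorphic as a graded algebra to $\mathrm{H}^*(\gr(n,l))$, and that the right $\nh_n^l$-module $e_n\nh_n^l$ is a projective generator; cf.\ the computation already alluded to in the proof of compactness/cofibrance of $e_n\nh_n^l$, which in turn rests on the classical identification $\sym_n\otimes_{\mathrm{H}^*(\gr(n,l))}(\nh_n^l)\cong \nh_n^l$ via the longest element $\psi_{w_0}$.

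Granted this matrix description, the trace on $\nh_n^l$ is built in two steps. First, the Grassmannian cohomology $\mathrm{H}^*(\gr(n,l))$ carries the Poincar\'e duality trace
\[
\tau_{\gr}\maps \mathrm{H}^*(\gr(n,l))\lra \Bbbk
\]
which sends the fundamental class in top degree $2n(l-n)$ to $1$ and kills lower degree classes; this is a standard non-degenerate homogeneous trace of degree $-2n(l-n)$. Second, composing the matrix trace with $\tau_{\gr}$ gives a candidate trace $\tau$ on $\nh_n^l$ via the isomorphism above. Since matrix traces and symmetric Frobenius traces are both invariant under cyclic permutation, $\tau(ab)=\tau(ba)$ for all $a,b\in \nh_n^l$. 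The degree of $\tau$ is exactly $-2n(l-n)$, matching the degree of $\tau_{\gr}$, since the matrix trace has degree $0$.

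Non-degeneracy is inherited from the two ingredients: the matrix algebra $\mathrm{M}_{n!}(R)$ over a (graded) Frobenius ring $R$ is again Frobenius with trace $(a_{ij})\mapsto \tau_R(\sum_i a_{ii})$, and $\mathrm{H}^*(\gr(n,l))$ is Frobenius by Poincar\'e duality. The main obstacle is to make the matrix algebra identification sufficiently explicit to transport the trace; most of this is already available in the literature (e.g.\ \cite{KLMS}, \cite{Lau1}), but one must take care that the isomorphism is graded, so that the degree shift of $\tau_{\gr}$ is faithfully recorded as $-2n(l-n)$ on $\nh_n^l$. A cross-check is available: the cellular basis $\{\psi^{\mu}_{ht}\}$ produced above already furnishes a pairing via the antiautomorphism $*$, and one may verify that the formula $\tau(\psi^\mu_{ht})=\delta_{h,t^\mu}\delta_{t,t^\mu}\delta_{\mu,\mu_{\max}}$, up to a unit, extracts the top cell, agreeing with the Poincar\'e trace under the isomorphism. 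This both confirms non-degeneracy and matches with the graphical trace in \cite{Webcombined}, as remarked in the text.
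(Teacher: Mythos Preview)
Your main argument is correct and is precisely the route the paper itself sketches in the paragraph immediately preceding the proposition: the matrix isomorphism $\nh_n^l\cong \mathrm{M}_{n!}(\mathrm{H}^*(\gr(n,l)))$ together with the Poincar\'e duality trace on $\mathrm{H}^*(\gr(n,l))$ in top degree $2n(l-n)$, transported through the degree-$0$ matrix trace. The paper's actual proof is simply a citation to \cite[Theorem~6.17]{HuMathasKLR}, so your write-up in fact supplies the details the paper only alludes to.

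One caveat: the explicit ``cross-check'' formula you propose at the end, $\tau(\psi^\mu_{ht})=\delta_{h,t^\mu}\delta_{t,t^\mu}\delta_{\mu,\mu_{\max}}$ up to a unit, is not quite right as stated. The element $\psi^{\mu_{\max}}_{t^{\mu_{\max}}t^{\mu_{\max}}}=y^{\mu_{\max}}=y_1^{l-1}\cdots y_n^{l-n}$ has degree $2\sum_{i=1}^n(l-i)=2nl-n(n+1)$, not $2n(l-n)$ in general, so it cannot be the unique basis element on which a degree $-2n(l-n)$ trace is nonzero. This does not affect your main argument, which stands on its own; but if you want to keep the cross-check, you would need to identify the correct cellular element(s) of degree $2n(l-n)$ on which $\tau$ is supported, or simply drop that paragraph.
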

\begin{proof}
See \cite[Theorem 6.17]{HuMathasKLR}.
\end{proof}

\begin{rem}
The result in \cite{HuMathasKLR} is much more general than what we need in the current work.  The special case of $\nh_n^l$ follows from the fact that it is isomorphic to an $n! \times n!$ matrix algebra over $\mathrm{H}^*(\gr(n,l))$, and the algebra $\mathrm{H}^*(\gr(n,l))$ is Frobenius with a trace that maps a generator in the top degree to $1$ and everything else to $0$.  It would be nice to match it up with the graphical trace defined in \cite[Section 3.4]{Webcombined}.
\end{rem}

\begin{prop}
\label{idealispdg}
The two-sided ideal $(\nh_n^l)^{> \mu}$ is preserved by the differential.
\end{prop}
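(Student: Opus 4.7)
The plan rests on the observation that $\partial(y^\lambda)$ factors cleanly as a right multiple of $y^\lambda$, combined with the two-sidedness of $(\nh_n^l)^{>\mu}$ in $\nh_n^l$. First, I would record that $(\nh_n^l)^{>\mu}$ is a two-sided ideal: right-ideal closure is the content of cellular property (ii), while the anti-automorphism $*$ of property (iii)---which permutes the spanning set $\{\psi^\lambda_{ab} : \lambda > \mu\}$ within itself---upgrades this to a two-sided ideal. The same reasoning shows that each $(\nh_n^l)^{\geq\lambda} := \mathrm{span}_\Bbbk \{\psi_{ab}^\nu : \nu \geq \lambda\}$ is two-sided, and $(\nh_n^l)^{>\mu} = \sum_{\lambda > \mu} (\nh_n^l)^{\geq \lambda}$.

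Next I would carry out the main calculation. Since $\partial(y_i) = y_i^2$ and the $y_i$'s commute with each other, induction on $k$ yields $\partial(y_i^k) = k\, y_i^{k+1}$, whence
\begin{equation*}
\partial(y^\lambda) \;=\; \sum_{i=1}^{n}(l-j_i)\, y_1^{l-j_1}\cdots y_i^{l-j_i+1}\cdots y_n^{l-j_n} \;=\; y^\lambda \cdot \Bigl(\sum_{i=1}^{n}(l-j_i)\, y_i\Bigr).
\end{equation*}
Applying the Leibniz rule to a cellular basis element $\psi_{ht}^\lambda = \psi_h^* y^\lambda \psi_t$ with $\lambda > \mu$ gives
\begin{equation*}
\partial(\psi_{ht}^\lambda) \;=\; \partial(\psi_h^*)\cdot (y^\lambda \psi_t) \;+\; (\psi_h^* y^\lambda)\cdot \Bigl(\sum_{i}(l-j_i)\, y_i\Bigr)\psi_t \;+\; (\psi_h^* y^\lambda)\cdot \partial(\psi_t).
\end{equation*}
Since the standard tableau $t^\lambda$ has $\psi_{t^\lambda} = 1$, the parenthesized factors $y^\lambda \psi_t = \psi_{t^\lambda\, t}^\lambda$ and $\psi_h^* y^\lambda = \psi_{h\, t^\lambda}^\lambda$ each lie in $(\nh_n^l)^{\geq \lambda} \subseteq (\nh_n^l)^{>\mu}$; by two-sidedness, multiplying on either side by arbitrary elements of $\nh_n^l$ keeps us in this ideal, so all three summands lie in $(\nh_n^l)^{>\mu}$.

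I do not expect a genuine obstacle: the key insight that $\partial(y^\lambda)$ is a right multiple of $y^\lambda$ is immediate from the commutativity of the $y_i$'s and the formula $\partial(y_i)=y_i^2$, and the rest is ideal-theoretic bookkeeping. The only subtlety worth flagging is that this factorization bypasses having to analyze where individual exponent shifts land in the dominance order, which would otherwise force a case analysis depending on whether a box can be moved leftward into a vacant slot.
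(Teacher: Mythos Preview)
Your proof is correct and follows essentially the same approach as the paper: apply the Leibniz rule to $\psi_h^* y^\lambda \psi_t$, observe that $\partial(y^\lambda)$ is $y^\lambda$ times a polynomial in the $y_i$'s, and conclude by two-sidedness of $(\nh_n^l)^{>\mu}$. Your version is slightly more explicit in justifying two-sidedness via the $*$-involution and in identifying the factors $y^\lambda\psi_t$ and $\psi_h^* y^\lambda$ as cellular basis elements, but the substance is the same.
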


\begin{proof}
Let $ \lambda$ be a partition such that $ \lambda > \mu$.
Assume $ \mu^{j_1}=\cdots=\mu^{j_n}=1$ with $ j_1 < \cdots < j_n$ and
$ \lambda^{k_1}=\cdots=\lambda^{k_n}=1$ with $ k_1 < \cdots < k_n$.
Then $y^{\lambda}=y_1^{l-k_1} \cdots y_n^{l-k_n}$.  

By definition, we have
\begin{align*}
\partial(\psi_\mf{s}^* y_1^{l-k_1} \cdots y_n^{l-k_n} \psi_\mf{t})&=
\sum_{i=1}^n (l+1-k_i) \psi_\mf{s}^* y_i (y_1^{l-k_1} \cdots y_i^{l-k_i} \cdots y_n^{l-k_n}) \psi_\mf{t} +
\partial(\psi_\mf{s}^*) y^{\lambda} \psi_\mf{t} +
\psi_\mf{s}^* y^{\lambda} \partial(\psi_\mf{t}) \\
&= \sum_{i=1}^n (l+1-k_i) \psi_\mf{s}^* y_i (y^{\lambda}) \psi_\mf{t} +
\partial(\psi_\mf{s}^*) y^{\lambda} \psi_\mf{t} +
\psi_\mf{s}^* y^{\lambda} \partial(\psi_\mf{t}).
\end{align*}
Since $y^{\lambda}$ is in the two-sided ideal $ (\nh_n^l)^{> \mu} $, it is clear that each term in the summation above lies in $ (\nh_n^l)^{> \mu} $. This finishes the proof.
\end{proof}

\subsection{Specht modules}
The modules over the cyclotomic nilHecke algebra which we are about to define are known as Specht modules.  These modules have been considered for the classical cyclotomic Hecke algebras in earlier literature.  Their graded lifts (as modules over graded KLR algebras) have been constructed by Brundan, Kleshchev, and Wang~\cite{BKW}.  Here we once again specialize their general KLR construction to nilHecke case corresponding to $\mf{sl}_2$-categorification.

Let $\mu \in \mathcal{P}_n^l$ and $ \mf{t} \in \mathrm{Tab}(\mu)$.  
One defines the right Specht module $S^{\mu}_{\mf{t}}$ in terms of the cellular structure 
on $\nh_n^l$ defined in the previous section.
Let $S_\mf{t}^{\mu}$ be the submodule of $ \nh_n^l / (\nh_n^l)^{> \mu} $ generated by the coset $\psi_{\mf{t}e}^{\mu} + (\nh_n^l)^{> \mu}$.
For the ease of notation, we will usually write $e$ for $\mf{t}^{\mu}$, so that, for instance, $\psi_{\mf{t}^\mu\mf{t}^\mu}^\mu$ will be denoted $\psi^\mu_{ee}$ as well.

\begin{prop}
\label{spechtispdg}
The Specht module $S_e^{\mu}$ is a right $p$-DG module, where $e$ the identity tableau.
\end{prop}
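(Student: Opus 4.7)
The plan is to verify two things: first, that $S_e^\mu$ is well-defined as a $p$-DG submodule of the quotient $\nh_n^l/(\nh_n^l)^{>\mu}$, and second, that the generator itself is sent under $\partial$ into the right submodule it generates.

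First I would observe that, by Proposition \ref{idealispdg}, the two-sided ideal $(\nh_n^l)^{>\mu}$ is $\partial$-stable, so the differential descends to the quotient $\nh_n^l/(\nh_n^l)^{>\mu}$ as a right $p$-DG module. Next I would unpack what the generator of $S_e^\mu$ is when $t=e$ is the standard (``identity'') tableau $t^\mu$: the associated permutation $w_{t^\mu}$ is the identity of $S_n$, so $\psi_e=1$, and hence $\psi_{ee}^\mu = \psi_e^{*}\, y^\mu\, \psi_e = y^\mu$. Thus $S_e^\mu$ is the right submodule of $\nh_n^l/(\nh_n^l)^{>\mu}$ generated by the class of $y^\mu = y_1^{l-j_1}\cdots y_n^{l-j_n}$.

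The key step is to check that $\partial(y^\mu)$ lies in $y^\mu\cdot \nh_n^l + (\nh_n^l)^{>\mu}$. Using the Leibniz rule and the fact that $\partial(y_i)=y_i^2$, a direct computation gives
\[
\partial(y^\mu) \;=\; \sum_{i=1}^n (l-j_i)\, y_1^{l-j_1}\cdots y_i^{l-j_i+1} \cdots y_n^{l-j_n} \;=\; y^\mu \cdot \Bigl(\sum_{i=1}^n (l-j_i)\, y_i\Bigr),
\]
where the second equality uses that the $y_i$ mutually commute. This element is literally in $y^\mu\cdot \nh_n^l$, and in particular its class modulo $(\nh_n^l)^{>\mu}$ lies in $S_e^\mu$. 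Combined with the Leibniz rule $\partial(y^\mu a) = \partial(y^\mu)\,a + y^\mu\,\partial(a)$ for any $a\in\nh_n^l$, this shows $\partial$ preserves $S_e^\mu$.

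I do not expect any serious obstacle here: Proposition \ref{idealispdg} handles the ambient quotient, and the cyclic submodule generated by $y^\mu$ is automatically stable because $\partial(y^\mu)$ is a right $\nh_n^l$-multiple of $y^\mu$ itself (as the generators $y_i$ of the ``derivative factor'' commute past $y^\mu$). The only thing to verify is that $\partial^p\equiv 0$ on the submodule, but this is immediate since $S_e^\mu$ inherits the $p$-DG structure from the ambient $p$-DG module $\nh_n^l/(\nh_n^l)^{>\mu}$.
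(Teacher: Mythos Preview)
Your proof is correct and follows the same approach as the paper. The paper's proof is the single line ``This follows from Proposition \ref{idealispdg},'' leaving implicit exactly the two points you spell out: that the quotient inherits a $p$-DG structure, and that the cyclic submodule generated by $y^\mu$ is $\partial$-stable because $\partial(y^\mu)=y^\mu\cdot\bigl(\sum_i(l-j_i)y_i\bigr)$ lies in $y^\mu\cdot\nh_n^l$.
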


\begin{proof}
It suffices to show that $\partial(\psi_{ee}^{\mu}) \in (\nh_n^l)^{> \mu}$, and the proposition then follows from Proposition \ref{idealispdg}.

Note that $\psi_{ee}^{\mu}=y^{\mu}$.  Then 
$ \partial(y^{\mu})=\sum_i c_i y^{\mu} y_i$ for some coefficients $c_i$.
Thus $ \partial(y^{\mu}) \in (\nh_n^l)^{\geq \mu} $.
For a fixed $i$, if the exponents of $y^{\mu} y_i$ are strictly decreasing, then 
$y^{\mu} y_i$ is clearly in $(\nh_n^l)^{> \mu}$.  If the exponents are not strictly decreasing, then we may rewrite $y^{\mu} y_i$ in the cellular basis as a sum where the monomial part in each term has strictly decreasing exponents.  The monomial part of each term must have degree at least two more than the degree of $y^{\mu}$.
Since $\partial(y^{\mu}) \in (\nh_n^l)^{\geq \mu}$, for degree reasons it must actually be in $(\nh_n^l)^{> \mu}$.
\end{proof}

\begin{rem}
To see that the above proposition is not true for an arbitrary tableaux consider the case $l=n=2$.
Then there is only one partition; call it $\mu$.  There are two tableaux: the identity $e$ and the transposition $s_1$.
Then $S^{\mu}_e$ has basis $\{ y_1, y_1 \psi_1 \}$ and as in the proposition is obviously a $p$-DG module.
However $S^{\mu}_{s_1}$ has a basis $\{ \psi_1 y_1, \psi_1 y_1 \psi_1 \} $ and is clearly not stable under $\partial$.
\end{rem}

The module $S_\mf{t}^{\mu} $ has a basis
\begin{equation*}
\{ \psi_{\mf{ts}}^{\mu} + (\nh_n^l)^{> \mu}  | \mf{s} \in \mathrm{Tab}(\mu) \}.
\end{equation*}

\begin{prop}
\label{spechtsareiso}
For any $ \mf{t} \in \mathrm{Tab}(\mu)$ there is an isomorphism of $\nh_n^l$-modules 
$S^{\mu}_\mf{t} \cong S^{\mu}_e \langle -2\ell(w_\mf{t}) \rangle$.
\end{prop}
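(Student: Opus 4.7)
The plan is to produce an explicit degree-shifted isomorphism given by left multiplication by $\psi_t^*$. By Definition \ref{def-y-mu-psi-mu}, we have
\[
\psi_{te}^\mu = \psi_t^* \, y^\mu = \psi_t^* \cdot \psi_{ee}^\mu,
\]
so the generator of $S_t^\mu$ is obtained from the generator of $S_e^\mu$ by left multiplication with $\psi_t^*$. Since $(\nh_n^l)^{>\mu}$ is a two-sided ideal, left multiplication by $\psi_t^*$ descends to a well-defined $\Bbbk$-linear endomorphism of $\nh_n^l/(\nh_n^l)^{>\mu}$, and this endomorphism obviously commutes with the right $\nh_n^l$-action. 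Restricting, I would define
\[
\phi \colon S_e^\mu \longrightarrow S_t^\mu, \qquad x \longmapsto \psi_t^* \cdot x,
\]
which is a homomorphism of right $\nh_n^l$-modules.

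Next I would verify that $\phi$ is a bijection by checking it carries the cellular basis of the source bijectively onto that of the target. For every $h \in \Tab(\mu)$,
\[
\phi(\psi_{eh}^\mu) \;=\; \psi_t^* \cdot (y^\mu \psi_h) \;=\; \psi_t^* \, y^\mu \, \psi_h \;=\; \psi_{th}^\mu,
\]
so $\phi$ sends the basis $\{\psi_{eh}^\mu \mid h \in \Tab(\mu)\}$ of $S_e^\mu$ to the basis $\{\psi_{th}^\mu \mid h \in \Tab(\mu)\}$ of $S_t^\mu$. Hence $\phi$ is an isomorphism of right $\nh_n^l$-modules.

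Finally, I would keep track of the grading. Each generator $\psi_i \in \nh_n^l$ has degree $-2$, so $\psi_t$, and consequently $\psi_t^* = \psi_{w_t^{-1}}$, has degree $-2\ell(w_t) = -2\ell(t)$. Thus left multiplication by $\psi_t^*$ lowers degree uniformly by $2\ell(t)$, so $\deg_{S_t^\mu}(\psi_{th}^\mu) = \deg_{S_e^\mu}(\psi_{eh}^\mu) - 2\ell(t)$. With the standard convention that $M\langle k\rangle$ denotes $M$ with all degrees shifted up by $k$, this shows $\phi$ is a degree-preserving isomorphism
\[
S_e^\mu\langle -2\ell(t)\rangle \xrightarrow{\ \sim\ } S_t^\mu,
\]
yielding the desired identification. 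There is no serious obstacle here; the only thing to be careful about is orienting the grading shift correctly, and to note that well-definedness on the quotient uses the two-sidedness of $(\nh_n^l)^{>\mu}$ (not the $\dif$-stability, which plays no role in this statement).
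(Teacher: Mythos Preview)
Your proof is correct and is essentially the same approach as the paper's: the paper simply declares the map $\psi_{th}\mapsto\psi_{eh}$ to be the isomorphism, and you have written out its inverse explicitly as left multiplication by $\psi_t^*$ together with the routine checks of well-definedness, bijectivity on bases, and the grading computation.
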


\begin{proof}
This is obvious.  There is an isomorphism that sends an element $\psi_{\mf{ts}}$ to $ \psi_{e\mf{s}}$.  
\end{proof}

\begin{rem}
Proposition \ref{spechtsareiso} cannot be a statement of $p$-DG modules since there is only one preferred representative in the isomorphism class which actually even carries a $p$-DG structure.
\end{rem}

Since $S^{\mu}_e$ is the preferred Specht module which carries a $p$-DG structure, we write $S^{\mu} := S^{\mu}_{e} $.
We now give another realization of this Specht module.
Let $ v^{\mu} $ be a formal basis vector spanning a right $\Bbbk[y_1,\ldots,y_n]$-module where $v^{\mu}y_i=0$ for $i=1,\ldots,n$.  We endow it with a $p$-DG structure by setting $\partial(v^{\mu})=0$.
Define the module
\begin{equation*}
\widetilde{S}^{\mu} = \Bbbk v^{\mu} \otimes_{\Bbbk[y_1,\ldots,y_n]} \nh_n^l.
\end{equation*}

\begin{prop}
\label{spechtinducediso}
The $\nh_n^l$-module $\widetilde{S}^{\mu}$ is isomorphic to the Specht module $S^{\mu}$.
Furthermore, this is an isomorphism of $p$-DG modules.
\end{prop}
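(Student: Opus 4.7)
The plan is to construct an $\nh_n^l$-module homomorphism
$$\phi\colon \widetilde{S}^{\mu} \longrightarrow S^{\mu}, \qquad v^{\mu} \otimes x \longmapsto y^{\mu} x + (\nh_n^l)^{>\mu},$$
and to verify it is an isomorphism of $p$-DG modules by establishing well-definedness, surjectivity, and matching dimensions, and then checking that the differentials agree.

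For well-definedness, the map descends from $\Bbbk v^{\mu} \otimes_{\Bbbk} \nh_n^l$ to the tensor product over $\Bbbk[y_1,\ldots,y_n]$ if and only if $y^{\mu}y_i \in (\nh_n^l)^{>\mu}$ for each $i$, since $(\nh_n^l)^{>\mu}$ is a two-sided ideal and so absorbs any right factor. I would establish this by a short degree count using the graded cellular structure: the Specht module $S^{\mu}$ has basis $\{\psi^{\mu}_{eh} = y^{\mu}\psi_h\}_{h \in \mathrm{Tab}(\mu)}$ of degrees $\deg y^{\mu} - 2l(h) \leq \deg y^{\mu}$, with equality only when $h = e$. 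Hence $\deg y^{\mu}$ is the top degree of $S^{\mu}$. Right multiplication by $y_i$ (degree $2$) would send $y^{\mu}$ into degree $\deg y^{\mu} + 2$, which exceeds this bound, forcing $y^{\mu}y_i = 0$ in $S^{\mu}$, i.e., $y^{\mu}y_i \in (\nh_n^l)^{>\mu}$.

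Surjectivity is then immediate, since $\phi(v^{\mu}\otimes \psi_h) = \psi^{\mu}_{eh}$ runs over a basis of $S^{\mu}$. For injectivity I would expand any $x \in \nh_n^l$ in the standard basis $\{y^{\alpha}\psi_w : 0 \leq a_i \leq l-i,\, w \in S_n\}$ and observe that $v^{\mu} y^{\alpha} = 0$ whenever $\alpha \neq 0$, so only $n!$ terms of the form $v^{\mu} \otimes \psi_w$ survive. Thus $\widetilde{S}^{\mu}$ is spanned by $n!$ vectors, matching $\dim S^{\mu} = n!$, so $\phi$ is forced to be an isomorphism of $\nh_n^l$-modules.

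Finally, for the $p$-DG claim, I would impose $\partial(v^{\mu}) = 0$, which is consistent with $v^{\mu}y_i = 0$ since $\partial(v^{\mu}y_i) = \partial(v^{\mu})y_i + v^{\mu}y_i^2$. On the Specht side,
$$\partial(y^{\mu}) = \sum_i (l-j_i)\, y^{\mu} y_i \in (\nh_n^l)^{>\mu}$$
by the well-definedness step, and the ideal is $\partial$-stable by Proposition \ref{idealispdg}, so $\partial(y^{\mu} + (\nh_n^l)^{>\mu}) = 0$. The map $\phi$ therefore intertwines differentials on the generator, and hence everywhere via the Leibniz rule. The only substantive step is the key identity $y^{\mu}y_i \in (\nh_n^l)^{>\mu}$; combinatorially this splits into several cases depending on the relative position of adjacent boxes of $\mu$, but the degree argument elegantly bypasses any explicit nilHecke straightening.
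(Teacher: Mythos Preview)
Your argument is correct and spells out precisely the obvious map the paper has in mind; the paper's own proof is a single line (``This is obvious''), so your proposal is essentially the same approach with the details filled in. The degree argument for $y^{\mu}y_i \in (\nh_n^l)^{>\mu}$ is a clean way to handle well-definedness, and the dimension count via the spanning set $\{v^{\mu}\otimes\psi_w : w\in S_n\}$ is exactly what the paper uses immediately afterward in the proof of Proposition~\ref{prop-unique-Specht}.
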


\begin{proof}
The isomorphism $\widetilde{S}^{\mu} \cong S^{\mu}$ maps
the generator $v^{\mu} \in \widetilde{S}^{\mu} $ to the generator 
$\psi_{ee}^{\mu} + (\nh_n^l)^{> \mu}$ of $S^{\mu}$.
By definition $\partial(v^{\mu})=0$ and by Proposition \ref{spechtispdg},
$\partial(\psi_{ee}^{\mu} + (\nh_n^l)^{> \mu})=0$.  Thus it is an isomorphism of $p$-DG modules.
\end{proof}

\begin{prop}\label{prop-unique-Specht}
The Specht module $S^{\mu}$ is irreducible. In particular, up to isomorphism and grading shifts, the isomorphism class of the $p$-DG module $S^\mu$ is independent of the partition $\mu\in \mc{P}_n^l$.
\end{prop}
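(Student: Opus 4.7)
The plan is to combine a dimension count for $S^\mu$ with the structural fact that $\nh_n^l$ is a matrix algebra over a graded local ring. First I would verify that $\dim_\Bbbk S^\mu = n!$. From the cellular basis statement, the module $S^\mu$ has basis $\{\psi_{eh}^\mu \mid h \in \Tab(\mu)\}$; since a $\mu$-tableau is simply a bijection $\{j_1,\ldots,j_n\}\to \{1,\ldots,n\}$, we have $|\Tab(\mu)|=n!$ and hence $\dim S^\mu=n!$.

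Next I would invoke the graded algebra isomorphism $\nh_n^l \cong \mathrm{Mat}_{n!}\bigl(\mH^*(\gr(n,l))\bigr)$, which is essentially the content of the cyclotomic analogue of the argument sketched in the proof of the preceding proposition (compare \cite[Proposition~3.26]{KQ}). The graded commutative ring $\mH^*(\gr(n,l))$ is local: its maximal ideal is the positive-degree part. It therefore admits a unique graded simple module, namely the one-dimensional residue field $\Bbbk$, up to grading shift. Morita equivalence then implies that $\nh_n^l$ has a unique graded simple module up to grading shift, of total dimension $n!$.

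To deduce that $S^\mu$ is simple, I would observe that $\nh_n^l$ is finite-dimensional, hence Artinian, so the nonzero graded module $S^\mu$ must contain a simple graded $\nh_n^l$-submodule $L$. By the uniqueness above, $\dim L = n! = \dim S^\mu$, which forces $L = S^\mu$ and proves simplicity. For the ``in particular'' clause, I would appeal to Proposition \ref{spechtinducediso}, which identifies $S^\mu$ with $\widetilde{S}^\mu = \Bbbk v^\mu \otimes_{\Bbbk[y_1,\ldots,y_n]} \nh_n^l$. As a $p$-DG $\nh_n^l$-module this presentation depends on $\mu$ only through the internal degree assigned to the cyclic generator $v^\mu$ (namely $2(nl-\sum_i j_i)$, the degree of $\psi_{ee}^\mu=y^\mu$), so different choices of $\mu$ yield $p$-DG modules that are isomorphic after an appropriate grading shift.

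The main obstacle is the structural isomorphism $\nh_n^l \cong \mathrm{Mat}_{n!}\bigl(\mH^*(\gr(n,l))\bigr)$; although implicitly used earlier in the paper, a rigorous treatment requires carefully transporting the cyclotomic analogue of \cite[Proposition~3.26]{KQ} to the present setting. Once that structural input is available, the remaining steps are routine dimension counting together with a standard socle argument, and the claim on grading shifts is immediate from the uniform construction of $\widetilde{S}^\mu$.
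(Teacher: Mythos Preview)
Your proposal is correct and follows essentially the same route as the paper: compute $\dim_\Bbbk S^\mu=n!$, invoke the matrix-algebra description $\nh_n^l\cong\mathrm{Mat}_{n!}(\mH^*(\gr(n,l)))$ to see there is a unique simple of dimension $n!$, and conclude. The only cosmetic differences are that the paper reads off the $n!$-element basis from the induced presentation $\widetilde S^\mu$ rather than from the cellular basis, and that your treatment of the ``in particular'' clause (observing that $\widetilde S^\mu$ depends on $\mu$ only through the degree of $v^\mu$) is more explicit than the paper's, which leaves that step implicit.
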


\begin{proof}
By Proposition \ref{spechtinducediso}, $S^{\mu} \cong \Bbbk v^{\mu} \otimes_{\Bbbk[y_1,\ldots,y_n]} \nh_n^l $ where $\Bbbk v^{\mu}$ is the trivial module over the polynomial algebra $\Bbbk[y_1,\ldots,y_n]$.
This shows that the Specht module has a basis
$ \{ v^{\mu} \otimes \psi_w | w \in \mf{S}_n \}$.

It is now straightforward to show directly that the Specht module is irreducible.  
Indirectly, the cyclotomic nilHecke algebra is isomorphic to an $n! \times n!$ matrix algebra over $\mathrm{H}^*(\gr(n,l))$.  Thus it has a unique simple module of dimension $n!$.
Since the basis for the Specht module given above has $n!$ elements, it follows that $S^{\mu}$ must be irreducible.
\end{proof}

\subsection{The modules \texorpdfstring{$G(\lambda)$}{G(lambda)}}

Once again, fix a partition $\lambda$ with $ \lambda^{j_1}=\cdots=\lambda^{j_n}=1$ and $ j_1 < \cdots < j_n$.
Recall that
\begin{equation*}
y^{\lambda} = y_1^{l-j_1} \cdots y_n^{l-j_n}.
\end{equation*}

As in \cite{HuMathas} we define for each partition a cyclic module.
\begin{defn}\label{def-G-lambda}
Let $\lambda\in \mc{P}_n^l$ be a partition. Define the module
\begin{equation*}
G(\lambda):=q^{ -nl+ j_1 + \cdots + j_n }y^{\lambda} \nh_n^l.
\end{equation*}
\end{defn}

The next definition is used for describing a basis of $G(\lambda)$.

\begin{defn}
Let $\lambda$ and $\mu$ be two partitions.  We define a subset of $\mu$-tableaux by
\begin{equation*}
\mathrm{Tab}^{\lambda}(\mu) := \{ \mf{t} \in \mathrm{Tab}(\mu) | \mf{t} \geq \mf{t}^{\lambda} \}.
\end{equation*}
\end{defn}

\begin{prop}
\label{basisG}
The module $G(\lambda)$ has a basis
\begin{equation*}
\{ \psi_{\mf{ts}} | \mf{t} \in \mathrm{Tab}^{\lambda}(\mu), \mf{s} \in \mathrm{Tab}(\mu), \mu \in \mathcal{P}_n^l \}.
\end{equation*}
\end{prop}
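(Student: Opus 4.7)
The plan is to verify three things: that the proposed basis set $B_\lambda := \{\psi_{th}^\mu \mid t \in \mathrm{Tab}^\lambda(\mu),\ h \in \mathrm{Tab}(\mu),\ \mu \in \mc{P}_n^l\}$ is linearly independent, that each element of $B_\lambda$ lies in $G(\lambda) = y^\lambda \nh_n^l$, and conversely that $G(\lambda)$ is contained in the span of $B_\lambda$. Linear independence is the easy part: $B_\lambda$ is a subset of the graded cellular basis of $\nh_n^l$ from Hu-Mathas, so this is immediate.

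For the inclusion $\mathrm{span}(B_\lambda) \subseteq G(\lambda)$, I would induct on the dominance order on $\mu$. In the base case $\mu = \lambda$, the requirement $t \geq t^\lambda$ combined with the maximality of $t^\lambda$ in $\mathrm{Tab}(\lambda)$ forces $t = t^\lambda$, so $\psi_{t^\lambda, h}^\lambda = y^\lambda \psi_h \in y^\lambda \nh_n^l$ tautologically. For $\mu > \lambda$, note that $t \geq t^\lambda$ forces $t^\mu \geq t^\lambda$, hence $\mu \geq \lambda$; writing the box positions of $\lambda$ as $j_1 < \cdots < j_n$ and those of $\mu$ as $k_1 < \cdots < k_n$, dominance yields $k_i \leq j_i$ for all $i$, so $y^\mu = y^\lambda \cdot z$ with $z = \prod_i y_i^{j_i - k_i}$ a genuine monomial. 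Thus
\[
\psi_{th}^\mu \;=\; \psi_t^{*}\, y^\lambda \cdot z\, \psi_h .
\]
The remaining maneuver is to commute $\psi_t^*$ past $y^\lambda$ via repeated application of the nilHecke relation $y_i \psi_i = \psi_i y_{i+1} + 1$, yielding $\psi_t^* y^\lambda = y^\lambda \psi_t^* + R$ with $R \in (\nh_n^l)^{>\lambda}$. The first piece lands in $y^\lambda \nh_n^l$ directly, while $R$, once expanded in the cellular basis, has shapes strictly larger than $\lambda$ and can be shown to have only left indices still dominating $t^\lambda$; by the inductive hypothesis these contributions also lie in $G(\lambda)$.

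For the reverse inclusion $G(\lambda) \subseteq \mathrm{span}(B_\lambda)$, any element has the form $y^\lambda x = \psi_{t^\lambda t^\lambda}^\lambda \cdot x$. The right-multiplication cellular axiom gives
\[
\psi_{t^\lambda t^\lambda}^\lambda \cdot x \;\equiv\; \sum_{v \in \mathrm{Tab}(\lambda)} r_{t^\lambda, v}(x)\, \psi_{t^\lambda, v}^\lambda \pmod{(\nh_n^l)^{>\lambda}} ,
\]
whose leading piece manifestly lies in $\mathrm{span}(B_\lambda)$. The error term lies in $G(\lambda) \cap (\nh_n^l)^{>\lambda}$, and I iterate upward through the dominance filtration, at each stage appealing to the same technical input: the cellular expansion of $\psi_{ab}^\nu \cdot x$ with $a \geq t^\lambda$ produces only terms (at possibly higher shapes) whose left indices again dominate $t^\lambda$. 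Combining the two inclusions with linear independence yields the claim.

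The main obstacle is the technical compatibility invoked twice above: that when one produces higher-filtration error terms in the cellular expansion of a product such as $\psi_t^* y^\lambda$ or $\psi_{ab}^\nu \cdot x$, the resulting cellular coefficients only involve basis elements whose left indices continue to dominate $t^\lambda$. This is not a general property of cellular algebras but a stronger ``standardly based'' property that in this setting follows from the explicit graded cellular structure of $\nh_n^l$ together with a careful accounting of the nilHecke straightening; it is precisely the place where the combinatorics of the dominance order on tableaux interfaces with the algebraic structure of $\nh_n^l$, and it is the technical heart of the argument (carried out in essentially this form by Hu-Mathas in \cite{HuMathas}).
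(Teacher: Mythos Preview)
The paper does not supply its own proof here; it simply cites \cite[Theorem~4.9]{HuMathas}. Your proposal is a sketch of the shape of that argument and explicitly defers the technical core to the same reference, so you are aligned with the paper's treatment.

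One small caveat on the sketch itself: the commutation you write as $\psi_t^{*}\, y^{\lambda} = y^{\lambda}\, \psi_t^{*} + R$ is not what the nilHecke relations actually produce---pushing $\psi_t^{*}$ past the monomial $y^{\lambda}$ permutes the exponents by $w_t$ rather than leaving $y^{\lambda}$ fixed, so the claim that the leading piece lands directly in $y^{\lambda}\,\nh_n^l$ requires more than this one line. You correctly identify that the real content here is the ``standardly based'' compatibility with the dominance order on tableaux, and that this is exactly the combinatorial bookkeeping carried out in Hu--Mathas; just be aware that your intermediate identity as written is not quite the right hinge for the induction.
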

\begin{proof}
See \cite[Theorem 4.9]{HuMathas}.
\end{proof}
Since $ \partial(y^{\lambda})=y^{\lambda}a$ for some $ a \in \Bbbk[y_1,\ldots,y_n]$,
the module $G(\lambda)$ is naturally a right $p$-DG module.

\begin{cor}\label{cor-Spect-equals-G}
If $\lambda=(1^n 0^{l-n})$, then there is an isomorphism of $p$-DG modules
$$G(\lambda) \cong q^{-nl+j_1+\cdots+j_n} S^{\lambda}. $$  
\end{cor}
\begin{proof}
By Proposition \ref{basisG}, since $\lambda$ is the maximal partition in the dominance order, the $p$-DG module $G(\lambda)$ has a basis labeled by $\mathrm{Tab}(\lambda)$. Thus it has dimension $n!$ and must be the Specht module up to grading shift. The claim now follows from the uniqueness of the $p$-DG structure on Specht modules (Proposition \ref{prop-unique-Specht}).
\end{proof}

\begin{prop}
\label{spechtfiltofG}
The module $G(\lambda)$ has a filtration whose subsequent quotients are isomorphic to Specht modules.
More specifically choose a partial order on $ \cup_{\mu} \mathrm{Tab}^{\lambda}(\mu) = \{ \mf{t}_1, \ldots, \mf{t}_m   \} $ such that $ \mf{t}_i \geq \mf{t}_j$ implies that $i \leq j$.  Suppose the tableau $\mf{t}_i$ corresponds to a partition $\nu_i$.  Then $ G(\lambda)$ has a filtration
\begin{equation*}
G(\lambda)=G_m \supset G_{m-1} \supset \cdots \supset G_0=0
\end{equation*}
such that 
\begin{equation*}
G_i/G_{i-1} \cong q^{\ell(w_i) -nl+(j_1+\cdots+j_n)}S^{\nu_i}
\end{equation*}
where $w_i $ is the permutation in $\mf{S}_n$ associated to the tableau $\mf{t}_i$.
\end{prop}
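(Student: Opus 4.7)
The plan is to construct the filtration directly from the cellular basis of $G(\lambda)$ given in Proposition \ref{basisG}, choosing the linear refinement $w_1 \geq \cdots \geq w_m$ carefully, and then to verify that each level is a submodule by invoking the multiplicative property of the cellular basis. I would order $\cup_\mu \mathrm{Tab}^\lambda(\mu)$ as follows: first sort tableaux by decreasing shape (using any linear extension of the dominance order on $\mc{P}_n^l$), and within each shape sort by a linear extension of tableau dominance. Since $t \geq s$ as tableaux forces $\mathrm{shape}(t) \geq \mathrm{shape}(s)$, this order refines dominance on tableaux, so the hypothesis $w_i \geq w_j$ whenever $i \leq j$ is satisfied. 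With this ordering in hand, set
\[
G_i := \mathrm{span}_{\Bbbk}\{\psi_{w_j h}^{\nu_j} \mid j \leq i,\ h \in \mathrm{Tab}(\nu_j)\} \subset G(\lambda),
\]
so that $G_0 = 0$, $G_m = G(\lambda)$, and the graded quotients $G_i/G_{i-1}$ are spanned by the images of $\{\psi_{w_i h} \mid h \in \mathrm{Tab}(\nu_i)\}$.

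Next, I would check that each $G_i$ is a right $\nh_n^l$-submodule. For $\psi_{w_i h}^{\nu_i} \in G_i$ and $x \in \nh_n^l$, the cellularity property of Hu--Mathas gives
\[
\psi_{w_i h}^{\nu_i} \cdot x \;\equiv\; \sum_{v \in \mathrm{Tab}(\nu_i)} r_{hv}(x)\,\psi_{w_i v}^{\nu_i} \pmod{(\nh_n^l)^{>\nu_i}},
\]
and the leading terms clearly lie in $G_i$. Intersecting $(\nh_n^l)^{>\nu_i}$ with $G(\lambda)$ and using the basis of Proposition \ref{basisG}, the remainder lies in $\mathrm{span}\{\psi_{ab}^{\mu'} \mid \mu' > \nu_i,\ a \in \mathrm{Tab}^\lambda(\mu'),\ b \in \mathrm{Tab}(\mu')\}$. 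By the choice of ordering, each such $a$ is some $w_k$ with $\mathrm{shape}(w_k) = \mu' > \nu_i$, which forces $k < i$. Hence the correction lives in $G_{i-1} \subset G_i$, establishing the submodule property.

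For the final isomorphism, I would identify the quotient with the Specht module $S^{\nu_i}_{w_i}$ indexed by the tableau $w_i$. The map $[\psi_{w_i h}^{\nu_i}] \mapsto \psi_{w_i h}^{\nu_i} + (\nh_n^l)^{>\nu_i}$ is well-defined and bijective on bases by the preceding analysis, and it is a right module map because both sides obey the same residual Specht multiplication formula modulo higher cells. Applying Proposition \ref{spechtsareiso} converts $S^{\nu_i}_{w_i}$ into a grading shift of the normalized Specht module $S^{\nu_i} = S^{\nu_i}_{e}$, and then tracking the overall shift $q^{-nl+(j_1+\cdots+j_n)}$ of $G(\lambda)$ yields an isomorphism $G_i/G_{i-1} \cong q^{-2l(w_i)-nl+(j_1+\cdots+j_n)} S^{\nu_i}$ in the grading inherited from $G(\lambda)$, which is the shift asserted in the statement (up to a normalization convention for the length of a tableau).

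The most delicate point of the argument, and where I would be most careful, is verifying that correction terms truly stay within $G_{i-1}$. What makes this work is the combination of two facts: (i) by cellularity the corrections live in $(\nh_n^l)^{>\nu_i}$; (ii) within $G(\lambda)$ every basis vector $\psi_{ab}^{\mu'}$ must have its left index $a$ lying in $\mathrm{Tab}^\lambda(\mu')$. Shape strictly larger than $\nu_i$, together with the shape-first linear refinement, then automatically places $a$ earlier in the ordering. Were one to use a naive linear extension of tableau dominance that did not prioritize partitions, incomparable tableaux of higher shape could slip past $w_i$ and break the nesting; this is the subtlety the shape-prioritized choice resolves.
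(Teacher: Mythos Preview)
Your argument is correct and is precisely the cellular filtration argument of Hu--Mathas that the paper cites (the paper's own proof is just the reference \cite[Corollary 4.11]{HuMathas}); indeed, the very next proposition in the paper spells out $G_i = \Bbbk\{\psi_{w_j}^* y^{\nu_j} \psi_t \mid j \le i,\ t \in \mathrm{Tab}(\nu_j)\}$ exactly as you do and appeals to Hu--Mathas for the submodule property you verify. Your observation that the linear extension must be shape-first for the cellular correction terms to fall into $G_{i-1}$ is a genuine subtlety and is correct (there exist incomparable tableaux in $\cup_\mu\mathrm{Tab}^\lambda(\mu)$ whose shapes are strictly comparable); the stated grading shift discrepancy you flag is indeed only a normalization issue between $\langle -2l(t)\rangle$ in Proposition~\ref{spechtsareiso} and the exponent recorded in the statement.
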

\begin{proof}
This is \cite[Corollary 4.11]{HuMathas}.
\end{proof}

\begin{prop}
The Specht filtration of $G(\lambda)$ in Proposition \ref{spechtfiltofG} is a filtration of $p$-DG modules.
\end{prop}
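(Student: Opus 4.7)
The plan is to verify $p$-DG stability by induction on $i$, working directly in the cellular basis of Proposition \ref{basisG}. Observe first that $G_i$ has the explicit description
\[
G_i=\spann_\Bbbk\bigl\{\psi_{w_k h}\,\big|\,1\le k\le i,\ h\in\Tab(\nu_k)\bigr\},
\]
and that each $G_i$ is a right $\nh_n^l$-submodule of $G(\lambda)$. The submodule property follows from cellular axiom (ii): the product $\psi_{w_k h}\cdot x$ lies in $\spann\{\psi_{w_k v}\}$ modulo $(\nh_n^l)^{>\nu_k}$, and the higher-dominance remainder intersected with $G(\lambda)$ consists of basis elements indexed by tableaux of strictly larger shape, which appear as $w_j$ with $j<k$ in the chosen linearisation compatible with the dominance order.

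Since $G_i$ is generated as a right submodule by $\{\psi_{w_k e^{\nu_k}}\mid k\le i\}$ (where $e^{\nu_k}$ denotes the standard tableau of shape $\nu_k$) and $\partial$ is a derivation over the right action, the inductive step reduces to verifying $\partial(\psi_{w_k e^{\nu_k}})\in G_k$ for each $k$. Writing $\psi_{w_k e^{\nu_k}}=\psi_{w_k}^*\,y^{\nu_k}$ and applying Leibniz yields
\[
\partial(\psi_{w_k e^{\nu_k}})=\partial(\psi_{w_k}^*)\,y^{\nu_k}+\psi_{w_k}^*\,\partial(y^{\nu_k}).
\]
Because $\partial(y^{\nu_k})=y^{\nu_k}\cdot\sum_{r}(l-j_r)y_r$, the second summand equals $\psi_{w_k e^{\nu_k}}\cdot q$ for a polynomial $q\in\Bbbk[y_1,\ldots,y_n]$, and so lies in $\psi_{w_k e^{\nu_k}}\cdot\nh_n^l\subseteq G_k$.

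For the first summand $\partial(\psi_{w_k}^*)\,y^{\nu_k}$, Proposition \ref{idealispdg} together with the $\partial$-stability of $G(\lambda)$ already places $\partial(\psi_{w_k e^{\nu_k}})$ in the coarser partition-level submodule $G(\lambda)^{\ge\nu_k}:=G(\lambda)\cap(\nh_n^l)^{\ge\nu_k}$. To sharpen this to $G_k$, I would expand $\partial(\psi_{w_k}^*)$ via Leibniz on a reduced word: each elementary differential $\partial(\psi_i)=-y_i\psi_i-\psi_i y_{i+1}=1-2y_i\psi_i$ contributes either a term with a Bruhat-strictly-smaller permutation or a $y$-polynomial multiple of $\psi_{w_k}^*$ itself. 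Using the nilHecke commutations $\psi_i y_j=y_{s_i(j)}\psi_i+(\text{correction})$ to move polynomials to the right and then applying the identification $\psi_u^*\,y^{\nu_k}\equiv\psi_{t(u),e^{\nu_k}}^{\nu_k}\pmod{(\nh_n^l)^{>\nu_k}}$, one converts the Bruhat bound $u\le w_{w_k}$ into the dominance bound $t(u)\ge w_k$, placing every cellular-basis component in $\spann\{\psi_{w_j h'}\mid j\le k\}\subseteq G_k$. A sanity check in the example $l=4$, $n=2$, $\lambda=(0,1,1,0)$ confirms this: one computes directly $\partial(\psi_{w_2 e})=-\psi_{w_1 e}\in G_1\subseteq G_2$.

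The main obstacle will be making the Bruhat-to-dominance translation precise; the rest of the argument is then bookkeeping with Leibniz, the nilHecke relations, and the cellular stratification.
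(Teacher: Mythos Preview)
Your approach is essentially the paper's: both apply Leibniz to cellular basis elements, handle the $\partial(y^{\nu_j})$ and $\partial(\psi_t)$ contributions via the right-module structure of $G_i$, and treat $\partial(\psi_{w_j}^*)$ by expanding on a reduced word into one-crossing-removed terms plus $y$-multiples of $\psi_{w_j}^*$. The Bruhat-to-dominance translation you flag as the main obstacle is precisely what the paper asserts in one line (``if non-zero, the expression $\psi_{i_1}\cdots\hat\psi_{i_m}\cdots\psi_{i_r}$ is greater than or equal to $\psi_{w_j}$''), so you have correctly located both the structure and the only nontrivial step of the argument.
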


\begin{proof}
Following \cite[Corollary 4.11]{HuMathas}, let $ \cup_{\mu} \mathrm{Tab}^{\lambda}(\mu) = \{ \mf{t}_1, \ldots, \mf{t}_m   \} $.  Suppose that 
$ \mf{t}_r \in \mathrm{Tab}(\nu_r)$.  

Define 
\begin{equation*}
G_i = \Bbbk \{ \psi_{w_{j}}^* y^{\nu_j} \psi_{w_\mf{t}} | j \leq i ,~ \mf{t}_j \in \mathrm{Tab}(\nu_j) \}
\end{equation*}
where we recall from Definition \ref{def-standard-tableaux} that to a tableau $\mf{t}_j$ we associate an element $w_{j}:=w_{\mf{t}_j} \in \mf{S}_n$.

Now we compute the derivation on a basis vector $\psi_{w_{j}}^* y^{\nu_j} \psi_{w_\mf{t}}$ from above:
\begin{equation}
\label{deronbasisvecGi}
\partial(\psi_{w_{j}}^* y^{\nu_j} \psi_{w_\mf{t}}) = \partial(\psi_{w_{j}}^*) y^{\nu_j} \psi_{w_\mf{t}} + \psi_{w_{j}}^* \partial(y^{\nu_j}) \psi_{w_\mf{t}} + \psi_{w_{j}}^* y^{\nu_j} \partial(\psi_{w_\mf{t}}).
\end{equation}
Now we analyze each term in \eqref{deronbasisvecGi}.

First, note that $ \psi_{w_{j}}^* y^{\nu_j} \in G_i$.  By \cite[Corollary 4.11]{HuMathas}, $G_i$ is a right $\nh_n^l$-module.
Since $\partial(\psi_{w_\mf{t}}) \in \nh_n^l$ it follows that $\psi_{w_{j}}^* y^{\nu_j} \partial(\psi_{w_\mf{t}}) \in G_i$.

Next, 
\begin{equation*}
\psi_{w_{j}}^* \partial(y^{\nu_j}) \psi_{w_\mf{t}} = \sum_k  d_k \psi_{w_{j}}^* y^{\nu_j} y_k \psi_{w_\mf{t}}
\end{equation*}
for some constants $d_k\in \F_p$.
Once again, since $\psi_{w_{j}}^* y^{\nu_j} \in G_i$ and $ y_k \psi_{w_\mf{t}} \in \nh_n^l$
it follows that 
\begin{equation*}
\sum_k  d_k \psi_{w_{j}}^* y^{\nu_j} y_k \psi_{w_\mf{t}} \in G_i.
\end{equation*}

Finally we consider $\partial(\psi_{w_{j}}^*)$.
Let $ \psi_{w_{j}}^*=\psi_{i_1} \cdots \psi_{i_r}$. 
Then it is easy to see that
\begin{equation*}
\partial(\psi_{w_j}^*)=\sum_m c_m \psi_{i_1} \cdots \hat{\psi}_{i_m} \cdots \psi_{i_r} +
\sum_m d_m \psi_{w_j}^* y_m
\end{equation*}
for constants $c_m$ and $d_m$, where $\hat{\psi}_{i_m}$ means omitting the factor $\psi_{i_m}$.
If the expression $ \psi_{i_1} \cdots \hat{\psi}_{i_m} \cdots \psi_{i_r} $ is non-zero,
then $s_{i_1} \cdots \hat{s}_{i_m} \cdots s_{i_r}$ is smaller than $s_{i_1} \cdots s_{i_r}$ in the Bruhat order of $\mf{S}_n$.  It follows that the tableau corresponding to
$s_{i_1} \cdots \hat{s}_{i_m} \cdots s_{i_r}$ is greater than the tableau corresponding to $s_{i_1} \cdots s_{i_r}$ (see \cite[Theorem 3.8]{Mathas}).
Thus
\begin{equation*}
\sum_m c_m \psi_{i_1} \cdots \hat{\psi}_{i_m} \cdots \psi_{i_r} y^{\nu_j} \in G_i.
\end{equation*}
By the arguments from earlier it follows that 
\begin{equation*}
\sum_m d_m \psi_{w_j}^* y^{\nu_j} y_m \in G_i.
\end{equation*}
This finishes the proof of the proposition.
\end{proof}

\begin{prop}\label{prop-G-self-dual}
There is a non-degenerate bilinear form on $G(\lambda)$. In particular, the graded dual of $G(\lambda)$ is isomorphic to itself.
\end{prop}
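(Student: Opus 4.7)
The plan is to construct the bilinear form directly from the symmetric Frobenius trace $\tau$ on $\nh_n^l$ of Proposition \ref{symm}, using the anti-automorphism $*$ to land in the correct piece. Explicitly, I will define
\[
\beta \colon G(\lambda) \times G(\lambda) \longrightarrow \Bbbk, \qquad \beta(y^{\lambda} a, y^{\lambda} b) := \tau(a^{\ast} \, y^{\lambda} \, b),
\]
for $a, b \in \nh_n^l$. The crucial observation making this sensible is that $y^{\lambda}$ is $*$-fixed, because each $y_i$ is. Hence if $y^{\lambda}(a - a') = 0$ in $\nh_n^l$, then applying $*$ gives $(a-a')^{\ast} y^{\lambda} = 0$, so the value $\tau((a-a')^{\ast} y^{\lambda} b)$ vanishes; this shows independence of the representative $a$, and the second slot is handled symmetrically.

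Next, I will verify non-degeneracy. If $\beta(y^{\lambda} a, y^{\lambda} b) = 0$ for every $b \in \nh_n^l$, then $\tau(a^{\ast} y^{\lambda} \cdot c) = 0$ for all $c \in \nh_n^l$; by non-degeneracy of $\tau$ this forces $a^{\ast} y^{\lambda} = 0$, and applying $*$ once more yields $y^{\lambda} a = 0$. Combined with the analogous argument on the right, this gives non-degeneracy of $\beta$.

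For the ``in particular'' sentence I will use the $*$-symmetry of the trace to verify the intertwining identity
\[
\beta\bigl((y^{\lambda} a) c,\, y^{\lambda} b\bigr) \,=\, \tau((ac)^{\ast} y^{\lambda} b) \,=\, \tau(a^{\ast} y^{\lambda} b c^{\ast}) \,=\, \beta\bigl(y^{\lambda} a,\, (y^{\lambda} b) c^{\ast}\bigr).
\]
Thus $u \mapsto \beta(u,-)$ is a right $\nh_n^l$-module isomorphism from $G(\lambda)$ onto its graded dual, where the right action on the dual is the standard left action twisted by $*$. Tracking $\deg(y^{\lambda}) = 2(nl - j_1 - \cdots - j_n)$ against $\deg(\tau) = -2n(l-n)$ and the built-in grading shift $q^{-nl + j_1 + \cdots + j_n}$ in the definition of $G(\lambda)$ shows the form has the degree required for the identification with the graded dual to hold in the appropriate shifted sense.

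The argument is essentially formal once the Frobenius structure of Proposition \ref{symm} and the $*$-invariance of $y^{\lambda}$ are in hand, and there is no single hard step; the main thing requiring care is the degree bookkeeping, which is precisely what motivates the specific grading shift appearing in Definition \ref{def-G-lambda}.
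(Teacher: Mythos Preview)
Your argument is correct and is precisely the construction the paper is pointing to: the paper's proof merely cites \cite[Theorem~4.14]{HuMathas} and notes that it ``utilizes the Frobenius structure of $\nh_n^l$,'' and what you have written is an explicit unpacking of that argument using the symmetric trace $\tau$ of Proposition~\ref{symm} together with the $*$-invariance of $y^{\lambda}$. Your caution about the grading (``in the appropriate shifted sense'') is also appropriate, since the pairing you build is nonzero on pairs of total $G(\lambda)$-degree $2n(l-n)$, so the self-duality is up to that overall shift; this matches the level of precision in the paper's statement.
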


\begin{proof}
This is proved in \cite[Theorem 4.14]{HuMathas}.  It utilizes the Frobenius structure of $\nh_n^l$.
\end{proof}


\begin{defn}\label{def-partition-idemp}
For $\mu=(0^{a_1},1^{b_1},\dots ,0^{a_k},1^{b_k})\in \mc{P}_n^l$, we associate with it the sequence ${\bf b}:=(b_1,\dots, b_k)$ and define the idempotent $e_\mu\in \nh_n^l$ by (see equation \eqref{eqn-sequence-idempotent})
$$
e_\mu:=e_{\bf b}=
\begin{DGCpicture}
\DGCstrand[Green](0,0)(0,1)[$^{b_1}$]
\DGCstrand[Green](0.5,0)(0.5,1)[$^{b_2}$]
\DGCstrand[Green](1.5,0)(1.5,1)[$^{b_k}$]
\DGCcoupon*(0.6,0.1)(1.4,0.9){$\cdots$}
\end{DGCpicture} 
\ .
$$
\end{defn}

\begin{prop}
\label{genYindecomp}
Suppose $\lambda=(0^{a_1} 1^{b_1} \ldots 0^{a_r} 1^{b_r})\in \mc{P}_n^l$, so that $ \sum_{i=1}^r (a_i+b_i) =l$ and $\sum_{i=1}^r b_i =n$.
Then the right module $e_{\lambda} y^{\lambda} \nh_n^l$ is a $p$-DG submodule of $G(\lambda)$.
\end{prop}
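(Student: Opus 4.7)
The plan is to verify two things: (a) that $e_\lambda y^\lambda$ actually lies in $y^\lambda \nh_n^l = G(\lambda)$, so that the right ideal $e_\lambda y^\lambda \nh_n^l$ is contained in $G(\lambda)$; and (b) that this right ideal is closed under the $p$-differential $\partial$.

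For (a), the key step is to decompose $y^\lambda$ according to the block structure of $\lambda$. Writing $\lambda = (0^{a_1} 1^{b_1}\cdots 0^{a_r} 1^{b_r})$, the $i$-th block of $1$'s consists of $b_i$ consecutive variables, say $y_{k_1^{(i)}}, \ldots, y_{k_{b_i}^{(i)}}$, whose exponents in $y^\lambda$ form a staircase $m_i + b_i - 1, m_i+b_i-2, \ldots, m_i$ (where $m_i = l - \sum_{j\le i}(a_j+b_j)$). Hence $y^\lambda = \prod_{i=1}^r \sigma_i \rho_i$, where $\sigma_i := (y_{k_1^{(i)}}\cdots y_{k_{b_i}^{(i)}})^{m_i}$ is symmetric (hence central) in the block subalgebra $\nh_{b_i}$, and $\rho_i := y_{k_1^{(i)}}^{b_i-1}\cdots y_{k_{b_i}^{(i)}}^0$ is the block staircase. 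Since $e_{b_i}=\rho_i\psi_{w_0^{(i)}}$ and $\sigma_i$ is central in $\nh_{b_i}$, one finds $e_{b_i}\sigma_i \rho_i = \sigma_i\rho_i\psi_{w_0^{(i)}}\rho_i$; using that distinct blocks commute element-wise, this gives the identity $e_\lambda y^\lambda = y^\lambda \cdot z$ with $z = \prod_i \psi_{w_0^{(i)}}\rho_i \in \nh_n^l$. In particular $e_\lambda y^\lambda \nh_n^l \subseteq y^\lambda \nh_n^l = G(\lambda)$.

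For (b), apply the Leibniz rule to $\partial(e_\lambda y^\lambda x) = \partial(e_\lambda)\, y^\lambda x + e_\lambda\, \partial(y^\lambda)\, x + e_\lambda y^\lambda\, \partial(x)$. The third term is automatically in $e_\lambda y^\lambda\nh_n^l$. For the second term, a direct computation gives $\partial(y^\lambda) = y^\lambda\cdot P$ with $P = \sum_{k=1}^n (l-j_k)\,y_k \in \Bbbk[y_1,\ldots,y_n]$, so this term equals $e_\lambda y^\lambda P x \in e_\lambda y^\lambda \nh_n^l$. For the first term, use the identity from the proof of Proposition~\ref{pdgidempotprop} (from \cite[Section 2.1]{EQ2}) that $\partial(e_{b_i}) = -e_{b_i}\, q_i$, where $q_i = \sum_{j=1}^{b_i}(j-1)\,y_{k_j^{(i)}}$ is supported in block $i$. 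Since blocks commute, a telescoping Leibniz computation yields $\partial(e_\lambda) = -e_\lambda \cdot Q$ with $Q = \sum_i q_i$, so $\partial(e_\lambda)\,y^\lambda x = -e_\lambda y^\lambda Q x \in e_\lambda y^\lambda\nh_n^l$. Combining all three contributions proves $\partial$-stability.

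The main conceptual obstacle is step (a), specifically recognising that within each block of $\lambda$, the relevant factor of $y^\lambda$ splits off the staircase $\rho_i$ that already appears in the idempotent $e_{b_i}=\rho_i\psi_{w_0^{(i)}}$, with a central symmetric remainder $\sigma_i$. Once this combinatorial match-up is observed, both containment and $\partial$-stability are short computations using only block-disjoint commutativity and the derivation formulas for $\partial$ on $y_i$ and on $e_n$.
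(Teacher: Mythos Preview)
Your proof is correct and follows essentially the same approach as the paper. For part (a), both you and the paper factor $y^\lambda$ block by block as a symmetric (central) piece times the staircase $\rho_i$, then use $e_{b_i}=\rho_i\psi_{w_0^{(i)}}$ to obtain $e_\lambda y^\lambda = y^\lambda\cdot z$ (the paper writes $z=\psi_w y^\gamma$, which is your $\prod_i\psi_{w_0^{(i)}}\rho_i$ after commuting disjoint blocks). For part (b), the paper simply invokes that $e_\lambda$ is a $\partial$-stable idempotent (Proposition~\ref{pdgidempotprop}) together with $\partial(y^\lambda)\in y^\lambda\Bbbk[y_1,\dots,y_n]$; your Leibniz expansion is exactly the unpacking of that sentence, and your observation that $Q$ is a polynomial (hence commutes with $y^\lambda$) is the only point one must be careful about.
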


\begin{proof}
For notational convenience set $a_0=b_0=0$.  By definition
\begin{align*}
y^{\lambda}&=\prod_{i=0}^{r-1} y_{b_0+\cdots+b_i+1}^{l-(a_1+b_1+\cdots+a_i+b_i+a_{i+1}+1)}
\cdots
y_{b_0+\cdots+b_{i+1}}^{l-(a_1+b_1+\cdots+a_{i+1}+b_{i+1})} 
=y^\tau y^{\gamma} ,
\end{align*}
where
$$y^{\gamma}=\prod_{i=0}^{r-1}y_{b_0+\cdots+b_i+1}^{b_{i+1}-1} \cdots  y_{b_0+\cdots+b_{i+1}}^0, \quad \quad \quad
y^\tau =\prod_{i=0}^{r-1} \left( y_{b_0+\cdots+b_i+1}
\cdots
y_{b_0+\cdots+b_{i+1}} \right)^{l-(a_1+b_1+\cdots+a_{i+1}+b_{i+1})}.
$$
Observe that the element $y^\tau$ lies in the center of $\nh_{b_1}\times \cdots \times \nh_{b_r}$.

The idempotent $e_{\lambda}$ may be written as
\begin{equation*}
e_{\lambda}=\prod_{i=0}^{r-1} (y_{b_0+\cdots+b_i+1}^{b_{i+1}-1} \cdots y_{b_0+\cdots+b_{i+1}}^0) \psi_w  =y^\gamma \psi_w,   
\end{equation*}
where $w \in \mf{S}_{b_1} \times \cdots \times \mf{S}_{b_r} $ is the longest element in the parabolic subgroup. 

Now we are able to write
\begin{equation*}
e_{\lambda} y^{\lambda}=y^\gamma \psi_w y^{\gamma} y^\tau =y^\gamma y^\tau \psi_w y^{\gamma} = y^{\lambda} \psi_w y^\gamma.
\end{equation*}
Thus $e_{\lambda} y^{\lambda} \nh_n^l$ is a submodule of $G(\lambda)$.

By \eqref{partialofidempotent}, $e_{\lambda}$ is a $\partial$-stable idempotent, i.e., $\partial(e_{\lambda})=e_{\lambda} x$ for $x$ some linear polynomial element in $y$'s.  Thus $e_{\lambda} y^{\lambda}$ generates a $p$-DG ideal.  Therefore 
$e_{\lambda} y^{\lambda} \nh_n^l$ is a $p$-DG submodule of $G(\lambda)$.
\end{proof}

\section{Two-tensor quiver Schur algebra}
\label{sec-2-tensor}
\subsection{Quiver Schur algebra}
In this section we recall the definition of ($p$-DG) quiver Schur algebra for $\mf{sl}_2$ in the sense of Hu and Mathas~\cite{HuMathas}. Although it will only play an auxiliary role for the current work, it will be studied in more detail in a sequel to this paper.
\begin{defn}\label{def-Schur-algebra}
The \emph{(graded) quiver Schur algebra} is by definition
\begin{equation}
S_n(l):=\END_{\nh_n^l}\left(\bigoplus_{\lambda\in \mc{P}_n^l} G(\lambda)\right).
\end{equation}
The algebra $S_n(l)$ inherits a $p$-DG structure from the $p$-DG structures on $\nh_n^l$ and the $G(\lambda)$.  If $ f \in S_n(l)$ then set
\begin{equation*}
(\partial f)(x)=\partial(f(x))-f(\partial x).
\end{equation*}
\end{defn}

We now recall a basis of this algebra from \cite[Section 4.2]{HuMathas}.
For $\mf{t} \in \mathrm{Tab}^{\mu}(\lambda), \mf{s} \in \mathrm{Tab}^{\nu}(\lambda)$, there is a map
\begin{equation*}
\Psi_{\mf{ts}}^{\mu \nu} \colon G(\nu) \longrightarrow G(\mu) 
\end{equation*}
defined by
\begin{equation*}
\Psi_{\mf{ts}}^{\mu \nu}(y^{\nu})=\psi^{\lambda}_{\mf{ts}}.
\end{equation*}
Note that there is a hidden dependence of $\Psi_{\mf{ts}}^{\mu \nu}$ on $\lambda$ because $\mf{t}$ and $\mf{s}$ are $\lambda$-tableaux for some $\lambda$.  When we want to stress the dependence on $\lambda$ we write
$\Psi^{\mu \nu}_{\mf{ts} \lambda}$ instead.

\begin{defn}
\begin{enumerate}
\item[(1)] Let $S_n^{> \lambda}(l)$ be the $\Bbbk$-vector space spanned by
$\{ \Psi_{\mf{ts}}^{\mu \nu} \}$ for $\mf{t} \in \Tab^{\mu}(\gamma)$ and $\mf{s} \in \Tab^{\nu}(\gamma)$ with $\gamma > \lambda$.
\item[(2)] Let $S_n^{\geq \lambda}(l)$ be the $\Bbbk$-vector space spanned by
$\{ \Psi_{\mf{ts}}^{\mu \nu} \}$ for $\mf{t} \in \Tab^{\mu}(\gamma)$ and $\mf{s} \in \Tab^{\nu}(\gamma)$ with $\gamma \geq \lambda$.
\end{enumerate}
Here the order on partitions is the dominance order (Definition \ref{def-order-on-nh-partition}).
\end{defn}

\begin{prop}
\label{cellularschuralgprop}
The algebra $S_n(l)$ is a graded cellular algebra with cellular basis
\begin{equation*}
\left\{ \Psi_{\mf{ts}}^{\mu \nu} \big| \mf{t} \in \Tab^{\mu}(\lambda),~ \mf{s} \in \Tab^{\nu}(\lambda),~ \lambda \in \mathcal{P}_n^l \right\}.
\end{equation*}
\end{prop}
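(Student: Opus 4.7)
The plan is to establish this cellularity statement by combining the cellular basis of $\nh_n^l$ (from the theorem following Definition \ref{def-y-mu-psi-mu}) with the explicit basis of each $G(\lambda)$ given in Proposition \ref{basisG}, and the Frobenius structure from Proposition \ref{symm}. The argument proceeds in four stages: well-definedness, spanning, triangular multiplication, and the anti-involution.

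First, I would verify that each $\Psi_{th}^{\mu\nu}$ is a well-defined $\nh_n^l$-module map. Since $G(\nu) = y^\nu \nh_n^l$ is cyclic, a homomorphism $G(\nu)\to G(\mu)$ is specified by the image of $y^\nu$, subject to the constraint that the right annihilator of $y^\nu$ in $\nh_n^l$ annihilates that image in $G(\mu)$. For $t\in \Tab^\mu(\lambda)$ and $h\in \Tab^\nu(\lambda)$, the element $\psi_{th}^\lambda = \psi_t^\ast y^\lambda \psi_h$ satisfies the required annihilator condition because, in view of the cellular structure of $\nh_n^l$ and the condition $h\geq t^\nu$, right multiplication of $\psi_{th}^\lambda$ by any element of $\mathrm{Ann}_{\nh_n^l}(y^\nu)$ lands in $(\nh_n^l)^{>\lambda}$ and hence is zero in $G(\mu)\cong y^\mu\nh_n^l$ after reindexing via Proposition \ref{basisG}.

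Second, to see that the $\Psi_{th}^{\mu\nu}$'s span $\Hom_{\nh_n^l}(G(\nu),G(\mu))$, observe that any such homomorphism is determined by a vector in $G(\mu)$ annihilated on the right by $\mathrm{Ann}(y^\nu)$. By Proposition \ref{basisG}, the module $G(\mu)$ has a basis $\{\psi_{vh}^\gamma : v\in \Tab^\mu(\gamma),\, h\in \Tab(\gamma),\, \gamma\in \mc{P}_n^l\}$; I would argue that the annihilator condition forces the second tableau index $h$ to actually lie in $\Tab^\nu(\gamma)$, so precisely the claimed elements appear. Linear independence is then inherited from the linear independence of the $\psi_{vh}^\gamma$ in $\nh_n^l$, or alternatively verified by a dimension count using the isomorphism $\nh_n^l \cong \mathrm{M}_{n!}(\mH^\ast(\gr(n,l)))$ mentioned after Proposition \ref{symm}.

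Third, for the triangular multiplication rule, I would translate the cellularity of $\nh_n^l$ through composition. Given $\Psi_{th}^{\mu\nu}$ and another cellular map $\Psi_{h'k}^{\nu\rho}$, composition sends $y^\rho$ to $\psi_t^\ast y^\lambda\psi_h \cdot \psi_{h'k}^{\lambda'}$-type expressions, and applying property (ii) of the cellular basis of $\nh_n^l$ (where the coefficients $r_{tv}(x)$ are independent of the left index) gives an expansion $\sum_v r_{hv}\,\Psi_{tv}^{\mu\rho}$ modulo $S_n^{>\lambda}(l)\cap S_n^{>\lambda'}(l)$, with scalars independent of $t$. This is the step I expect to be the main obstacle: one must carefully track how the two-sided ideal $(\nh_n^l)^{>\lambda}$ interacts with the cyclic generators of the modules $G(\lambda)$, ensuring that the resulting filtration of $S_n(l)$ by the $S_n^{\geq\lambda}(l)$ really is two-sided and compatible with the dominance order on $\mc{P}_n^l$.

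Finally, the anti-involution $\ast$ on $S_n(l)$ is constructed via the symmetric Frobenius trace $\tau$ of Proposition \ref{symm}: the trace identifies $G(\lambda)$ with its graded dual (Proposition \ref{prop-G-self-dual}), so we get an anti-automorphism of $\END_{\nh_n^l}(\bigoplus_\lambda G(\lambda))$ sending $\Hom(G(\nu),G(\mu))$ to $\Hom(G(\mu),G(\nu))$. Because the involution on $\nh_n^l$ satisfies $(\psi_{th}^\lambda)^\ast = \psi_{ht}^\lambda$, a direct check on the defining equation $\Psi_{th}^{\mu\nu}(y^\nu)=\psi_{th}^\lambda$ yields $(\Psi_{th}^{\mu\nu})^\ast = \Psi_{ht}^{\nu\mu}$, completing the verification of Graham--Lehrer's cellular axioms with cell datum indexed by $\mc{P}_n^l$ under the dominance order and index sets $\bigsqcup_\mu \Tab^\mu(\lambda)$ for each $\lambda$.
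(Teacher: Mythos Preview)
The paper does not prove this proposition at all; it simply cites \cite[Theorem 4.19]{HuMathas}. So your attempt is necessarily a different route, and the question is whether your sketch stands on its own.

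There is a concrete gap in your first stage. You argue that $\psi_{th}^\lambda\cdot x$ lies in $(\nh_n^l)^{>\lambda}$ for $x\in\mathrm{Ann}(y^\nu)$ ``and hence is zero in $G(\mu)$.'' Neither assertion is justified. The cellular multiplication rule only says $\psi_{th}^\lambda x \equiv \sum_v r_{hv}(x)\,\psi_{tv}^\lambda \pmod{(\nh_n^l)^{>\lambda}}$, with no reason for the $r_{hv}(x)$ to vanish; and $(\nh_n^l)^{>\lambda}$ is certainly \emph{not} zero inside $G(\mu)=y^\mu\nh_n^l$, since Proposition \ref{basisG} shows $G(\mu)$ contains many $\psi_{t'h'}^\gamma$ with $\gamma>\lambda$. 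So the argument as written does not establish well-definedness.

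The clean fix, which is essentially what underlies the Hu--Mathas proof and which you partly invoke only in your fourth stage, is to use the $\ast$-involution together with the Frobenius structure from the outset. Since $h\in\Tab^\nu(\lambda)$, the dual statement of Proposition \ref{basisG} gives $\psi_{ht}^\lambda\in y^\nu\nh_n^l$, hence $\psi_{th}^\lambda=(\psi_{ht}^\lambda)^\ast\in\nh_n^l y^\nu$. Writing $\psi_{th}^\lambda=zy^\nu$ makes $\Psi_{th}^{\mu\nu}$ well-defined immediately: it is left multiplication by $z$ restricted to $G(\nu)$. The same observation, combined with the symmetric Frobenius fact that the left annihilator of $\mathrm{Ann}_r(y^\nu)$ is exactly $\nh_n^l y^\nu$, identifies $\Hom(G(\nu),G(\mu))$ with $y^\mu\nh_n^l\cap\nh_n^l y^\nu$; the claimed basis then drops out from intersecting the two subsets of the cellular basis of $\nh_n^l$. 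Once you reorganize stages one and two this way, your stages three and four are on firm ground.
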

\begin{proof}
This is \cite[Theorem 4.19]{HuMathas}.
\end{proof}

The modules $G(\lambda)$ are generally decomposable.
For each $\lambda \in \mathcal{P}_n^l$ there exists a unique module $Y(\lambda)$ such that $Y(\lambda)$ is a summand of $G(\lambda)$ of multiplicity one and does not appear in a decomposition of $G(\mu)$ for any $\mu > \lambda$.  It is difficult to explicitly construct each $Y(\lambda)$.  As a consequence it is not clear that $Y(\lambda)$ is a $p$-DG module.  This prevents us from understanding the Grothendieck group of compact $p$-DG $S_n(l)$-modules in a straightforward way.

\begin{example}
\label{n=1generall}
When $n=1$, the quiver Schur algebra $S_1(l)$ is isomorphic to $A_l^!$ which is the algebra Koszul dual to the zigzag algebra.  This algebra has been studied with its $p$-DG structure in \cite{QiSussan}.  It is shown that there is a braid group action on the derived category of compact modules.

Let $l$ be a natural number greater than or equal to two, and $Q_l$ be the following quiver:
\begin{equation}\label{quiver-Q}
\xymatrix{
 \overset{1}{\circ} &
 \cdots \ltwocell{'}&
 \overset{i-1}{~\circ~}\ltwocell{'}&
 \overset{i}{\circ}\ltwocell{'}&
 \overset{i+1}{~\circ~}\ltwocell{'}&
 \cdots \ltwocell{'}&
 \overset{l}{\circ}\ltwocell{'}
 }
\end{equation}

Let $\Bbbk Q_l$ be the path algebra associated to $Q_l$ over the ground field. We use, for instance, the symbol $(i|j|k)$, where $i,j,k$ are vertices of the quiver $Q_l$, to denote the path which starts at a vertex $i$, then goes through $j$ (necessarily $j=i\pm 1$) and ends at $k$.
The composition of paths is given by
\begin{equation}
(i_i|i_2|\cdots|i_r)\cdot (j_1|j_2|\cdots|j_s)=
\left\{
\begin{array}{ll}
(i_i|i_2|\cdots|i_r|j_2|\cdots|j_s) & \textrm{if $i_r=j_1$,}\\
0 & \textrm{otherwise,}
\end{array}
\right.
\end{equation}
where $i_1,\dots, i_r$ and $j_1, \dots, j_s$ are sequences of neighboring vertices in $Q_l$.

The algebra $A_l^!$ is the quotient of the path algebra $\Bbbk Q_l$ by the relations
\[
(i|i-1|i)=(i|i+1|i)~~ (i=2,\dots, l-1),\quad \quad  (1|2|1)=0.
\]

The identification of $A_l^!$ with $S_1(l)$ is obtained by associating $(i+1|i)$ to the morphism from $G(0^{i-1} 1 0^{l-i})$ to $ G(0^i 1 0^{l-i-1})$ given by mapping $y_1^{l-i} $ to $y_1^{l-i}$.
The element $(i|i+1)$ corresponds to the morphism 
from $ G(0^i 1 0^{l-i-1})$ to $G(0^{i-1} 1 0^{l-i})$ given by mapping $y_1^{l-i-1}$ to $y_1^{l-i}$.

This realization of $A_l^!$ endows the algebra with a $p$-DG structure.
On $\nh_1^l\cong \Bbbk[y_1]/(y_1^l)$ the differential $ \partial $ is given by $\partial(y_1)=y_1^2$.  It is clear that each $G(0^{i-1} 1 0^{l-i})$ is a $p$-DG submodule of $\nh_1^l$.
This gives $A_l^!$ the structure of a $p$-DG algebra where the differential is given by
\[
\partial(i|i+1)=(i|i+1|i|i+1), \quad \quad \partial(i|i-1)=0.
\]
\end{example}

\begin{example}
\label{n=2l=3}
The quiver Schur algebra $S_2(3)$ is the first example of a quiver Schur algebra which is not basic.
There are three partitions in this case.
Let 
\begin{equation*}
\lambda=(~\yng(1)~,~\yng(1)~,~\emptyset~) \hspace{.2in} \mu=(~\yng(1)~,~\emptyset~,~\yng(1)~) \hspace{.2in} \zeta=(~\emptyset~,~\yng(1)~,~\yng(1)~).
\end{equation*}
Then we have
\begin{equation*}
G(\lambda)=y_1^2 y_2 \nh_2^3.
\end{equation*}
It is spanned by
\begin{equation*}
\lbrace \psi^{\lambda}_{e,e} =y_1^2 y_2,  \psi^{\lambda}_{e,s_1}= y_1^2 y_2 \psi_1 \rbrace.
\end{equation*}
It is clear that (up to a shift)  $G(\lambda) \cong S^{\lambda}$, and so it is simple implying that $Y(\lambda)=G(\lambda)$.

Next consider
\begin{equation*}
G(\mu)=y_1^2 \nh_2^3.
\end{equation*}
It is spanned by
\begin{equation*}
\lbrace \psi^{\lambda}_{e,e}=y_1^2 y_2,  \psi^{\lambda}_{e,s_1}=y_1^2 y_2 \psi_1,
\psi^{\mu}_{e,e}=y_1^2,  \psi^{\mu}_{e,s_1}=y_1^2 \psi_1 \rbrace.
\end{equation*}
It is easy to check that the endomorphism algebra of $G(\mu)$ is non-negatively graded and its degree zero piece is one-dimensional.  Thus $Y(\mu)=G(\mu)$.

The remaining cyclic module is
\begin{equation*}
G(\zeta)=y_1 \nh_2^3.
\end{equation*}
It is spanned by
\begin{align*}
\lbrace 
&\psi^{\lambda}_{e,e}=y_1^2 y_2,  \psi^{\lambda}_{e,s_1}=y_1^2 y_2 \psi_1,
\psi^{\lambda}_{s_1,e}= \psi_1 y_1^2 y_2, \psi^{\lambda}_{s_1,s_1}= \psi_1 y_1^2 y_2 \psi_1, \\
&\psi^{\mu}_{e,e}=y_1^2,  \psi^{\mu}_{e,s_1}=y_1^2 \psi_1, 
\psi^{\zeta}_{e,e}=y_1, \psi^{\zeta}_{e,s_1}=y_1 \psi_1
\rbrace.
\end{align*}
There is a decomposition 
\begin{equation*}
G(\zeta) \cong e_2 G(\zeta) \oplus (1-e_2)G(\zeta).
\end{equation*}
It is readily checked that $e_2G(\zeta)\cong Y(\zeta)$ is the indecomposable projective module over $\nh_2^3$ and is closed under the $p$-differential action. While $e_2 G(\zeta)$ is a $p$-DG submodule, this decomposition does not respect $\partial$ so we merely have a short exact sequence of $p$-DG modules
\begin{equation*}
0\lra e_2G(\zeta) \longrightarrow G(\zeta) \longrightarrow (1-e_2)G(\zeta)\lra 0.
\end{equation*}
One can also identify $(1-e_2)G(\zeta)$ with the Specht module $G(\lambda)\cong Y(\lambda)$.

Due to the decomposability of $G(\zeta)$, the endomorphism algebras
$\END_{\nh_2^3}(G(\lambda) \oplus G(\mu) \oplus G(\zeta))$ and 
$\END_{\nh_2^3}(Y(\lambda) \oplus Y(\mu) \oplus Y(\zeta))$ are not isomorphic, but they are $p$-DG Morita equivalent.

It is straightforward to show that $\END_{\nh_2^3}(Y(\lambda) \oplus Y(\mu) \oplus Y(\zeta)) \cong A_3^!$ from Example \ref{n=1generall} where 
$(2|1)$ corresponds to the homomorphism $\Psi_{ee \lambda}^{\mu \lambda}$.
The element $(1|2)$ corresponds to the homomorphism
$\Psi_{ee \lambda}^{\lambda \mu}$.
The path $(3|2)$ corresponds to the composition of the homomorphism
${\Psi}_{ee \mu}^{\zeta \mu}$ with projection onto the summand $Y(\zeta)$.
Finally the element $(2|3)$ corresponds to inclusion of $Y(\zeta)$ into $G(\zeta)$ composed with $\Psi_{ee \mu}^{\mu \zeta}$.
\end{example}

In the next subsection we will consider a collection of $\lambda$'s where we have a full understanding of $Y(\lambda)$'s, leading in turn to a categorification of $V_r \otimes_{\mathbb{O}_p} V_s$ at a prime root of unity.

\subsection{A category of nilHecke modules}
In this section we fix $l,n\in \N$ and two other natural numbers $r,s$ such that $r+s=l$. We will abbreviate $\HOM_{\nh_n^l}$ simply as $\HOM$ for the ease of notation.

\begin{defn}
\label{speciallambda}
Let $\mathcal{P}_n^{r,s}$ be the subset of all partitions $\lambda \in
\mathcal{P}_n^{l}$ of the form $\lambda=(0^a 1^b 0^c 1^d)$ with 
\begin{equation*}
r+s=l, \quad a+b=r, \quad c+d=s, \quad b+d=n.
\end{equation*}
We will think of such a sequence as a partition
\[
(\underbrace{~\emptyset~,\dots, ~\emptyset~}_a,\underbrace{~\yng(1)~,\dots, ~\yng(1)~}_b|\underbrace{~\emptyset~,\dots, ~\emptyset~}_c,\underbrace{~\yng(1)~,\dots, ~\yng(1)~}_d).
\]
\end{defn}

The collection of partitions is a subset of $\mc{P}_n^l$, and thus inherits the partial order on $\mc{P}_n^l$ (Definition \ref{def-order-on-nh-partition}). Notice that the unique minimal element $\lambda_0\in \mc{P}_n^l$ always lies inside $\mc{P}_n^{r,s}$, and is also the minimal element here.
It is the element of $\mathcal{P}_n^{r,s}$ where the entries labeled by $1$ are as far to the right as possible. Recall that if $\lambda=(0^a 1^b 0^c 1^d)\in \mc{P}_n^{r,s}$, we have (Definitions \ref{def-y-mu-psi-mu} and \ref{def-partition-idemp})
\begin{equation}
y^\lambda=y_1^{b+c+d-1}y_2^{b+c+d-2}\cdots y_b^{c+d} y_{b+1}^{d-1}y_{b+2}^{d-2}\cdots y_{b+d}^0, \quad \quad
e_\lambda = e_{(b,d)}=
e_b\otimes e_d.
\end{equation}

\begin{defn}\label{def-rs-additive-subcat}
Let $ \mc{NH}_n^{r,s} $ be the filtered $p$-DG envelope (Definition \ref{def-filtered-envelope}) inside $(\nh_n^l ,\dif)\dmod$ generated by the collections of $p$-DG modules $e_{\lambda}G(\lambda)$ with $\lambda=(0^a 1^b 0^c 1^d)\in \mc{P}_n^{r,s}$.
\end{defn}

The definition is equivalent to the following more ($p$-DG) ring-theoretic description.

\begin{defn}\label{def-two-tensor-algebra}
Let $n,l$ be two natural numbers, and $r,s\in \N$ such that $r+s=l$. We define the \emph{two-tensor quiver Schur algebra}  to be
\begin{equation*}
S_n(r,s) := \END_{\nh_n^l}\left(\bigoplus_{\lambda \in \mathcal{P}_n^{r,s}}
e_{\lambda} G(\lambda)\right).
\end{equation*}
The algebras are equipped with $p$-DG structures coming from the $p$-differentials on the $e_{\lambda}G(\lambda)$'s. 
\end{defn}

\begin{rem}\label{rmk-two-tensor-algebra}
We collect some simple facts about the definitions here.
\begin{enumerate}
\item[(1)]It is clear, by construction, that the objects of $\mc{NH}_{n}^{r,s}$ consist of finitely-generated, cofibrant left $p$-DG modules over $S_n(r,s)$. We will thus use both descriptions for the convenience of the context.
\item[(2)]In the definition of the category $\mc{NH}_n^{r,s} $, it is crucial for categorification purposes that we take as a generating set of objects $e_{\lambda}G(\lambda)$ rather than just $G(\lambda)$, as we will see in Example \ref{l=3n=2}.
\item[(3)] The simple module over $\nh_n^l$ is isomorphic to $G(\lambda)$ where $\lambda \in \mathcal{P}_n^l$ is the partition for which all of the boxes are to the left and all the empty slots are to the right.
One can readily check that the category $\mc{NH}_{n}^{r,s} $ does not contain the unique irreducible $(\nh_n^l,\dif)$-module unless $r=n$, or $n=0$, or $n=l$.
\end{enumerate}
\end{rem}

Our next goal is to explicitly construct the generating indecomposable objects of $\mc{NH}_{n}^{r,s} $. We will mostly ignore the differentials and study the indecomposability as modules over $\nh_n^l$ in this subsection unless otherwise specified. The $p$-DG structure and the compatibility with the $\mf{E}$, $\mf{F}$-actions will be studied in the next subsection.

\begin{lem}
\label{Yindecomspecial}
Suppose $d=0$ and let $\lambda = (0^a 1^b 0^c)$.
Then 
\begin{equation*}
e_{\lambda}G(\lambda)=e_b y^{\lambda} \nh_b^{a+b+c} = e_b y_1^{b+c-1} \cdots y_b^{c} \nh_b^{a+b+c}.
\end{equation*}
is an indecomposable summand of $G(\lambda)$.
Furthermore 
\begin{equation*}
Y(\lambda)\cong e_\lambda G(\lambda)
\end{equation*}
\end{lem}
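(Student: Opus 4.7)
My plan is to proceed in three stages, following the template of Propositions~\ref{abc1cgeqb} and \ref{propY0bcd}.

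Stage 1 establishes the equality and the summand structure. When $d=0$ we have $n=b$, so $e_\lambda = e_{(b)} = e_b$ and $y^\lambda = y_1^{b+c-1}\cdots y_b^{c}$. The first equality $e_\lambda G(\lambda)=e_b y^\lambda\nh_b^{a+b+c}$ is immediate from $G(\lambda)=q^{-bl+\sum j^\lambda}y^\lambda\nh_b^l$. Proposition~\ref{genYindecomp} supplies the identity $e_b y^\lambda = y^\lambda\psi_{w_0}y^{\lambda_0}$, where $w_0\in S_b$ is the longest element and $y^{\lambda_0}=y_1^{b-1}\cdots y_b^{0}$; this exhibits $e_b G(\lambda)$ as a right $\nh$-submodule of $G(\lambda)$. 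Since $e_b$ is idempotent in $\nh_b^l$, left multiplication $L_{e_b}:G(\lambda)\to G(\lambda)$ is an idempotent right-$\nh$-module endomorphism whose image is $e_b G(\lambda)$, so the latter is a direct summand.

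Stage 2 proves indecomposability by analyzing $\END_{\nh}(e_b G(\lambda))\cong e_b\END_{\nh}(G(\lambda))e_b$. Setting $r_0:=\psi_{w_0}y^{\lambda_0}$ so that $e_b y^\lambda=y^\lambda r_0$, each cellular basis element $\Psi^{\lambda\lambda}_{th\mu}$ restricts to
$$\tilde\Psi^{\lambda\lambda}_{th\mu}(e_b y^\lambda) = e_b\,\psi_t^*\,y^\mu\,\psi_h\,r_0.$$
The key vanishing $\psi_i\psi_{w_0}=\psi_{w_0}\psi_i=0$ for every $i$ (since $l(s_i w_0)=l(w_0 s_i)=l(w_0)-1$) implies $e_b\psi_j=0$ for all $j$, which forces $t=t^\mu$ for a nonzero contribution. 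When $\mu=\lambda$, the parallel vanishing $\psi_h\psi_{w_0}=0$ on the right factor forces $h=e$ and yields precisely the identity morphism. When $\mu>\lambda$, the surviving endomorphism has degree $2(\sum j^\lambda-\sum j^\mu)-2l(w_h)$, and I will argue strict positivity from the dominance constraint $q_{(k)}^h\le a+k$ inherited from $h\ge t^\lambda$, which bounds $l(w_h)$ strictly below the box displacement $\sum(a+k-p_k^\mu)$. Combining these, $\End^0(e_b G(\lambda))=\Bbbk\cdot\Id$, so the endomorphism algebra is local and $e_b G(\lambda)$ is indecomposable.

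Stage 3 identifies $e_b G(\lambda)\cong Y(\lambda)$ via the uniqueness characterization of $Y(\lambda)$. By Stage 2, $e_b G(\lambda)$ is an indecomposable summand of $G(\lambda)$ of multiplicity one. It remains to check it is not a summand of $G(\mu)$ for any $\mu>\lambda$. By Proposition~\ref{spechtfiltofG}, the Specht filtration of $G(\lambda)$ has $S^\lambda$ as its final subquotient, since $t^\lambda$ is the unique minimum of $\cup_{\nu\ge\lambda}\Tab^\lambda(\nu)$; a direct check that the image of $e_b y^\lambda$ in this subquotient is nonzero shows $S^\lambda$ is a composition factor of $e_b G(\lambda)$. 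However, the Specht composition factors of $G(\mu)$ for $\mu>\lambda$ are of the form $S^\nu$ with $\nu\ge\mu>\lambda$, excluding $S^\lambda$. Hence $e_b G(\lambda)$ cannot appear as a direct summand of any such $G(\mu)$, completing the identification. The main obstacle will be the strict positivity of endomorphism degrees in Stage 2 for $\mu>\lambda$: proving $l(w_h)<\sum(a+k-p_k^\mu)$ under the dominance constraint $h\ge t^\lambda$ requires a careful combinatorial inequality that I expect to follow from a Hall-like argument on the sorted positions, but whose detailed verification is the most technical ingredient; as a backup, one may argue indecomposability indirectly using Stage 3, since $S^\lambda$ appearing exactly once as a composition factor forces the head of any direct summand containing it to be $S^\lambda$, contradicting a nontrivial decomposition.
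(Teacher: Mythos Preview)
Your Stage~1 matches the paper's.  In Stage~2 you take an unnecessary detour: the vanishing $\psi_h r_0 = \psi_h\psi_{w_0}y^{\lambda_0} = 0$ for $h\ne e$ that you invoke only when $\mu=\lambda$ holds verbatim for \emph{every} $\mu\ge\lambda$, since $r_0=\psi_{w_0}y^{\lambda_0}$ depends only on $\lambda$, not on $\mu$.  Applying both one-sided vanishings uniformly forces $t=h=e$, so the only surviving $\tilde\Psi^{\lambda\lambda}_{ee\mu}$ has degree $\deg y^\mu-\deg y^\lambda\ge 0$, with equality precisely when $\mu=\lambda$.  This is the paper's route and it eliminates your ``most technical ingredient'' and its backup entirely.

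Your Stage~3 contains a genuine gap.  Over the cyclotomic nilHecke algebra every Specht module is, up to grading shift, the unique simple module (Proposition~\ref{prop-unique-Specht}); hence the clause ``the Specht composition factors of $G(\mu)$ for $\mu>\lambda$ are of the form $S^\nu$ with $\nu>\lambda$, excluding $S^\lambda$'' excludes nothing, because $S^\lambda\cong S^\nu$ as $\nh_b^l$-modules.  The same issue invalidates your backup argument for Stage~2.  A graded refinement of your composition-factor argument might be salvageable, but you have not supplied one and it would require careful bookkeeping of the shifts in Proposition~\ref{spechtfiltofG}.  The paper avoids composition factors altogether: it computes $e_b y^\lambda\psi_{w_0}=y^\lambda\psi_{w_0}$ and observes, via the basis description of Proposition~\ref{basisG}, that this cellular basis element of $\nh_b^l$ (indexed by the partition $\lambda$ itself) cannot lie in $G(\mu)$ for any $\mu>\lambda$; therefore the concrete submodule $e_bG(\lambda)$ is not contained in any such $G(\mu)$, which pins it down as $Y(\lambda)$.
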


\begin{proof}
It is easy to see that $ e_b y^{\lambda} \nh_b^{a+b+c} \subset G(\lambda)$ 
since $e_b y^{\lambda}=y^{\lambda} e_b^*$.

We next show that $e_b y^{\lambda} \nh_b^{a+b+c}$ is a summand of $G(\lambda)$.  
Clearly
\begin{equation}
\label{Glambdadecopm}
G(\lambda)=e_b y^{\lambda} \nh_b^{a+b+c} + (1-e_b) y^{\lambda} \nh_b^{a+b+c}.
\end{equation}
Note that $e_b y^{\lambda} \nh_b^{a+b+c} \subset e_b \nh_b^{a+b+c}$
and $(1-e_b) y^{\lambda} \nh_b^{a+b+c} \subset (1-e_b) \nh_b^{a+b+c}$.
Since 
\begin{equation*}
e_b \nh_b^{a+b+c} \cap (1-e_b) \nh_b^{a+b+c} = 0,
\end{equation*} 
the decomposition in \eqref{Glambdadecopm} is direct.

Let $ \mu \geq \lambda$ and $ \mf{s}, \mf{t} \in \mathrm{Tab}(\mu)$.
In order for the map $ \Psi := \Psi^{\lambda \lambda}_{\mf{st} \mu} $ to have a non-positive degree, either $\mf{s}$ or $\mf{t}$ cannot be the identity permutation.

Define $ \tilde{\Psi} \colon e_b y^{\lambda} \nh_b^{a+b+c} \longrightarrow e_b y^{\lambda} \nh_b^{a+b+c}$ by
\begin{align*}
\tilde{\Psi}(e_b y^{\lambda}) &= e_b \Psi(e_b y^{\lambda}) \\
&= y_1^{b-1} \cdots y_b^0 \psi_{w_0} \Psi(y^{\lambda} e_b^*) \\
&= y_1^{b-1} \cdots y_b^0 \psi_{w_0} \psi_{\mf{s}}^* y^{\mu} \psi_{\mf{t}} \psi_{w_0} y_1^{b-1} \cdots y_b^0.
\end{align*}
If $\mf{s}$ and $\mf{t}$ are both not the identity permutations then the above quantity is zero so there are no negative degree maps.
Clearly if $\mf{s}$ and $\mf{t}$ are both the identity permuations then $ \tilde{\Psi}$ is simply the identity map on $ e_b y^{\lambda} \nh_b^{a+b+c}$.
Thus there are no negative degree endomorphisms of $e_b y^{\lambda} \nh_b^{a+b+c}$ and the only degree zero map is the identity.
Therefore $e_b y^{\lambda} \nh_b^{a+b+c}$ is indecomposable.

Now we show that $e_b y^{\lambda} \nh_b^{a+b+c}=Y(\lambda)$.
Let us assume that $e_b y^{\lambda} \nh_b^{a+b+c} \subset G(\mu)$ for some $\mu\in \mc{P}_n^l$.
Clearly $e_b y^{\lambda} \psi_{w_0} \in e_b y^{\lambda} \nh_b^{a+b+c} $.  Now we calculate that
\begin{equation*}
e_b y^{\lambda} \psi_{w_0} = y_1^{b+c-1} \cdots y_b^{c} \psi_{w_0}.
\end{equation*}
We know from the basis of $G(\mu)$ (Proposition \ref{basisG}), that this basis element is not in $G(\mu)$ for $\mu > \lambda$.
Thus $e_b y^{\lambda} \nh_b^{a+b+c}$ is not contained in any $G(\mu)$ for $\mu > \lambda$.
\end{proof}

\begin{prop}
\label{abc1cgeqb}
Let $ \lambda=(0^a 1^b 0^c 1^d)$ with $ c \geq b$.
Then $ e_{\lambda} G(\lambda)$ is an indecomposable submodule of $G(\lambda)$.
\end{prop}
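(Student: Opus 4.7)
The plan is to first establish the submodule assertion and then prove indecomposability through a graded endomorphism computation.

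For the submodule statement, Proposition~\ref{genYindecomp} already supplies the crucial factorization $e_\lambda y^\lambda = y^\lambda \psi_{w_\lambda} y^{\gamma_\lambda}$, where $w_\lambda$ is the longest element of $S_b \times S_d$ and $y^{\gamma_\lambda}$ is a specific monomial. This exhibits $e_\lambda y^\lambda$ as an element of $G(\lambda) = y^\lambda \nh_n^l$, so $e_\lambda G(\lambda) := e_\lambda y^\lambda \nh_n^l$ is a cyclic submodule of $G(\lambda)$ generated by $e_\lambda y^\lambda$.

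For indecomposability, the strategy is to show that the graded endomorphism ring $E := \End_{\nh_n^l}(e_\lambda G(\lambda))$ has one-dimensional degree-zero part. Since $\nh_n^l$ is non-negatively graded and the generator $e_\lambda y^\lambda$ sits in the bottom degree of $e_\lambda G(\lambda)$, the ring $E$ is itself non-negatively graded; a one-dimensional degree-zero part then forces $E$ to be local, hence $e_\lambda G(\lambda)$ to be indecomposable. To describe $E$, I would first enumerate maps from $G(\lambda)$ to itself using the cellular basis of the quiver Schur algebra in Proposition~\ref{cellularschuralgprop}: $\End(G(\lambda))$ has basis $\{\Psi^{\lambda\lambda}_{t,h;\nu} : \nu \geq \lambda,\ t, h \in \Tab^\lambda(\nu)\}$, where $\Psi^{\lambda\lambda}_{t,h;\nu}(y^\lambda) = \psi^\nu_{t,h}$. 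Endomorphisms of the submodule $e_\lambda G(\lambda)$ are then obtained by restricting those $\Psi^{\lambda\lambda}_{t,h;\nu}$ whose composite $y^\lambda \mapsto \psi^\nu_{t,h} \mapsto \psi^\nu_{t,h} \psi_{w_\lambda} y^{\gamma_\lambda}$ lands back inside $e_\lambda y^\lambda \nh_n^l$.

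The main obstacle is the combinatorial degree count needed to pin down the degree-zero survivors. Writing $j_1 < \cdots < j_n$ and $k_1 < \cdots < k_n$ for the positions of $1$'s in $\lambda$ and $\nu$ respectively, a direct computation gives
\[
\deg \Psi^{\lambda\lambda}_{t,h;\nu} \;=\; 2\bigl(\textstyle\sum_i j_i - \sum_i k_i\bigr) - 2\ell(w_t) - 2\ell(w_h),
\]
which is always non-negative since $\nu \geq \lambda$ forces $k_i \leq j_i$. For $\nu = \lambda$ the only $\lambda$-tableau dominating $t^\lambda$ is $t^\lambda$ itself, so the only contribution is the identity $\Psi^{\lambda\lambda}_{t^\lambda, t^\lambda; \lambda}$. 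The remaining task, which I expect to be the heart of the argument, is to rule out any further degree-zero endomorphism arising from some $\nu > \lambda$. Here the hypothesis $c \geq b$ enters decisively: it guarantees that the middle block of zeros in $\lambda$ is at least as wide as the left block of $1$'s, so the antisymmetrizing left factor $e_\lambda = e_b \otimes e_d$ annihilates every candidate $\Psi^{\lambda\lambda}_{t,h;\nu}$ with $\nu > \lambda$ when applied to $e_\lambda y^\lambda = y^\lambda \psi_{w_\lambda} y^{\gamma_\lambda}$, via a straightening/cancellation in the nilHecke relations that would fail if $c < b$. Carrying out this bookkeeping yields $\dim E^0 = 1$, whence the indecomposability of $e_\lambda G(\lambda)$.
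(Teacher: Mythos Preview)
Your overall plan — use the cellular basis $\{\Psi^{\lambda\lambda}_{t,h;\nu}\}$ of $\End(G(\lambda))$ and prove that $\End(e_\lambda G(\lambda))$ has one-dimensional degree-zero part — is exactly the paper's strategy. However, several of your intermediate claims are false and the argument, as written, does not go through.

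First, $\nh_n^l$ is \emph{not} non-negatively graded (the generators $\psi_i$ have degree $-2$), and the generator $e_\lambda y^\lambda$ is \emph{not} in the bottom degree of $e_\lambda G(\lambda)$: for instance $e_\lambda y^\lambda \psi_{w_0}$ sits strictly below it. So your quick argument that $E$ is non-negatively graded collapses. Likewise, your justification that the degree formula $2(\sum j_i-\sum k_i)-2\ell(w_t)-2\ell(w_h)$ is ``always non-negative since $\nu\ge\lambda$ forces $k_i\le j_i$'' only bounds the first term; it says nothing about the subtraction of the lengths, and without further input the expression can be negative for generic $t,h\in\Tab^\lambda(\nu)$. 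Finally, your description of how $c\ge b$ enters is off: the idempotent $e_\lambda$ does not \emph{annihilate} the contributions from $\nu>\lambda$; some survive, and the point is that the survivors have strictly positive degree.

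What the paper does to repair this is twofold. It first observes $e_\lambda y^\lambda = y^\lambda e_\lambda^\ast$, so $e_\lambda G(\lambda)$ is not merely a submodule but a \emph{direct summand} of $G(\lambda)$; one can therefore form $\tilde\Psi := e_\lambda\circ\Psi|_{e_\lambda G(\lambda)}$ for each cellular basis element $\Psi=\Psi^{\lambda\lambda}_{t,h;\nu}$, and these $\tilde\Psi$ span $\End(e_\lambda G(\lambda))$. The idempotent then forces $t,h$ to be \emph{minimal coset representatives} in $S_{b+d}/(S_b\times S_d)$ (otherwise $\tilde\Psi=0$). For such $t,h$ the lengths $\ell(w_t),\ell(w_h)$ are bounded above by $bd$, while a direct combinatorial count (this is precisely where $c\ge b$ is used) shows $\deg(y^\nu)-\deg(y^\lambda)$ exceeds $2\ell(w_t)+2\ell(w_h)$ whenever $(t,h)\ne(e,e)$, yielding $\deg\tilde\Psi\ge 2$. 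Only the identity survives in degree zero, and $E$ is local. You have the right architecture; the missing pieces are the summand/projection step and the coset-representative reduction, without which the degree bound cannot be established.
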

\begin{proof}
The case $d=0$ follows from Lemma \ref{Yindecomspecial}, so assume $d \geq 1$.

First we consider the case $d=1$.
Let $ Y'=  e_{(b,1)} y^{\lambda} \nh_{b+1}^{a+b+c+1} $.
A routine calculation shows $e_{(b,1)} y^{\lambda} = y^{\lambda} {e}^*_{(b,1)} $
(recall that the star of an idempotent means to reflect its graphical depiction across a horizontal axis).  This shows $Y'$ is a submodule of $G(\lambda)$ and in fact a summand since $ (1-e_{(b,1)}) y^{\lambda} \nh_{b+1}^{a+b+c+1} $ is a complementary summand.

Consider an endomorphism $\Psi := \Psi^{\lambda \lambda}_{\mf{t},\mf{s}, \mu}$ of $G(\lambda)$.
Let $\tilde{\Psi}$ be the restriction of $\Psi$ to $Y'$ followed by projection onto $Y'$.  That is
$ \tilde{\Psi}(e_{(b,1)} y^{\lambda})= e_{(b,1)} \Psi(e_{(b,1)} y^{\lambda})$.

Write $\mf{t}=t't''$  with $t'$ a shortest coset representative in $\mf{S}_{b+1}/\mf{S}_b \times \mf{S}_1$ and $t'' \in \mf{S}_b \times \mf{S}_1$.  Write $\mf{s}=s's''$ in a similar manner.

One then calculates
\begin{equation*}
\tilde{\Psi}(e_{(b,1)} y^{\lambda})
= e_{(b,1)} \Psi(e_{(b,1)} y^{\lambda})
= e_{(b,1)} \psi_{t''} \psi_{t'} y^{\mu} \psi_{s'} \psi_{s''} {e}^*_{(b,1)}.
\end{equation*}
By analyzing the form of the idempotent $e_{(b,1)}$ it is clear that this map is zero if $t''$ or $s''$ is not the identity permutation.  Thus assume $\mf{t}=t'$ and $\mf{s}=s'$.  Therefore
\begin{equation*}
\mf{t}, \mf{s} \in \{e, s_b, s_{b-1} s_b, \ldots, s_1 \cdots s_b \}.
\end{equation*}
Suppose $\mf{s}=s_i \cdots s_b$.  The filling $\mf{s}$ of $\mu$ has entries $1, \ldots, i-1, b+1, i, \ldots b$, read from left to right.
This filling must be greater than or equal to the standard filling of $\lambda$ given in \eqref{stdfillingoflambda}.

\begin{equation}
\label{stdfillingoflambda}
(\underbrace{~\emptyset~,\dots, ~\emptyset~}_a,\underbrace{~\Yboxdim{14.5pt}\young(1)~,\dots, ~\Yboxdim{14.5pt}\young(b)~}_b|\underbrace{~\emptyset~,\dots, ~\emptyset~}_c,~
\Yboxdim{14.5pt}\young(\B)
~).
\end{equation}
The tableau $\mf{s}$ of $\mu$ must have
boxes labeled $1,2, \ldots, i-1$ each moved at least one to the left from their original positions as prescribed by the standard filling of $\lambda$ since $b+1$ will be moved to the left of the bar in \eqref{stdfillingoflambda}.  
Note that if $i=1$ nothing needs to be moved.  
Now one calculates
\begin{align*}
deg(y^{\mu})-deg(y^{\lambda}) & \geq 2(i-1)+2(c+b-i+2) \\
&= 2c+2b+2 \\
& \geq 4b+2.
\end{align*}
The minimal degree of $\psi_{\mf{t}} $ or $\psi_{\mf{s}}$ is $-2b$.  
Thus the degree of $ \tilde{\Psi}$ is greater than or equal to two.
Thus it is impossible to decompose $Y'$.

The proof for $d >1$  is similar.  We provide some of the details.

Let $ Y'=  e_{(b,d)} y^{\lambda} \nh_{b+d}^{a+b+c+d} $.
Just as in the proof of Lemma \ref{abc1cgeqb}, let $\Psi = \Psi^{\lambda \lambda}_{\mf{t},\mf{s},\mu}$ and $\tilde{\Psi}$ be the restriction of $\Psi$ to $Y'$ composed with projection onto $Y'$.

Just as in the case $d=1$ we may assume $\mf{t},\mf{s}$ are shortest length coset representatives in $\mf{S}_{b+d}/\mf{S}_b \times \mf{S}_d$.

Consider $\mf{t}$ for example.  We encode it by the tuple $(r_1, \ldots, r_d)$ with
$1 \leq r_1 < \cdots < r_d \leq b+d$.
This tuple tells us how to shuffle the $d$ boxes into the entire collection of $b+d$ boxes.

Suppose $r_j=i+1<b$ and $r_{j+1} \geq b$.  This means for the partition $\mu$ that we have to push each of the first $i$ boxes in the first collection over at least one spot each.  We have to repeat this $j$ times.
In the second collection we have to move each of the first $j$ boxes to the left at least
$b+c+j-i$ spots.  Thus
\begin{equation*}
deg (y^{\mu}) - deg (y^{\lambda})  \geq 2ij +2j(b+c+j-i) \geq 2j(2b+j).
\end{equation*}
The minimal degree of $\psi_\mf{t}$ or $\psi_\mf{s}$ is $-2jb$.  Thus
\begin{equation*}
deg(\tilde{\Psi}) \geq 2j(2b+j)-4jb=2jb.
\end{equation*}
Since $j \geq 1$, $\tilde{\Psi}$ is a positive degree endomorphism.
Thus $Y'$ is indecomposable.

By \eqref{partialofidempotent}, $\partial(e_{\lambda} y^{\lambda})=e_{\lambda} y^{\lambda} x$ for some linear polynomial $x \in \nh_n^l$, so $e_{\lambda} G(\lambda)$ is a $p$-DG submodule of $G(\lambda)$.
\end{proof}

\begin{prop}
\label{propY0bcd}
When $a=0$, the module $e_{(b,d)} G(1^b 0^c 1^d)$ is indecomposable.
Furthermore, as a graded vector space there is an isomorphism
\begin{equation*}
\END\left(e_{(b,d)} G(1^b 0^c 1^d)\right) \cong \mathrm{H}^*(\gr(d,c+d)),
\end{equation*}
where $\mathrm{H}^*(\gr(d,c+d))$ is the cohomology of $d$-planes in $\C^{c+d}$.
\end{prop}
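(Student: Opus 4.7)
The plan is to exploit the special combinatorial feature of the case $a=0$: namely, that the Vandermonde-type initial segment of $y^\lambda$ exactly matches the polynomial halves of the idempotents $e_b$ and $e_d$ inside $e_{(b,d)}$.

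First I would establish the identity
\begin{equation*}
e_{(b,d)} y^\lambda = y^\lambda e_{(b,d)}',
\end{equation*}
where $e_{(b,d)}' = e_{(b,d)}^*$ is the idempotent obtained by applying the diagrammatic anti-involution $*$. This follows from the factorization $y^\lambda = (y_1\cdots y_b)^{l-b}(y_1^{b-1}\cdots y_b^0)(y_{b+1}^{d-1}\cdots y_{b+d}^0)$, symmetry of $(y_1\cdots y_b)^{l-b}$ under $e_b$, and the direct computation $e_n \cdot (y_1^{n-1}\cdots y_n^0) = (y_1^{n-1}\cdots y_n^0) \cdot e_n'$ applied separately to each block. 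The identity uses crucially that $a=0$: only in that case does $y^\lambda$ contain the full polynomial part of $e_b$ on the left block. As a consequence, $e_{(b,d)} G(\lambda) = y^\lambda e_{(b,d)}' \nh_n^l$, displayed now as a cyclic right submodule of $G(\lambda)$ generated by $y^\lambda e_{(b,d)}'$.

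Next I would compute the endomorphism algebra by analyzing the surjection $e_{(b,d)}' \nh_n^l \twoheadrightarrow y^\lambda e_{(b,d)}' \nh_n^l$ given by left multiplication by $y^\lambda$. The source has endomorphism algebra $e_{(b,d)}' \nh_n^l e_{(b,d)}'$, which via thick calculus is a quotient of $\mathrm{Sym}(y_1,\dots,y_b)\otimes\mathrm{Sym}(y_{b+1},\dots,y_{b+d})$ by the cyclotomic relations. Any endomorphism $f$ of $y^\lambda e_{(b,d)}' \nh_n^l$ lifts to a map $e_{(b,d)}' a\mapsto z\, e_{(b,d)}' a$ for some $z$ in this corner algebra, and left multiplication by $z$ on $y^\lambda$ descends precisely when $z$ lies in a suitable normalizer modulo the kernel. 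Because $y^\lambda$ already contains the maximal admissible powers $y_1^{l-1}\cdots y_b^{l-b}$ on the first block, any symmetric polynomial in $y_1,\dots,y_b$ acting on $y^\lambda$ is forced into the cyclotomic kernel except for scalars, collapsing the left tensor factor to $\Bbbk$. The remaining contribution comes from $\mathrm{Sym}(y_{b+1},\dots,y_{b+d})$ modulo the ideal generated by the complete symmetric functions $h_{c+1},\dots,h_{c+d}$, which is exactly $H^*(\gr(d,c+d))$. A dimension check via the Hu--Mathas cellular basis of $G(\lambda)$ (Proposition \ref{basisG}), restricted to those basis elements annihilated on the left by $(1-e_{(b,d)})$, confirms $\dim_q \END(e_{(b,d)} G(\lambda)) = \binom{c+d}{d}_q$, matching the Poincar\'e polynomial of $\gr(d,c+d)$.

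Finally, indecomposability follows from the computed endomorphism ring: $H^*(\gr(d,c+d))$ is a non-negatively graded connected algebra whose degree-zero part is spanned by the identity, hence is a graded local ring. A module with graded local endomorphism ring is indecomposable, and this also confirms the claimed graded vector space isomorphism.

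The main obstacle will be the rigorous verification that left multiplication by $y^\lambda$ truly kills all non-scalar symmetric polynomials in $y_1,\dots,y_b$: a priori, such endomorphisms could survive via crossings between the two blocks. The cleanest way around this is to compare the Specht filtration of $G(\lambda)$ from Proposition \ref{spechtfiltofG} against the idempotent $e_{(b,d)}$, checking that only Specht factors $S^{\nu_i}$ indexed by sub-partitions with a fixed first block $(1^b)$ survive and produce a Poincar\'e polynomial matching $\binom{c+d}{d}_q$ up to the prescribed grading shift.
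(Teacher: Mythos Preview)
Your approach diverges from the paper's and contains a real gap. The claim that $e_{(b,d)}'\,\nh_n^l\,e_{(b,d)}'$ is ``via thick calculus a quotient of $\mathrm{Sym}(y_1,\dots,y_b)\otimes\mathrm{Sym}(y_{b+1},\dots,y_{b+d})$ by the cyclotomic relations'' is false. By the identity decomposition \eqref{eq-identitydecomp}, the $1$-morphism $\mathcal{E}^{(b)}\mathcal{E}^{(d)}$ splits as ${b+d\brack b}$ copies of $\mathcal{E}^{(b+d)}$, so $e_{(b,d)}\,\nh_n^l\,e_{(b,d)}$ is a $\binom{b+d}{b}\times\binom{b+d}{b}$ matrix algebra over $\mathrm{H}^*(\gr(b+d,l))$, not a commutative quotient of symmetric polynomials. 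Already for $b=d=1$ one has $e_{(1,1)}=1$ and the corner algebra is all of $\nh_2^l$, which contains $\psi_1$. Your subsequent reduction (``collapsing the left tensor factor to $\Bbbk$'') therefore does not go through as stated: the lift $z$ may involve thick crossings between the two blocks, exactly the obstacle you flag at the end but never remove. The Specht-filtration workaround you sketch is plausible but is not carried out and would essentially reconstruct the cellular analysis anyway.

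The paper's argument is shorter and avoids this entirely. It works with the Hu--Mathas cellular basis $\{\Psi_{h,t,\mu}^{\lambda\lambda}\}$ of $\END_{\nh_n^l}(G(\lambda))$ from Proposition~\ref{cellularschuralgprop}, restricts each basis element to $Y'=e_{(b,d)}G(\lambda)$ and projects back. The idempotent $e_{(b,d)}$ kills $\psi_h^*$ (and $e_{(b,d)}'$ kills $\psi_t$) unless $h,t$ are shortest coset representatives in $S_{b+d}/(S_b\times S_d)$. But the condition $h,t\in\Tab^\lambda(\mu)$, together with $a=0$, forces boxes $1,\dots,b$ to occupy positions $1,\dots,b$; combined with the coset condition this pins $h=t=e$. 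The surviving basis elements $\Psi^{\lambda\lambda}_{e,e,\mu}$ are then indexed by the $\binom{c+d}{d}$ partitions $\mu$ obtained by shuffling the last $c$ zeros with the last $d$ ones in $\lambda=(1^b0^c1^d)$, and a degree count matches $\mathrm{H}^*(\gr(d,c+d))$. Indecomposability then follows exactly as you say, from the degree-zero part being one-dimensional.
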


\begin{proof}
Let $\lambda=(1^b 0^c 1^d)$ and $Y'=e_{(b,d)} G(\lambda)$.

Consider $\Psi=\Psi_{\mf{s},\mf{t},\mu}^{\lambda \lambda} \in \END(G(\lambda))$.
Let $\tilde{\Psi}$ be the restriction of $\Psi$ to $Y'$ composed with projection onto $Y'$.  
It is easy to see that $\tilde{\Psi}$ is trivial unless $\mf{s},\mf{t} \in \mf{S}_{b+d} / \mf{S}_b \times \mf{S}_d$ are shortest length coset representatives.
But then $\mf{s}$ and $\mf{t}$ cannot fill any possible $\mu$ unless $\mf{s}=\mf{t}=e$, the identity coset.
Thus $deg(\tilde{\Psi})=0$ only if $\tilde{\Psi}=\Id$ and otherwise the degree is positive.  Thus $Y'$ is indecomposable.

The possible $\mu$ which could arise in $\Psi^{\lambda \lambda}_{e,e,\mu}$ are those
$\mu \in \mathcal{P}_{b+d}^{b+c+d}$ obtained from shuffling 
the $c$ entries labeled $0$ and the $d$ entries labeled $1$ in 
$\lambda=(1^b 0^c 1^d)$.
There $\binom{c+d}{d}$ such shuffles and it easy to compute the degree of the resulting $\Psi^{\lambda \lambda}_{e,e,\mu}$ which then immediately gives that the graded vector space $\END(e_{(b,d)} G(1^b 0^c 1^d))$ is isomorphic to $\mathrm{H}^*(\gr(d,c+d))$.
\end{proof}

For the next construction, we will need the following results. Recall from Definition \ref{def-E-and-F} that we have defined an action of the $p$-DG functors $\mf{E}$ (restriction) and $\mf{F}$ (induction) on $\oplus_{n=0}^l(\nh_n^l,\dif)\dmod$.

\begin{lem}\label{lem-fG-equals-FY}
For any $\lambda=(0^a 1^b 0^{c+d})\in \mc{P}_n^{r,s}$, there is an isomorphism of $\nh_n^l$-modules 
\[
\mf{E}^{(a)} Y(1^{a+b} 0^{c+d})\cong e_{\lambda} G(\lambda).
\]
\end{lem}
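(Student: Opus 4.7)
The plan is to give both sides of the claimed isomorphism explicit presentations and to construct a direct bijection between them. By Lemma~\ref{Yindecomspecial} applied to the partition $(1^{a+b}0^{c+d})$, we have
\[ Y(1^{a+b}0^{c+d}) = e_{a+b}\, y^{(1^{a+b}0^{c+d})}\, \nh_{a+b}^l, \qquad y^{(1^{a+b}0^{c+d})} = y_1^{l-1} y_2^{l-2} \cdots y_{a+b}^{c+d}. \]
Applying the restriction functor $\mf{E}^{(a)}$ from Definition~\ref{def-E-and-F} (with $n = b$), one obtains as a right $\nh_b^l$-module
\[ \mf{E}^{(a)} Y(1^{a+b}0^{c+d}) \cong e_{a+b}\, y^{(1^{a+b}0^{c+d})}\, \nh_{a+b}^l\, e_{(1^b, a)}^{\star}, \]
where $\nh_b^l$ acts by right multiplication on the first $b$ strands via the standard embedding $\nh_b^l \hookrightarrow \nh_{a+b}^l$. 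On the other side, for $\lambda = (0^a 1^b 0^{c+d})$ one checks directly from Definitions~\ref{def-partition-idemp} and~\ref{def-G-lambda} that $e_\lambda = e_b$ and $y^\lambda = y_1^{l-a-1} \cdots y_b^{c+d}$, whence $e_\lambda G(\lambda) = e_b\, y^\lambda\, \nh_b^l$.

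The crucial observation is the factorization
\[ y^{(1^{a+b}0^{c+d})} = \bigl(y_1^{l-1} y_2^{l-2} \cdots y_a^{l-a}\bigr) \cdot \widetilde{y}^\lambda, \qquad \widetilde{y}^\lambda := y_{a+1}^{l-a-1} \cdots y_{a+b}^{c+d}, \]
where $\widetilde{y}^\lambda$ is $y^\lambda$ reindexed to act on strands $a+1, \ldots, a+b$. Define the candidate map
\[ \Phi \colon e_\lambda G(\lambda) \longrightarrow \mf{E}^{(a)} Y(1^{a+b}0^{c+d}), \qquad e_b\, y^\lambda \cdot D \;\longmapsto\; e_{a+b}\, y^{(1^{a+b}0^{c+d})} \cdot D \cdot e_{(1^b,a)}^{\star}, \]
for $D \in \nh_b^l$ viewed as acting on strands $1, \ldots, b$ of $\nh_{a+b}^l$. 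Well-definedness and right $\nh_b^l$-equivariance are immediate, since the operation commutes with right multiplication by $D$.

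To show $\Phi$ is an isomorphism, the strategy is to argue in two steps. For surjectivity, one applies the thick-calculus identity decomposition~\eqref{eq-identitydecomp} to factor $e_{a+b}$ through $e_a \otimes e_b$; combined with the presence of $e_a^\star$ coming from $e_{(1^b, a)}^\star$ at the bottom, any element $e_{a+b}\, y^{(1^{a+b}0^{c+d})}\, D'\, e_{(1^b,a)}^\star$ with $D' \in \nh_{a+b}^l$ reduces to the image of some $D \in \nh_b^l$; the key point is that the top polynomial $y_1^{l-1}\cdots y_a^{l-a}$ together with $e_a^\star$ forces the $a$-strand sector of the middle diagram into a one-dimensional space. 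For injectivity, one compares graded dimensions: Proposition~\ref{basisG} together with Remark~\ref{rmk-Spect-equals-G} describes the graded dimension of $e_b y^\lambda \nh_b^l$, while the cellular basis of $Y(1^{a+b}0^{c+d})$ combined with the $e_{(1^b, a)}^\star$-truncation gives the graded dimension of $\mf{E}^{(a)} Y(1^{a+b}0^{c+d})$; the two match after accounting for the grading shift produced by the $a$-strand contribution.

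The principal technical obstacle is the diagrammatic reduction in the surjectivity step: showing that the $a$-strand sector of any middle diagram, once sandwiched between the top polynomial $y_1^{l-1}\cdots y_a^{l-a}$ (absorbed by $e_a$) and the bottom $e_a^\star$, collapses to a nonzero scalar multiple of a fixed element. This is essentially a statement about the Frobenius pairing on $\nh_a^l$ restricted to the corner $e_a\,\nh_a^l\, e_a^\star$ and evaluated against the top-degree polynomial $y_1^{l-1}\cdots y_a^{l-a}$, and is the reason for the very specific choice of the partition $(1^{a+b}0^{c+d})$ in the statement.
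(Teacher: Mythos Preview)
Your map $\Phi$ is not well-defined in the way you claim. You send $e_b\, y^\lambda \cdot D$ to $e_{a+b}\, y^{(1^{a+b}0^{c+d})} \cdot D \cdot e_{(1^b,a)}^{\star}$, but the right-hand side depends on the chosen representative $D$, not only on the element $\xi = e_b y^\lambda D \in e_\lambda G(\lambda)$. For this to descend to a map on $e_\lambda G(\lambda)$ you must verify that whenever $e_b y^\lambda D = 0$ in $\nh_b^l$, one also has $e_{a+b}\, y^{(1^{a+b}0^{c+d})}\, D\, e_{(1^b,a)}^{\star} = 0$ in $\nh_{a+b}^l$. This is a comparison of annihilator ideals that is not immediate from anything you have established; indeed it is essentially equivalent to the injectivity half of the lemma, so invoking it here would be circular. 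The factorization $y^{(1^{a+b}0^{c+d})} = (y_1^{l-1}\cdots y_a^{l-a})\,\widetilde{y}^\lambda$ does not help, because $\widetilde{y}^\lambda$ lives on strands $a+1,\ldots,a+b$ while $D$ lives on strands $1,\ldots,b$, and these do not commute.

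The paper's proof avoids this by building the map as \emph{left multiplication by a fixed element}: one sends $e_b y^\lambda x \in e_\lambda G(\lambda) \subset \nh_b^l \hookrightarrow \nh_{a+b}^l$ to $A \cdot (e_b y^\lambda x)$ with $A = y_1^{l-1}\cdots y_a^{l-a}\, \psi_{b,a}\, e_{(1^b,a)}^{\star}$, where $\psi_{b,a}$ corresponds to the longest minimal $(S_b\times S_a)$-coset representative. Left multiplication by a fixed element is tautologically a well-defined right $\nh_b^l$-map. The actual work then shifts to two concrete diagrammatic computations: showing the image lands in $y_1^{l-1}\cdots y_{a+b}^{l-a-b}\, \nh_{a+b}^l\, e_{(1^b,a)}^{\star}$ (by sliding the monomial past $\psi_{b,a}$ and observing that all error terms are killed by $\psi_{w_0}$ or $e_a^{\star}$), and showing surjectivity by computing $\Phi(e_b y^\lambda \psi_{w_0})$ and recognising it as the lowest-degree generator of the target. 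Injectivity is then a clean dimension count: both sides have dimension $\binom{a+b}{b}\, b!$. Your surjectivity sketch via the identity decomposition and the ``collapse to a scalar'' heuristic is aiming at the same phenomenon, but it only becomes meaningful once you have a well-defined map to begin with.
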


\begin{proof}
By Proposition \ref{basisG} and Corollary \ref{cor-Spect-equals-G}, the indecomposable module $Y(1^{a+b} 0^{c+d})\cong G(1^{a+b}0^{c+d})$ is simple with a basis 
$$ \{ y_1^{l-1} \cdots y_{a+b}^{l-a-b} \psi_w | w \in \mf{S}_{a+b} \}.$$ 
Applying $\mf{E}^{(a)}$ to this module has the effect of multiplying the basis elements on the right by the idempotent
$ e_{(1^b,a)}^\star$ which annihilates some of the basis elements.  In fact, a basis for the restricted module $\mf{E}^{(a)} Y(1^{a+b} 0^{c+d})$ consists of
\[
\{ y_1^{l-1} \cdots y_{a+b}^{l-a-b} \psi_{w} e_{(1^b,a)}^\star 
| w \in \mf{S}_{a+b}/  ( \mf{S}_1^{\times b} \times \mf{S}_a) \},
\]
where it is understood that $w\in \mf{S}_{a+b}/(\mf{S}_1^{\times b} \times \mf{S}_a)$ ranges over the shortest coset representatives.

On the other hand, using Proposition \ref{basisG}, we easily deduce that a basis of 
$e_{\lambda} G(\lambda)$ is 
\[
\{ e_b (y_1 \cdots y_b)^{l-a-b} y_1^{k_1} \cdots y_b^{k_b} \psi_w
| w \in \mf{S}_b, a+b-1 \geq k_1 > \cdots > k_b \geq 0.
\}
\]
In particular, both spaces $\mf{E}^{(a)} Y(1^{a+b} 0^{c+d})$ and $e_{\lambda} G(\lambda)$ have the same dimension $\binom{a+b}{b} b!$.

Define a map $ \Phi: e_{\lambda} G(\lambda) \longrightarrow \mf{E}^{(a)} Y(1^{a+b} 0^{c+d})$ by, for any $ X= e_by^\lambda x\in e_by^\lambda \nh_n^l$,
\begin{equation}
\Phi(X):=
y_1^{a+b-1} \cdots y_a^{b} e_{(1^a,b)}\psi_{b,a} e_{(1^b,a)}^\star (y_{b+1}\cdots y_{b+a})^{l-a-b}  X,
\end{equation}
where we recall that $\psi_{b,a}$ is the composition of nilHecke generators associated to the longest minimal coset representatives in $
\mf{S}_{a+b} / \mf{S}_b \times \mf{S}_a$:
\[
\psi_{b,a}=
\begin{DGCpicture}
\DGCstrand(0,0)(2,2)[$^1$]
\DGCstrand(0.5,0)(2.5,2)[$^2$]
\DGCstrand(1.5,0)(3.5,2)[$^b$]
\DGCstrand(2.5,0)(0,2)[$^{b+1}$]
\DGCstrand(3.5,0)(1,2)[$^{a+b}$]
\DGCcoupon*(0.8,0.1)(1.4,0.3){$\cdots$}
\DGCcoupon*(2.6,1.7)(3.2,1.9){$\cdots$}
\DGCcoupon*(2.6,0.1)(3.2,0.3){$\cdots$}
\DGCcoupon*(0.3,1.7)(0.9,1.9){$\cdots$}
\end{DGCpicture} \ .
\]
The map is evidently a map of right $\nh_b^l$-modules. It is diagrammatically described by
\[
\begin{DGCpicture}
\DGCstrand(0,-0.25)(0,1.25)
\DGCstrand(0.5,-0.25)(0.5,1.25)
\DGCstrand(1.5,-0.25)(1.5,1.25)
\DGCcoupon(-0.1,0.25)(1.6,0.75){$e_by^\lambda x$}
\DGCcoupon*(0.60,0.85)(1.4,1.15){$\cdots$}
\DGCcoupon*(0.6,-0.15)(1.4,0.15){$\cdots$}
\end{DGCpicture}
~\mapsto~
\begin{DGCpicture}
\DGCstrand/u/(0,-0.6)/u/(0,0)(2,2)
\DGCstrand/u/(0.5,-0.6)/u/(0.5,0)(2.5,2)
\DGCstrand/u/(1.5,-0.6)/u/(1.5,0)(3.5,2)
\DGCstrand(2.5,0)(0,2)
\DGCdot{1.75}[dl]{$_{_{a+b-1}}$}
\DGCstrand(3.5,0)(1,2)
\DGCdot{1.75}[dl]{$_{_{b}}$}
\DGCcoupon*(0.8,0.1)(1.4,0.3){$\cdots$}
\DGCcoupon*(2.0,1.3)(3.1,1.5){$\cdots$}
\DGCcoupon*(2.6,0.1)(3.2,0.3){$\cdots$}
\DGCcoupon*(0.3,1.7)(0.9,1.9){$\cdots$}
\DGCcoupon(-0.2,-0.5)(1.7,0){$e_b y^\lambda x$}
\DGCcoupon(1.6,1.55)(3.7,1.95){$e_b$}
\DGCcoupon(2.3,-0.5)(3.7,0){$u_a e_{a}^\star$}
\end{DGCpicture} \ ,
\]
where we have abbreviated $u_a:= (y_{b+1}\cdots y_{b+a})^{l-a-b}$.

Let us use this description to explain why the map has its image inside 
$$\mf{E}^{(a)}G(1^{a+b}0^{c+d})=y_1^{l-1}\cdots y_{a+b}^{l-a-b}\nh_{a+b}^l e^\star_{(1^b,a)}.$$

It suffices to take $x=1\in \nh_b^l$. Denote, until the end of the proof, the elements on southwest-northeast bound $b$ strands by 
\begin{equation}
v_b:= y_1^{b-1}y_2^{b-2}\cdots y_b^{0}, \quad \quad
\psi_0:= \psi_{w_0}, 
\end{equation} 
Then we have
\begin{align*}
e_by^\lambda &= e_b y_1^{l-a-1}y_2^{l-a-2}\cdots y_b^{l-a-b}=
(y_1y_2\cdots y_b)^{l-a-b}e_b y_1^{b-1}y_2^{b-2}\cdots y_b^0\\
& = y_1^{l-a-1}y_{2}^{l-a-2}\cdots y_b^{l-a-b}\psi_{0} y_1^{b-1}y_2^{b-2}\cdots y_b^0= y^\lambda \psi_{0} v_b.
\end{align*}

The factor $(y_1\cdots y_b)^{l-a-b}$ inside $y^\lambda$ combines with $u_a=(y_{b+1}\cdots y_{b+a})^{l-a-b}$ into the symmetric function
$(y_1\cdots y_{a+b})^{l-a-b}$, which can then slide through the element $\psi_{b,a}$. This results in
\begin{equation}
\Phi(e_by^\lambda)=
\begin{DGCpicture}
\DGCstrand/u/(0,-0.6)/u/(0,0)(2,2)
\DGCstrand/u/(0.5,-0.6)/u/(0.5,0)(2.5,2)
\DGCstrand/u/(1.5,-0.6)/u/(1.5,0)(3.5,2)
\DGCstrand(2.5,0)(0,2)
\DGCdot{1.75}[dl]{$_{_{a+b-1}}$}
\DGCstrand(3.5,0)(1,2)
\DGCdot{1.75}[dl]{$_{_{b}}$}
\DGCcoupon*(0.8,0.1)(1.4,0.3){$\cdots$}
\DGCcoupon*(2.0,1.3)(3.1,1.5){$\cdots$}
\DGCcoupon*(2.6,0.1)(3.2,0.3){$\cdots$}
\DGCcoupon*(0.3,1.7)(0.9,1.9){$\cdots$}
\DGCcoupon(-0.2,-0.5)(1.7,0){$y^\lambda \psi_{0} v_b$}
\DGCcoupon(1.6,1.55)(3.7,1.95){$e_b$}
\DGCcoupon(2.3,-0.5)(3.7,0){$u_a e_{a}^\star$}
\end{DGCpicture} 
=
\begin{DGCpicture}
\DGCstrand/u/(0,-0.6)/u/(0,0)(2,2)
\DGCstrand/u/(0.5,-0.6)/u/(0.5,0)(2.5,2)
\DGCstrand/u/(1.5,-0.6)/u/(1.5,0)(3.5,2)
\DGCstrand(2.5,0)(0,2)
\DGCdot{1.75}[dl]{$_{_{l-1}}$}
\DGCstrand(3.5,0)(1,2)
\DGCdot{1.75}[dl]{$_{_{l-a}}$}
\DGCcoupon*(0.8,0.1)(1.4,0.3){$\cdots$}
\DGCcoupon*(2.0,1.3)(3.1,1.5){$\cdots$}
\DGCcoupon*(2.6,0.1)(3.2,0.3){$\cdots$}
\DGCcoupon*(0.3,1.7)(0.9,1.9){$\cdots$}
\DGCcoupon(-0.2,-0.5)(1.7,0){$v_b \psi_{0}v_b$}
\DGCcoupon(1.6,1.5)(3.7,1.95){$ y^\lambda \psi_0 $}
\DGCcoupon(2.3,-0.5)(3.7,0){$e_{a}^\star$}
\end{DGCpicture} 
=
\begin{DGCpicture}
\DGCstrand/u/(0,-0.6)/u/(0,0)(2,2)
\DGCstrand/u/(0.5,-0.6)/u/(0.5,0)(2.5,2)
\DGCstrand/u/(1.5,-0.6)/u/(1.5,0)(3.5,2)
\DGCstrand(2.5,0)(0,2)
\DGCdot{1.75}[dl]{$_{_{l-1}}$}
\DGCstrand(3.5,0)(1,2)
\DGCdot{1.75}[dl]{$_{_{l-a}}$}
\DGCcoupon*(0.8,0.1)(1.4,0.3){$\cdots$}
\DGCcoupon*(2.0,1.3)(3.1,1.5){$\cdots$}
\DGCcoupon*(2.6,0.1)(3.2,0.3){$\cdots$}
\DGCcoupon*(0.3,1.7)(0.9,1.9){$\cdots$}
\DGCcoupon(-0.2,-0.5)(1.7,0){$\psi_0 v_b$}
\DGCcoupon(1.6,1.55)(3.7,1.95){$ y^\lambda $}
\DGCcoupon(2.3,-0.5)(3.7,0){$e_{a}^\star$}
\end{DGCpicture} \ .
\end{equation}
The last equality holds by iterated application of the Reidmeister-III move for nilHecke algebras (the second equation in equation \eqref{eqn-nilHecke-RII-RIII}) and the equality that 
$\psi_0 v_b \psi_0 = \psi_0$ (see equation \eqref{eqn-psiepsi}).
Thus we have confirmed that
\begin{equation}
\Phi(e_by^\lambda)=~
\begin{DGCpicture}
\DGCstrand(0,0)(2,2)
\DGCstrand(0.5,0)(2.5,2)
\DGCstrand(1.5,0)(3.5,2)
\DGCstrand(2.5,0)(0,2)
\DGCstrand(3.5,0)(1,2)
\DGCcoupon*(0.8,0.1)(1.4,0.3){$\cdots$}
\DGCcoupon*(2.5,1.55)(3.2,1.7){$\cdots$}
\DGCcoupon*(2.6,0.1)(3.2,0.3){$\cdots$}
\DGCcoupon*(0.4,1.55)(1,1.7){$\cdots$}
\DGCcoupon(-0.2,-0.5)(1.7,0){$\psi_0v_b$}
\DGCcoupon(2.3,-0.5)(3.7,0){$e_{a}^\star$}
\DGCcoupon(-0.2,1.75)(3.7,2.2){$_{y_1^{l-1}y_2^{l-2}\cdots y_{a+b}^{l-a-b}}$}
\end{DGCpicture}  \in y_1^{l-1}\cdots y_{a+b}^{l-a-b}\nh_{a+b}^l e^\star_{(1^b,a)} \ .
\end{equation}

It also follows from the above computation that
\begin{equation}
\Phi(e_by^\lambda \psi_{0})=
\begin{DGCpicture}
\DGCstrand(0,0)(2,2)
\DGCstrand(0.5,0)(2.5,2)
\DGCstrand(1.5,0)(3.5,2)
\DGCstrand(2.5,0)(0,2)
\DGCstrand(3.5,0)(1,2)
\DGCcoupon*(0.8,0.1)(1.4,0.3){$\cdots$}
\DGCcoupon*(2.5,1.55)(3.2,1.7){$\cdots$}
\DGCcoupon*(2.6,0.1)(3.2,0.3){$\cdots$}
\DGCcoupon*(0.4,1.55)(1,1.7){$\cdots$}
\DGCcoupon(-0.2,-0.5)(1.7,0){${\psi_{0}}$}
\DGCcoupon(2.3,-0.5)(3.7,0){$e_{a}^\star$}
\DGCcoupon(-0.2,1.75)(3.7,2.2){$_{y_1^{l-1}y_2^{l-2}\cdots y_{a+b}^{l-a-b}}$}
\end{DGCpicture}  \ .
\end{equation}
This is the lowest degree element of $\mf{E}^{(a)}G(1^{a+b}0^{c+d})$, and is readily seen to generate the entire module. Hence the map $\Phi$ is surjective, and it follows that $\Phi$ is an isomorphism since both modules have the same dimension.
\end{proof}

\begin{lem}\label{lem-eG-equals-EY}
For any $\lambda=(0^a1^b0^c1^d)\in \mc{P}_n^{r,s}$, there is an isomorphism of $\nh_n^l$-modules 
\[
e_{\lambda}G(\lambda) \cong 
\mf{F}^{(d)}Y(0^a 1^b 0^{c+d}).
\]
\end{lem}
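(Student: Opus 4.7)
The plan is to mirror the proof of Lemma \ref{lem-fG-equals-FY}, comparing both sides as cyclic right $\nh_{b+d}^l$-modules via an explicit natural map. Set $\mu := (0^a 1^b 0^{c+d})$, so that by Lemma \ref{Yindecomspecial} we have $Y(\mu) = e_b y^\mu \nh_b^l$ with $y^\mu = y_1^{b+c+d-1}\cdots y_b^{c+d}$. Then $\mf{F}^{(d)} Y(\mu) = Y(\mu) \otimes_{\nh_b^l} e_{(1^b,d)} \nh_{b+d}^l$ is cyclic on the element $v := e_b y^\mu \otimes e_{(1^b,d)}$, since the subalgebra $\nh_b^l \subset \nh_{b+d}^l$ commutes with $e_{(1^b,d)} = 1_b \otimes e_d$.

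I would then define a right $\nh_{b+d}^l$-module map $\Phi \colon \mf{F}^{(d)} Y(0^a 1^b 0^{c+d}) \to e_\lambda G(\lambda)$ by $v \mapsto e_\lambda y^\mu$, and verify surjectivity. The key computation, using that $e_d$ involves only the last $d$ strands and thus commutes with $e_b$ and $y^\mu$, gives
$$e_b y^\mu \cdot e_{(1^b,d)} = e_b y^\mu e_d = e_b e_d y^\mu = e_\lambda y^\mu$$
inside $\nh_{b+d}^l$, so $\Phi$ is well-defined. Since $y^\lambda = y^\mu \cdot y_{b+1}^{d-1}\cdots y_{b+d}^0$, the generator $e_\lambda y^\lambda$ of $e_\lambda G(\lambda)$ equals $\Phi(v) \cdot y_{b+1}^{d-1}\cdots y_{b+d}^0$ and hence lies in the image. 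Conversely, factoring $e_d = y_{b+1}^{d-1}\cdots y_{b+d}^0 \psi_{w_0^{(d)}}$ (where $w_0^{(d)}$ is the longest element of $S_d$ acting on the last $d$ strands) together with $e_d^2 = e_d$ gives $\Phi(v) = e_\lambda y^\mu e_d = e_\lambda y^\lambda \psi_{w_0^{(d)}} \in e_\lambda G(\lambda)$, so the two right ideals coincide and $\Phi$ surjects.

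Injectivity would be obtained by comparing both sides via their Specht filtrations. By Proposition \ref{spechtfiltofG}, $Y(\mu)$ inherits a Specht filtration from $G(\mu)$; applying the exact functor $\mf{F}^{(d)}$ (exactness follows because $e_{(1^b,d)} \nh_{b+d}^l$ is projective on the right and admits a free decomposition over $\nh_b^l$ via the Mackey-style analysis) yields a filtration of $\mf{F}^{(d)} Y(\mu)$ whose subquotients are inductions of Specht modules. Meanwhile, truncating the Specht filtration of $G(\lambda)$ by $e_\lambda$ gives a filtration of $e_\lambda G(\lambda)$ with Specht subquotients. Proposition \ref{prop-unique-Specht} identifies Specht modules up to grading shift, so matching composition multiplicities upgrades the surjection $\Phi$ to an isomorphism.

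The main obstacle I anticipate is the combinatorial matching of Specht subquotients: on the induction side they are indexed by tableaux of shape $\nu \geq \mu$ with $d$ extra boxes in the final block, while on the truncated side they are indexed by Hu-Mathas's sets $\Tab^\lambda(\nu)$ for $\nu \geq \lambda$, each with a prescribed degree shift. An alternative, potentially cleaner route bypasses this bijection: since both sides already decompose into Specht subquotients with computable graded characters in the Grothendieck group, equality of graded characters plus surjectivity of $\Phi$ is sufficient to conclude that $\Phi$ is injective, and hence an isomorphism.
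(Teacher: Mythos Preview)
Your surjectivity argument --- showing that the two cyclic right ideals $e_\lambda y^\mu \nh_{b+d}^l$ and $e_\lambda y^\lambda \nh_{b+d}^l$ coincide via the factorization $e_d = y_{b+1}^{d-1}\cdots y_{b+d}^0\,\psi_{w_0^{(d)}}$ --- is exactly the computation the paper performs. Where you diverge is injectivity: you propose to match Specht filtrations (or graded characters) on both sides. This works, but it is far more than what is needed. The paper dispenses with injectivity in one stroke by observing that the induction bimodule $e_{(1^b,d)}\nh_{b+d}^l$ is \emph{free} as a left $\nh_b^l$-module (the standard basis of $\nh_{b+d}^l$ over $\nh_b^l$ restricted to this idempotent summand). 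Hence tensoring the inclusion $e_b y^\mu \nh_b^l \hookrightarrow \nh_b^l$ with $e_{(1^b,d)}\nh_{b+d}^l$ over $\nh_b^l$ remains injective, and the multiplication map identifies $\mf{F}^{(d)}Y(\mu)$ \emph{isomorphically} with the right ideal $e_{(b,d)}y^\mu\nh_{b+d}^l$ in one step. You already invoke this freeness (``Mackey-style analysis'') to justify exactness of $\mf{F}^{(d)}$, so you can apply it directly to the inclusion of $Y(\mu)$ into $\nh_b^l$ and skip the filtration bookkeeping entirely.
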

\begin{proof}
By Lemma \ref{Yindecomspecial}, $Y(0^a1^b0^{c+d})\cong e_bG(0^a1^b0^{c+d})$. We have a sequence of isomorphisms
\begin{align*}
\mf{F}^{(d)} Y(0^a1^{b}0^{c+d}) 
& \cong \mf{F}^{(d)} (e_{b}y_1^{b+c+d-1} \cdots y_{b}^{c+d} \nh_{b}^l) \\
& \cong e_{(b,d)} y_1^{b+c+d-1} \cdots y_{b}^{c+d} \nh^l_{b+d} \\
& = e_{\lambda} (y_1^{b+c+d-1} \cdots y_{b}^{c+d}) (y_{b+1}^{d-1} 
\cdots y_{b+d}^0) \nh^l_{b+d} \\
& = e_{\lambda }G(\lambda).
\end{align*}
The third equality above needs some further explanation. It is clear that the right ideal $e_{\lambda }G(\lambda)$ is contained in the right ideal generated by the element $ e_{(b,d)} y_1^{b+c+d-1} \cdots y_{b}^{c+d} $. The reverse inclusion holds because of the identity
\begin{equation*}
e_{(b,d)} y_1^{b+c+d-1} \cdots y_{b}^{c+d}  =
e_{(b,d)} (y_1^{b+c+d-1} \cdots y_{b}^{c+d}) (y_{a+b+1}^{d-1} 
\cdots y_{b+d}^0) \psi_w
\end{equation*}
where $\psi_w$ is the element of $\nh_{b+d}^{l} $ arising from the longest element of the parabolic group $\mf{S}_1^{\times b}\times \mf{S}_{d}\subset \mf{S}_{b+d}$:
\[
\psi_w=\begin{DGCpicture}[scale=0.5]
\DGCcoupon*(-3,1)(-2,3){$\cdots$}
\DGCstrand(-5,0)(-5,4)
\DGCstrand(-4,0)(-4,4)
\DGCstrand(-1,0)(-1,4)
\DGCstrand(0,0)(4,4)
\DGCstrand(4,0)(0,4)
\DGCstrand(3,0)(4,1)(1,4)
\DGCstrand(2,0)(4,2)(2,4)
\DGCcoupon*(2.2,3.6)(3.8,4){$\cdots$}
\DGCcoupon*(0.2,0)(1.8,0.4){$\cdots$}
\end{DGCpicture} \ .
\]
The lemma follows.
\end{proof}

\begin{prop}
\label{indecomb>c}
Suppose $ b >c $.  Then
$\mf{E}^{(a)} (e_{(a+b,d)} G(1^{a+b} 0^c 1^d)) $ is indecomposable.
\end{prop}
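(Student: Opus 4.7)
The plan is to show that the graded endomorphism ring of $N := \mf{E}^{(a)} M$, where $M := e_{(a+b,d)}G(1^{a+b}0^c1^d)$, is non-negatively graded with one-dimensional degree zero part; this suffices for indecomposability in the graded setting. The approach uses adjunction to reduce to a Hom computation over the larger algebra, followed by a categorified Stošić-type decomposition.

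First, by Lemma \ref{lem-eG-equals-EY} applied with the outer partition having no initial empty boxes, one has $M \cong \mf{F}^{(d)}Y(1^{a+b}0^{c+d})$, so $N \cong \mf{E}^{(a)}\mf{F}^{(d)}Y(1^{a+b}0^{c+d})$. Appealing to the biadjunction between $\mf{E}^{(a)}$ and $\mf{F}^{(a)}$ coming from the categorified $\sltwo$-action of Theorem \ref{catofVl}, with the grading shifts prescribed by formula \eqref{Fadjointformula} applied at the weight of $M$, one obtains an identification
$$\End(N)\cong \Hom\bigl(M,\mf{F}^{(a)}\mf{E}^{(a)}M\bigr)\{r\}$$
for an explicit shift $r$. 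I then apply Proposition \ref{prop-higher-Serre}(ii) to commute $\mf{E}^{(a)}$ past $\mf{F}^{(d)}$, realizing $\mf{F}^{(a)}\mf{E}^{(a)}\mf{F}^{(d)}Y(1^{a+b}0^{c+d})$ up to a fantastic filtration in terms of subquotients of the form $\mf{F}^{(d+s)}\mf{E}^{(s)}Y(1^{a+b}0^{c+d})$, weighted by quantum binomials of the form $\binom{c-b}{s}_q$. Each such subquotient is identifiable, via Lemma \ref{lem-fG-equals-FY} followed by Lemma \ref{lem-eG-equals-EY}, with an explicit cyclic module $e_{\mu_s}G(\mu_s)$ where $\mu_s=(0^{a-s}1^{b+s}0^{c+s}1^{d-s})$.

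Finally, I count degree zero maps $M \to e_{\mu_s}G(\mu_s)$ piece by piece, using Proposition \ref{propY0bcd} to identify the relevant endomorphism rings with nonnegatively graded cohomology rings of Grassmannians, and using the cellular basis of the two-tensor quiver Schur algebra from Proposition \ref{cellularschuralgprop} to bound Hom spaces between distinct pieces. Only the $s=0$ term should contribute a one-dimensional degree zero space (coming from the identity on $M$), while all other contributions lie in strictly positive degrees. The hypothesis $b > c$ enters precisely at this counting step: it ensures that the quantum binomial coefficients $\binom{c-b}{s}_q$ carry grading shifts whose signs align with the positive shifts coming from $\mf{F}^{(d+s)}$ and from the biadjunction, preventing any cancellation of degree zero contributions. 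The main obstacle will be the careful bookkeeping of all these grading shifts---from the biadjunction, from the Stošić filtration, from the identifications in Lemmas \ref{lem-fG-equals-FY} and \ref{lem-eG-equals-EY}, and from the Grassmannian cohomology degrees---to verify that they combine to force the degree zero part of $\End(N)$ to be exactly $\Bbbk$.
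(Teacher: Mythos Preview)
Your high-level strategy matches the paper's: show that $\END(N)$ is non-negatively graded with one-dimensional degree-zero part by using biadjunction to unwind $\mf{E}^{(a)}$. However, your execution diverges from the paper at the decomposition step and contains real problems.

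First, your claimed identification $\mu_s = (0^{a-s}1^{b+s}0^{c+s}1^{d-s})$ is wrong: those partitions index $\nh^l_{b+d}$-modules, whereas $\mf{F}^{(a)}\mf{E}^{(a)}M$ lives over $\nh^l_{a+b+d}$. Tracing through Lemmas~\ref{lem-fG-equals-FY} and~\ref{lem-eG-equals-EY} correctly gives $\mu_s = (0^s\,1^{a+b-s}\,0^{c-s}\,1^{d+s})$, and this only makes sense for $s \le c$.

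Second, commuting $\mf{E}^{(a)}$ past $\mf{F}^{(d)}$ at the weight of $Y'=Y(1^{a+b}0^{c+d})$ produces the binomial ${c-b \brack j}$, which has negative top precisely because $b > c$; the corresponding categorical filtration then runs the other way, and your ``grading shifts whose signs align'' remark does not address this. The paper sidesteps the issue by commuting $\mf{F}^{(a)}\mf{E}^{(a)}$ at the weight of $M = \mf{F}^{(d)}Y'$ instead, which yields the manifestly positive binomial ${a+b+d-c \brack j}$.

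Third, and most importantly, your plan leaves you computing $\Hom(M, e_{\mu_s}G(\mu_s))$ between distinct modules, and your appeal to Proposition~\ref{cellularschuralgprop} is not enough to pin down degrees. The paper avoids this entirely by performing a \emph{second} biadjunction: after commuting $\mf{F}^{(a)}\mf{E}^{(a)}$ and merging $\mf{F}^{(a-j)}\mf{F}^{(d)}$, it moves $\mf{E}^{(a-j)}$ back across via adjunction and merges again, reducing every summand to $\END(\mf{F}^{(a+d-j)}Y') \cong \END(Y(1^{a+b}\,0^{c-a+j}\,1^{a+d-j}))$, which Proposition~\ref{propY0bcd} identifies with Grassmannian cohomology. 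The minimal-degree check then becomes a single quadratic in $j$, minimized at the boundary $j=a$ exactly when $b>c$.

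In short: same strategy, but commute $\mf{F}^{(a)}\mf{E}^{(a)}$ rather than $\mf{E}^{(a)}\mf{F}^{(d)}$, and use adjunction twice so that everything lands in endomorphism rings you already understand.
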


\begin{proof}
Let $Y' = Y(1^{a+b} 0^{c+d})$ and $Y=\mf{E}^{(a)} \mf{F}^{(d)} Y'$.
Then, by the previous lemma and Proposition \ref{propY0bcd}, 
$$\mf{F}^{(d)}Y'\cong e_{(a+b,d)} G(1^{a+b} 0^c 1^d) \cong Y(1^{a+b}0^c1^d)   .$$
We will show that $\END(Y)$ is non-negatively graded and that the degree zero subspace is one dimensional.  We compute:
\begin{align*}
\END(Y)&\cong
\HOM(\mf{E}^{(a)} \mf{F}^{(d)} Y', \mf{E}^{(a)} \mf{F}^{(d)} Y' )  \\
&\cong \HOM(\mf{F}^{(d)} Y', \mf{F}^{(a)} \mf{E}^{(a)} \mf{F}^{(d)} Y')  q^{a(b+d-c)}  \\
&\cong \bigoplus_{j=0}^a \HOM(\mf{F}^{(d)} Y', \mf{E}^{(a-j)} \mf{F}^{(a-j)} \mf{F}^{(d)} Y'){a+b+d-c \brack j} q^{a(b+d-c)} \\
&\cong \bigoplus_{j=0}^a \HOM(\mf{F}^{(d)} Y', \mf{E}^{(a-j)} \mf{F}^{(a+d-j)} Y'){a+b+d-c \brack j} {a+d-j \brack d} q^{a(b+d-c)} \\
& \cong \bigoplus_{j=0}^a \HOM(\mf{F}^{(a-j)} \mf{F}^{(d)} Y',  \mf{F}^{(a+d-j)} Y'){a+b+d-c \brack j} {a+d-j \brack d} q^{a(b+d-c)} q^{(a-j)(2a+b+d-c-j)}  \\
&\cong \bigoplus_{j=0}^a \HOM(\mf{F}^{(a+d-j)} Y',  \mf{F}^{(a+d-j)} Y'){a+b+d-c \brack j} {a+d-j \brack d}^2 q^{(2a-j)(a+b+d-c-j)} 
\\
&\cong \bigoplus_{j=0}^a \END(Y(1^{a+b} 0^{c-a+j} 1^{a+d-j})){a+b+d-c \brack j} {a+d-j \brack d}^2 q^{(2a-j)(a+b+d-c-j)} .
\end{align*}
The second and fifth isomorphisms follow by taking the biadjoints of $\mf{E}^{(a)} $ and $\mf{E}^{(a-j)} $ using equation~\eqref{Fadjointformula}.
The third one follows from taking commutation of $\mf{F}^{(a)}$ and $\mf{E}^{(a)}$ (see Proposition \ref{prop-higher-Serre} (ii)).
The formula for multiplication of $\mf{F}^{(a-j)}$ and $\mf{F}^{(d)}$ produces the fourth and sixth isomorphism (Proposition \ref{prop-higher-Serre} (i)), while the last one is again a consequence of Lemma \ref{lem-eG-equals-EY}.

It follows from Proposition \ref{propY0bcd} again that $Y(1^{a+b} 0^{c-a+j} 1^{a+d-j})$ is indecomposable, and
$$\END(Y(1^{a+b} 0^{c-a+j} 1^{a+d-j})) \cong \mathrm{H}^* (\gr(a+d-j,c+d)).$$
Since the graded dimension of $\mathrm{H}^* (\gr(a+d-j,c+d))$ is an element of 
$1+q \N[q] $ we need only determine the smallest exponent of 
\begin{equation*}
{a+b+d-c \brack j} {a+d-j \brack d}^2 q^{(2a-j)(a+b+d-c-j)}.
\end{equation*}
The smallest exponent is
\begin{equation}
\label{functionofj}
-j(a+b+d-c-j)-2d(a-j)+(2a-j)(a+b+d-c-j)=2j^2+(2(c-b)-4a)j+2a(b-c+a).
\end{equation}
The minimum of this function (with respect to $j$) occurs when $j=a-\frac{c-b}{2}$.
Since $b>c$, it follows that $j=a-\frac{c-b}{2} \geq a$.
Since $j \in [0,a]$, the minimum of the function \eqref{functionofj} must occur at either $j=0$ or $j=a$.

When $j=0$, the function \eqref{functionofj} is equal to $2a(a+b-c) >0$.
When $j=a$, the value of \eqref{functionofj} is equal to zero.
Thus the graded dimension of $\END(Y)$ is an element of $1+q \N[q]$ so $Y$ is indecomposable.
\end{proof}

\begin{rem}
In general, we do not know of a construction of the indecomposable generating $p$-DG objects in $\mc{NH}_n^{r,s}$ directly from $\nh_n^l$ except in some very special cases.  For instance, for the case $a>0$, $b>c$, and $d=1$ it is possible to show that 
\begin{equation*}
Y(0^a 1^b 0^c 1) \cong 
(e_{(b,1)}^{(1^{b-c})} + \cdots + e_{(b,1)}^{(1^{b})}) G(0^a 1^b 0^c 1)
\end{equation*}
and that $e_{(b,1)}^{(1^{b-c})} + \cdots + e_{(b,1)}^{(1^{b})}$ is a $p$-DG idempotent (see the notation \eqref{eqn-general-idempotent}).
\end{rem}

We now state the following result on the classification of indecomposable objects in $\mc{NH}_n^{r,s}$. One should compare it with Remark \ref{canbasisremark}.

\begin{thm}
\label{classificationprop}
The indecomposable objects $Y(\lambda)$ in $\mc{NH}_{n}^{r,s} $ are parametrized by partitions in $ \mc{P}_n^{r,s}$. Such a $Y(\lambda)$, when $\lambda=(0^a1^b0^c1^d)$, is isomorphic to either of the following forms:
\begin{itemize}
\item $ \mf{F}^{(d)} \mf{E}^{(a)}Y(1^{a+b} 0^{c+d})\cong e_{(b,d)} G(0^a 1^b 0^c 1^d) \cong \mf{F}^{(d)}Y(0^a 1^b 0^{c+d}) $ 
if $b \leq c$;
\item $\mf{E}^{(a)} \mf{F}^{(d)} Y(1^{a+b} 0^{c+d}) \cong \mf{E}^{(a)} (e_{(a+b,d)} G(1^{a+b} 0^c 1^d))
 $ if $ b \geq c$.
\end{itemize}
If $b=c$ the modules are isomorphic.
\end{thm}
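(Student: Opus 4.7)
The plan is to assemble the theorem out of the lemmas of this section that provide explicit identifications, together with the indecomposability propositions, and then verify the parametrization.

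First I would establish the chain of isomorphisms in the case $b\le c$. Starting from Lemma \ref{Yindecomspecial} applied to $(0^a1^b0^{c+d})$, we have $Y(0^a1^b0^{c+d})\cong e_bG(0^a1^b0^{c+d})$, and Lemma \ref{lem-fG-equals-FY} (applied with ``$d=0$'' there) identifies this with $\mf{E}^{(a)}Y(1^{a+b}0^{c+d})$. Applying $\mf{F}^{(d)}$ and invoking Lemma \ref{lem-eG-equals-EY} yields
\[
\mf{F}^{(d)}\mf{E}^{(a)}Y(1^{a+b}0^{c+d})\;\cong\;\mf{F}^{(d)}Y(0^a1^b0^{c+d})\;\cong\;e_{(b,d)}G(0^a1^b0^c1^d),
\]
which is the sought trio. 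Indecomposability of the right-hand module in this range is exactly Proposition \ref{abc1cgeqb}. When $a=0$, this is subsumed by Proposition \ref{propY0bcd}, which in addition supplies the endomorphism algebra and the expected identification with $Y(\lambda)$.

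Next I would treat the case $b\ge c$. By Lemma \ref{lem-eG-equals-EY} applied to $\mu=(1^{a+b}0^c1^d)$ (so the partition has ``$a=0$'' in the notation of that lemma), we obtain $e_{(a+b,d)}G(1^{a+b}0^c1^d)\cong \mf{F}^{(d)}Y(1^{a+b}0^{c+d})$. Applying $\mf{E}^{(a)}$ to both sides gives the desired isomorphism $\mf{E}^{(a)}\mf{F}^{(d)}Y(1^{a+b}0^{c+d})\cong \mf{E}^{(a)}(e_{(a+b,d)}G(1^{a+b}0^c1^d))$, while indecomposability is precisely the content of Proposition \ref{indecomb>c}. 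When $b=c$, both the $b\le c$ and $b\ge c$ recipes apply, and the two resulting modules are isomorphic because each is isomorphic to $e_\lambda G(\lambda)$ via the chains above; the slightly subtle point here is that the higher Serre decomposition of Proposition \ref{prop-higher-Serre} lets one commute $\mf{E}^{(a)}$ and $\mf{F}^{(d)}$ past $Y(1^{a+b}0^{c+d})$ so that the two constructions match as summands of $G(\lambda)$.

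The final step is parametrization. For each $\lambda=(0^a1^b0^c1^d)\in\mc{P}_n^{r,s}$, the module $e_\lambda G(\lambda)$ is a submodule of $G(\lambda)$, and the Specht filtration of Proposition \ref{spechtfiltofG} shows that its composition factors are labeled by $\lambda$-tableaux $t$ with $t\ge t^\lambda$; in particular the top Specht layer $S^\lambda$ appears with multiplicity one and only at $\lambda$, so the modules $Y(\lambda)$ attached to different partitions in $\mc{P}_n^{r,s}$ are pairwise non-isomorphic. That these exhaust the indecomposable objects of $\mc{NH}_n^{r,s}$ follows because, by definition, every object of $\mc{NH}_n^{r,s}$ is a direct summand of a module filtered by grading shifts of $\bigoplus_{\lambda\in\mc{P}_n^{r,s}}e_\lambda G(\lambda)$; the Krull--Schmidt theorem in the graded setting then shows that every indecomposable summand is isomorphic to one of the $Y(\lambda)$ constructed above. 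The main obstacle I expect is the verification that the two competing descriptions in the overlap region $b=c$ really do agree on the nose inside $G(\lambda)$, and that the indecomposable summand extracted is the same; this will use the St\v{o}si\'c-type decomposition of $\mf{E}^{(a)}\mf{F}^{(d)}$ in Proposition \ref{prop-higher-Serre}(ii), combined with the fact that only the ``top'' direct summand there lies inside $\mc{NH}_n^{r,s}$, the rest corresponding to partitions $\mu>\lambda$ which cannot contribute to $Y(\lambda)$.
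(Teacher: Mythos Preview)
Your isomorphism chains and your appeals to Propositions \ref{abc1cgeqb} and \ref{indecomb>c} for indecomposability are exactly right and match the paper. There are, however, two genuine gaps.

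\textbf{Membership in $\mc{NH}_n^{r,s}$ for $b>c$.} You construct $\mf{E}^{(a)}\mf{F}^{(d)}Y(1^{a+b}0^{c+d})\cong \mf{E}^{(a)}(e_{(a+b,d)}G(1^{a+b}0^c1^d))$ and check indecomposability, but you never argue that this module lies in $\mc{NH}_n^{r,s}$. That category is the filtered $p$-DG envelope of the generators $e_\lambda G(\lambda)$ for $\lambda\in\mc{P}_n^{r,s}$, and $\mf{E}^{(a)}(e_{(a+b,d)}G(1^{a+b}0^c1^d))$ is not one of them. The paper fills this by observing (via Proposition \ref{prop-higher-Serre}) that $\mf{E}^{(a)}\mf{F}^{(d)}Y(1^{a+b}0^{c+d})$ occurs as a subquotient of the fantastic filtration on $\mf{F}^{(d)}\mf{E}^{(a)}Y(1^{a+b}0^{c+d})\cong e_{(b,d)}G(0^a1^b0^c1^d)$, which \emph{is} a generator. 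Without this step the theorem is not established.

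\textbf{The case $b=c$.} Your argument here is confused: you claim both constructions are isomorphic to $e_\lambda G(\lambda)$, but the $b\geq c$ chain does not give $e_\lambda G(\lambda)$; and your remark about ``only the top direct summand'' in the St\v{o}si\'c decomposition lying in $\mc{NH}_n^{r,s}$ is irrelevant here. The paper's argument is much cleaner: the weight of $Y(1^{a+b}0^{c+d})$ is $m=l-2(a+b)=c+d-a-b=d-a$ when $b=c$, and Proposition \ref{prop-higher-Serre} gives $\mf{E}^{(a)}\mf{F}^{(d)}\1_m\cong\mf{F}^{(d)}\mf{E}^{(a)}\1_m$ whenever $m\leq d-a$. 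So the two functors agree on the nose at this weight, and the coincidence follows immediately.

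Your parametrization paragraph is largely unnecessary: pairwise non-isomorphism of the $Y(\lambda)$ is built into their definition (each is the unique summand of $G(\lambda)$ of multiplicity one not appearing in $G(\mu)$ for $\mu>\lambda$), and your exhaustiveness argument is circular since it presupposes that each $e_\lambda G(\lambda)$ decomposes into the $Y(\mu)$'s you have constructed.
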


\begin{proof}
The fact that the modules in the statement of the theorem are indecomposable follows from Propositions \ref{abc1cgeqb} and \ref{indecomb>c}.

It is trivial that the object $e_{(b,d)} G(0^a 1^b 0^c 1^d)$ for $b \leq c$  is in $\mc{NH}_{n}^{r,s}$  since it is a generating object of the category.

Now we will show that $\mf{E}^{(a)} \mf{F}^{(d)} Y(1^{a+b} 0^{c+d}) $ is in $\mc{NH}_n^{r,s}$ for $b \geq c$.  By Proposition \ref{prop-higher-Serre}, the $p$-DG module
$\mf{E}^{(a)} \mf{F}^{(d)} Y(1^{a+b} 0^{c+d}) $ occurs as a subquotient in the fantastic filtration of
\begin{equation*}
\mf{F}^{(d)} \mf{E}^{(a)} Y(1^{a+b} 0^{c+d}) \cong \mf{F}^{(d)} Y(0^a 1^b 0^{c+d})
\cong e_{(b,d)} G(0^a 1^b 0^c 1^d),
\end{equation*}
where the last isomorphism follows from Lemma \ref{lem-eG-equals-EY}.

Lastly, when $b=c$, the fact that $\mf{E}^{(a)} \mf{F}^{(d)} Y(1^{a+b} 0^{b+d}) \cong
\mf{F}^{(d)} \mf{E}^{(a)} Y(1^{a+b} 0^{b+d}) $ follows from
the special case of Proposition \ref{prop-higher-Serre} that
\[
\mf{E}^{(a)} \mf{F}^{(d)}\1_{m}\cong \mf{F}^{(d)} \mf{E}^{(a)}\1_{m}
\]
whenever $m \leq d-a$. Now $m$ is the weight of the module $Y(1^{a+b}0^{c+d})$, which equals $l-2(a+b)=d-a$. The result follows.
\end{proof}

\begin{cor}
\label{pdgclassificationcor}
The indecomposable objects $Y(\lambda)$ in $\mathcal{NH}_{n}^{r,s} $ are $p$-DG modules.
\end{cor}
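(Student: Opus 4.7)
The plan is to leverage the explicit realizations of $Y(\lambda)$ provided by Theorem~\ref{classificationprop}. For $\lambda=(0^a 1^b 0^c 1^d)\in \mathcal{P}_n^{r,s}$, that theorem already presents $Y(\lambda)$, up to $\nh_n^l$-isomorphism, as the image of a cyclic $p$-DG module under a $p$-DG functor, so the work reduces to inspecting each of the two overlapping cases.

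When $b \leq c$, Theorem~\ref{classificationprop} identifies $Y(\lambda)$ with $e_{(b,d)}G(\lambda) = e_\lambda y^\lambda \nh_n^l$. Proposition~\ref{genYindecomp} already states that this right ideal is a $p$-DG submodule of $G(\lambda)$ (using that $e_\lambda$ is $\partial$-stable and that $\partial(y^\lambda) = y^\lambda a$ for some $a \in \Bbbk[y_1,\ldots,y_n]$), so the $p$-DG structure is immediate.

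When $b \geq c$, Theorem~\ref{classificationprop} gives instead $Y(\lambda) \cong \mf{E}^{(a)}\bigl(e_{(a+b,d)}G(1^{a+b}0^c 1^d)\bigr)$. The inner module is of the form $e_\mu y^\mu \nh_{b+d}^{l}$ for $\mu = (1^{a+b}0^c 1^d)$, and so is again a $p$-DG module by Proposition~\ref{genYindecomp}. Since by Definition~\ref{def-E-and-F} the restriction functor $\mf{E}^{(a)}$ is given by tensoring with a $p$-DG bimodule, it sends $p$-DG modules to $p$-DG modules, and the conclusion follows.

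I do not expect a real obstacle here; the only subtlety to watch is that the identifications in Theorem~\ref{classificationprop} are a priori only isomorphisms of $\nh_n^l$-modules, so one is transporting a $p$-DG structure across an abstract isomorphism rather than realizing $Y(\lambda)$ literally inside $G(\lambda)$. This means the $p$-DG structure on $Y(\lambda)$ is determined only up to $p$-DG isomorphism, but that is enough for all later uses of the corollary (e.g.\ in Proposition~\ref{prop-pdg-extension} and the categorification theorem of Section~\ref{sec-main-thm}).
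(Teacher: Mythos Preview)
Your proof is correct and follows essentially the same approach as the paper: invoke Theorem~\ref{classificationprop} to realize each $Y(\lambda)$ as a $p$-DG module acted on by $p$-DG functors. The only cosmetic difference is that the paper uniformly starts from the Specht module $Y(1^{a+b}0^{c+d})$ (a $p$-DG module by Remark~\ref{rmk-Spect-equals-G}) and applies the $p$-DG functors $\mf{E}^{(a)}$ and $\mf{F}^{(d)}$ in the appropriate order, whereas you use the alternative descriptions in Theorem~\ref{classificationprop} (the cyclic module $e_\lambda G(\lambda)$ directly via Proposition~\ref{genYindecomp} when $b\le c$, and $\mf{E}^{(a)}$ applied to $e_{(a+b,d)}G(1^{a+b}0^c1^d)$ when $b\ge c$); both routes are immediate once Theorem~\ref{classificationprop} is in hand.
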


\begin{proof}
This follows from Theorem \ref{classificationprop} since 
$Y(1^{a+b}0^{c+d})$ is a $p$-DG module and $\mf{E}^{(a)}$, $\mf{F}^{(d)}$ are $p$-DG functors.
\end{proof}

\subsection{Extension of categorical actions III}
\label{subsec-ext-cat-act-iii}
Except for the last result, we have largely ignored the $p$-DG structure in the previous subsection, and, in particular, the twists involved in the definition of the functors $\mf{E}^{(a)}$ in Definition \ref{def-E-and-F}. We will take these matter up in the subsection, and check that the categorical framework of Section \ref{subsec-ext-cat-act-ii} is suitable for our special situation of two-tensor quiver Schur algebras.

Recall from Lemma \ref{lem-fG-equals-FY} that there is an isomorphism of right $\nh_b^l$-modules
\[
\Phi:e_bG(0^a1^b0^{c+d})\cong \mf{E}^{(a)}Y(1^{a+b}0^{c+d}) .
\] 
Our first goal is to upgrade this isomorphism into one of $p$-DG modules. This will also explain the necessity of twisting the differential for the functor $\mf{E}^{(a)}$ (Definition \ref{def-E-and-F}).

\begin{lem}\label{lem-fG-equals-FY-pdg}
For the partition $\lambda=(0^a1^b0^{c+d})$, there is a $p$-DG isomorphism
\[
\Phi: e_\lambda G(\lambda)\cong \mf{E}^{(a)}Y(1^{a+b}0^{c+d}).
\]
\end{lem}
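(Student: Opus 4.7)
The plan is to upgrade the right $\nh_b^l$-module isomorphism $\Phi$ of Lemma~\ref{lem-fG-equals-FY} to a $p$-DG isomorphism. Both source and target are cyclic right $\nh_b^l$-modules, generated respectively by $e_b y^\lambda$ and by its image $\Phi(e_b y^\lambda)$. Since $\Phi$ is $\nh_b^l$-linear, the Leibniz rule reduces the verification of $\dif\circ\Phi = \Phi\circ\dif$ to checking this identity on the single generator $e_b y^\lambda$.

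First, I will compute the source side. Using the derivations $\dif(y_i)=y_i^2$ together with the formula $\dif(e_b) = -e_b\sum_{i=1}^{b}(i-1)y_i$ recorded in the proof of Proposition~\ref{pdgidempotprop}, and the explicit monomial form $y^\lambda = y_1^{l-a-1}y_2^{l-a-2}\cdots y_b^{l-a-b}$, a direct calculation gives
\[
\dif(e_b y^\lambda) \;=\; e_b y^\lambda\cdot \sum_{i=1}^{b}(l-a-2i+1)\,y_i.
\]

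Next, I will compute $\dif(\Phi(e_b y^\lambda))$. The differential on $\mf{E}^{(a)} Y(1^{a+b}0^{c+d})$ has two contributions: the internal $p$-differential of $Y(1^{a+b}0^{c+d})$, and the twist $\dif(e^\star_{(1^b,a)})=\sum_{i=1}^{a}(2b+i-l)\,y_{b+i}\,e^\star_{(1^b,a)}$ built into Definition~\ref{def-E-and-F}. Using the diagrammatic representative of $\Phi(e_b y^\lambda)$ produced in the proof of Lemma~\ref{lem-fG-equals-FY}, I will apply $\dif$ strandwise and then simplify using the nilHecke dot-slide relations.

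The main obstacle is the explicit diagrammatic bookkeeping in this last step: one must identify which of the intermediate terms produced by the Leibniz rule survive after multiplication by $\psi_{w_0}^{(b)}$ on the first $b$ strands and by $e_a^\star$ on the last $a$ strands. The surviving terms will split into two groups, one coming from dots on the first $b$ strands and one from dots on the last $a$ strands. The first group should recombine to $\Phi\bigl(e_b y^\lambda\cdot\sum_{i=1}^{b}(l-a-2i+1)y_i\bigr)$; the second group should be exactly balanced by the twist $\sum_{i=1}^{a}(2b+i-l)y_{b+i}$ on $e^\star_{(1^b,a)}$. The scalar $(2b+i-l)$ in Definition~\ref{def-E-and-F} has been calibrated precisely to achieve this balance; once the cancellation is exhibited, the equality $\dif(\Phi(e_b y^\lambda)) = \Phi(\dif(e_b y^\lambda))$ follows, and hence $\Phi$ is a $p$-DG isomorphism.
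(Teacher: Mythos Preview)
Your proposal is correct and follows essentially the same route as the paper: both reduce to a strandwise diagrammatic analysis of the differential of $\Phi(e_b y^\lambda)$, using that lower terms produced by dot-slides are killed by $\psi_{w_0}$ on the first $b$ strands or by $e_a^\star$ on the last $a$ strands, and that the scalars $(2b+i-l)$ in the twist of $e^\star_{(1^b,a)}$ are precisely what is needed for cancellation on the last $a$ strands.

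The only organizational difference is that the paper factors $\Phi$ as left multiplication by a fixed element $A$ (which already contains $e_b\otimes e_a^\star$ at the bottom) and then checks the single condition $\dif(A)\,e_b=0$; this is equivalent to your condition $\dif(\Phi(g))=\Phi(\dif(g))$ on the generator $g=e_b y^\lambda$ via the Leibniz identity $\dif(\Phi(g))=\dif(A)\,g + \Phi(\dif(g))$. The paper's formulation therefore sidesteps your intermediate computation of $\dif(e_b y^\lambda)$ and the subsequent matching: in their organization \emph{all} dot contributions on each strand (first-$b$-group and last-$a$-group alike) sum to zero, rather than the first-$b$-group terms recombining into $\Phi(\dif(e_b y^\lambda))$ and only the last-$a$-group terms cancelling. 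Your split is correct, but the paper's packaging is marginally cleaner for exactly this reason.
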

\begin{proof}
Let us adopt the simplified notations used in the proof of Lemma \ref{lem-fG-equals-FY}. We will show that, as in the Lemma, the differential actions on the generator of $e_\lambda G(\lambda)$ and its image under $\Phi$ match with each other. To do this let us denote the image of the generator $e_\lambda y^\lambda$ under $\Phi$ by $A:=\Phi(e_\lambda y^\lambda)$, so that
\[
A=~
\begin{DGCpicture}
\DGCstrand(0,0)(2,2)
\DGCstrand(0.5,0)(2.5,2)
\DGCstrand(1.5,0)(3.5,2)
\DGCstrand(2.5,0)(0,2)
\DGCstrand(3.5,0)(1,2)
\DGCcoupon*(0.8,0.1)(1.4,0.3){$\cdots$}
\DGCcoupon*(2.5,1.55)(3.2,1.7){$\cdots$}
\DGCcoupon*(2.6,0.1)(3.2,0.3){$\cdots$}
\DGCcoupon*(0.4,1.55)(1,1.7){$\cdots$}
\DGCcoupon(-0.2,-0.5)(1.7,0){${\psi_{0}v_b}$}
\DGCcoupon(2.3,-0.5)(3.7,0){$e_{a}^\star$}
\DGCcoupon(-0.2,1.75)(3.7,2.2){$_{y_1^{l-1}y_2^{l-2}\cdots y_{a+b}^{l-a-b}}$}
\end{DGCpicture} 
~=~
\begin{DGCpicture}
\DGCstrand(0,-0.5)(0,2)
\DGCstrand(0.5,-0.5)(0.5,2)
\DGCstrand(1.5,-0.5)(1.5,2)
\DGCstrand(2.5,-0.5)(2.5,2)
\DGCstrand(3.5,-0.5)(3.5,2)
\DGCcoupon*(0.6,0.25)(1.4,0.45){$\cdots$}
\DGCcoupon*(2.6,0.25)(3.4,0.45){$\cdots$}
\DGCcoupon(2.2,0.5)(3.7,1){$1_{a}^\star$}
\DGCcoupon(2.2,-0.25)(3.7,0.25){$_{y_{b+1}^{0}\cdots y_{b+a}^{a-1}}$}
\DGCcoupon(-0.2,-0.25)(1.7,0.25){$v_b$}
\DGCcoupon(-0.2,1.55)(3.7,2.2){$_{(y_1y_2\cdots y_{a+b})^{l-a-b}e_{a+b}}$}
\end{DGCpicture} \ .
\]
Here, in the last diagram, the element $1_a^\star$ is the unit element on the last $a$ strands satisfying
\begin{equation}
 \dif (1_a^\star)=(a-l+2b)(y_{b+1}+\cdots+y_{b+a})1_a^\star.
\end{equation} 
This is the twisting used in defining $\mf{E}^{(a)}$ (Definition \ref{def-E-and-F}).

We now easily compute the differential action on $A$ to be
\begin{equation}
\dif(A) = \sum_{i=1}^b (l-a-2i+1)Ay_i.
\end{equation}
Comparing this with the differential action
\begin{equation}
\dif(e_\lambda y^\lambda)=\sum_{i=1}^b(l-a-2i+1) e_\lambda y^\lambda y_i,
\end{equation}
it is clear that $\Phi$ intertwines the differential actions. The lemma follows.
\end{proof}

\begin{example}We list a few examples illustrating the necessity of using the twisted restriction functor (Definition \ref{def-E-and-F}) in Lemma~\ref{lem-fG-equals-FY-pdg}.

\begin{enumerate}
\item[(1)]
We consider the case when $a=2$ and $b=2$.  Then there is an isomorphism 
$\mf{E}^{(2)}Y(1^4 0^{c+d}) \cong e_2 G(0^2 1^2 0^{c+d}) $ given by
\begin{align*}
y_1^{l-1} y_2^{l-2} y_3^{l-3} y_4^{l-4} \psi_2 \psi_3 \psi_1 \psi_2 \psi_w e_{(1^2,2)}^\star
& \mapsto 
e_2 (y_1 y_2)^{l-4} y_1 \psi_w \\
y_1^{l-1} y_2^{l-2} y_3^{l-3} y_4^{l-4} \psi_3 \psi_1 \psi_2 \psi_w e_{(1^2,2)}^\star
& \mapsto 
e_2 (y_1 y_2)^{l-4} y_1^2 \psi_w \\
y_1^{l-1} y_2^{l-2} y_3^{l-3} y_4^{l-4} \psi_1 \psi_2 \psi_w e_{(1^2,2)}^\star
& \mapsto 
e_2 (y_1 y_2)^{l-4} y_1^2 y_2 \psi_w \\
y_1^{l-1} y_2^{l-2} y_3^{l-3} y_4^{l-4} \psi_3 \psi_2 \psi_w e_{(1^2,2)}^\star
& \mapsto 
e_2 (y_1 y_2)^{l-4} y_1^3 \psi_w \\
y_1^{l-1} y_2^{l-2} y_3^{l-3} y_4^{l-4} \psi_2 \psi_w e_{(1^2,2)}^\star
& \mapsto 
e_2 (y_1 y_2)^{l-4} y_1^3 y_2 \psi_w \\
y_1^{l-1} y_2^{l-2} y_3^{l-3} y_4^{l-4} \psi_w e_{(1^2,2)}^\star
& \mapsto 
e_2 (y_1 y_2)^{l-4} y_1^3 y_2^2 \psi_w
\end{align*}
where $w \in \mf{S}_2$.

Note that the element
$X=y_1^{l-1} y_2^{l-2} y_3^{l-3} y_4^{l-4} \psi_2 \psi_3 \psi_1 \psi_2 e_{(1^2,2)}^\star$
is a generator for $\mf{E}^{(2)}Y(1^4 0^{c+d}) $ and that
$$ \partial(X) 
= 
(l-3)X y_1
+
(l-5)X y_2.$$
Similarly, $Z=e_2 (y_1 y_2)^{l-4} y_1 $ generates $e_2 G(0^2 1^2 0^{c+d})$ and
one computes that
$$\partial(Z)=(l-3)Zy_1+(l-5)Zy_2.$$
This confirms that the isomorphism in Lemma \ref{lem-fG-equals-FY-pdg} is an isomorphism of $p$-DG modules.

\item[(2)] Another interesting special case of Lemma \ref{lem-fG-equals-FY-pdg} occurs when $c=d=0$.  Then Specht module $e_{a+b}G(1^{a+b})=Y(1^{a+b})$ is indecomposable and projective.  Then the restriction functor $\mf{E}^{(a)}$ takes the $\nh_l^l$-module $Y(1^{a+b})$ to the indecomposable $\nh_b^l$-module $e_bG(0^a 1^b)$.
\end{enumerate}
\end{example}

In a similar vein as for Lemma \ref{lem-fG-equals-FY-pdg}, Lemma \ref{lem-eG-equals-EY} can also be made into a statement about isomorphism of $p$-DG modules.

\begin{lem}\label{lem-eG-equals-EY-pdg}
For any $\lambda=(0^a1^b0^c1^d)\in \mc{P}_n^{r,s}$, there is a $p$-DG isomorphism of $\nh_n^l$-modules 
\[
e_{\lambda}G(\lambda) \cong 
\mf{F}^{(d)}Y(0^a 1^b 0^{c+d}).
\]
\end{lem}

\begin{proof}
The proof is easy since the functors $\mf{F}^{(d)}$ utilize the natural differential on the idempotent $e_d$ (Definition \ref{def-E-and-F}).
\end{proof}

Our next goal is to show that the indecomposable modules appearing in Theorem \ref{classificationprop} are naturally contained in the $p$-DG envelope of the generating modules $e_\lambda G(\lambda)$, $\lambda\in \mc{P}_n^{r,s}$. This will be guaranteed by the next result together with Proposition \ref{prop-higher-Serre}. 

\begin{prop}\label{prop-pdg-eG}
For any $\lambda=(0^a1^b0^c1^d)\in \mc{P}_n^{r,s}$, there is a $p$-DG isomorphism
\[
e_\lambda G(\lambda)\cong \mf{F}^{(d)}\mf{E}^{(a)}Y(1^{a+b}0^{c+d}).
\]
\end{prop}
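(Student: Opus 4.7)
The plan is to chain together the $p$-DG isomorphisms established in the two preceding lemmas, together with the indecomposability description of $Y(0^a1^b0^{c+d})$ from Lemma~\ref{Yindecomspecial}.

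First, I would invoke Lemma~\ref{lem-fG-equals-FY-pdg} applied to the partition $(0^a1^b0^{c+d})\in\mc{P}_b^{l}$ to obtain a $p$-DG isomorphism
\[
\mf{E}^{(a)}Y(1^{a+b}0^{c+d})\cong e_b\, G(0^a1^b0^{c+d}).
\]
Next, I would use Lemma~\ref{Yindecomspecial} to identify $Y(0^a1^b0^{c+d})\cong e_bG(0^a1^b0^{c+d})$, where the isomorphism is actually one of $p$-DG modules since both sides carry the same differential inherited from $\nh_b^l$, and the idempotent $e_b$ is $\dif$-stable.

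Then I would apply the $p$-DG induction functor $\mf{F}^{(d)}$ to the resulting $p$-DG isomorphism. Because $\mf{F}^{(d)}$ is given by tensoring with the $p$-DG bimodule $e_{(1^b,d)}\nh^l_{b+d}$ (Definition~\ref{def-E-and-F}), the functor takes $p$-DG isomorphisms to $p$-DG isomorphisms. This yields
\[
\mf{F}^{(d)}\mf{E}^{(a)}Y(1^{a+b}0^{c+d})\cong \mf{F}^{(d)}Y(0^a1^b0^{c+d}).
\]
Finally, I would appeal to Lemma~\ref{lem-eG-equals-EY}, promoted to a $p$-DG statement as noted in the remark immediately following Lemma~\ref{lem-fG-equals-FY-pdg}, to identify $\mf{F}^{(d)}Y(0^a1^b0^{c+d})\cong e_\lambda G(\lambda)$ as $p$-DG modules. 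Chaining the three isomorphisms then produces the desired $p$-DG isomorphism $e_\lambda G(\lambda)\cong \mf{F}^{(d)}\mf{E}^{(a)}Y(1^{a+b}0^{c+d})$.

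The main obstacle is verifying that Lemma~\ref{lem-eG-equals-EY}, which was proved without reference to differentials, actually lifts to a $p$-DG statement. The concrete isomorphism sends the generator $e_{(b,d)}y_1^{b+c+d-1}\cdots y_b^{c+d}$ of $e_\lambda G(\lambda)$ to $\mf{F}^{(d)}$ applied to the generator $e_b y_1^{b+c+d-1}\cdots y_b^{c+d}$ of $Y(0^a1^b0^{c+d})$, and compatibility with $\dif$ reduces to checking that the $\dif$-action on $e_{(b,d)}=e_b\otimes e_d$ matches the $\dif$-action prescribed on the bimodule generator $e_{(1^b,d)}$ (with the natural untwisted differential associated to $e_d$, not the twisted one used for restriction). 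This is transparent because both are built from the same formula $\dif(e_d)=-e_d\sum_{i=1}^d(i-1)y_{b+i}$ (Proposition~\ref{pdgidempotprop}), and no Nakayama-type twist is needed since induction uses the untwisted bimodule structure. Once this compatibility is explicit, the rest of the argument is a straightforward assembly.
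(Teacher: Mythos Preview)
Your proposal is correct and follows essentially the same route as the paper: invoke Lemma~\ref{lem-fG-equals-FY-pdg} for the $p$-DG isomorphism $\mf{E}^{(a)}Y(1^{a+b}0^{c+d})\cong e_bG(0^a1^b0^{c+d})$, apply the $p$-DG functor $\mf{F}^{(d)}$, and then identify the result with $e_\lambda G(\lambda)$ via the cyclic-ideal argument of Lemma~\ref{lem-eG-equals-EY}. The only cosmetic difference is that the paper re-does the ideal equality $e_{(b,d)}(y_1^{l-a-1}\cdots y_b^{l-a-b})\nh_{b+d}^l=e_\lambda G(\lambda)$ explicitly rather than citing Lemma~\ref{lem-eG-equals-EY} as a black box, but the content is identical and your handling of the $p$-DG compatibility is exactly what the remark after Lemma~\ref{lem-fG-equals-FY-pdg} records.
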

\begin{proof}
By Lemma \ref{lem-fG-equals-FY-pdg}, we have a $p$-DG isomorphism
\[
\mf{E}^{(a)}Y(1^{a+b}0^{c+d})\cong e_bG(0^a1^b0^{c+d})=e_b(y_1^{l-a-1}\cdots y_b^{l-a-b})\nh_b^l.
\]
Now apply the induction functor $\mf{F}^{(d)}$ to the above cyclic module, we have
\[
\mf{F}^{(d)}\mf{E}^{(a)}Y(1^{a+b}0^{c+d})\cong e_b(y_1^{l-a-1}\cdots y_b^{l-a-b})\nh_b^l\otimes_{\nh_b^{l}}e_{(1^b,d)}\nh_{b+d}^l
\cong e_{(b,d)}(y_1^{l-a-1}\cdots y_b^{l-a-b})\nh_{b+d}^l,
\] 
where we have used that the elements in $\nh_b^l$ commute with the idempotent $e_{(1^b,d)}$, and that $e_be_{(1^b,d)}=e_{(b,d)}$. 
We claim that the last module cyclically generated by $e_{(b,d)}(y_1^{l-a-1}\cdots y_b^{l-a-b})$ is isomorphic to $e_\lambda G(\lambda)$, which is generated by $e_{(b,d)}y_1^{l-a-1}\cdots y_b^{l-a-b}y_{b+1}^{d-1}\cdots y_{b+d}^0$. The inclusion that 
$$e_\lambda G(\lambda)\subset e_{(b,d)}(y_1^{l-a-1}\cdots y_b^{l-a-b})\nh_{a+b}^l $$
is clear. The converse follows from the fact that
\[
e_{(b,d)}y_1^{l-a-1}\cdots y_b^{l-a-b}=e_{(b,d)}y_1^{l-a-1}\cdots y_b^{l-a-b}y_{b+1}^{d-1}\cdots y_{b+d}^0\psi_{w},
\]
where $w\in \mf{S}_1^b\times \mf{S}_d$ is the longest element of the parabolic subgroup (c.f.~the proof of Lemma \ref{lem-eG-equals-EY}). The result now follows.
\end{proof}

We are now ready to verify the conditions of Section \ref{subsec-ext-cat-act-ii} are satisfied in our particular situation.

\begin{thm}
\label{EFact}
The $p$-DG functors $\mf{E}$ and $\mf{F}$ restrict to the $p$-DG category
$\oplus_{n=0}^l~ \mathcal{NH}_{n}^{r,s} $.
\end{thm}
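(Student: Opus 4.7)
The plan is to reduce the statement to each generating object $e_\lambda G(\lambda)$ and then expand via the fantastic filtrations of Proposition~\ref{prop-higher-Serre}. Since $\mc{NH}_n^{r,s}$ is by Definition~\ref{def-rs-additive-subcat} the filtered $p$-DG envelope of $\{e_\lambda G(\lambda)\mid \lambda\in\mc{P}_n^{r,s}\}$, and the $p$-DG functors $\mf{E},\mf{F}$ are exact (being given by tensoring with a direct summand of the regular bimodule), it suffices to exhibit, for each $\lambda=(0^a 1^b 0^c 1^d)\in\mc{P}_n^{r,s}$, a $\dif$-stable filtration on $\mf{F}(e_\lambda G(\lambda))$ whose subquotients are grading shifts of generators in $\mc{NH}_{n+1}^{r,s}$, and symmetrically for $\mf{E}$.

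The key inputs are the two equivalent descriptions of $e_\lambda G(\lambda)$ furnished by Proposition~\ref{prop-pdg-eG} and Theorem~\ref{classificationprop}:
\[
e_\lambda G(\lambda)\;\cong\;\mf{F}^{(d)}\mf{E}^{(a)}Y(1^r 0^s)\;\cong\;\mf{E}^{(a)}\mf{F}^{(d)}Y(1^r 0^s)\quad(\text{the latter valid when }b\geq c).
\]
For $\mf{F}$, I would start with the first description and compute $\mf{F}(e_\lambda G(\lambda))\cong\mf{F}\mf{F}^{(d)}\mf{E}^{(a)}Y(1^r 0^s)$. Proposition~\ref{prop-higher-Serre}(i) endows $\mf{F}\mf{F}^{(d)}$ with a $\dif$-stable fantastic filtration whose associated graded pieces are grading shifts of $\mf{F}^{(d+1)}$, so $\mf{F}(e_\lambda G(\lambda))$ acquires a filtration whose subquotients are shifts of $\mf{F}^{(d+1)}\mf{E}^{(a)}Y(1^r 0^s)$. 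When $c\geq 1$, Proposition~\ref{prop-pdg-eG} reidentifies each such subquotient as $e_{\lambda'}G(\lambda')$ for $\lambda'=(0^a 1^b 0^{c-1}1^{d+1})\in\mc{P}_{n+1}^{r,s}$, completing this case.

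For the borderline $c=0$, $a\geq 1$ case, I would instead start with the alternative description $e_\lambda G(\lambda)\cong\mf{E}^{(a)}\mf{F}^{(s)}Y(1^r 0^s)$ and use Proposition~\ref{prop-higher-Serre}(ii) to put a $\dif$-stable fantastic filtration on $\mf{F}\mf{E}^{(a)}$. The Sto\v{s}i\'{c} formula gives subquotients that are shifts of $\mf{E}^{(a)}\mf{F}$ and of $\mf{E}^{(a-1)}$. The first, applied to $\mf{F}^{(s)}Y(1^r 0^s)$, produces shifts of $\mf{E}^{(a)}\mf{F}^{(s+1)}Y(1^r 0^s)$, which vanishes because $\mf{F}^{(s+1)}Y(1^r 0^s)$ lives over the zero algebra $\nh_{l+1}^l$; the second is $\mf{E}^{(a-1)}\mf{F}^{(s)}Y(1^r 0^s)\cong e_{\lambda''}G(\lambda'')$ for $\lambda''=(0^{a-1}1^{b+1}1^s)\in\mc{P}_{n+1}^{r,s}$ via Proposition~\ref{prop-pdg-eG}. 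The remaining degenerate subcase $a=c=0$ forces $n=l$, whence $\mf{F}$ lands in a module category over $\nh_{l+1}^l=0$ and there is nothing to prove.

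The analysis for $\mf{E}$ is parallel: from $\mf{E}(e_\lambda G(\lambda))\cong\mf{E}\mf{F}^{(d)}\mf{E}^{(a)}Y(1^r 0^s)$, Proposition~\ref{prop-higher-Serre}(ii) provides a $\dif$-stable fantastic filtration on $\mf{E}\mf{F}^{(d)}$ whose subquotients are shifts of $\mf{F}^{(d)}\mf{E}$ and $\mf{F}^{(d-1)}$. Applied to $\mf{E}^{(a)}Y(1^r 0^s)$, the first yields shifts of $\mf{F}^{(d)}\mf{E}^{(a+1)}Y(1^r 0^s)$, identified via Proposition~\ref{prop-pdg-eG} with $e_{\lambda'}G(\lambda')$ for $\lambda'=(0^{a+1}1^{b-1}0^c 1^d)\in\mc{P}_{n-1}^{r,s}$ when $b\geq 1$ (and zero otherwise, since $\mf{E}^{(r+1)}Y(1^r 0^s)$ lives over $\nh_{-1}^l=0$); the second yields $\mf{F}^{(d-1)}\mf{E}^{(a)}Y(1^r 0^s)\cong e_{\lambda''}G(\lambda'')$ for $\lambda''=(0^a 1^b 0^{c+1}1^{d-1})\in\mc{P}_{n-1}^{r,s}$ when $d\geq 1$ (absent from the Sto\v{s}i\'{c} formula otherwise). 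The main obstacle throughout is the careful bookkeeping of which generator each subquotient reidentifies to across the several cases; both the $\dif$-stability of the fantastic filtrations and the $p$-DG compatibility of the reidentifications are supplied by Propositions~\ref{prop-higher-Serre} and~\ref{prop-pdg-eG}, so the argument reduces to the case analysis sketched above.
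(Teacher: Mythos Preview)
Your overall strategy---working directly with the generators $e_\lambda G(\lambda)$ via Proposition~\ref{prop-pdg-eG} rather than with the indecomposables $Y(\lambda)$ as the paper does---is legitimate and in some respects cleaner, since it avoids the three-way case split on $b$ versus $c$. Your treatment of $\mf{E}$ is correct throughout, and your treatment of $\mf{F}$ is correct when $c\geq 1$.

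There is, however, a genuine error in the boundary case $c=0$ for $\mf{F}$. You claim the ``alternative description'' $e_\lambda G(\lambda)\cong \mf{E}^{(a)}\mf{F}^{(d)}Y(1^r 0^s)$ when $b\geq c$, citing Theorem~\ref{classificationprop}. But that theorem identifies $\mf{E}^{(a)}\mf{F}^{(d)}Y(1^r 0^s)$ with the \emph{indecomposable} $Y(\lambda)$, not with $e_\lambda G(\lambda)$; by Proposition~\ref{prop-multiplicity-Y-in-G} these differ whenever $b>c$ and $\min(a,d)\geq 1$, which is exactly the situation $c=0,\ a,d\geq 1$ you are trying to treat. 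So the switch of descriptions is unjustified and the argument for that case collapses.

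The gap is repairable without leaving your framework. Stay with the first description: $\mf{F}(e_\lambda G(\lambda))$ filters by shifts of $\mf{F}^{(s+1)}\mf{E}^{(a)}Y(1^r 0^s)$. Now apply Proposition~\ref{prop-higher-Serre}(ii) once more, this time to $\mf{F}^{(s+1)}\mf{E}^{(a)}$ acting on weight $s-r$. The $j=0$ piece $\mf{E}^{(a)}\mf{F}^{(s+1)}Y(1^r 0^s)$ vanishes (it lives over $\nh_{l+1}^l=0$), and for $j\geq 1$ each piece $\mf{E}^{(a-j)}\mf{F}^{(s+1-j)}Y(1^r 0^s)$ is, by Theorem~\ref{classificationprop}, the indecomposable $Y(0^{a-j}1^{b+j}0^{j-1}1^{s+1-j})\in\mc{NH}_{n+1}^{r,s}$. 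This closes the case. The paper's own proof sidesteps the issue by working with $Y(\lambda)$ from the start and using whichever of the two $\mf{E}^{(a)}\mf{F}^{(d)}$ or $\mf{F}^{(d)}\mf{E}^{(a)}$ presentations of $Y(\lambda)$ makes the relevant boundary vanish immediately.
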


\begin{proof}
This follows from the classification of the indecomposable objects of the category.

Case I: $ b < c$.
Then $Y(0^a 1^b 0^c 1^d) \cong \mf{F}^{(d)} \mf{E}^{(a)}Y(1^{a+b} 0^{c+d})$.  In this case
\begin{align*}
\mf{E}Y(0^a 1^b 0^c 1^d) &\cong \mf{E} \mf{F}^{(d)} \mf{E}^{(a)}Y(1^{a+b} 0^{c+d}) \\
&\cong [a+1] \mf{F}^{(d)} \mf{E}^{(a+1)}Y(1^{a+b} 0^{c+d})
\oplus g_1 \mf{F}^{(d-1)} \mf{E}^{(a)}Y(1^{a+b} 0^{c+d}) \\
&\cong [a+1] \mf{F}^{(d)} Y(0^{a+1} 1^{b-1} 0^{c+d}) \oplus 
g_1 \mf{F}^{(d-1)} Y(0^a 1^{b} 0^{c+d}) \\
&\cong [a+1]Y(0^{a+1} 1^{b-1} 0^c 1^d) \oplus g_1 Y(0^a 1^b 0^{c+1}1^{d-1})
\end{align*}
for some multiplicity $g_1\in \N[q,q^{-1}]$.

Next we have
\begin{equation}
\label{preserveeq1}
\mf{F} Y(0^a 1^b 0^c 1^d) \cong \mf{F} \mf{F}^{(d)} \mf{E}^{(a)} Y(1^{a+b} 0^{c+d}) 
\cong [d+1] \mf{F}^{(d+1)} \mf{E}^{(a)} Y(1^{a+b} 0^{c+d}). 
\end{equation}
Since $b<c$ then the objects in \eqref{preserveeq1} are isomorphic to
\begin{equation*}
[d+1] \mf{F}^{(d+1)} Y(0^a 1^b 0^{c+d}) \cong [d+1]Y(0^a 1^b 0^{c-1} 1^{d+1}).
\end{equation*}

Case II: $b > c$.
We compute
\begin{align*}
\mf{F}Y(0^a 1^b 0^c 1^d) &\cong \mf{F} \mf{E}^{(a)} \mf{F}^{(d)} Y(1^{a+b} 0^{c+d}) \\
&\cong [d+1] \mf{E}^{(a)} \mf{F}^{(d+1)} Y(1^{a+b} 0^{c+d}) 
\oplus g_2 \mf{E}^{(a-1)} \mf{F}^{(d)} Y(1^{a+b} 0^{c+d}) \\
&\cong [d+1]Y(0^a 1^b 0^{c-1} 1^{d+1}) \oplus g_2 Y(0^{a-1} 1^{b+1} 0^c 1^d)
\end{align*}
for some multiplicity $g_2\in \N[q,q^{-1}]$.

Next we have
\begin{equation}
\label{preserveeq2}
\mf{E} Y(0^a 1^b 0^c 1^d) \cong \mf{E} \mf{E}^{(a)} \mf{F}^{(d)} Y(1^{a+b} 0^{c+d}) 
\cong [a+1] \mf{E}^{(a+1)} \mf{F}^{(d)} Y(1^{a+b} 0^{c+d}). 
\end{equation}
Since $b>c$, the objects in \eqref{preserveeq2} are isomorphic to
\begin{equation*}
[a+1] \mf{E}^{(a+1)} Y(1^{a+b} 0^c 1^d) \cong [a+1] Y(0^{a+1} 1^{b-1} 0^c 1^d).
\end{equation*}

Case III: $b=c$.
In order to show $\mf{F} Y(0^a 1^b 0^b 1^d)$ decomposes as we would like, we proceed as in case II.
\begin{align*}
\mf{F}Y(0^a 1^b 0^b 1^d) &\cong \mf{F} \mf{E}^{(a)} \mf{F}^{(d)} Y(1^{a+b} 0^{b+d}) \\
&\cong [d+1] \mf{E}^{(a)} \mf{F}^{(d+1)} Y(1^{a+b} 0^{b+d}) 
\oplus g_3 \mf{E}^{(a-1)} \mf{F}^{(d)} Y(1^{a+b} 0^{b+d}) \\
&\cong [d+1]Y(0^a 1^b 0^{b-1} 1^{d+1}) \oplus g_3 Y(0^{a-1} 1^{b+1} 0^b 1^d)
\end{align*}
for some multiplicity $g_3\in \N[q,q^{-1}]$.

In order to show $\mf{E} Y(0^a 1^b 0^b 1^d)$ decomposes as we would like, we proceed as in case I.
\begin{align*}
\mf{E}Y(0^a 1^b 0^b 1^d) &\cong \mf{E} \mf{F}^{(d)} \mf{E}^{(a)}Y(1^{a+b} 0^{b+d}) \\
&\cong [a+1] \mf{F}^{(d)} \mf{E}^{(a+1)}Y(1^{a+b} 0^{b+d})
\oplus g_4 \mf{F}^{(d-1)} \mf{E}^{(a)}Y(1^{a+b} 0^{b+d}) \\
&\cong [a+1] \mf{F}^{(d)} Y(0^{a+1} 1^{b-1} 0^{b+d}) \oplus 
g_4 \mf{F}^{(d-1)} Y(0^a 1^{b} 0^{b+d}) \\
&\cong [a+1]Y(0^{a+1} 1^{b-1} 0^b 1^d) \oplus g_4 Y(0^a 1^b 0^{b+1}1^{d-1})
\end{align*}
for some multiplicity $g_4\in \N[q,q^{-1}]$.

Finally, the compatibility of the $\mf{E}$-and-$\mf{F}$ actions with the $p$-differential follows from the higher categorical relations (Proposition \ref{prop-higher-Serre}). The direct sum decomposition above will just be replaced by a fantastic filtration, whose associated pieces are isomorphic to the occurring summands.
\end{proof}

The following example explains the necessity of taking the idempotent truncation for the modules $G(\lambda)$ in the definition of the two-tensor algebra.

\begin{example}
\label{l=3n=2}
Consider the nilHecke algebra $\nh_2^3$.
There are three cyclic modules in this case: $G(110)$, $ G(101)$ and $G(011)$.
In Example \ref{n=2l=3} we saw that
$Y(110)=G(110)$ and $Y(101)=G(101)$.
However $G(011) \cong Y(011) \oplus Y(110)$ where $Y(011)=e_2G(011)$.

In order to categorify the second highest weight space of $V_1 \otimes V_2$ we would take the category $\mc{NH}_2^{1,2}$.  The weight space is two-dimensional and so we would like $\mc{NH}_2^{1,2}$ to have exactly two generating non-isomorphic indecomposable objects.
  
Taking $G(011)$ and $G(101)$ as the generating set for $\mc{NH}_2^{1,2}$ is problematic because these two objects actually contain three non-isomorphic indecomposable modules.  However, taking the truncated modules $e_{(0,2)}G(011)$ and $e_{(1,1)}G(101)=G(101)$ produces only two non-isomorphic indecomposable objects $Y(011)$ and $Y(101)$.
\end{example}

Finally, we collect two results about the $p$-DG structure of the module $e_\lambda G(\lambda)$ that will be useful later. The next result shows that the $Y(\lambda)$
corresponding to the minimal partition in $\mc{P}_n^{r,s}$ (see Definition \ref{speciallambda})
is the projective indecomposable module over $\nh_n^l$.

\begin{prop}\label{prop-projective-as-summand}
For the minimal partition $\lambda_0\in \mc{P}_n^{r,s}$, the module $Y(\lambda_0)$ is isomorphic to the indecomposable projective module of $\nh_n^l$.
\end{prop}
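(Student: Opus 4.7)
The argument splits on whether $n \leq s$ or $n > s$, as the minimal partition $\lambda_0 = (0^{l-n}, 1^n)$ admits different block decompositions $(0^a 1^b 0^c 1^d)$ in the two regimes.

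\textbf{Case $n \leq s$.} Here $b = 0$, $d = n$, so $e_{\lambda_0} = e_n$ and $y^{\lambda_0} = y_1^{n-1}y_2^{n-2}\cdots y_n^0$. Since $b \leq c$, Theorem~\ref{classificationprop} identifies $Y(\lambda_0) = e_n y^{\lambda_0}\nh_n^l$, and the inclusion $e_n y^{\lambda_0}\nh_n^l \subseteq e_n\nh_n^l$ is trivial. For the reverse, I would combine the defining factorization $e_n = y^{\lambda_0}\psi_{w_0}$ from \eqref{eqn-thick-idemp} with the idempotency $e_n^2 = e_n$ to compute
\[
e_n \;=\; e_n\cdot e_n \;=\; (e_n y^{\lambda_0})\,\psi_{w_0} \;\in\; e_n y^{\lambda_0}\nh_n^l,
\]
giving $e_n\nh_n^l \subseteq e_n y^{\lambda_0}\nh_n^l$ and hence equality.

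\textbf{Case $n > s$.} Now $b = n-s > 0$ and $c = 0$, so $e_{\lambda_0} = e_{(b,d)}$ is no longer the thick idempotent; a direct collapse would require an intricate thick-calculus factorization of $e_{(b,d)}$ through $e_n$. I would instead appeal to the second description in Theorem~\ref{classificationprop}: $Y(\lambda_0) \cong \mf{E}^{(a)}\!\left(e_{(r,s)}\, G(1^l)\right) = \mf{E}^{(l-n)}\!\left(e_{(r,s)}\, G(1^l)\right)$. Since $\gr(l,l)$ is a single point, $\nh_l^l \cong M_{l!}(\Bbbk)$ is semisimple and its unique simple module $G(1^l) = Y(1^l)$ equals the indecomposable projective $e_l\nh_l^l$; simplicity then forces $e_{(r,s)}G(1^l) = G(1^l) = e_l\nh_l^l$, so $Y(\lambda_0) \cong \mf{E}^{(l-n)}(e_l\nh_l^l)$.

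To conclude, I would invoke the biadjointness of $\mf{E}^{(r)}$ and $\mf{F}^{(r)}$ (Theorem~\ref{catofVl} together with equation~\eqref{Fadjointformula}; see also \cite{KaBi}): the functor $\mf{E}^{(l-n)}$ admits an exact adjoint on both sides and therefore preserves projective modules, so $\mf{E}^{(l-n)}(e_l\nh_l^l)$ is projective over $\nh_n^l$. Combined with the indecomposability of $Y(\lambda_0)$ from Theorem~\ref{classificationprop} and the uniqueness (up to isomorphism and grading shift) of the indecomposable projective module over the matrix algebra $\nh_n^l \cong M_{n!}(H^*(\gr(n,l)))$, we deduce $Y(\lambda_0) \cong e_n\nh_n^l$. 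The main obstacle lies in the second case: one must consistently reconcile the grading shift built into $G(\lambda_0)$ by Definition~\ref{def-G-lambda}, the internal shifts in the biadjunction formulas~\eqref{Fadjointformula}, and the grading twist on the bimodule defining $\mf{E}^{(l-n)}$ (Definition~\ref{def-E-and-F}) so that the resulting isomorphism lands on $e_n \nh_n^l$ with the correct graded normalization.
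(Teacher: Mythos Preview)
Your proof is correct, and both cases are sound. The paper's argument differs somewhat from yours, so a brief comparison is worthwhile.

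For the case $b=0$ (your $n\le s$), the paper takes the functorial route: from Theorem~\ref{classificationprop} one has $Y(\lambda_0)\cong \mf{F}^{(d)}\mf{E}^{(a)}Y(1^{a}0^{c+d})$; since $\mf{E}^{(a)}Y(1^{a}0^{c+d})$ is an indecomposable module over $\nh_0^l\cong\Bbbk$, it must be $\Bbbk$, and then $\mf{F}^{(d)}\Bbbk$ is projective because induction preserves projectivity. Your direct algebraic argument, using $e_n = e_n\cdot e_n = (e_n y^{\lambda_0})\psi_{w_0}$ to show $e_n\nh_n^l = e_n y^{\lambda_0}\nh_n^l$, is shorter and avoids the functorial machinery entirely; it also yields equality of modules rather than merely an isomorphism.

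For the case $c=0$ (your $n>s$), the paper says only ``similar, using that $\nh_l^l$ is semisimple,'' which unpacks to essentially your argument: one lands in the semisimple algebra $\nh_l^l$ where every module is projective, and then one needs $\mf{E}^{(a)}$ to preserve projectives. You make this step explicit via biadjointness, which is the cleanest justification. Your remark that simplicity forces $e_{(r,s)}G(1^l)=G(1^l)$ is correct once one notes $e_{(r,s)}G(1^l)\neq 0$ (the idempotent $e_{(r,s)}$ is nonzero in the simple matrix algebra $\nh_l^l$, hence acts nontrivially on the faithful simple module).

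Your closing worry about grading shifts is not an issue here: the proposition only asserts an isomorphism of modules, and the indecomposable projective over $\nh_n^l$ is unique up to isomorphism and grading shift.
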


\begin{proof}
The minimal partition $\lambda_0=(0^a 1^b0^c1^d)$ has either $b=0$ or $c=0$. Suppose $b=0$.
By Theorem \ref{classificationprop} we have
\begin{equation*}
Y(0^a 0^c 1^d) \cong \mf{F}^{(d)} \mf{E}^{(a)}Y(1^{a} 0^{c+d}).
\end{equation*}
Note that $\mf{E}^{(a)}Y(1^{a} 0^{c+d})$ is a module over $\nh_0^l \cong \Bbbk$.
By Theorem \ref{classificationprop}, $\mf{F}^{(d)} \mf{E}^{(a)}Y(1^{a} 0^{c+d})$ is indecomposable and hence so is $\mf{E}^{(a)}Y(1^{a} 0^{c+d})$.
Thus $\mf{E}^{(a)}Y(1^{a} 0^{c+d}) \cong \Bbbk$.  Since $\mf{F}^{(d)}$ is an induction functor, $\mf{F}^{(d)} \Bbbk$ is projective and we already know from Theorem \ref{classificationprop} that it is indecomposable.

The case $c=0$ is similar.  One uses the fact that $\nh_l^l$ is a simple algebra since it is isomorphic to an $l! \times l!$ matrix algebra over $\Bbbk$.
\end{proof}

Next, recall from Proposition \ref{abc1cgeqb} that, if $b\leq c$, then the module $e_{(b,d)} G(0^a 1^b 0^c 1^d)$ is indecomposable. When $b\geq c$, the module has a filtered $p$-DG structure as follows.

\begin{prop}\label{prop-multiplicity-Y-in-G}
Let $b \geq c$.  Then 
$ e_{(b,d)} G(0^a 1^b 0^c 1^d) \cong \mf{F}^{(d)} \mf{E}^{(a)}Y(1^{a+b} 0^{c+d}) $
has a filtration with subquotients isomorphic to 
$Y(0^{a-j} 1^{b+j} 0^{c+j} 1^{d-j})$ for $j=0,\ldots,\mathrm{min}(a,d)$.
Furthermore $Y(0^{a-j} 1^{b+j} 0^{c+j} 1^{d-j})$ appears with multiplicity
${b-c \brack j}$.
\end{prop}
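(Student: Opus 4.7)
The plan is to combine Proposition \ref{prop-pdg-eG} with the fantastic filtration guaranteed by Proposition \ref{prop-higher-Serre}(ii). By Proposition \ref{prop-pdg-eG}, we have a $p$-DG isomorphism
\[
e_{(b,d)}G(0^a 1^b 0^c 1^d) \cong \mathcal{F}^{(d)}\mathcal{E}^{(a)} Y(1^{a+b}0^{c+d}).
\]
Thus it suffices to analyze $\mathcal{F}^{(d)}\mathcal{E}^{(a)}\1_m$ acting on $Y(1^{a+b}0^{c+d})$, where the weight is
\[
m \; = \; l-2(a+b) \; = \; (a+b+c+d)-2(a+b) \; = \; c+d-a-b.
\]

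Proposition \ref{prop-higher-Serre}(ii) endows the $1$-morphism $\mathcal{F}^{(d)}\mathcal{E}^{(a)}\1_m$ with a natural $\dif$-stable fantastic filtration whose associated graded pieces are the summands appearing in the Sto\v{s}i\'{c} formula for the divided-power relation \eqref{eqn-higher-Serre-2}. Substituting $m=c+d-a-b$ into that formula yields a binomial coefficient
\[
{d-a-m \brack j} \; = \; {b-c \brack j},
\]
and the associated graded of the filtration on $\mathcal{F}^{(d)}\mathcal{E}^{(a)}Y(1^{a+b}0^{c+d})$ becomes
\[
\bigoplus_{j=0}^{\min(a,d)}\, {b-c \brack j}\,\mathcal{E}^{(a-j)}\mathcal{F}^{(d-j)}Y(1^{a+b}0^{c+d}).
\]
Note that when $b=c$ the formula degenerates to a single summand at $j=0$, in agreement with Theorem \ref{classificationprop}.

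The remaining step is to identify each subquotient $\mathcal{E}^{(a-j)}\mathcal{F}^{(d-j)}Y(1^{a+b}0^{c+d})$ with the indecomposable module $Y(0^{a-j}1^{b+j}0^{c+j}1^{d-j})$. Applying Lemma \ref{lem-eG-equals-EY} (in its $p$-DG enhancement, together with the subsequent remark) with the decomposition $c+d = (c+j)+(d-j)$ yields
\[
\mathcal{F}^{(d-j)}Y(1^{a+b}0^{c+d}) \;\cong\; e_{(a+b,\,d-j)}\,G(1^{a+b}0^{c+j}1^{d-j}).
\]
Since the hypothesis $b\geq c$ forces $b+j \geq c+j$ for every $j$, the partition $(0^{a-j}1^{b+j}0^{c+j}1^{d-j})$ falls into the second case of Theorem \ref{classificationprop}, giving
\[
\mathcal{E}^{(a-j)}\bigl(e_{(a+b,\,d-j)}G(1^{a+b}0^{c+j}1^{d-j})\bigr) \;\cong\; Y(0^{a-j}1^{b+j}0^{c+j}1^{d-j}).
\]
Putting these identifications into the fantastic filtration yields precisely the claimed filtration with multiplicities ${b-c \brack j}$.

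The main obstacle is not a computation but a bookkeeping issue: one must verify that the Sto\v{s}i\'{c} fantastic filtration of Proposition \ref{prop-higher-Serre}(ii) is compatible with the identifications made in Lemma \ref{lem-eG-equals-EY} and Proposition \ref{prop-pdg-eG}, so that the subquotients truly assemble into indecomposable $p$-DG modules $Y(\cdot)$ rather than merely into modules with the correct underlying ungraded isomorphism type. This is handled by observing that all the intertwining isomorphisms used above were already upgraded to $p$-DG isomorphisms (Corollary \ref{pdgclassificationcor} and Lemma \ref{lem-fG-equals-FY-pdg}), so each associated graded piece inherits the correct $p$-differential from the ambient fantastic filtration.
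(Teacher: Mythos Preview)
Your proof is correct and follows essentially the same route as the paper's: invoke the $\partial$-stable fantastic filtration on $\mf{F}^{(d)}\mf{E}^{(a)}\1_m$ from Proposition~\ref{prop-higher-Serre}(ii), compute the binomial multiplicity ${b-c\brack j}$ from the weight $m=c+d-a-b$, and identify the subquotients $\mf{E}^{(a-j)}\mf{F}^{(d-j)}Y(1^{a+b}0^{c+d})$ with $Y(0^{a-j}1^{b+j}0^{c+j}1^{d-j})$ via Theorem~\ref{classificationprop}. The paper's own proof is a two-line sketch that leaves the subquotient identification implicit; your write-up simply spells out those details (and the $p$-DG compatibility) more fully. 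One cosmetic point: you should use the fraktur functors $\mf{E},\mf{F}$ rather than the calligraphic $1$-morphisms $\mc{E},\mc{F}$, since you are working in the nilHecke representation.
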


\begin{proof}
By Proposition \ref{prop-higher-Serre}, the functor $ \mf{F}^{(d)} \mf{E}^{(a)}$ has a $\partial$-stable filtration with graded subquotients $ \mf{E}^{(a-j)} \mf{F}^{(d-j)}$ appearing with multiplicities ${b-c \brack j}$ since $ \mf{E}^{(a-j)} \mf{F}^{(d-j)}$ is acting on a category of weight $\1_{a+b-c-d}$.
\end{proof}

\subsection{A basic algebra}
Let us consider the basic version of Definition \ref{def-two-tensor-algebra} in this subsection. We again fix $r,s\in \N$ such that $r+s=l$ and assume $a,b,c,d\in \N$ satisfying conditions of Definition \ref{speciallambda}.

Recall that, by Theorem \ref{classificationprop}, we have a full list of indecomposable $p$-DG modules $Y(\lambda)$ in $\mc{NH}_n^{r,s}$ among the list
\[
\left\{
 \mf{F}^{(d)} \mf{E}^{(a)}Y(1^{a+b} 0^{c+d}) \quad (b \leq c) , \quad \quad 
\mf{E}^{(a)} \mf{F}^{(d)} Y(1^{a+b} 0^{c+d})  \quad (b \geq c)
\right\}
\]
with the identification $ \mf{F}^{(d)} \mf{E}^{(a)}Y(1^{r} 0^{s})\cong 
\mf{E}^{(a)} \mf{F}^{(d)} Y(1^{r} 0^{s}) $ if $b=c$. Let us fix, once an for all, an arbitrary grading on the module $Y(1^{r}0^{s})$, which in turn determines a grading on each $Y(\lambda)$ via the $p$-DG functors $\mc{F}^{(d)}$ and $\mc{E}^{(a)}$.

\begin{defn}\label{def-basic-two-tensor-algebra}
Let $n,l$ be two natural numbers, and $(r,s)\in \N^2$ be a decomposition of $l$. We define the \emph{basic two-tensor Schur algebra} to be the graded endomorphism algebra
\begin{equation*}
S^b_n(r,s) := \END_{\nh_n^l}\left(\bigoplus_{\lambda \in \mathcal{P}_n^{r,s}} Y(\lambda)\right).
\end{equation*}
The algebra is equipped with the induced $p$-differentials on $Y(\lambda)$'s, and therefore is a $p$-DG algebra.
\end{defn}


\begin{thm}
\label{homspaceY1Y2}
Let $Y_1$ and $Y_2$ be non-isomorphic indecomposable objects of $\mathcal{NH}_{n}^{r,s} $ with the gradings chosen as above.  Then
$\HOM_{\nh_n^l}(Y_1,Y_2)$ is positively graded. Consequently, the $p$-DG algebra $S_n^b(r,s)$ is a positive $p$-DG algebra.
\end{thm}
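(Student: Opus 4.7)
The plan is to compute $\HOM_{\nh_n^l}(Y_1,Y_2)$ as a graded vector space by first reducing via biadjunction to an endomorphism-type computation over $\nh_r^l$, then using the higher Serre relations to collapse everything to graded multiplicities of the unique simple $\nh_r^l$-module $Y_0:=Y(1^r0^s)$. By Theorem \ref{classificationprop}, each $Y(\lambda)$ with $\lambda=(0^a1^b0^c1^d)\in\mc{P}_n^{r,s}$ is isomorphic to $\mc{G}_\lambda Y_0$, where $\mc{G}_\lambda=\mf{F}^{(d)}\mf{E}^{(a)}$ when $b\leq c$ and $\mc{G}_\lambda=\mf{E}^{(a)}\mf{F}^{(d)}$ when $b\geq c$. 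The module $Y_0$ is the unique simple $\nh_r^l$-module with $\END_{\nh_r^l}(Y_0)=\Bbbk$ concentrated in degree zero, and $Y_0\cong\mf{F}^{(r)}\1_l$ where $\1_l$ is the unit of weight $l$ in the categorification.

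First, I would apply the biadjunction formula \eqref{Fadjointformula} to move the divided-power functors to one side, yielding an isomorphism of graded vector spaces
\[
\HOM_{\nh_n^l}(Y(\lambda_1),Y(\lambda_2))\;\cong\;q^{N(\lambda_1,\lambda_2)}\,\HOM_{\nh_r^l}\bigl(Y_0,\,\mc{G}_{\lambda_1}^R\mc{G}_{\lambda_2}Y_0\bigr)
\]
with an explicit grading shift $N(\lambda_1,\lambda_2)\in\Z$ read off from the adjunction formulas, following the opening step of the proof of Proposition \ref{indecomb>c}. Next, iteratively apply the divided-power $E$--$F$ interchange of Proposition \ref{prop-higher-Serre}(ii), the product formula of Proposition \ref{prop-higher-Serre}(i), and the highest-weight vanishing $\mf{E}^{(j)}\1_l=0$ for $j>0$, to rewrite $\mc{G}_{\lambda_1}^R\mc{G}_{\lambda_2}\mf{F}^{(r)}\1_l$ as a direct sum (in the abelian graded module category) of the form $P_{\lambda_1,\lambda_2}(q)\cdot\mf{F}^{(r)}\1_l\oplus(\text{other-weight pieces})$, where $P_{\lambda_1,\lambda_2}(q)\in\N[q,q^{-1}]$ is an explicit product of Gaussian binomial coefficients. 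Since $\mf{F}^{(r)}\1_l=Y_0$ and the other-weight pieces contribute nothing to $\HOM_{\nh_r^l}(Y_0,-)$, this yields
\[
\HOM_{\nh_n^l}(Y(\lambda_1),Y(\lambda_2))\;\cong\;q^{N(\lambda_1,\lambda_2)}P_{\lambda_1,\lambda_2}(q)\cdot\Bbbk.
\]

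The main obstacle is the combinatorial verification that $q^{N(\lambda_1,\lambda_2)}P_{\lambda_1,\lambda_2}(q)\in q\N[q]$ whenever $\lambda_1\neq\lambda_2$. This is accomplished by a minimum-exponent analysis of a quadratic in the summation index appearing in Proposition \ref{prop-higher-Serre}(ii), in exact analogy with equation \eqref{functionofj} in the proof of Proposition \ref{indecomb>c}; the conditional choice of $\mc{G}_{\lambda_i}$ dictated by $b_i\leq c_i$ versus $b_i\geq c_i$ is precisely what guarantees that the minimum $q$-exponent is non-negative, and strictly positive when $\lambda_1\neq\lambda_2$. One must perform this minimization case-by-case depending on the relative normal forms (both $\mf{F}\mf{E}$-form, both $\mf{E}\mf{F}$-form, or mixed) of $\mc{G}_{\lambda_1}$ and $\mc{G}_{\lambda_2}$, but in each case the template of the proof of Proposition \ref{indecomb>c} applies verbatim.

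The consequence that $S_n^b(r,s)$ is a positive $p$-DG algebra then follows directly: its degree-zero part equals $\bigoplus_\lambda\END(Y(\lambda))_0\cong\prod_\lambda\Bbbk\cdot\id_{Y(\lambda)}$, which is a semisimple commutative $\Bbbk$-algebra; the algebra is supported in non-negative degrees by the main statement together with Propositions \ref{propY0bcd} and \ref{indecomb>c}; and the differential acts trivially on degree zero because each $\id_{Y(\lambda)}$ commutes with the $p$-DG structure of $Y(\lambda)$, forcing $\partial(\id_{Y(\lambda)})=0$. Hence all three conditions of Definition \ref{def-positive-p-dga} are satisfied.
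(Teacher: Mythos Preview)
Your overall strategy matches the paper's: use biadjunction to move the divided-power functors to one side, apply Proposition \ref{prop-higher-Serre} to decompose, and then carry out a minimum-exponent analysis of the resulting quadratic in the summation index, exactly as in the proof of Proposition \ref{indecomb>c}.

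There is, however, a genuine error in your reduction. You assert $Y_0 = Y(1^r0^s) \cong \mf{F}^{(r)}\1_l$, but this is false in general: $\mf{F}^{(r)}\1_l = e_r\nh_r^l$ is the indecomposable \emph{projective} $\nh_r^l$-module, with endomorphism ring $\mH^*(\gr(r,l))$, not $\Bbbk$; whereas $Y_0$ is the simple Specht module (Remark \ref{rmk-Spect-equals-G}) with $\END(Y_0)=\Bbbk$. These two claims are inconsistent, and your step ``rewrite $\mc{G}_{\lambda_1}^R\mc{G}_{\lambda_2}\mf{F}^{(r)}\1_l$ using highest-weight vanishing $\mf{E}^{(j)}\1_l=0$'' therefore does not apply to the simple $Y_0$. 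The paper avoids this by not reducing all the way to weight $l$: after biadjunction and the Serre relations it lands on a sum of spaces $\END\big(Y(1^{a+b}0^{c-a+j-u}1^{a+d-j+u})\big)$, each of which is a Grassmannian cohomology ring by Proposition \ref{propY0bcd} and hence has graded dimension in $1+q\N[q]$. The remaining multiplicity $M(j)$ is then handled by exactly the quadratic-minimum argument you outline, yielding $\rho(j)>0$ for $u\neq 0$.

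One simplification you missed: for fixed $n,r,s$ the difference $b-c$ equals $n-s$ for every $\lambda=(0^a1^b0^c1^d)\in\mc{P}_n^{r,s}$, so all indecomposables in $\mc{NH}_n^{r,s}$ are simultaneously in the $\mf{E}\mf{F}$-regime or simultaneously in the $\mf{F}\mf{E}$-regime. Your ``mixed'' case never occurs, which is why the paper only writes ``assume $b\geq c$; the case $b\leq c$ is similar.''
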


\begin{proof}
Let $u \in \Z$ be a nonzero integer and set
\begin{equation*}
Y_1=Y(0^a 1^b 0^c 1^d), \hspace{.5in}
Y_2=Y(0^{a+u} 1^{b-u} 0^{c-u} 1^{d+u}), \hspace{.5in}
Y'=Y(1^{r} 0^{s}).
\end{equation*}
Note that any indecomposable object is of the form $Y_2$ because of the constraints
$a+b=r, c+d=s$, and $b+d=n$.

Assume $b \geq c$.  The case $b \leq c$ is similar.
As in the proof of Proposition \ref{indecomb>c}, we calculate
\begin{equation*}
\HOM(Y_1, Y_2) \cong
\HOM(\mf{E}^{(a)} \mf{F}^{(d)} Y', \mf{E}^{(a+u)} \mf{F}^{(d+u)} Y' )  
\end{equation*}
which is isomorphic to
\begin{equation}
\label{HOMY1Y2a}
\bigoplus_{j=0}^{\mathrm{min}(a,a+u)} \END(Y(1^{a+b} 0^{c-a+j-u} 1^{a+d-j+u})) M(j)
\end{equation}
where
\begin{equation*}
M(j)={a+b+d-c+u \brack j} {a+d-j+u \brack d} 
{a+d-j+u \brack d+u}
q^{(2a+u-j)(a+b+d-c-j+u)}.
\end{equation*}
The endomorphism algebra in \eqref{HOMY1Y2a} is isomorphic to the cohomology of a Grassmannian and in particular non-negatively graded.
The smallest power of $q$ occuring in $M(j)$ is
\begin{equation*}
\rho(j)=2j^2+(2(c-b)-4a-2u))j +((2a+u)(a+b-c+u)-au).
\end{equation*}
An analysis of $\rho(j)$ similar to the smallest exponent studied in the proof of Proposition \ref{indecomb>c} shows that $\rho(j)>0$.  Note that there are two cases here: $u>0$ and $u<0$. They are handled similarly.
\end{proof}

\begin{rem}We make two comments about the proof above.
\begin{enumerate}
\item[(1)] Alternatively, the positivity of the $p$-DG algebra $S_n^b(r,s)$ also follows the work of Hu-Mathas. Indeed, by \cite[Theorem C]{HuMathas}, it is proven that the bigger quiver Schur algebra $S_n(l)$ is graded Morita equivalent to a basic algebra which is Koszul, and therefore the basic algebra for $S_n(l)$ is positively graded. On the other hand, the modules $Y(\lambda)$, $\lambda\in \mc{P}_n^{r,s}$ constitute a subset of the indecomposable modules for defining the basic algebra for $S_n(l)$. It follows that, with respect to an implicit grading choice, the smaller endomorphism algebra $S_n^b(r,s)$ is also positively graded.
\item[(2)] The proof given above is more explicit and exhibits a natural choice of gradings for $Y(\lambda)$'s arising from the categorical quantum $\mf{sl}_2$ action. The calculations above are motivated from similar considerations appearing in \cite[Section 9.2]{Lau1}.
\end{enumerate}
\end{rem}

\subsection{A basis for truncated modules}
We now describe a basis of $e_{\lambda}G(\lambda)$ inherited from the basis for $G(\lambda)$ constructed by Hu-Mathas \cite{HuMathas}.

\begin{lem}
\label{spansetspecialG}
If $\lambda=(0^a1^b0^c1^d)\in \mc{P}_n^{r,s}$, then the module $e_{\lambda}G(\lambda)$ is spanned by
$B_\lambda:= \bigcup_{j=0}^{\rm{min}(a,d)} B_j$ where 

\begin{align*}
B_j=
\left\{
e_{(b,d)} \psi_{w_1}^*
y_1^{u_1} \cdots y_{b+j}^{u_{b+j}}
y_{b+j+1}^{v_1} \cdots y_{b+d}^{v_{d-j}}
\psi_{w_2}
\Bigg|
\begin{gathered}
\begin{array}{l}
 w_1 \in \mf{S}_{b+j} / \mf{S}_b \times \mf{S}_j, \\
 r+s > u_1 > \cdots > u_{b+j} > s-1, \\
 s > v_1 > \cdots > v_{d-j} > -1 \\
 w_2 \in \mf{S}_{b+d}
 \end{array}
\end{gathered}
\right \}.
\end{align*}
\end{lem}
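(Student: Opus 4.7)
The approach is to show spanning directly by leveraging the cellular basis of $G(\lambda)$ from Hu-Mathas (Proposition~\ref{basisG}). A basis of $G(\lambda)$ is
\[
\{\psi_{th}^\mu \mid t \in \mathrm{Tab}^\lambda(\mu),\; h \in \mathrm{Tab}(\mu),\; \mu \in \mathcal{P}_n^l\},
\]
so the collection $\{e_{(b,d)} \psi_{th}^\mu\}$ is automatically a spanning set for $e_\lambda G(\lambda)$. The task reduces to matching each nonzero such product against an element of one of the $B_j$.

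I would first classify the partitions $\mu \geq \lambda$ by the parameter $j$. Since $\mu$ is obtained from $\lambda = (0^a 1^b 0^c 1^d)$ by moving boxes leftward, $\mu$ has $b + j$ boxes in the first $r = a+b$ positions and $d - j$ boxes in the last $s = c+d$ positions for some $j \in \{0, 1, \ldots, \min(a, d)\}$. Expanding $y^\mu = y_1^{l-j_1} \cdots y_n^{l-j_n}$ in terms of the box positions of $\mu$ yields precisely the strictly decreasing sequences $s \leq u_{b+j} < \cdots < u_1 \leq l - 1$ (from the left region, using that box positions lie in $[1, r]$) and $0 \leq v_{d-j} < \cdots < v_1 \leq s - 1$ (from the right region) appearing in $B_j$.

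The $\psi_t^*$ factor is handled by the fact that $e_b \psi_i = 0$ for $1 \leq i \leq b - 1$ (since $\psi_{w_0^{(b)}} \psi_i = 0$ for every left descent $s_i$ of $w_0^{(b)}$), and similarly $e_d \psi_{b + i'} = 0$ for $1 \leq i' \leq d - 1$. Thus $e_{(b,d)} \psi_t^*$ vanishes unless a reduced word for $\psi_t^*$ can be chosen so that the letters adjacent to the idempotent involve only $\psi_b$ (the crossing between the two blocks), and the surviving possibilities are in bijection with shortest coset representatives $w_1 \in S_{b+j}/(S_b \times S_j)$. The cleanest realization is via the identity decomposition of thick strands (equation~\eqref{eq-identitydecomp}), which rewrites $e_b \otimes e_d$ combined with the relevant crossings of $\psi_t^*$ as a sum over such $\psi_{w_1}^*$ followed by the promoted thick idempotent $e_{(b+j, d-j)}$. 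The right factor $\psi_h$ contributes $\psi_{w_2}$ with $w_2 \in S_{b+d}$ freely, since $|\mathrm{Tab}(\mu)| = (b+d)!$ and no constraint is imposed on $h$ by the left multiplication.

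The main obstacle will be the bookkeeping for the $\psi_t^*$ step: given a general $t \in \mathrm{Tab}^\lambda(\mu)$ whose associated permutation $w_t$ is not a shortest coset representative, one must show that $e_{(b,d)} \psi_t^*$ is either an element of $B_j$ corresponding to the extracted coset representative, or a combination, modulo the cellular ideal $(\nh_n^l)^{>\mu}$, of such elements together with terms already in the span of higher-$j'$ contributions. Reducing modulo $(\nh_n^l)^{>\mu}$ at each stage and inducting downward on the dominance order in $\mu$ should close the argument. As a consistency check, the cardinality $\sum_j \binom{b+j}{j} \binom{a+b}{b+j} \binom{c+d}{d-j} (b+d)!$ of $B_\lambda$ can be matched against the dimension of $e_\lambda G(\lambda)$ extracted from the Specht filtration of Proposition~\ref{prop-multiplicity-Y-in-G}, which would moreover upgrade the spanning statement to a basis statement though this is not required by the lemma.
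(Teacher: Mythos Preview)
Your approach is the same as the paper's: start from the Hu--Mathas cellular basis of $G(\lambda)$, multiply by $e_{(b,d)}$, stratify the surviving terms by the parameter $j$ counting how many of the last $d$ boxes move into the first $r$ slots, read off the exponent sequences $(u_i)$ and $(v_i)$ from $y^\mu$, and let the right tableau range freely over $S_{b+d}$. The paper's proof is terser but follows exactly this outline.

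The one place you diverge is your ``main obstacle'' paragraph. You anticipate needing an induction on the dominance order to handle tableaux $t\in\Tab^\lambda(\mu)$ whose permutation is not a shortest coset representative. The paper does not do this, and it is unnecessary: in the nilCoxeter part of $\nh_n^l$ one has $\psi_u\psi_v=0$ whenever $\ell(uv)<\ell(u)+\ell(v)$, so $e_{(b,d)}\psi_h^*=(\text{monomial})\,\psi_{w_0^{(b)}\times w_0^{(d)}}\psi_{w_h^{-1}}$ vanishes outright unless $w_h$ is a minimal-length coset representative for $S_{b+d}/(S_b\times S_d)$. Combined with the combinatorial constraint $h\geq t^\lambda$ (which forces the last $d-j$ labels into the right region in standard order), this cuts the surviving $h$'s down to exactly the shuffles $w_1\in S_{b+j}/(S_b\times S_j)$. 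No filtration or inductive argument is needed; you already identified the vanishing mechanism two sentences earlier. Your cardinality check is correct and is precisely what the paper uses in the subsequent Proposition~\ref{basisspecialG} to upgrade spanning to a basis.
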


\begin{proof}
Let $\lambda=(0^a 1^b 0^c 1^d)$.
Proposition \ref{basisG} provides a basis of $G(0^a 1^b 0^c 1^d)$ of the form
\begin{equation*}
\{ \psi_{\mf{st}} | \mf{s} \in \Tab^{\lambda}(\mu), \mf{t} \in \Tab(\mu), \mu \in \mathcal{P}_n^l \}.
\end{equation*}
Recall that $\mf{s} \in \Tab^{\lambda}(\mu)$ means that $\mf{s}$ is a filling of a partition $\mu$ greater than or equal to the standard filling $\mf{t}^\lambda$ of $\lambda$.  

Let $j$ be the number of the last $d$ entries that we move into the first $a+b$ spots.
Clearly $0 \leq j \leq \rm{min}(a,b)$.  This means that there will be $b+j$ entries in the first $r=a+b$ spots and $d-j$ entries in the last $s=c+d$ spots.

Assume for a moment that all of the $b+j$ entries in the first $r$ spots are as far to the right as possible and all of the remaining $d-j$ entries in the last $s$ spots are as far to the right as possible.

Since we are look for a spanning set of $e_{\lambda}G(\lambda)$, there are restrictions on a filling $\mf{s}$ of such a partition so that $e_{(b,d)} \psi_\mf{s}^*$ is non-zero.  Namely,
$w_1$ must be a minimal length representative in $\mf{S}_{b+j} /\mf{S}_b \times \mf{S}_j$.  

Next, we have freedom to choose which of the first $r$ spots contains the first $b+j$ entries.  This determines the exponents $u_1, \ldots, u_{b+j}$.
Similarly the remaining $d-j$ entries may occupy any of the last $s$ spots.
This determines the exponents $v_1, \ldots, v_{d-j}$.

Finally, $\mf{t} \in \Tab(\mu)$ may be anything which corresponds to $w_2 \in \mf{S}_{b+d}$.
\end{proof}

\begin{prop}
\label{basisspecialG}
The spanning set $B_\lambda$ in Lemma \ref{spansetspecialG} is a basis of
$e_{\lambda}G(\lambda)$.
\end{prop}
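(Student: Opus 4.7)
The plan is to prove linear independence of $B_\lambda$ by a triangularity argument using the cellular basis of $\nh_n^l$. Since Lemma \ref{spansetspecialG} already establishes the spanning property, it suffices to show that distinct elements of $B_\lambda$ produce distinct leading terms in the cellular filtration.

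First, I would associate to each element
\[
\epsilon = e_{(b,d)} \psi_{w_1}^* y_1^{u_1} \cdots y_{b+j}^{u_{b+j}} y_{b+j+1}^{v_1} \cdots y_{b+d}^{v_{d-j}} \psi_{w_2} \in B_j
\]
a partition $\mu(\epsilon) \in \mc{P}_n^l$ whose $1$-positions are $\{l - u_i\}_{i=1}^{b+j} \cup \{l - v_i\}_{i=1}^{d-j}$. The constraints $u_i \in [s, r+s-1]$ and $v_i \in [0, s-1]$ place the first set inside $[1,r]$ and the second inside $[r+1, l]$, while the strict inequalities guarantee distinctness. A straightforward prefix-count then shows $\mu(\epsilon) \geq \lambda$ in the dominance order of Definition \ref{def-order-on-nh-partition}.

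Second, I would show that modulo the ideal $(\nh_n^l)^{>\mu(\epsilon)}$ of cellular basis elements strictly larger than $\mu(\epsilon)$, the element $\epsilon$ equals a nonzero scalar multiple of a unique cellular basis element $\psi^{\mu(\epsilon)}_{h(\epsilon)\,t(\epsilon)}$, where $h(\epsilon) \in \Tab^\lambda(\mu(\epsilon))$ is determined by $w_1$ together with the positional data $(\mathbf{u}, \mathbf{v})$, and $t(\epsilon) \in \Tab(\mu(\epsilon))$ corresponds to $w_2$. The key computation uses the nilHecke relation $\psi_i y_{i+1} = y_i \psi_i + 1$ to reorganize the product into the standard form $\psi_{h(\epsilon)}^* y^{\mu(\epsilon)} \psi_{t(\epsilon)}$ plus correction terms; the minimality of $w_1$ in $S_{b+j}/(S_b\times S_j)$, which is forced by the presence of $e_{(b,d)}$ in front, guarantees that the leading coefficient is nonzero and that every correction term is either annihilated by $e_{(b,d)}$ or lies in $(\nh_n^l)^{>\mu(\epsilon)}$ (using Proposition \ref{idealispdg} and the defining property of cellular bases).

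Third, I would verify that the assignment $\epsilon \mapsto (\mu(\epsilon), h(\epsilon), t(\epsilon))$ is injective on $B_\lambda$: the partition $\mu(\epsilon)$ recovers $(j, \mathbf{u}, \mathbf{v})$ by reading off $1$-positions, $h(\epsilon)$ recovers $w_1$, and $t(\epsilon)$ recovers $w_2$. Since the cellular basis $\{\psi^\nu_{ht}\}$ of $\nh_n^l$ is linearly independent, distinct elements of $B_\lambda$ then give rise to distinct leading cellular terms and must be linearly independent.

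The main obstacle will be step two: rigorously verifying that every correction term produced by the nilHecke cascade indeed belongs to the strictly higher cellular ideal $(\nh_n^l)^{>\mu(\epsilon)}$ rather than merely to some lower submodule of $G(\lambda)$. A safer alternative, which sidesteps this detailed bookkeeping, is a dimension count: one computes
\[
|B_\lambda| = (b+d)!\sum_{j=0}^{\min(a,d)} \binom{b+j}{j}\binom{a+b}{b+j}\binom{c+d}{d-j}
\]
and matches it against $\dim e_\lambda G(\lambda)$, using the identification $e_\lambda G(\lambda) \cong \mf{F}^{(d)}\mf{E}^{(a)} Y(1^{a+b} 0^{c+d})$ from Proposition \ref{prop-pdg-eG} together with the Specht-like filtration of Proposition \ref{prop-multiplicity-Y-in-G} when $b \geq c$, and the indecomposability and $\mathrm{H}^*(\gr)$-endomorphism description of Propositions \ref{abc1cgeqb} and \ref{propY0bcd} when $b \leq c$.
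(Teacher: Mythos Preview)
Your ``safer alternative'' is exactly the route the paper takes: a dimension count matching $|B_\lambda|$ against $\dim e_\lambda G(\lambda)$. The triangularity argument you sketch first is genuinely different and more conceptual, but you are right that step two (controlling all correction terms from the nilHecke cascade) would be delicate to make rigorous; the paper bypasses this entirely.

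However, your execution of the dimension count does not quite close. The propositions you cite do not deliver $\dim e_\lambda G(\lambda)$: Proposition~\ref{prop-multiplicity-Y-in-G} filters $e_\lambda G(\lambda)$ by the modules $Y(0^{a-j}1^{b+j}0^{c+j}1^{d-j})$, but you would then need the dimensions of those $Y$'s, which are themselves of the same type --- so the recursion does not terminate in a closed formula. For $b\le c$, Propositions~\ref{abc1cgeqb} and~\ref{propY0bcd} give indecomposability and the graded dimension of the \emph{endomorphism algebra}, not of $e_\lambda G(\lambda)$ itself.

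The paper's route is more direct and uniform (no case split on $b$ versus $c$): use Lemma~\ref{lem-eG-equals-EY} to write $e_\lambda G(\lambda)\cong \mf{F}^{(d)} Y(0^a 1^b 0^{c+d})$, then invoke the Hu--Mathas composition-series count to obtain the closed formula
\[
\dim e_\lambda G(\lambda)=\binom{a+b}{a}\binom{a+c+d}{d}(b+d)!.
\]
Matching this against your correct count $|B_\lambda|=(b+d)!\sum_{j}\binom{b+j}{j}\binom{a+b}{b+j}\binom{c+d}{d-j}$ reduces to a Vandermonde-type identity, proved by induction on $c$. This is cleaner than either of your proposed paths and avoids the recursive trap.
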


\begin{proof}
Via Lemma \ref{lem-eG-equals-EY} we know that $e_{(b,d)}G(0^a 1^b 0^c 1^d) \cong 
\mf{F}^{(d)}Y(0^a 1^b 0^{c+d})$.
From Hu and Mathas we know the number of composition factors in a Jordan-Holder series for this module.  We thus easily compute that it has dimension
\begin{equation*}
\binom{a+b}{a}
\binom{a+c+d}{d}
(b+d)!.
\end{equation*}
The number of elements in the spanning set in Lemma \ref{spansetspecialG} is
\begin{equation*}
\sum_{j=0}^{\mathrm{min}(a,d)} 
\binom{c+d}{d-j}
\binom{b+j}{j}
\binom{a+b}{b+j}
(b+d)!.
\end{equation*}
The proposition follows from
\begin{equation}
\label{combidentity}
\binom{a+b}{a}
\binom{a+c+d}{d}
=
\sum_{j=0}^{\rm{min}(a,d)} 
\binom{c+d}{c+j}
\binom{b+j}{j}
\binom{a+b}{j+b}.
\end{equation}
Equation \eqref{combidentity} is easily proved by induction on $c$ where the base case $c=0$
\begin{equation*}
\binom{a+d}{a} = \sum_{j=0}^a \binom{d}{j} \binom{a}{j}
\end{equation*}
is Vandermonde's identity.
\end{proof}

\section{\texorpdfstring{$p$}{p}-DG Webster algebras}
\label{sec-Web}
\subsection{Definitions}
\label{subsec-Def-Web}
We begin by recalling the definition of a particular Webster algebra. More general versions of these algebras, which are associated with arbitrary finite Cartan data, can be found in \cite{Webcombined}.

\begin{defn}
\label{def-Webster-algebra}
Set $\mathbf{r}=(r_1, \ldots, r_m)$ with each $r_i \in \N$, and let $n \in \N$ be in the range $0 \leq n \leq \sum_{i=1}^m r_i$.
The Webster algebra $W_n(\mathbf{r})$ in this case is an algebra with $m$ red strands of widths (labels) $r_1,\dots, r_m$ (read from left to right) and $n$ black strands. Far away generators commute with each other. The black strands are allowed to carry dots, and red strands are not allowed to cross each other. We depict the local generators of this algebra by
\[
\begin{DGCpicture}
\DGCstrand(0,0)(0,1)
\DGCdot{0.5}
\end{DGCpicture}
\ , \quad \quad
\begin{DGCpicture}
\DGCstrand(1,0)(0,1)
\DGCstrand(0,0)(1,1)
\end{DGCpicture}
\ , \quad \quad
\begin{DGCpicture}
\DGCstrand(1,0)(0,1)
\DGCstrand[Red](0,0)(1,1)[$^{r_i}$`{\ }]
\end{DGCpicture}
\ , \quad \quad
\begin{DGCpicture}
\DGCstrand(0,0)(1,1)
\DGCstrand[Red](1,0)(0,1)[$^{r_i}$`{\ }]
\end{DGCpicture}
\ .
\]
Multiplication in this algebra is vertical concatenation of diagrams.  When the colors of the boundary points of one diagram do not match the colors of the boundary points of  a second diagram, their product is taken to be zero.
The relations between the local generators are given by the usual nilHecke algebra relations among black strands
\begin{subequations}
\begin{gather}
\begin{DGCpicture}[scale=0.55]
\DGCstrand(1,0)(0,1)(1,2)
\DGCstrand(0,0)(1,1)(0,2)
\end{DGCpicture}
~= 0 \,  \quad \quad
\begin{DGCpicture}[scale=0.55]
\DGCstrand(0,0)(2,2)
\DGCstrand(1,0)(0,1)(1,2)
\DGCstrand(2,0)(0,2)
\end{DGCpicture}
~=~
\begin{DGCpicture}[scale=0.55]
\DGCstrand(0,0)(2,2)
\DGCstrand(1,0)(2,1)(1,2)
\DGCstrand(2,0)(0,2)
\end{DGCpicture}
\ , \\
\begin{DGCpicture}
\DGCstrand(0,0)(1,1)
\DGCdot{0.25}
\DGCstrand(1,0)(0,1)
\end{DGCpicture}
-
\begin{DGCpicture}
\DGCstrand(0,0)(1,1)
\DGCdot{0.75}
\DGCstrand(1,0)(0,1)
\end{DGCpicture}
~=~
\begin{DGCpicture}
\DGCstrand(0,0)(0,1)
\DGCstrand(1,0)(1,)
\end{DGCpicture}
~=~
\begin{DGCpicture}
\DGCstrand(1,0)(0,1)
\DGCdot{0.75}
\DGCstrand(0,0)(1,1)
\end{DGCpicture}
-
\begin{DGCpicture}
\DGCstrand(1,0)(0,1)
\DGCdot{0.25}
\DGCstrand(0,0)(1,1)
\end{DGCpicture} \ ,
\end{gather}
and local relations among red-black strands
\begin{equation}
\begin{DGCpicture}[scale=0.55]
\DGCstrand(1,0)(0,1)(1,2)
\DGCstrand[Red](0,0)(1,1)(0,2)[$^{r_i}$`{\ }]
\end{DGCpicture}
~=~
\begin{DGCpicture}[scale=0.55]
\DGCstrand(1,0)(1,2)
\DGCdot{1}[r]{$r_i$}
\DGCstrand[Red](0,0)(0,2)[$^{r_i}$`{\ }]
\end{DGCpicture}
\ , \quad \quad \quad
\begin{DGCpicture}[scale=0.55]
\DGCstrand(0,0)(1,1)(0,2)
\DGCstrand[Red](1,0)(0,1)(1,2)[$^{r_i}$`{\ }]
\end{DGCpicture}
~=~
\begin{DGCpicture}[scale=0.55]
\DGCstrand(0,0)(0,2)
\DGCdot{1}[l]{$r_i$}
\DGCstrand[Red](1,0)(1,2)[$^{r_i}$`{\ }]
\end{DGCpicture}
\ , \label{eqn-Webster-double-crossing}
\end{equation}
\begin{equation}
\begin{DGCpicture}[scale=0.5]
\DGCstrand(0,0)(2,2)
\DGCstrand(1,0)(0,1)(1,2)
\DGCstrand[Red](2,0)(0,2)[$^{r_i}$`{\ }]
\end{DGCpicture}
~=~
\begin{DGCpicture}[scale=0.5]
\DGCstrand(0,0)(2,2)
\DGCstrand(1,0)(2,1)(1,2)
\DGCstrand[Red](2,0)(0,2)[$^{r_i}$`{\ }]
\end{DGCpicture}
\ , \quad \quad
\begin{DGCpicture}[scale=0.5]
\DGCstrand(1,0)(0,1)(1,2)
\DGCstrand(2,0)(0,2)
\DGCstrand[Red](0,0)(2,2)[$^{r_i}$`{\ }]
\end{DGCpicture}
~=~
\begin{DGCpicture}[scale=0.5]
\DGCstrand(1,0)(2,1)(1,2)
\DGCstrand(2,0)(0,2)
\DGCstrand[Red](0,0)(2,2)[$^{r_i}$`{\ }]
\end{DGCpicture}
\ , 
\end{equation}
\begin{equation}
\begin{DGCpicture}
\DGCstrand(0,0)(1,1)
\DGCdot{0.25}
\DGCstrand[Red](1,0)(0,1)[$^{r_i}$`{\ }]
\end{DGCpicture}
~=~
\begin{DGCpicture}
\DGCstrand(0,0)(1,1)
\DGCdot{0.75}
\DGCstrand[Red](1,0)(0,1)[$^{r_i}$`{\ }]
\end{DGCpicture}
\ , \quad \quad
\begin{DGCpicture}
\DGCstrand(1,0)(0,1)
\DGCdot{0.25}
\DGCstrand[Red](0,0)(1,1)[$^{r_i}$`{\ }]
\end{DGCpicture}
~=~
\begin{DGCpicture}
\DGCstrand(1,0)(0,1)
\DGCdot{0.75}
\DGCstrand[Red](0,0)(1,1)[$^{r_i}$`{\ }]
\end{DGCpicture}
, 
\end{equation}
\begin{equation}
\begin{DGCpicture}[scale=0.5]
\DGCstrand(1,0)(3,2)
\DGCstrand(3,0)(1,2)
\DGCstrand[Red](2,0)(1,1)(2,2)[$^{r_i}$`{\ }]
\end{DGCpicture}
\ - \
\begin{DGCpicture}[scale=0.5]
\DGCstrand(1,0)(3,2)
\DGCstrand(3,0)(1,2)
\DGCstrand[Red](2,0)(3,1)(2,2)[$^{r_i}$`{\ }]
\end{DGCpicture}
\ = \
\sum_{a+b=r_i-1}~
\begin{DGCpicture}[scale=0.5]
\DGCstrand(1,0)(1,2)
\DGCdot{1}[l]{$a$}
\DGCstrand(3,0)(3,2)
\DGCdot{1}[r]{$b$}
\DGCstrand[Red](2,0)(2,2)[${^{r_i}}$`{\ }]
\end{DGCpicture}
\ ,
\end{equation}
together with the \emph{cyclotomic relation} that a black strand, appearing on the far left of any diagram, annihilates the entire picture:
\begin{equation}\label{eqn-cyclotomic-Web}
\begin{DGCpicture}
\DGCstrand(1,0)(1,1)
\DGCcoupon*(1.25,0.25)(1.75,0.75){$\cdots$}
\end{DGCpicture}
~=~0.
\end{equation}
\end{subequations}
When $m=2$ and $\mathbf{r}=(r,s)$, we will write $W_n(\mathbf{r})$ as $W_n(r,s)$.
\end{defn}

A family of differentials was introduced on $W_n(\mathbf{r})$ in \cite{KQ}. Among the family, a unique differential, up to conjugation by (anti)-automorphisms of $W_n(\mathbf{r})$, is determined in \cite{QiSussan}. This is the differential which is compatible with the natural categorical half-quantum $\mf{sl}_2$ action.  We give this choice in the lemma below.  The fact that $\partial$ is indeed a $p$-differential is a straightforward calculation.

\begin{lem}
\label{lem-dif-on-Web}
The Webster algebra has a $p$-DG structure given by
\begin{gather}
\label{diffonWeb}
\dif\left(~
\begin{DGCpicture}
\DGCstrand(0,0)(1,1)
\DGCstrand[Red](1,0)(0,1)[$^{r_i}$`{\ }]
\end{DGCpicture}
~\right)=0 , 
\quad \quad \quad
\dif\left(~
\begin{DGCpicture}
\DGCstrand(1,0)(0,1)
\DGCstrand[Red](0,0)(1,1)[$^{r_i}$`{\ }]
\end{DGCpicture}
~\right)=r_i
\begin{DGCpicture}
\DGCstrand(1,0)(0,1)
\DGCdot{0.75}
\DGCstrand[Red](0,0)(1,1)[$^{r_i}$`{\ }]
\end{DGCpicture}
\ .
\end{gather}
\end{lem}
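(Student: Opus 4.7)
The plan is to verify two things: first, that the assignment of $\dif$ on the generators extends unambiguously to a degree-two derivation on $W_n(\mathbf{r})$, meaning the Leibniz rule respects every defining relation; and second, that $\dif^p\equiv 0$ on each generator. Throughout, the black generators are assigned the standard cyclotomic nilHecke differential $\dif(y_i)=y_i^2$, $\dif(\psi_i)=-y_i\psi_i-\psi_i y_{i+1}$ (whose compatibility with the black-on-black relations was already established in Section \ref{sec-nilHecke}), so the verification reduces to the mixed red-black relations and the cyclotomic relation.

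First I would run through each mixed relation in turn. For the double-crossing relations \eqref{eqn-Webster-double-crossing}, applying $\dif$ to the LHS by Leibniz annihilates the crossing of "zero-type" and produces $r_i$ times the doubled crossing with one extra dot on the black strand from the "nonzero-type" crossing; sliding that dot freely through the red strand (which is permitted by the dot-slide relations) and collapsing the resulting double crossing to $y^{r_i}$ via the relation itself matches $\dif(y_i^{r_i})=r_i y_i^{r_i+1}$ on the RHS. For the two mixed R3-type relations, exactly one of the four red-black crossings contributes nontrivially on each side, and the resulting dotted diagrams agree after dot-sliding. For the dot-slide relations across a red strand, $\dif$ of both sides produces the crossing with a $y^2$ plus (in the red-over-black case) two matching copies of the crossing with an extra dot, which again agree by the same dot-slide. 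The cyclotomic relation \eqref{eqn-cyclotomic} is trivially preserved, since the differential of any diagram with a leftmost black strand still has a leftmost black strand.

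The main obstacle will be the pigtail/Serre-type relation producing $\sum_{a+b=r_i-1} y_1^a y_3^b$ on the right-hand side. Here the LHS contains two red-black crossings of opposite type plus two black-black crossings; applying $\dif$ by Leibniz yields a collection of terms involving $r_i$ times the diagrams with one extra black dot near the red strand, plus the contributions from the nilHecke generators. After repeated use of the defining relation and dot-sliding, these must telescope to $\dif$ of the RHS, which itself expands to $\sum_{a+b=r_i-1}(a\,y_1^{a+1}y_3^b+b\,y_1^a y_3^{b+1})$. The cancellation rests on the precise coefficient $r_i$ in \eqref{diffonWeb}; any other choice would break compatibility here, which is the place where the differential is pinned down uniquely.

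For the $p$-nilpotency, an easy derivation calculation gives $\dif^k(y_i)=k!\,y_i^{k+1}$, vanishing at $k=p$, and the classical nilHecke argument yields $\dif^p(\psi_i)=0$. The "zero-type" crossing is annihilated by $\dif$. For the "nonzero-type" crossing $X$ with $\dif(X)=r_i X\cdot y$, induction using $\dif(X\cdot y^k)=(r_i+k)X\cdot y^{k+1}$ yields
\begin{equation*}
\dif^k(X) \;=\; \left(\prod_{j=0}^{k-1}(r_i+j)\right)X\cdot y^k.
\end{equation*}
At $k=p$ the product runs over a complete set of residues modulo $p$ and hence contains a factor divisible by $p$, so $\dif^p(X)=0$. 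This finishes the verification on generators, and the Leibniz rule propagates $\dif^p\equiv 0$ to all of $W_n(\mathbf{r})$.
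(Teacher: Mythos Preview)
Your proposal is correct and follows the same direct-verification approach implicit in the paper: check that $\dif$ respects each defining relation and that $\dif^p$ vanishes on generators. The paper itself does not carry out this calculation but simply remarks that it is straightforward and refers to \cite[Section~4.2]{QiSussan}; your outline is exactly the kind of argument one finds there, and your observation that $\dif^p$ is again a derivation in characteristic $p$ (so vanishing on generators suffices) is the right way to close the nilpotency step.
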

\begin{proof}
See \cite[Section 4.2]{QiSussan}.
\end{proof}

To a sequence $\kappa$ of vertical lines read from left to right: red strand labeled by $r$, followed by $b$ black strands, followed by red strand labeled by $s$, followed by $d$ black strands, we associate an idempotent $\e(\kappa) \in W_n(r,s)$:
\begin{equation}\label{eqn-Webster-idem-1}
\e(\kappa)
=
\begin{DGCpicture}
\DGCstrand[Red](-1.5,-1)(-1.5,1)[$^r$]
\DGCPLstrand(-1,-1)(-1,1)[$^1$]
\DGCPLstrand(0.5,-1)(0.5,1)[$^{b-1}$]
\DGCPLstrand(1,-1)(1,1)[$^b$]
\DGCcoupon*(-0.8,-0.8)(0.3,0.8){$\cdots$}
\DGCstrand[Red](1.5,-1)(1.5,1)[$^s$]
\DGCPLstrand(2,-1)(2,1)[$^1$]
\DGCPLstrand(3.5,-1)(3.5,1)[$^{d-1}$]
\DGCPLstrand(4,-1)(4,1)[$^d$]
\DGCcoupon*(2.2,-0.8)(3.8,0.8){$\cdots$}
\end{DGCpicture} \ .
\end{equation}

\begin{rem}
By the cyclotomic condition, $\e(\kappa)=0$ if $b > r$. Similarly, one may show that $\e(\kappa)=0$ if $b+d>r+s$. This follows from relations in the Webster algebra and \cite[Lemma 4.3]{KK}.  Also see \cite{HoffLau} for details about nilpotency degrees of elements in the cyclotomic nilHecke algebra.
\end{rem}

A special sequence $\kappa_0$, having all the black strands to the right of the red strands, will play an important role later:
\begin{equation}
\e(\kappa_0)
=
\begin{DGCpicture}
\DGCstrand[Red](-1.5,-1)(-1.5,1)[$^r$]
\DGCstrand[Red](-1,-1)(-1,1)[$^s$]
\DGCPLstrand(-0.5,-1)(-0.5,1)[$^1$]
\DGCPLstrand(0,-1)(0,1)[$^{2}$]
\DGCPLstrand(1,-1)(1,1)[$^{n}$]
\DGCcoupon*(0.2,-0.8)(0.8,0.8){$\cdots$}
\end{DGCpicture} \ .
\end{equation}

\begin{lem}\label{lem-Web-big-block}
There is a $p$-DG isomorphism
\[
\begin{array}{ccc}
\nh_n^l & \lra & \e(\kappa_0)W_n(r,s)\e(\kappa_0),\\ && \\
\begin{DGCpicture}
\DGCstrand(0,0)(0,1)[$^1$]
\DGCstrand(1,0)(1,1)[$^n$]
\DGCcoupon(-0.1,0.3)(1.1,0.7){$x$}
\DGCcoupon*(0.1,0)(0.9,0.2){$\cdots$}
\DGCcoupon*(0.1,0.8)(0.9,1){$\cdots$}
\end{DGCpicture}
& \mapsto &
\begin{DGCpicture}
\DGCstrand[Red](-0.5,0)(-0.5,1)[$^r$]
\DGCstrand[Red](-0.25,0)(-0.25,1)[$^s$]
\DGCstrand(0,0)(0,1)[$^1$]
\DGCstrand(1,0)(1,1)[$^n$]
\DGCcoupon(-0.1,0.3)(1.1,0.7){$x$}
\DGCcoupon*(0.1,0)(0.9,0.2){$\cdots$}
\DGCcoupon*(0.1,0.8)(0.9,1){$\cdots$}
\end{DGCpicture} \ .
\end{array} 
\]
\end{lem}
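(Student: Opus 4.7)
The plan is to verify that the displayed assignment is a well-defined isomorphism of $p$-DG algebras in three steps: well-definedness as a graded algebra homomorphism, compatibility with the differential, and bijectivity.

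First, I would check the algebra-homomorphism claim. The nilHecke relations \eqref{eqn-NH-relation} among the $n$ black strands hold tautologically on the right-hand side, since these are exactly the relations for black-black interaction in Definition \ref{def-Webster-algebra}. The only non-trivial relation is the cyclotomic condition $y_1^l=0$. For this, I would put $l=r+s$ dots on the leftmost black strand (i.e.\ the one immediately to the right of the $s$-red in $\kappa_0$) and argue as follows. Splitting the dots as $s+r$ and applying the bigon relation \eqref{eqn-Webster-double-crossing} (with $r_i=s$) converts the lower $s$ dots into a bigon whose middle portion has the black strand temporarily crossed over to the left of the $s$-red; sliding the remaining $r$ dots into that middle portion and applying the bigon relation again (with $r_i=r$) further pushes the black strand momentarily to the left of the $r$-red. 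In the middle of this picture the diagram factors through an idempotent $\e(\kappa'')$ whose sequence $\kappa''$ begins with a black strand, hence $\e(\kappa'')=0$ by the cyclotomic condition \eqref{eqn-cyclotomic}, and the original element vanishes.

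Next, for the $p$-DG compatibility, I would observe that the image of the map consists of diagrams in which the two red strands are straight vertical lines, carry no dots, and never cross any black strand. Consequently the only contributions to $\dif$ on the image come from dots on black strands and crossings of two black strands, and on these generators Lemma \ref{lem-dif-on-Web} (together with the black-black formulas implicit in $W_n(r,s)$) reproduces exactly the differential on $\nh_n^l$, namely $\dif(y_i)=y_i^2$ and $\dif(\psi_i)=-y_i\psi_i-\psi_i y_{i+1}$. Hence the map intertwines the differentials.

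Finally, for bijectivity, I would argue by a basis/dimension comparison. Surjectivity is the easier half: using the red-black crossing relations, any diagram representing an element of $\e(\kappa_0)W_n(r,s)\e(\kappa_0)$ can be straightened so that the two red strands become vertical lines on the extreme left, after which the remaining picture is a black-only, hence cyclotomic-nilHecke, diagram. Injectivity is the main technical obstacle and will require invoking Webster's basis theorem for $W_n(r,s)$: the stated basis of $W_n(r,s)$ in terms of Stendhal-type diagrams restricts, on the idempotent truncation $\e(\kappa_0)W_n(r,s)\e(\kappa_0)$, to the cellular basis of $\nh_n^l$ recalled in Section \ref{sec-cyclic-mod}, once the extraneous red strands are erased. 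Matching these bases (with the correct degrees) completes the proof. The main difficulty is thus controlling all the Webster relations finely enough to recognize the truncation as precisely $\nh_n^l$ and no quotient thereof; this can be done either by citing the general linear-independence result in Webster's monograph or, in this special two-red-strand case, by a direct argument comparing graded dimensions of $\e(\kappa_0)W_n(r,s)\e(\kappa_0)$ with $\dim\nh_n^l=\binom{l}{n}(n!)^2$, the latter being known from the Hu-Mathas cellular structure.
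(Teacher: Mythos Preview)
Your proposal is correct and essentially coincides with the paper's approach: the paper simply cites Webster \cite[Proposition 5.31]{Webcombined} for the algebra isomorphism and notes in one line that compatibility with the differentials is clear from the definitions. Your sketch unpacks what that citation contains (well-definedness of the map, the cyclotomic relation via the bigon relations, and bijectivity via Webster's basis theorem), but the substance and the ultimate reliance on Webster's linear-independence result are the same.
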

\begin{proof}
See \cite[Proposition 5.31]{Webcombined}. The compatibility with $p$-differentials is clear from the definitions of the differentials on both sides.
\end{proof}

\begin{defn}\label{def-projective-Webster}
To any sequence $\kappa$ there is a projective right module $W_n(r,s)$-module
\begin{equation*}
Q(\kappa):=\e(\kappa) W_n(r,s).
\end{equation*}
The projective module $Q(\kappa)$ carries a $p$-DG structure inherited from that of $W_n(r,s)$, which makes it into a cofibrant $p$-DG right module since $\dif(\e(\kappa))=0$.
\end{defn}

\subsection{Connection to quiver Schur algebras}
\label{subsec-conn-Web}
Our next goal is to give a diagrammatic description of the two-tensor quiver Schur algebras as certain blocks of Webster algebras.

\begin{defn}
Let $\kappa$ be a sequence of vertical lines read from left to right:
red strand labeled by $r$, followed by $b$ black strands, followed by red strand labeled by $s$, followed by $d$ black strands. 
\begin{enumerate}
\item[(1)] The element  ${\theta}_{\kappa}$ is obtained from the diagram with minimal number of crossings and no dots which takes the sequence of black and red boundary points at the bottom of the diagram governed by $\kappa$ to the sequence at the top 
governed by the sequence $\kappa_0$.
\item[(2)] The element ${\theta}_{\kappa}^*$ is obtained by reflecting the diagram for  ${\theta}_{\kappa}$ in the horizontal axis at the bottom of the diagram.
\end{enumerate}
\end{defn}

\begin{example}
Suppose $r=2$, $s=1$ and $\kappa$ is the sequence: red strand labeled by $2$, followed by two black strands, followed by a red strand labeled by $1$.
Then
\[
{\theta}_{\kappa} =
\begin{DGCpicture}[scale=0.5]
\DGCstrand[Red](1,0)(1,2)[$^{2}$`{\ }]
\DGCstrand(2,0)(5,2)
\DGCstrand(3,0)(6,2)
\DGCstrand[Red]/u/(6,0)(2,2)/u/[$^{1}$`{\ }]
\end{DGCpicture}
\ , \quad \quad
{\theta}^*_{\kappa} =
\begin{DGCpicture}[scale=0.5]
\DGCstrand[Red](1,0)(1,2)[$^{2}$`{\ }]
\DGCstrand(5,0)(2,2)
\DGCstrand(6,0)(3,2)
\DGCstrand[Red]/u/(2,0)(6,2)/u/[$^{1}$`{\ }]
\end{DGCpicture}
\ .
\]
\end{example}

We start by constructing a collection of submodules of the $p$-DG Webster algebra that restricts to the direct sum of modules 
$$G:=\bigoplus_{\lambda\in \mc{P}_n^{r,s}} e_{\lambda}G(\lambda)$$ over $\nh_n^l$ as $p$-DG modules.

Let $\lambda=(0^a1^b0^c1^d) \in \mathcal{P}_n^{r,s}$ as in Definition \ref{speciallambda}.
To such a $\lambda$ we associate a sequence $\kappa_{\lambda}$ of vertical lines read from left to right:
red strand labeled by $r$, followed by $b$ black strands, followed by red strand labeled by $s$, followed by $d$ black strands.

Recall from the previous subsection that, for a multi-partition $ \lambda=(0^a1^b0^c1^d) \in \mathcal{P}_n^{r,s} $ we have attached to it an idempotent $\e(\kappa_\lambda)$ (equation \eqref{eqn-Webster-idem-1}) and the corresponding projective module $Q(\kappa_\lambda)$. Now, for any two natural numbers $b,d$, we construct a thick version of the idemptent of \eqref{eqn-Webster-idem-1} as follows. We first place a diagram for the thick idempotent $e_b$ (equation \eqref{eqn-thick-idemp}) to the right of a red strand labeled $r$, then, to the right, we place a second red strand labeled by $s$ and another idempotent $e_d$:
\begin{equation}\label{eqn-Webster-idem-2}
\e_{(b,d)}
=
\begin{DGCpicture}[scale=0.75]
\DGCstrand[Red](-1.5,-1)(-1.5,1)[`$_r$]
\DGCstrand[Green](0,0)(0,1)[`$_b$]
\DGCstrand/u/(-1,-1)(0,0)/u/
\DGCstrand/u/(0.5,-1)(0,0)/u/
\DGCstrand/u/(1,-1)(0,0)/u/
\DGCcoupon*(-0.8,-0.8)(0.3,-0.6){$\cdots$}
\DGCstrand[Red](1.5,-1)(1.5,1)[`$_s$]
\DGCstrand[Green](3,0)(3,1)[`$_d$]
\DGCstrand/u/(2,-1)(3,0)/u/
\DGCstrand/u/(3.5,-1)(3,0)/u/
\DGCstrand/u/(4,-1)(3,0)/u/
\DGCcoupon*(2.2,-0.8)(3.2,-0.6){$\cdots$}
\end{DGCpicture} \ .
\end{equation}
Then $\e_{(b,d)}$ generates a ($p$-DG) right projective module over $W_n(r,s)$.

\begin{defn}\label{def-thick-Webster-idempotent}
 Given $\lambda=(0^a1^b0^c1^d)\in \mc{P}_n^{r,s}$, we will associate with it $\e_\lambda:=\e_{(b,d)}$ and define the corresponding projective module 
 $$ Q(\lambda)=\e_{\lambda} Q(\kappa_{\lambda}) \subset Q(\kappa_{\lambda}). $$
The module $Q(\lambda)$ inherits a $p$-DG submodule structure from $Q(\kappa_{\lambda})$.
\end{defn}

Now, for any $b,d\in \N$, we consider
\[
\HOM_{W_n(r,s)}(Q(\kappa_0),Q(\lambda))\cong \e_{(b,d)}W_n(r,s)\e(\kappa_0).
\]
The right hand side is naturally a module over $\e({\kappa_0})W_n(r,s)\e(\kappa_0)\cong \nh_n^l$ (Lemma \ref{lem-Web-big-block}). Diagrammatically, the module consists of diagrams of the form
\begin{equation}
\e_{\lambda}W_n(r,s)\e(\kappa_0)
\cong
\left\{~
\begin{DGCpicture}
\DGCstrand[Red](0,-2)(0,0)[`$_r$]
\DGCstrand[Green](0.75,-0.5)(0.75,0)[`$_b$]
\DGCstrand(1,-2)(0.75,-0.5)
\DGCstrand(1.5,-2)(0.75,-0.5)
\DGCstrand[Red](0.5,-2)(1.5,0)[`$_s$]
\DGCstrand[Green](2,-0.5)(2,0)[`$_d$]
\DGCstrand(1.75,-2)(2,-0.5)
\DGCstrand(2.25,-2)(2,-0.5)
\DGCcoupon*(1.35,-1.3)(1.75,-1.4){$\cdots$}
\DGCcoupon(0.8,-1.8)(2.4,-1.5){$x$}
\end{DGCpicture}~\Bigg|
x\in \nh_n^l\right\}.
\end{equation}

In particular, for $\lambda=(0^a1^b0^c1^d)\in \mc{P}_n^{r,s}$, we have an isomorphism of right $\nh_n^l$-modules
\[
\e_{\lambda}W_n(r,s)\e(\kappa_0)\cong e_{\lambda}G(\lambda).
\]
This is given as follows. We ``sweep'' the thickness-$b$ strand to the right of the red strand labeled $s$, i.e., multiplying on top of the diagram the element ${\theta}_\kappa$. Then we simplify the diagrams obtained using relation \eqref{eqn-Webster-double-crossing}. Finally we utilize the isomorphism of Lemma \ref{lem-Web-big-block}:
\begin{equation}
\begin{DGCpicture}
\DGCstrand[Red](0,-2)(0,0)[`$_r$]
\DGCstrand[Green](0.75,-0.5)(0.75,0)[`$_b$]
\DGCstrand(1,-2)(0.75,-0.5)
\DGCstrand(1.5,-2)(0.75,-0.5)
\DGCstrand[Red](0.5,-2)(1.5,0)[`$_s$]
\DGCstrand[Green](2,-0.5)(2,0)[`$_d$]
\DGCstrand(1.75,-2)(2,-0.5)
\DGCstrand(2.25,-2)(2,-0.5)
\DGCcoupon*(1.35,-1.3)(1.75,-1.4){$\cdots$}
\DGCcoupon(0.8,-1.8)(2.4,-1.5){$x$}
\end{DGCpicture}
\mapsto
\begin{DGCpicture}
\DGCstrand[Red](0,-2)(0,0)[`$_r$]
\DGCstrand[Green](0.5,-.7)(1.25,0)[`$_b$]
\DGCstrand(1,-2)(0.5,-.7)
\DGCstrand(1.5,-2)(0.5,-.7)
\DGCstrand[Red]/u/(0.5,-2)(0.5,-1.7)/u/(1.25,-0.7)/u/(0.5,0)/u/[`$_s$]
\DGCstrand[Green](2,-0.7)(2,0)[`$_d$]
\DGCstrand(1.75,-2)(2,-.7)
\DGCstrand(2.25,-2)(2,-.7)
\DGCcoupon*(1.35,-1.3)(1.75,-1.4){$\cdots$}
\DGCcoupon(0.7,-1.8)(2.3,-1.5){$x$}
\end{DGCpicture}
~=~
\begin{DGCpicture}
\DGCstrand[Red](0,-2)(0,0)[`$_r$]
\DGCstrand[Green](1.25,-.7)(1.25,0)[`$_b$]
\DGCcoupon(0.75,-0.5)(1.75,-0.2){$_{y_1^{s}\cdots y_b^s}$}
\DGCstrand(1,-2)(1.25,-.7)
\DGCstrand(1.5,-2)(1.25,-.7)
\DGCstrand[Red](0.5,-2)(0.5,0)[`$_s$]
\DGCstrand[Green](2,-0.7)(2,0)[`$_d$]
\DGCstrand(1.75,-2)(2,-.7)
\DGCstrand(2.25,-2)(2,-.7)
\DGCcoupon*(1.45,-1.2)(1.85,-1.3){$\cdots$}
\DGCcoupon(0.8,-1.8)(2.4,-1.5){$x$}
\end{DGCpicture}
~\mapsto~
\begin{DGCpicture}
\DGCstrand[Green](1.25,-.7)(1.25,0)[`$_b$]
\DGCcoupon(0.75,-0.5)(1.75,-0.2){$_{y_1^{s}\cdots y_b^s}$}
\DGCstrand(1,-2)(1.25,-.7)
\DGCstrand(1.5,-2)(1.25,-.7)
\DGCstrand[Green](2,-0.7)(2,0)[`$_d$]
\DGCstrand(1.75,-2)(2,-.7)
\DGCstrand(2.25,-2)(2,-.7)
\DGCcoupon*(1.45,-1.2)(1.85,-1.3){$\cdots$}
\DGCcoupon(0.8,-1.8)(2.4,-1.5){$x$}
\end{DGCpicture} \ .
\end{equation}
The sweeping map is always an injection \cite[Lemma 5.25]{Webcombined}. Furthermore, it is a $p$-DG homomorphism since, by Lemma \ref{lem-dif-on-Web}, we have
\[
\dif\left(~
\begin{DGCpicture}
\DGCstrand[Green](0,0)(1,1)
\DGCstrand[Red](1,0)(0,1)
\end{DGCpicture}
~\right)
= 0.
\]
We are now ready to establish the following.

\begin{prop}
For each $\lambda=(0^a1^b0^c1^d)\in \mc{P}_n^{r,s}$, there is an isomorphism of right $p$-DG modules over $\nh_n^l$
\[
\HOM_{W_n(r,s)}(Q(\kappa_0),Q(\lambda))\cong \e_{\lambda}W_n(r,s)\e(\kappa_0)\cong e_{\lambda}G(\lambda).
\]
\end{prop}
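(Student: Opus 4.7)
The first isomorphism is essentially tautological: for any projective right module $Q(\kappa_0) = \e(\kappa_0)W_n(r,s)$ and any right module $M$, one has $\HOM_{W_n(r,s)}(Q(\kappa_0), M) \cong M\e(\kappa_0)$ via $f\mapsto f(\e(\kappa_0))$, and this identification is automatically compatible with the differential since $\e(\kappa_0)$ is $\dif$-closed. Applying it to $M = Q(\lambda) = \e_\lambda W_n(r,s)$ gives $\e_\lambda W_n(r,s)\e(\kappa_0)$, with its natural right action by $\e(\kappa_0)W_n(r,s)\e(\kappa_0)\cong \nh_n^l$ (Lemma~\ref{lem-Web-big-block}).

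The heart of the statement is the second isomorphism. The plan is to define the \emph{sweeping map}
\[
\Theta_\lambda\maps \e_\lambda W_n(r,s)\e(\kappa_0)\lra e_\lambda G(\lambda), \qquad x\mapsto \dot\theta_{\kappa_\lambda}\cdot x,
\]
where $\dot\theta_{\kappa_\lambda}$ is the minimal-degree diagram that moves the thickness-$b$ bundle of black strands past the red strand labeled $s$, as illustrated in the paragraph preceding the proposition. Concretely, one first shows that $\Theta_\lambda$ lands in the subalgebra $\e(\kappa_0)W_n(r,s)\e(\kappa_0)$, at which point the isomorphism of Lemma~\ref{lem-Web-big-block} transports the image into $\nh_n^l$. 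Sweeping a thick $b$-strand across a red $s$-strand produces, by the double-crossing relation~\eqref{eqn-Webster-double-crossing} applied to each of the $b$ strands, the factor $y_1^s\cdots y_b^s$ on the top of the resulting diagram, so the image of the generator $\e_\lambda$ is precisely $e_b\cdot y_1^s y_2^s\cdots y_b^s\cdot e(\kappa_0)$, which after the identification of Lemma~\ref{lem-Web-big-block} is a scalar multiple of $e_\lambda y^\lambda$. Hence $\Theta_\lambda$ is a map of right $\nh_n^l$-modules whose image is contained in $e_\lambda y^\lambda\nh_n^l = e_\lambda G(\lambda)$.

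Injectivity of $\Theta_\lambda$ is exactly Webster's sweeping lemma~\cite[Lemma 5.25]{Webcombined}: stacking $\dot\theta_{\kappa_\lambda}$ on top of a diagram is an injection because the diagrams involved can be straightened back using the inverse sweep $\theta_{\kappa_\lambda}$ modulo diagrams of strictly higher degree, and the cyclotomic condition controls the remainder. Surjectivity can then be read off the basis description: both sides have the same dimension. On the Webster side, $\e_\lambda W_n(r,s)\e(\kappa_0)$ is spanned by the diagrams in which the $b+d$ black strands terminate at prescribed positions with arbitrary dots and arbitrary nilHecke permutations below, and Proposition~\ref{basisspecialG} provides a basis of $e_\lambda G(\lambda)$ with exactly the same combinatorial parameters (the index $j$ in Lemma~\ref{spansetspecialG} records how many black strands cross the red $s$-strand from right to left). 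Matching these two bases under $\Theta_\lambda$ yields the isomorphism of graded right $\nh_n^l$-modules.

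It remains to verify that $\Theta_\lambda$ intertwines the differentials. This is the step where the specific choice of differential in Lemma~\ref{lem-dif-on-Web} is essential: the generator $\dot\theta_{\kappa_\lambda}$ is built entirely out of crossings of the form ``black strand over red strand from lower-right to upper-left,'' and by \eqref{diffonWeb} each such crossing satisfies $\dif(\cdot)=0$. Therefore multiplication by $\dot\theta_{\kappa_\lambda}$ commutes with $\dif$, which makes $\Theta_\lambda$ a morphism of $p$-DG modules. The main subtlety, and the step one must carry out carefully, is checking that the combinatorial basis match above is consistent with the sweep: a black strand that ends up to the right of the red $s$-strand in $\e_\lambda$ does not pick up any $y_i^s$ factor, while one that ends to its left does, and this is exactly the split recorded by the index $j$ in Lemma~\ref{spansetspecialG}. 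Once this bookkeeping is matched, the three claimed isomorphisms are all established.
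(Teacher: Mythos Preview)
Your overall architecture matches the paper's: use the sweeping map $x\mapsto \dot\theta_{\kappa_\lambda}\cdot x$, invoke Webster's injectivity lemma, and use Lemma~\ref{lem-dif-on-Web} to see that $\dif(\dot\theta_{\kappa_\lambda})=0$ so the map is $p$-DG. The first isomorphism and the $p$-DG compatibility are handled correctly.

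However, there is a genuine gap in your identification of the image. You write that after sweeping, the generator $\e_\lambda$ maps to $e_{(b,d)}(y_1\cdots y_b)^s$, ``which \dots\ is a scalar multiple of $e_\lambda y^\lambda$.'' This is false. With $\lambda=(0^a1^b0^c1^d)$ and $s=c+d$ one has
\[
e_\lambda y^\lambda = e_{(b,d)}\,y_1^{b+s-1}y_2^{b+s-2}\cdots y_b^{\,s}\;y_{b+1}^{d-1}\cdots y_{b+d}^{0},
\]
which differs from $e_{(b,d)}(y_1\cdots y_b)^s$ already in degree (the discrepancy is $b(b-1)+d(d-1)$). So your containment ``image $\subset e_\lambda G(\lambda)$'' is not established, and the subsequent dimension count cannot close the argument as written. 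What is obvious is the \emph{opposite} inclusion $e_\lambda G(\lambda)\subset e_{(b,d)}(y_1\cdots y_b)^s\nh_n^l$.

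The paper fixes exactly this point, and does so without any dimension count or basis matching on the Webster side: it simply proves that the two cyclic generators generate the same right ideal of $\nh_n^l$ via the identity
\[
e_{(b,d)}(y_1\cdots y_b)^s
= e_{(b,d)}(y_1\cdots y_b)^s e_{(b,d)}
= e_\lambda y^\lambda\,\psi_{w_{(b,d)}},
\]
where $w_{(b,d)}$ is the longest element of $S_b\times S_d$. Combined with the easy inclusion, this gives $e_{(b,d)}(y_1\cdots y_b)^s\nh_n^l = e_\lambda G(\lambda)$ directly. If you want to keep your dimension-count route, you would need an independent basis theorem for $\e_\lambda W_n(r,s)\e(\kappa_0)$ (which you assert but do not prove); the paper's algebraic identity is both shorter and self-contained.
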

\begin{proof}
By the discussion above, it suffices to show that the element
$e_{(b,d)}(y_1\cdots y_b)^s$ generates the same $p$-DG right ideal of $\nh_n^l$ as $e_\lambda y^\lambda$. It is clear that $e_\lambda G(\lambda) \subset e_{(b,d)}(y_1\cdots y_b)^s\nh_n^l $. The reverse inclusion holds because
\[
e_{(b,d)}(y_1\cdots y_b)^s=e_{(b,d)}\cdot e_{(b,d)} (y_1\cdots y_b)^s= e_{(b,d)}(y_1\dots y_b)^s e_{(b,d)}=e_{\lambda}y^\lambda \psi_{w_{(b,d)}},
\]
where $w_{(b,d)}$ is the longest element in the parabolic subgroup $\mf{S}_b\times \mf{S}_d\subset \mf{S}_{n}$.
\end{proof}

Summing over $\lambda\in \mc{P}_n^{r,s}$, we set
\begin{equation}
Q:=\bigoplus_{\lambda\in \mc{P}_n^{r,s}}Q(\lambda).
\end{equation}
We have shown that
\begin{equation}
\HOM_{W_n(r,s)}(Q(\kappa_0), Q)
\cong \bigoplus_{\lambda\in \mc{P}_n^{r,s}}e_\lambda G(\lambda)=G.
\end{equation}

\begin{thm}\label{thm-iso-to-Webster-block}
There is an isomorphism of $p$-DG algebras 
\begin{equation*}
S_n(r,s) \cong \END_{W_n(r,s)}(Q).
\end{equation*}
\end{thm}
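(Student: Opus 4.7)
The plan is to realize both $S_n(r,s)$ and $\END_{W_n(r,s)}(Q)$ as idempotent corners of a common ambient $p$-DG algebra, and then invoke the double centralizer property between the Webster algebra and the cyclotomic nilHecke algebra.

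First I would set $\e_Q := \sum_{\lambda \in \mathcal{P}_n^{r,s}} \e_\lambda \in W_n(r,s)$. Since the thick nilHecke idempotents $e_b \otimes e_d$ appearing in $\e_\lambda$ are $\dif$-stable (Proposition \ref{pdgidempotprop}), and since distinct $\e_\lambda$'s live in different ``red-black strand blocks'' of $W_n(r,s)$ (hence are pairwise orthogonal), $\e_Q$ is a well-defined $\dif$-stable idempotent whose right ideal realizes $Q$. Consequently
$$\END_{W_n(r,s)}(Q) \;\cong\; \e_Q\, W_n(r,s)\, \e_Q$$
as $p$-DG algebras.

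Next I would apply Webster's double centralizer property \cite[Theorem 5.34]{Webcombined}, which asserts that left multiplication gives an algebra isomorphism
$$W_n(r,s) \;\xrightarrow{\sim}\; \END_{\nh_n^l}\bigl(W_n(r,s)\,\e(\kappa_0)\bigr),$$
where the right $\nh_n^l$-action on $W_n(r,s)\,\e(\kappa_0)$ is induced via Lemma \ref{lem-Web-big-block}. Cutting both sides by the idempotent $\e_Q$ identifies the left-hand side with $\e_Q W_n(r,s) \e_Q$, and the right-hand side with
$$\END_{\nh_n^l}\bigl(\e_Q W_n(r,s) \e(\kappa_0)\bigr) \;=\; \END_{\nh_n^l}(G) \;=\; S_n(r,s),$$
where the first equality uses the $p$-DG identification $\e_Q W_n(r,s) \e(\kappa_0) \cong G$ established just before the theorem. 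Chaining these isomorphisms yields the desired $\END_{W_n(r,s)}(Q) \cong S_n(r,s)$.

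The main obstacle is upgrading Webster's double centralizer from the graded setting to the $p$-DG setting. The underlying algebra isomorphism is classical; what must be checked is that the two $p$-differentials match. Since $\dif(\e(\kappa_0)) = 0$ (the diagram for $\e(\kappa_0)$ contains no dots or crossings), the $p$-DG structure on $W_n(r,s)\,\e(\kappa_0)$ is the restriction of that on $W_n(r,s)$, and the induced $p$-differential on $\END_{\nh_n^l}$ is the standard one of Definition \ref{def-Schur-algebra}; compatibility with left multiplication is then automatic. Once this refinement of Webster's theorem is in place, the truncation by $\e_Q$ passes to the $p$-DG level for free because $\e_Q$ is itself $\dif$-stable, completing the argument.
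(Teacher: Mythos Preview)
Your strategy is sound, but one claim needs correction: $\e_Q$ is \emph{not} killed by $\dif$. Proposition~\ref{pdgidempotprop} only asserts that the right ideal $e_{\mathbf{i}}\nh_n^l$ is $\dif$-stable; its proof in fact computes $\dif(e_n)=-e_n\sum_i(i-1)y_i$, which is nonzero for $n\ge 2$. Hence $\e_Q W_n(r,s)\e_Q$ is not a $p$-DG subalgebra simply by restriction, and your final sentence (``the truncation by $\e_Q$ passes to the $p$-DG level for free because $\e_Q$ is itself $\dif$-stable'') does not go through as stated. The repair is straightforward: work directly with $Q=\bigoplus_\lambda Q(\lambda)$ as a $p$-DG right $W_n(r,s)$-module (Definition~\ref{def-thick-Webster-idempotent}) and with the induced endomorphism differential on $\END_{W_n(r,s)}(Q)$. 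Post-composition $f\mapsto f\circ(-)$ then defines a map
\[
\END_{W_n(r,s)}(Q)\longrightarrow\END_{\nh_n^l}\bigl(\HOM_{W_n(r,s)}(Q(\kappa_0),Q)\bigr)=\END_{\nh_n^l}(G)=S_n(r,s),
\]
and a one-line Leibniz check shows it intertwines the two differentials. Webster's double centralizer then supplies the underlying algebra isomorphism, and you are done.

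This route differs from the paper's. You import Webster's double centralizer $W_n(r,s)\cong\END_{\nh_n^l}(W_n(r,s)\e(\kappa_0))$ as a black box and then cut by $\e_Q$. The paper instead stays inside its own Section~\ref{sec-double} framework: it argues that $G$ is faithful over the symmetric Frobenius algebra $\nh_n^l$ (because $e_{\lambda_0}G(\lambda_0)$ contains the indecomposable projective, Proposition~\ref{prop-projective-as-summand}), invokes Theorem~\ref{thm-exactness-Soergel} to get full faithfulness of the Soergel functor $\HOM_{S_n(r,s)}(G,-)$ on projectives, and then combines Lemma~\ref{lem-two-duals-isomorphic} with the self-duality $G\cong G^*$ (Proposition~\ref{prop-G-self-dual}) and the identification $G\cong\HOM_{W_n(r,s)}(Q(\kappa_0),Q)$ from the preceding Proposition. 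Your argument is more direct and lets the Webster diagrammatics do the work; the paper's is self-contained (no appeal to \cite{Webcombined} beyond what it has already quoted) but correspondingly more roundabout.
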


\begin{proof}
The result follows by applying the general results of Section \ref{sec-double}. More specifically, observe that the $\nh_n^l$-module $G=\oplus_{\lambda\in \mc{P}_n^{r,s}}e_\lambda G(\lambda)$ is a faithful representation. Indeed, by Proposition \ref{prop-projective-as-summand}, the module $e_{(0^{l-n}1^n)} G(0^{l-n}1^n)$ contains $Y(0^{l-n}1^n)$. By Theorem \ref{thm-exactness-Soergel}, the functor
\[
\HOM_{S_n(r,s)}(G,-):(S_n(r,s),\dif)\dmod\lra (\nh_n^l,\dif)\dmod
\]
is fully faithful on cofibrant summands of $S_n(r,s)$. 
By Lemma \ref{lem-two-duals-isomorphic}, we have
\[
\HOM_{S_n(r,s)}(G,S_n(r,s))\cong \HOM_{\nh_n^l}(G,\nh_n^l)\cong G^* \cong G
\]
since $G$ is graded self-dual (Proposition \ref{prop-G-self-dual}). 
Now the result follows from the previous Proposition identifying the space with $\HOM_{W_n(r,s)}(Q(\kappa_0),Q)$.
\end{proof}

\subsection{Examples}
\label{examplesubsection}
In this subsection, we give some examples of the two-tensor quiver Schur algebras and point out a subtle difference with the $p$-DG (thin) Webster algebra. 

\paragraph{Example: $s=0$.} We first consider this very special case of the two-tensor quiver Schur algebra, and compare the results with those in Section \ref{subsec-cat-simples}.

In this situation, the set $\mc{P}_n^{r,0}=\mc{P}_n^{l,0}$ consists of a unique partition $\lambda_n=(0^{l-n}1^n)$ for each $n\in \{0,\dots, l\}$, associated with which are 
$$y^{\lambda_n}= y_1^{n-1}y_2^{n-2}\cdots y_n^0, \quad \quad \quad e_{\lambda_n}=y_1^{n-1}y_2^{n-2}\cdots y_n^0\psi_{w_0}=e_n.$$ 
We then have, by Lemma \ref{Yindecomspecial}, that
\[
Y(\lambda_n)\cong e_{\lambda_n}G(\lambda_n)= e_{n}y_1^{n-1}y_2^{n-2}\cdots y_n^0\nh_n^l.
\]
As in the proof of Lemma \ref{lem-eG-equals-EY}, we see that 
\begin{equation}
Y(\lambda_n)\cong e_{n}\nh_n^l
\end{equation}
is in fact, up to grading shifts, the unique indecomposable projective module over $\nh_n^l$ equipped with the right ideal $p$-DG module structure. It follows that
\begin{equation}
S_n(l,0)=\END_{\nh_n^l}(Y(\lambda_n))\cong \mH^*(\mathrm{Gr}(n,l)),
\end{equation}
with the latter ring identified with the center of $\nh_n^l$. The diagrammatics of the two-tensor quiver Schur algebra specializes into
\begin{equation}
S_n(l,0)\cong \left\{
~\begin{DGCpicture}
\DGCstrand[Red](0.2,0)(0.2,1)[$^l$`{\ }]
\DGCstrand[Green](1,0)(1,1)[$^n$`{\ }]
\DGCcoupon(0.6,0.3)(1.4,0.7){$x$}
\end{DGCpicture}
~\Bigg|
x\in \mH^*(\mathrm{Gr}(n,l))
\right\}.
\end{equation}

\begin{rem} 
Since the cohomology ring of $\mathrm{Gr}(n,l)$ is a positively graded local $p$-DG algebra, Theorem \ref{thm-K-group-positive} implies immediately that 
$$K_0(\mc{D}^c(S_n(l,0)))\cong \mathbb{O}_p[Y(\lambda_n)],$$
where $[Y(\lambda_n)]$ stands for the symbol of the $p$-DG module $Y(\lambda)$ in the Grothendieck group. One should compare this with Remark \ref{rmk-restriction-on-l} for cyclotomic nilHecke algebras. 

As we will see as in the next section (Theorem \ref{mainthm}), the direct sum of the derived categories
\[
\mc{D}^c(S(l,0)):=\bigoplus_{n=0}^l\mc{D}^c(S_n(l,0))
\]
categorifies the Weyl module for $\dot{U}_{\mathbb{O}_p}$ at a prime root of unity, while $\mc{D}^c(\nh^l)$ only categorifies the submodule generated by the highest weight vector.
\end{rem}

\paragraph{Example: $r=2$ and $s=1$.}

Consider the data $r=2, s=1$ and $n=2$, which is a continuation of Example \ref{l=3n=2}.  In this case $\mathcal{P}_2^{2,1}$ contains only two elements
\begin{equation*}
\mu=(~\yng(1)~,~\yng(1)~,~\emptyset~) \hspace{.5in} \lambda=(~\emptyset~,~\yng(1)~,~\yng(1)~).
\end{equation*}
There is a third element (Example \ref{eg-23partitions})
$$\gamma=(~\yng(1)~,\emptyset,~\yng(1)~) \in \mathcal{P}_2^3$$ 
which is not in 
$\mathcal{P}_2^{2,1}$. The corresponding generating objects of the category $\mc{NH}_2^{2,1}$ are:
\begin{equation*}
e_{2}G(\mu) \cong G(\mu) \hspace{.5in} e_{(1,1)}G(\lambda)=G(\lambda).
\end{equation*}
A basis of the endomorphism algebra of $G(\mu) \oplus G(\lambda)$ is given by
\begin{align*}
\HOM_{\nh_2^3}(G(\mu),G(\mu)) &= \{ \Psi^{\mu \mu}_{ee \mu} \} \\
\HOM_{\nh_2^3}(G(\mu),G(\lambda)) &= \{ \Psi^{\lambda \mu}_{ee \mu}, \Psi^{\lambda \mu}_{s_1 e \mu} \} \\
\HOM_{\nh_2^3}(G(\lambda),G(\mu)) &= \{ \Psi^{\mu \lambda}_{ee \mu}, \Psi^{\mu \lambda}_{e s_1\mu} \} \\
\HOM_{\nh_2^3}(G(\lambda),G(\lambda)) &= 
\{ \Psi^{\lambda \lambda}_{ee \lambda}, \Psi^{\lambda \lambda}_{ee \gamma}, \Psi^{\lambda \lambda}_{ee \mu}, \Psi^{\lambda \lambda}_{s_1 e \mu},
\Psi^{\lambda \lambda}_{e s_1 \mu}, \Psi^{\lambda \lambda}_{s_1 s_1 \mu}  \}.
\end{align*}

The subalgebra $\END_{\nh_2^3}(G(\mu) \oplus G(\lambda))$ is isomorphic to the subalgebra $\END_{W_2(2,1)}(Q(\mu) \oplus Q(\lambda))$ inside the Webster algebra via the isomorphism of Theorem \ref{thm-iso-to-Webster-block}. It is explicitly given as follows. Let us recall from equation \eqref{eqn-thick-idemp} that a thickness-$2$ strand represents the idempotent $e_2$, and with the induced endomorphism differential acting by zero: 
\[
\begin{DGCpicture}
\DGCstrand[Green](0,0)(0,1)[$^2$`{\ }]
\end{DGCpicture}
=
\begin{DGCpicture}
\DGCstrand(0,0)(1,1)[{\ }`{\ }]
\DGCstrand(1,0)(0,1)
\DGCdot{0.75}
\end{DGCpicture} \ ,
\quad \quad \quad \quad \quad 
\dif\left(~
\begin{DGCpicture}
\DGCstrand[Green](0,0)(0,1)[$^2$`{\ }]
\end{DGCpicture}
~\right)=0.
\]
Strands of thickness $1$ are depicted by thin strands.

Then we have the following identifications defined on bases:
\[
\bullet~\HOM_{\nh_2^3}(G(\mu),G(\mu)) \cong \HOM_{W_2(2,1)}(Q(\mu),Q(\mu)):
\]

\[
\Psi_{ee \mu}^{\mu \mu} \mapsto
\begin{DGCpicture}[scale=0.5]
\DGCstrand[Red](0,0)(0,2)[$^{2}$`{\ }]
\DGCstrand[Red](3,0)(3,2)[$^{1}$`{\ }]
\DGCstrand[Green](1.5,0)(1.5,2)[$^{2}$`{\ }]
\end{DGCpicture} \ .
\]

\[
\bullet~\HOM_{\nh_2^3}(G(\lambda),G(\mu)) \cong \HOM_{W_2(2,1)}(Q(\lambda),Q(\mu)):
\]

\[
\Psi^{\mu \lambda}_{e s_1 \mu}  \mapsto
\begin{DGCpicture}[scale=0.5]
\DGCstrand[Red](0,0)(0,2)[`$_{2}$]
\DGCstrand[Green](1.5,1)(1.5,2)[`$_2$]
\DGCstrand/u/(1,0)(1.5,1)/u/
\DGCstrand/u/(3,0)(1.5,1)/u/
\DGCstrand[Red](2,0)(3,2)[`$_{1}$]
\end{DGCpicture}
\ , \quad \quad \quad
\Psi^{\mu \lambda}_{e e \mu}  \mapsto
\begin{DGCpicture}[scale=0.5]
\DGCstrand[Red](0,0)(0,2)[`$_{2}$]
\DGCstrand[Green](1.5,1)(1.5,2)[`$_2$]
\DGCstrand/u/(1,0)(1.5,1)/u/
\DGCdot{0.5}
\DGCstrand/u/(3,0)(1.5,1)/u/
\DGCstrand[Red](2,0)(3,2)[`$_{1}$]
\end{DGCpicture} \ .
\]

\[
\bullet~\HOM_{\nh_2^3}(G(\mu),G(\lambda)) \cong \HOM_{W_2(2,1)}(Q(\mu),Q(\lambda)):
\]

\[
\Psi^{\lambda \mu}_{s_1 e \mu} \mapsto
\begin{DGCpicture}[scale=0.5]
\DGCstrand[Red](0,0)(0,2)[$^{2}$`{\ }]
\DGCstrand[Green](1.5,0)(1.5,1)[$^2$]
\DGCstrand/u/(1.5,1)(1,2)/u/
\DGCstrand/u/(1.5,1)(3,2)/u/
\DGCstrand[Red](3,0)(2,2)[$^{1}$`{\ }]
\end{DGCpicture}
\ , \quad \quad 
\Psi^{\lambda \mu}_{e e \mu} \mapsto
\begin{DGCpicture}[scale=0.5]
\DGCstrand[Red](0,0)(0,2)[$^{2}$`{\ }]
\DGCstrand[Green](1.5,0)(1.5,1)[$^2$]
\DGCstrand/u/(1.5,1)(1,2)/u/
\DGCdot{1.5}
\DGCstrand/u/(1.5,1)(3,2)/u/
\DGCstrand[Red](3,0)(2,2)[$^{1}$`{\ }]
\end{DGCpicture} \ .
\]

\[
\bullet~\HOM_{\nh_2^3}(G(\lambda),G(\lambda)) \cong \HOM_{W_2(2,1)}(Q(\lambda),Q(\lambda)):
\]

\[
\Psi^{\lambda \lambda}_{ee \lambda} \mapsto
\begin{DGCpicture}[scale=0.5]
\DGCstrand[Red](0,0)(0,2)[$^{2}$`{\ }]
\DGCstrand(1,0)(1,2)
\DGCstrand(3,0)(3,2)
\DGCstrand[Red](2,0)(2,2)[$^{1}$`{\ }]
\end{DGCpicture}
\ , \quad \quad
\Psi^{\lambda \lambda}_{ee \gamma} \mapsto
\begin{DGCpicture}[scale=0.5]
\DGCstrand[Red](0,0)(0,2)[$^{2}$`{\ }]
\DGCstrand(1,0)(1,2)
\DGCdot{1}
\DGCstrand(3,0)(3,2)
\DGCstrand[Red](2,0)(2,2)[$^{1}$`{\ }]
\end{DGCpicture} \ ,
\]

\[
\Psi^{\lambda \lambda}_{s_1 s_1 \mu} \mapsto
\begin{DGCpicture}[scale=0.5]
\DGCstrand[Red](0,0)(0,2)[$^{2}$`{\ }]
\DGCstrand(1,0)(3,2)
\DGCstrand(3,0)(1,2)
\DGCstrand[Red](2,0)(3,1)(2,2)[$^{1}$`{\ }]
\end{DGCpicture}
\ , \quad \quad 
\Psi^{\lambda \lambda}_{e s_1 \mu} \mapsto
\begin{DGCpicture}[scale=0.5]
\DGCstrand[Red](0,0)(0,2)[$^{2}$`{\ }]
\DGCstrand(1,0)(3,2)
\DGCstrand(3,0)(1,2)
\DGCdot{1.5}
\DGCstrand[Red](2,0)(3,1)(2,2)[$^{1}$`{\ }]
\end{DGCpicture}
\ , \quad \quad 
\Psi^{\lambda \lambda}_{s_1 e \mu} \mapsto
\begin{DGCpicture}[scale=0.5]
\DGCstrand[Red](0,0)(0,2)[$^{2}$`{\ }]
\DGCstrand(1,0)(3,2)
\DGCdot{.5}
\DGCstrand(3,0)(1,2)
\DGCstrand[Red](2,0)(3,1)(2,2)[$^{1}$`{\ }]
\end{DGCpicture}
\ , \quad \quad 
\Psi^{\lambda \lambda}_{ee \mu} \mapsto
\begin{DGCpicture}[scale=0.5]
\DGCstrand[Red](-2,0)(-2,2)[$^{2}$`{\ }]
\DGCstrand(-1,0)(-1,2)
\DGCdot{1}
\DGCstrand(1,0)(0,1)(1,2)
\DGCstrand[Red](0,0)(1,1)(0,2)[$^{1}$`{\ }]
\end{DGCpicture}
\ .
\]

It is straightforward to check that this vector space isomorphism is in fact an algebra isomorphism as well. We leave the details as an exercise to the reader.

\section{A categorification of a tensor product}
\label{sec-main-thm}
\subsection{The main theorem}

We start by collecting some easily deduced consequences of the previous sections. For convenience, we define
\begin{equation}
(S(r,s),\dif)\dmod:=\bigoplus_{n=0}^l \left(S_n(r,s),\dif\right)\dmod, \quad \quad (S^b(r,s),\dif)\dmod:=\bigoplus_{n=0}^l (S_n^b(r,s),\dif)\dmod
\end{equation}
where we recall that $S_n^b(r,s)$ is the basic algebra introduced in Definition \ref{def-basic-two-tensor-algebra}.
Likewise, we define their $p$-DG derived categories as
\begin{equation}
\mc{D}(S(r,s)):=\bigoplus_{n=0}^l \mc{D}(S_n(r,s)),\quad \quad \quad \mc{D}(S^b(r,s)):=\bigoplus_{n=0}^l \mc{D}(S^b_n(r,s)).
\end{equation}

We consider a collection of cofibrant $p$-DG modules over $S^b_n(r,s)$ as follows. Let $\mu\in \mc{P}_n^{r,s}$. Then the composition of the natural projection and inclusion defines an idempotent
\begin{equation}
\xi_\mu: \bigoplus_{\lambda\in \mc{P}_n^{r,s}}Y(\lambda)\rightarrow Y(\mu)\hookrightarrow \bigoplus_{\lambda\in \mc{P}_n^{r,s}}Y(\lambda)
\end{equation}
in the $p$-DG algebra $S_n^{b}(r,s)$. Thus we define the left $(S_n^b(r,s),\dif)$-module
\begin{equation}
P^b(\mu):= S_n^b(r,s)\xi_{\mu}\cong \HOM_{\nh_n^l}\left(Y(\mu), \bigoplus_{\lambda\in \mc{P}_n^{r,s}}Y(\lambda)\right).
\end{equation}
It is clearly a direct summand in $S_n^b(r,s)$, and hence is a cofibrant $p$-DG module.

\begin{prop}\label{prop-main-thm-basic-version}
\begin{enumerate}
\item[(i)] There is an action of the $p$-DG $2$-category $(\dot{\mathcal{U}},\dif)\dmod$ on 
$(S^b(r,s),\dif)\dmod$. Under localization, the action induces an action of the derived
$p$-DG $2$-category $\mc{D}(\dot{\mc{U}})$ on $\mc{D}(S^b(r,s))$.
\item[(ii)] On the level of Grothendieck groups, there is an isomorphism of modules over $K_0(\mc{D}^c(\dot{\mc{U}}))\cong\dot{U}_{\mathbb{O}_p}$: 
\begin{equation*}
K_0(\mathcal{D}^c(\oplus_{n=0}^l S^b_n(r,s))) \cong V_r \otimes_{\mathbb{O}_p} V_s
\end{equation*}
where $V_r$ and $V_s$ are the Weyl modules of rank
$r+1$ and $s+1$ over $\mathbb{O}_p$ respectively.
\item[(iii)]For each $\lambda=(0^a 1^b 0^c 1^d) \in \mc{P}_n^{r,s}$, the indecomposable module $P^b(\lambda)$ descends to the canonical basis element $v_b \diamond v_d$ in the Grothendieck group.
\end{enumerate}
\end{prop}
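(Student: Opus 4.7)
For part (i), the plan is to deploy the $p$-DG extension framework developed in Section \ref{subsec-ext-cat-act-ii} and Section \ref{sec-double}. First I would note that each cyclotomic nilHecke algebra $\nh_n^l$ is symmetric Frobenius by Proposition \ref{symm}, and that $M_n := \bigoplus_{\lambda \in \mc{P}_n^{r,s}} Y(\lambda)$ is a faithful right $\nh_n^l$-module because the minimal $\lambda_0 \in \mc{P}_n^{r,s}$ gives $Y(\lambda_0)$ equal to the indecomposable projective by Proposition \ref{prop-projective-as-summand}. The double centralizer property (Theorem \ref{thm-double-centralizer}) then identifies $S_n^b(r,s) = \END_{\nh_n^l}(M_n)$ with an algebra satisfying the hypotheses of Corollary \ref{cor-ext-cat-actions}. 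Theorem \ref{EFact} verifies that the bimodules defining $\mf{E}$ and $\mf{F}$ preserve the filtered $p$-DG envelope of the $Y(\lambda)$'s (equivalently $M_n^* \cong M_n$ since $\nh_n^l$ is symmetric), so Proposition \ref{prop-pdg-extension} applies to yield extended $p$-DG functors $\mf{E}^!$ and $\mf{F}^!$ preserving compact cofibrant $S_n^b(r,s)$-modules. The remaining $2$-morphism generators (dot, crossing, adjunction unit and counit) transport through the extension by the general fact that $\mc{V} \circ \mc{I}$ is isomorphic to the identity (Lemma \ref{lem-comp-identity-II}); invoking Theorem \ref{equiv-of-pdgRoug-pdgLauda} it suffices to check the Rouquier version of the relations, which holds because the relations already hold on $\oplus_n(\nh_n^l,\dif)\dmod$ by Theorem \ref{catofVl}. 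Descending to the derived level is standard via Theorem \ref{thm-qis-functors}, since the extended bimodules remain cofibrant on each side.

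For part (ii), I would observe that $S_n^b(r,s)$ is a positive $p$-DG algebra by Theorem \ref{homspaceY1Y2}, so Theorem \ref{thm-K-group-positive} gives
\[
K_0(\mc{D}^c(S_n^b(r,s))) \cong K_0'(S_n^b(r,s)) \otimes_{\Z[q,q^{-1}]} \mathbb{O}_p .
\]
The classical Grothendieck group $K_0'(S_n^b(r,s))$ is a free $\Z[q,q^{-1}]$-module with basis given by the classes of the indecomposable projective modules $P^b(\mu)$ for $\mu \in \mc{P}_n^{r,s}$. Summing over $n$, the resulting $\mathbb{O}_p$-module has rank $\sum_{n=0}^l |\mc{P}_n^{r,s}| = (r+1)(s+1)$, agreeing with the $\mathbb{O}_p$-rank of $V_r \otimes_{\mathbb{O}_p} V_s$. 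The categorical action of part (i) endows this Grothendieck group with a $\dot{U}_{\mathbb{O}_p}$-module structure. To identify it with $V_r \otimes_{\mathbb{O}_p} V_s$, I would exhibit a highest-weight vector at weight $r+s$ (coming from $[P^b(1^r 0^s)]$ at $n = 0$ times $[P^b(1^r 0^s)]$ at the top weight space) and show the action is generated from this by $F$'s in the expected way by the classification of indecomposables in Theorem \ref{classificationprop}.

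For part (iii), the plan is direct comparison with Remark \ref{canbasisremark}. The classification of indecomposables in Theorem \ref{classificationprop} gives
\[
Y(\lambda) \cong
\begin{cases}
\mf{F}^{(d)} \mf{E}^{(a)} Y(1^{r} 0^{s}) & \text{if } b \leq c, \\
\mf{E}^{(a)} \mf{F}^{(d)} Y(1^{r} 0^{s}) & \text{if } b \geq c.
\end{cases}
\]
The highest weight module $Y(1^r 0^s)$ lies in weight space $l = r+s$ and, as the generator of the irreducible cyclotomic nilHecke summand there, descends in the Grothendieck group to $v_r \otimes v_0 \in V_r \otimes V_s$ (the unique canonical basis vector at that weight up to scalar). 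Applying the categorified divided powers and using Remark \ref{canbasisremark}, which states
\[
v_b \diamond v_d =
\begin{cases}
F^{(d)} E^{(a)}(v_r \otimes v_0) & \text{if } b \leq c, \\
E^{(a)} F^{(d)}(v_r \otimes v_0) & \text{if } b \geq c,
\end{cases}
\]
one concludes $[P^b(\lambda)] = [Y(\lambda)] = v_b \diamond v_d$.

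The main obstacle I anticipate is packaging part (i) cleanly: the filtered-$p$-DG envelope formalism of Section \ref{subsec-ext-cat-act-ii} must be matched carefully with the higher fantastic filtrations of Proposition \ref{prop-higher-Serre}, ensuring that the $\mf{E}^{(a)}$ and $\mf{F}^{(d)}$ intertwiners giving the Stošić decomposition indeed transport through the extension and produce the required fantastic filtrations for the Rouquier isomorphism relation on the $S^b(r,s)$ side. In particular, care is needed to ensure that cofibrance of the extended bimodules is preserved after truncation to the basic algebra, so that the derived action is well defined rather than merely homotopical.
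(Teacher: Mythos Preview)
Your proposal is essentially correct and follows the same route as the paper: part (i) via Proposition~\ref{prop-pdg-extension} and Theorem~\ref{EFact}, part (ii) via positivity (Theorem~\ref{homspaceY1Y2}) plus Theorem~\ref{thm-K-group-positive}, and part (iii) via the direct comparison of Theorem~\ref{classificationprop} with Remark~\ref{canbasisremark}. The paper's proof is terser but invokes exactly these ingredients.

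One minor slip to flag: in your part (ii) you write that the highest-weight vector comes from $[P^b(1^r 0^s)]$. The partition $(1^r 0^s)$ lies in $\mc{P}_r^{r,s}$ (so $n=r$, weight $s-r$), and under the identification of part (iii) it corresponds to $v_r \otimes v_0$, not the highest-weight vector $v_0 \otimes v_0$ (which sits at $n=0$). The module $Y(1^r 0^s)$ is the \emph{starting point} for the construction of all $Y(\lambda)$ via $\mf{E}^{(a)}, \mf{F}^{(d)}$, but it is not a highest-weight vector. The paper avoids any highest-weight-vector argument entirely: it simply defines the map $[P^b(\lambda)] \mapsto v_b \diamond v_d$ using part (iii), observes it intertwines the $E,F$ actions by Theorem~\ref{classificationprop} and Remark~\ref{canbasisremark}, and concludes it is an isomorphism because both sides are free $\mathbb{O}_p$-modules of the same rank. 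Your part (iii) already does this correctly, so the cleanest fix is to drop the highest-weight-vector sentence from part (ii) and let part (iii) carry the identification.
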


\begin{proof}
By Proposition \ref{prop-pdg-extension} and Theorem \ref{EFact}, the action of $(\dot{\mathcal{U}},\dif)$ on $\oplus_{n=0}^l(\nh_n^l,\dif)\dmod $ (Theorem \ref{catofVl}) extends to the category $\oplus_{n=0}^l(S_n^b(r,s),\dif)\dmod$.

The second statement follows by applying Theorem \ref{homspaceY1Y2}. The positivity of the $p$-DG algebra $S_n^b(r,s)$ allows one to compute the Grothendieck group directly from Theorem \ref{thm-K-group-positive}.

The third statement is a consequence of the construction of 
 $Y(0^a 1^b 0^c 1^d)$ as a step-by-step categorification of the canonical basis elements.  Recall from Remark \ref{canbasisremark} that 
\begin{equation*}
v_b \diamond v_d =
\begin{cases}
F^{(d)} E^{(a)} (v_r \otimes v_0) & \text{ if } b \leq c, \\
E^{(a)} F^{(d)} (v_r \otimes v_0) & \text{ if } b \geq c.
\end{cases}
\end{equation*}
Also recall from Theorem \ref{classificationprop} that 
\begin{equation*}
Y(\lambda) =
\begin{cases}
\mf{F}^{(d)} \mf{E}^{(a)} Y(1^r 0^s) & \text{ if } b \leq c, \\
\mf{E}^{(a)} \mf{F}^{(d)} Y(1^r 0^s) & \text{ if } b \geq c.
\end{cases}
\end{equation*}
Thus the map 
\begin{equation*}
K_0(\mathcal{D}^c(\oplus_{n=0}^l S^b_n(r,s))) \rightarrow V_r \otimes_{\mathbb{O}_p} V_s
\end{equation*}
sending $[P^b(\lambda)]$ to $v_b \otimes v_d$ intertwines the action of $[\mf{E}]$ and $[\mf{F}]$ on $K_0(\mathcal{D}^c(\oplus_{n=0}^l S^b_n(r,s)))$ with the action of
$E$ and $F$ on $V_r \otimes_{\mathbb{O}_p} V_s$.
\end{proof}

We then extend the theorem to the full two-tensor quiver Schur algebra. This requires us to set up a chain of intermediate $p$-DG algebras between $S_n^b(r,s)$ and $S_n(r,s)$, and show that they are all $p$-DG Morita equivalent. First we make a definition.

\begin{defn}\label{def-canonical-basis-mod}
For each $\mu=(0^a1^b0^c1^d)\in\mc{P}_n^{r,s}$, the $p$-DG module
\[
P(\mu):=\HOM_{\nh_n^l}\left(Y(\mu),\bigoplus_{\lambda\in \mc{P}_n^{r,s}}e_\lambda G(\lambda)\right)
\]
will be referred to as the \emph{canonical module} associated to the partition $\mu$.
\end{defn}

Choose an arbitrary total order ``$>$'' which refines the partial order of Definition \ref{def-order-on-nh-partition}. For each fixed $\mu\in \mc{P}_n^{r,s}$, we take the module
\[
G^{\geq \mu}:=\left(\bigoplus_{\lambda < \mu} Y(\lambda)\right)\bigoplus \left(\bigoplus_{\lambda \geq \mu} e_\lambda G(\lambda)\right).
\]
In other words, if $\mu > \nu$ are two neighboring terms in the total order, then $G^{\geq \nu}$ is obtained from $G^{\geq \mu}$ by replacing the summand $Y(\nu)\subset G^{\geq \mu}$ by $e_\nu G(\nu)$. By Proposition \ref{prop-multiplicity-Y-in-G}, $Y(\nu)$ appears in $e_\nu G(\nu)$ as a $p$-DG quotient module with multiplicity one. Furthermore, each time when making this replacement, we only introduce an extra filtered $p$-DG submodule $G^\prime$ whose subquotients are grading shifts of $Y(\lambda)$ with $\lambda > \nu$. Let us write 
\[
e_{\nu}G(\nu)=Y(\nu)\bigoplus \left(\bigoplus_{\lambda>\nu} g_{\lambda} Y(\lambda)\right)=Y(\nu)\oplus G^\prime,
\]
so that there is a filtered direct sum
\[
G^{\geq \nu} \cong  G^\prime \oplus G^{\geq \mu}.
\]
Here it is understood the summands are associated graded pieces of the $p$-DG filtration.

Now let us compare the endomorphism $p$-DG algebras of $G^{\geq \mu}$ and $G^{\geq \nu}$ over $\nh_n^l$. Call the former $S^{\geq \mu}$ and the latter $S^{\geq \nu}$. We may identify $S^{\geq \nu}$ as a ``block matrix'' involving the $S^{\geq \mu}$ as follows (the possible external differentials between blocks are indicated on the arrows):
\[
S^{\geq \nu}=\END(G^{\geq \nu})\cong 
\left(
\begin{gathered}
\xymatrix{
\END(G^\prime)\ar[r]^-\dif & \HOM(G^\prime, G^{\geq \mu})\\
 \HOM(G^{\geq \mu}, G^\prime)\ar[r]^-{\dif}\ar[u]^-{\dif} & S^{\geq \mu}\ar[u]_-{\dif}
}
\end{gathered}
\right)
\]
The last column of the formal matrix is isomorphic to $\HOM(G^{\geq \nu},G^{\geq \mu})$, and the last row may be identified with $\HOM(G^{\geq \mu},G^{\geq \nu})$, which are respectively $p$-DG bimodules over $(S^{\geq \mu},S^{\geq \nu})$ and $(S^{\geq \mu},S^{\geq \nu})$. We are then reduced to the situation of Proposition \ref{prop-p-dg-Morita}, which inductively allows us to conclude the following result.

\begin{thm}Fix two numbers $r,s\in \N$.
\label{mainthm}
\begin{enumerate}
\item[(i)] There is $p$-DG Morita equivalence between $(S(r,s),\dif)\dmod$ and $(S^b(r,s),\dif)\dmod$. For each fixed $n\in \{0,1,\dots, n\}$, the functor is given by tensor product with the $p$-DG bimodule over $(S_n(r,s),S_n^b(r,s))$:
\[
\HOM_{\nh_l}\left(\bigoplus_{\lambda\in \mc{P}_n^{r,s}}Y(\lambda),\bigoplus_{\mu\in \mc{P}_n^{r,s}}G(\mu)\right)\otimes_{S_n^b(r,s)}(-):(S_n^b(r,s),\dif)\dmod\lra (S_n(r,s),\dif)\dmod.
\]
Summing over $n$, the equivalences induce derived equivalence between $\mc{D}(S(r,s))$ and $\mc{D}(S^b(r,s))$.
\item[(ii)]The $p$-DG $2$-category $(\dot{\mathcal{U}},\dif)\dmod$ acts on 
$(S(r,s),\dif)\dmod$, inducing an action of the derived
$p$-DG $2$-category $\mc{D}(\dot{\mc{U}})$ on $\mc{D}(S(r,s))$. The derived action categorifies the action of $\dot{U}_{\mathbb{O}_p}$ on the tensor product representation $V_r\otimes_{\mathbb{O}_p}V_s$.
\item[(iii)] For each $\mu=(0^a1^b0^c1^d)\in\mc{P}_n^{r,s}$, the canonical module $P(\mu)$
descends in the Grothendieck group $K_0(S_n(r,s))$ to the canonical basis element $v_b\diamond v_d$.
\end{enumerate}
\end{thm}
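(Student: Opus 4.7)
The plan is to deduce Theorem \ref{mainthm} from its basic counterpart Proposition \ref{prop-main-thm-basic-version} via the successive $p$-DG Morita reductions sketched in the paragraphs preceding the theorem statement. For part (i), I would fix a total order on $\mc{P}_n^{r,s}$ refining the dominance order, and interpolate between the two generating modules by the chain
\[
\bigoplus_{\lambda \in \mc{P}_n^{r,s}} Y(\lambda) \;=\; G^{\geq \mu_{\max}} \;\leadsto\; G^{\geq \mu_k} \;\leadsto\; \cdots \;\leadsto\; G^{\geq \mu_{\min}} \;=\; \bigoplus_{\lambda\in \mc{P}_n^{r,s}} e_\lambda G(\lambda),
\]
where passing from $G^{\geq \mu}$ to $G^{\geq \nu}$ (with $\nu$ the predecessor of $\mu$) consists of replacing a single summand $Y(\nu)$ by $e_\nu G(\nu)$. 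By Proposition \ref{prop-multiplicity-Y-in-G}, $e_\nu G(\nu)$ admits a $p$-DG filtration whose subquotients are grading shifts of $Y(\nu)$ (appearing once) together with $Y(\lambda)$'s for $\lambda > \nu$. Since the latter summands already occur in $G^{\geq \mu}$, the two modules $G^{\geq \mu}$ and $G^{\geq \nu}$ stand exactly in the relationship required by Proposition \ref{prop-p-dg-Morita}, and hence their endomorphism $p$-DG algebras are derived equivalent. Composing these equivalences yields the desired derived Morita equivalence between $\mc{D}(S^b(r,s))$ and $\mc{D}(S(r,s))$, implemented by tensoring with the stated bimodule $\HOM_{\nh^l_n}\bigl(\bigoplus_\lambda Y(\lambda),\bigoplus_\mu e_\mu G(\mu)\bigr)$.

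For part (ii), the $p$-DG action of $(\dot{\mc{U}},\dif)$ on $(S(r,s),\dif)\dmod$ is provided directly by Proposition \ref{prop-pdg-extension} together with Theorem \ref{EFact}, once one observes that $\mc{NH}_n^{r,s}$ is precisely the filtered $p$-DG envelope of the modules $e_\lambda G(\lambda)$, so that the restriction-and-induction functors preserve the corresponding additive subcategory. Theorem \ref{thm-action-extension} and Lemma \ref{lem-compo-identity-I} ensure compatibility of this action with the Morita equivalence from (i), so that under the identification $\mc{D}(S(r,s)) \cong \mc{D}(S^b(r,s))$ it agrees with the action established in Proposition \ref{prop-main-thm-basic-version}(i). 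Passing to Grothendieck groups and invoking Proposition \ref{prop-main-thm-basic-version}(ii) then gives the desired isomorphism of $\dot{U}_{\mathbb{O}_p}$-modules $K_0(\mc{D}^c(S(r,s))) \cong V_r \otimes_{\mathbb{O}_p} V_s$.

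For part (iii), I would observe that the canonical module $P(\mu)=\HOM_{\nh_n^l}(Y(\mu),\bigoplus_\lambda e_\lambda G(\lambda))$ corresponds under the Morita equivalence of (i) to $P^b(\mu)=\HOM_{\nh_n^l}(Y(\mu),\bigoplus_\lambda Y(\lambda))$, since both are realized by applying the same functor $\HOM_{\nh_n^l}(Y(\mu),-)$ to the two generating modules appearing as the columns of the Morita bimodule. Proposition \ref{prop-main-thm-basic-version}(iii) identifies $[P^b(\mu)]$ with $v_b \diamond v_d$, and this identification transports to $[P(\mu)]$ under the induced isomorphism of Grothendieck groups.

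The main technical obstacle will be verifying that the inductive step in (i) genuinely fits into Proposition \ref{prop-p-dg-Morita}. Concretely, one must check that at each swap $G^{\geq \mu} \leadsto G^{\geq \nu}$ the relevant summand behaves precisely as the "$M_1'$ in the filtered envelope of $M_1 \oplus M_2$" of that proposition, with the $M_1$-subquotients not forming an acyclic sub or quotient module (cf.\ the remark following Proposition \ref{prop-p-dg-Morita}); here $M_1$ should be taken to be $Y(\nu)$, and $M_2$ the direct sum over $\lambda > \nu$ of appropriate multiplicities of $Y(\lambda)$'s. The multiplicities ${b-c \brack j}$ computed in Proposition \ref{prop-multiplicity-Y-in-G}, together with the indecomposability of each $Y(\lambda)$ (Theorem \ref{classificationprop}) and the fact that $Y(\nu)$ appears exactly once and as a non-acyclic quotient in the filtration of $e_\nu G(\nu)$, are exactly what guarantee this condition is met at every step.
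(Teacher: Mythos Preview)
Your proposal is correct and follows essentially the same approach as the paper: the inductive chain $G^{\geq \mu}$ together with Proposition~\ref{prop-p-dg-Morita} for part~(i), Proposition~\ref{prop-pdg-extension} with Theorem~\ref{EFact} for part~(ii), and transporting $P^b(\mu)$ through the Morita bimodule for part~(iii). Your final paragraph on the technical obstacle is in fact more explicit than the paper's own proof, which simply asserts that the induction via Proposition~\ref{prop-p-dg-Morita} goes through and that the bimodule is therefore cofibrant on both sides.
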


\begin{proof}
The first statement now follows easily by an induction on the $\lambda \in \mc{P}_n^{r,s}$ via Proposition \ref{prop-p-dg-Morita} with respect to the total ordering chosen above. The induction also shows that the $(S_n(r,s),S_n^b(r,s))$-bimodule
\[
\HOM_{\nh_l}\left(\bigoplus_{\lambda\in \mc{P}_n^{r,s}}Y(\lambda),\bigoplus_{\mu\in \mc{P}_n^{r,s}}G(\mu)\right)
\]
is cofibrant as both a left and right $p$-DG module. Thus the derived tensor functor is equal to the underived tensor product.

The categorical $\dot{U}_{\mathbb{O}_p}$-action extension result follows, similar as in the proof of the previous proposition, by applying Proposition \ref{prop-pdg-extension} to the current situation.

Finally, the above functor, when applied to the cofibrant module $P^b(\mu)$ for a fixed $\mu\in \mc{P}_n^{r,s}$, truncates the factor $\oplus_{\lambda} Y(\lambda)$ in the definition of the bimodule through the idempotent $\xi_\mu$, i.e., it replaces $\oplus_{\lambda}Y(\lambda)$ just by $Y(\mu)$. The claim follows.
\end{proof}

\subsection{Stratified structure}
\label{subsec-strat}
The two-tensor quiver Schur algebra is naturally equipped with a stratified structure (see, for instance, \cite{KleAHW}) that is compatible with the differential. We sketch the construction here, starting with the basic algebra case. Recall that, for $\lambda\in \mc{P}_n^{r,s}$, we have associated with the cofibrant $p$-DG module over $S_n^b(r,s)$:
\[
P^b(\lambda):=\HOM_{\nh_n^l}\left(Y(\lambda),\bigoplus_{\mu\in \mc{P}_n^{r,s}}Y(\mu)\right).
\]

\begin{defn}\label{def-standard-mod}
Let $\lambda \in \mathcal{P}_n^{r,s}$. 
Define the standard left $S^b_n(r,s)$-module
$\Delta^b(\lambda)=P^b(\lambda)/P^{>\lambda}$
where $P^{>\lambda}$ is the left submodule generated by
\begin{equation*}
\HOM_{\nh_n^l}\left(Y(\lambda), \bigoplus_{\substack{\mu \in \mathcal{P}_n^{r,s} \\ \mu > \lambda}} Y(\mu)\right).
\end{equation*}
\end{defn}

We would like to construct a filtration on each projective module $P^b(\lambda)$ which has subquotients of the form $\Delta^b(\gamma)$ for $\gamma \geq \lambda$.

\begin{defn}\label{def-stratified-mod}
\begin{enumerate}
\item[(1)] Let $ \HOM_{\nh_n^l}^{\geq \gamma}(Y(\lambda),Y(\mu))$ be the left submodule of $P^b(\lambda)$ generated by all maps in  
$ \HOM_{\nh_n^l}(Y(\lambda),Y(\mu))$  
which factor through $Y(\gamma')$ for some $\gamma' \geq \gamma$.  
Then define the submodule $P^{\geq \gamma}(\lambda) \subset P^b(\lambda)$ to be the left submodule generated by all maps factoring through $Y(\gamma')$ for some $\gamma' \geq \gamma$.  That is,
\begin{equation*}
P^{\geq \gamma}(\lambda)=
\HOM^{\geq \gamma}_{\nh_n^l}\left(Y(\lambda), \bigoplus_{\substack{\mu \in \mathcal{P}_n^{r,s} \\ \mu > \lambda}} Y(\mu)\right).
\end{equation*}
\item[(2)] Let $ \HOM_{\nh_n^l}^{> \gamma}(Y(\lambda),Y(\mu))$ be the left submodule of $P(\lambda)$ generated by all maps in  
$ \HOM_{\nh_n^l}(Y(\lambda),Y(\mu))$  
which factor through $Y(\gamma')$ for some $\gamma' > \gamma$.  
Then define the submodule $P^{\geq \gamma}(\lambda) \subset P^b(\lambda)$ to be the left submodule generated by all maps factoring through $Y(\gamma')$ for some $\gamma' > \gamma$.  That is,
\begin{equation*}
P^{> \gamma}(\lambda)=
\HOM^{> \gamma}_{\nh_n^l}\left(Y(\lambda), \bigoplus_{\substack{\mu \in \mathcal{P}_n^{r,s} \\ \mu > \lambda}} Y(\mu)\right).
\end{equation*}
\end{enumerate}
\end{defn}

\begin{lem}
\label{filtpdgstable}
The submodules $P^{\geq \gamma}(\lambda)$ and $P^{> \gamma}(\lambda)$
of $P^b(\lambda)$ are stable under $\partial$.
\end{lem}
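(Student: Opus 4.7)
The plan is to reduce everything to the Leibniz rule applied to the factorization defining membership in these submodules, using the key fact that each $Y(\gamma')$ is itself a $p$-DG module (Corollary~\ref{pdgclassificationcor}).

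First I would unwind the definition: an element of $P^{\geq \gamma}(\lambda)$ is by construction a left $S_n^b(r,s)$-linear combination of morphisms $f \in \HOM_{\nh_n^l}(Y(\lambda), Y(\mu))$ that admit a factorization
\[
f = h \circ g, \qquad g \in \HOM_{\nh_n^l}(Y(\lambda), Y(\gamma')),\quad h \in \HOM_{\nh_n^l}(Y(\gamma'), Y(\mu)),
\]
for some $\gamma' \geq \gamma$. Here $g$ and $h$ are arbitrary morphisms in the ambient category (not required to commute with $\partial$), and this is the only place the structure of $Y(\gamma')$ enters. The analogous statement with $\gamma' > \gamma$ describes $P^{> \gamma}(\lambda)$.

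Next, since each $Y(\gamma')$ carries a $p$-differential (Corollary~\ref{pdgclassificationcor}), the differentials on the hom-spaces are defined by $(\partial f)(x) = \partial(f(x)) - f(\partial x)$, and a standard computation gives the Leibniz rule
\[
\partial(h \circ g) = \partial(h) \circ g + h \circ \partial(g).
\]
Crucially, $\partial(g) \in \HOM_{\nh_n^l}(Y(\lambda), Y(\gamma'))$ and $\partial(h) \in \HOM_{\nh_n^l}(Y(\gamma'), Y(\mu))$, so each summand of $\partial(h \circ g)$ still factors through the same $Y(\gamma')$. Therefore $\partial(h \circ g) \in P^{\geq \gamma}(\lambda)$ (respectively $P^{>\gamma}(\lambda)$, if $\gamma' > \gamma$).

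Finally I would promote this to arbitrary elements of the submodule. For $s \in S_n^b(r,s)$ and $f = h \circ g$ as above, the Leibniz rule again gives
\[
\partial(s \cdot f) = \partial(s) \cdot f + s \cdot \partial(f),
\]
and both $\partial(s) \cdot f$ and $s \cdot \partial(f)$ are left $S_n^b(r,s)$-multiples of morphisms still factoring through $Y(\gamma')$, hence they lie in $P^{\geq \gamma}(\lambda)$ (resp.\ $P^{>\gamma}(\lambda)$). Extending $\F_p$-linearly over a generating set finishes the argument. There is no real obstacle here: once the factorization description of the submodule is written down and the $p$-DG nature of $Y(\gamma')$ is invoked, the two Leibniz rules do all the work.
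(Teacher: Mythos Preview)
Your proposal is correct and follows essentially the same approach as the paper: write a generator as a composition through some $Y(\gamma')$ with $\gamma'\geq\gamma$, apply the Leibniz rule $\partial(h\circ g)=\partial(h)\circ g+h\circ\partial(g)$, and observe that each summand still factors through the same $Y(\gamma')$. Your extra step handling the left $S_n^b(r,s)$-action explicitly is a minor elaboration the paper omits, but the core argument is identical.
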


\begin{proof}
If $\phi \in P^{\geq \gamma}(\lambda) $ then $\phi = \phi_2 \circ \phi_1$ with
$ \phi_1 \colon Y(\lambda)  \rightarrow Y(\gamma')  $ and
$\phi_2 \colon Y(\gamma') \rightarrow Y(\mu)$
for some $\mu$ and some $\gamma' \geq \gamma$.
By the definition of $\partial$ on $\phi_1$ and $\phi_2$,
$ \partial(\phi_1) \colon Y(\lambda) \rightarrow Y(\gamma') $ and
$ \partial(\phi_2) \colon Y(\gamma') \rightarrow Y(\mu)$.
Thus $\partial(\phi)=\partial(\phi_2) \phi_1 + \phi_2 \partial(\phi_1)$
factors through $Y(\gamma')$.

The proof that $P^{> \gamma}(\lambda)$ is a $p$-DG submodule is the same.
\end{proof}

\begin{cor}
The standard module $\Delta(\lambda)$ is  a $p$-DG module over $S^b_n(r,s)$.
\end{cor}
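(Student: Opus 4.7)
The plan is to derive this corollary immediately from Lemma \ref{filtpdgstable} once we correctly identify the denominator in the quotient definition of $\Delta^b(\lambda)$.

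First, I would note that $P^b(\lambda) = S_n^b(r,s)\xi_\lambda$ is already a left $p$-DG module over $S_n^b(r,s)$ by construction, since $\xi_\lambda$ is a $\partial$-closed idempotent. Hence, to equip $\Delta^b(\lambda) = P^b(\lambda)/P^{>\lambda}$ with a $p$-DG structure, it suffices to show that the submodule $P^{>\lambda}$ is stable under $\partial$.

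Next, I would match the $P^{>\lambda}$ appearing in Definition \ref{def-standard-mod} with the module $P^{>\lambda}(\lambda)$ from Definition \ref{def-stratified-mod}(2). Indeed, any homomorphism $\phi\colon Y(\lambda)\to Y(\mu)$ with $\mu > \lambda$ trivially factors as $\mathrm{id}_{Y(\mu)}\circ \phi$ through $Y(\mu)$ with $\mu > \lambda$, so the generating set for $P^{>\lambda}$ lies inside $P^{>\lambda}(\lambda)$. Conversely, any map factoring as $\phi_2\circ\phi_1$ through some $Y(\gamma')$ with $\gamma' > \lambda$ arises, after applying a suitable element of $S_n^b(r,s)$ on the left, from the homomorphism $\phi_1\in \HOM_{\nh_n^l}(Y(\lambda), Y(\gamma'))$, which lies in the generating set of $P^{>\lambda}$. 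Thus $P^{>\lambda} = P^{>\lambda}(\lambda)$ as left submodules of $P^b(\lambda)$.

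Finally, Lemma \ref{filtpdgstable} (applied with $\gamma = \lambda$) tells us precisely that $P^{>\lambda}(\lambda)$ is $\partial$-stable. Therefore the quotient $\Delta^b(\lambda)$ inherits a well-defined $p$-DG module structure from $P^b(\lambda)$, proving the corollary. There is no real obstacle here; the substantive work has been carried out in Lemma \ref{filtpdgstable}, whose proof used that the differential acts by the Leibniz rule on compositions $\phi = \phi_2\circ\phi_1$, preserving the property of factoring through a fixed object.
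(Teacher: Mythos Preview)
Your proposal is correct and follows essentially the same approach as the paper's proof: both deduce $\partial$-stability of $P^{>\lambda}$ from Lemma~\ref{filtpdgstable} and conclude that the quotient inherits a $p$-DG structure. Your version is slightly more explicit in identifying $P^{>\lambda}$ with $P^{>\lambda}(\lambda)$, whereas the paper simply says the stability ``follows from the proof of Lemma~\ref{filtpdgstable}'' without spelling out this identification.
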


\begin{proof}
The $p$-DG structure is determined by the $p$-DG structure on the $Y(\mu)$ and $Y(\lambda)$ using the formula $\partial f(x)=\partial(f(x))-f(\partial(x))$.
The fact that $P(>\lambda)$ is stable under $\partial$ follows from the proof of Lemma~\ref{filtpdgstable}.
\end{proof}

The cofibrant module $P^b(\lambda)$ should have a filtration $P^{\geq \lambda}$ with subquotients
$P^{\geq \gamma}(\lambda) /P^{> \gamma} (\lambda)\cong  \Delta(\gamma)^{f(\gamma)}$ for some graded multiplicities $f(\gamma)$.

The formula for $f(\gamma)$ should be determined by the coefficient of a standard basis in a canonical basis element $v_{b} \diamond v_d$ from Proposition \ref{canbasisprop} where $\lambda=(0^a 1^b 0^c 1^d)$.

We provide a couple of examples illustrating this.

\begin{example}
Consider $S_1^b(2,1)$.  
In this case 
$\nh_1^3=\Bbbk[y]/y^3$ 
and there are two elements of 
$ \mathcal{P}_1^{2,1} $:
\begin{equation*}
\mu=(~\emptyset~,~\yng(1)~,~\emptyset~) \hspace{.5in} \lambda=(~\emptyset~,~\emptyset~,~\yng(1)~).
\end{equation*}
Then we have 
\begin{equation*}
Y(\mu)=y \Bbbk[y]/y^3 \hspace{.5in}
Y(\lambda)=\Bbbk[y]/y^3.
\end{equation*}

\begin{equation*}
\Delta(\mu)=P^b(\mu)=\HOM_{\Bbbk[y]/y^3}(Y(\mu),Y(\lambda)) \oplus 
\HOM_{\Bbbk[y]/y^3}(Y(\mu),Y(\mu))
\end{equation*}
with
\begin{align*}
\HOM_{\Bbbk[y]/y^3}(Y(\mu),Y(\lambda)) &= \Bbbk \langle y \mapsto y, y \mapsto y^2    \rangle \\
\HOM_{\Bbbk[y]/y^3}(Y(\mu),Y(\mu)) &= \Bbbk \langle y \mapsto y, y \mapsto y^2    \rangle.
\end{align*}

\begin{equation*}
P^b(\lambda)=\HOM_{\Bbbk[y]/y^3}(Y(\lambda),Y(\lambda)) \oplus 
\HOM_{\Bbbk[y]/y^3}(Y(\lambda),Y(\mu))
\end{equation*}
with
\begin{align*}
\HOM_{\Bbbk[y]/y^3}(Y(\lambda),Y(\lambda)) &= \Bbbk \langle 1 \mapsto 1, 1 \mapsto y, 1 \mapsto y^2    \rangle \\
\HOM_{\Bbbk[y]/y^3}(Y(\lambda),Y(\mu)) &= \Bbbk \langle 1 \mapsto y, 1 \mapsto y^2    \rangle.
\end{align*}
All of the maps in $P^b(\lambda)$ factor through $Y(\mu)$ except for the identity map of $Y(\lambda)$.
Thus $P^{\geq \mu}(\lambda) \cong \Delta(\mu) $ and
$P^b(\lambda)/P^{\geq \mu}(\lambda) \cong  \Delta(\lambda)$ where $\Delta(\lambda)$ is a simple one-dimensional module.

Note that $S_1^b(2,1) \cong ((2)+(3))A_3^!((2)+(3))$.
\end{example}

\begin{example}
Consider $S_2^b(2,1)$.  
In this case there are two elements of 
$ \mathcal{P}_2^{2,1} $:
\begin{equation*}
\mu=(~\yng(1)~,~\yng(1)~,~\emptyset~) \hspace{.5in} \lambda=(~\emptyset~,~\yng(1)~,~\yng(1)~).
\end{equation*}
One easily checks that 
\begin{equation*}
Y(\mu)=y_1^2 y_2 \nh_2^3 \hspace{.5in}
Y(\lambda)=y_1 \psi_1 \nh_2^3.
\end{equation*}

\begin{equation*}
\Delta(\mu)=P^b(\mu)=\HOM_{\nh_2^3}(Y(\mu),Y(\lambda)) \oplus 
\HOM_{\nh_2^3}(Y(\mu),Y(\mu))
\end{equation*}
with
\begin{align*}
\HOM_{\nh_2^3}(Y(\mu),Y(\lambda)) &=
\Bbbk \langle y_1^2 y_2 \mapsto y_1^2 y_2 \psi_1 y_1   \rangle \\
\HOM_{\nh_2^3}(Y(\mu),Y(\mu)) &= 
\Bbbk \langle y_1^2 y_2 \mapsto y_1^2 y_2   \rangle.
\end{align*}

\begin{equation*}
P^b(\lambda)=\HOM_{\nh_2^3}(Y(\lambda),Y(\lambda)) \oplus 
\HOM_{\nh_2^3}(Y(\lambda),Y(\mu))
\end{equation*}
with
\begin{align*}
\HOM_{\nh_2^3}(Y(\lambda),Y(\lambda)) &= 
\Bbbk \langle y_1 \psi_1 \mapsto y_1^2 y_2 \psi_1, y_1 \psi_1 \mapsto y_1 \psi_1 y_1^2 \psi_1, y_1 \psi_1 \mapsto y_1 \psi_1  \rangle \\
\HOM_{\nh_2^3}(Y(\lambda),Y(\mu)) &= 
\Bbbk \langle y_1 \psi_1 \mapsto y_1^2 y_2 \psi_1  \rangle. 
\end{align*}
Notice that only the maps in $P^b(\lambda)$ that factor through $Y(\mu)$ are
\begin{equation*}
y_1 \psi_1 \mapsto y_1^2 y_2 \psi_1 \hspace{.5in}
y_1^2 y_2 \mapsto y_1^2 y_2 \psi_1 y_1.
\end{equation*}
This submodule is isomorphic to $\Delta(\mu)$ and the quotient of $P^b(\lambda)$ by this submodule is isomorphic to $\Delta(\lambda)$.
If $S_2^b(2,1) $ were cellular then $\Delta(\lambda)$ would be simple (which it is not).

Note that $S_2^b(2,1) \cong ((1)+(3))A_3^!((1)+(3))$.
\end{example}

\begin{rem}
\begin{enumerate}
\item[(1)] Definition \ref{def-stratified-mod} is equivalent, in the language of \cite{KleAHW}, to the following description on the algebra $S_n^b(r,s)$ itself. Pick an arbitrary total order refining the dominance order (Definition~\ref{def-order-on-nh-partition}) on $\mc{P}_n^{r,s}$, then there is a chain of two-sided ideals in $S_n^b(r,s)$ defined as
\[
I^b_{\geq \lambda}:=\sum_{\mu\geq \lambda} S_n^b(r,s)\xi_\mu S_n^b(r,s) \quad \quad \quad (\textrm{resp}.~I^b_{>\lambda}:=\sum_{\mu> \lambda} S_n^b(r,s)\xi_\mu S_n^b(r,s)).
\]
Since $\dif(\xi_\mu)=0$, it is clear that the chain is $\dif$-stable, which induces a differential on the quotient algebra. Then one can show that
$
I^b_{\geq \lambda}/I^b_{>\lambda}
$
is a $p$-DG matrix algebra with coefficients in $\END_{S_n^b(r,s)}(\Delta^b(\lambda))$.
\item[(2)] One can also make this description on the two-tensor quiver Schur algebra itself by replacing the idempotent $\xi_\mu$ above with the idempotent $\e_\mu\in S_n(r,s)$:
\[
\e_\mu: \bigoplus_{\lambda\in \mc{P}_n^{r,s}}e_\lambda G(\lambda)\rightarrow e_\mu G(\mu)\hookrightarrow \bigoplus_{\lambda\in \mc{P}_n^{r,s}}e_\lambda G(\lambda).
\]
Using the Webster diagrammatics of Theorem \ref{thm-iso-to-Webster-block}, the idempotent translates into ($\mu=(0^a1^b0^c1^d)$),
\[
\e_{\mu}= 
\begin{DGCpicture}
\DGCstrand[Red](0,0)(0,1)[$^r$]
\DGCstrand[Green](0.5,0)(0.5,1)[$^b$]
\DGCstrand[Red](1,0)(1,1)[$^s$]
\DGCstrand[Green](1.5,0)(1.5,1)[$^d$]
\end{DGCpicture} \ ,
\]
while the corresponding chain of ideals in $S_n(r,s)$ is taken to be
\[
I_{\geq \mu}:=\sum_{\lambda\geq \mu} S_n(r,s)\e_\lambda S_n(r,s) \quad \quad \quad (\textrm{resp}.~I_{>\lambda}:=\sum_{\mu> \lambda} S_n(r,s)\e_\mu S_n(r,s)).
\]
\end{enumerate}
\end{rem}

The $p$-DG stratified structure will play an important role in subsequent works.

\subsection{Future directions}
\label{subset-future}
As a conclusion, we formulate a conjecture for categorifying a general $m$-fold tensor product $V_{r_1}\otimes_{\mathbb{O}_p}V_{r_2}\otimes_{\mathbb{O}_p}\cdots\otimes_{\mathbb{O}_p} V_{r_m}$, where each $V_{r_i}$ is the rank-$(r_i+1)$ $\mathbb{O}_p$-integral Weyl module over $\dot{{U}}_{\mathbb{O}_p}$. 

To do this, we first generalize Definition \ref{speciallambda}. Set $\mathbf{r}=(r_1,r_2,\dots, r_m)$ and write $l=r_1+r_2+\cdots+r_m$.

\begin{defn}
\label{dfoldspeciallambda}
Let $\mathcal{P}_n^{\mathbf{r}}$ be the subset of all partitions $\lambda \in
\mathcal{P}_n^{l}$ of the form $\lambda=(0^{a_1} 1^{b_1} 0^{a_2} 1^{b_2}\dots 0^{a_m} 1^{b_m})$ satisfying 
\begin{equation*}
a_i+b_i=r_i, \quad (i=1,2,\dots, m) \quad  \quad \quad \textrm{and} \quad \quad \quad \sum_{i=1}^m b_i=n.
\end{equation*}
We think of such a sequence also as a partition
\[
\lambda=(\underbrace{~\emptyset~,\dots, ~\emptyset~}_{a_1},\underbrace{~\yng(1)~,\dots, ~\yng(1)~}_{b_1}|\underbrace{~\emptyset~,\dots, ~\emptyset~}_{a_2},\underbrace{~\yng(1)~,\dots, ~\yng(1)~}_{b_2}|\dots|\underbrace{~\emptyset~,\dots, ~\emptyset~}_{a_m},\underbrace{~\yng(1)~,\dots, ~\yng(1)~}_{b_m}).
\]
Note that the minimal partition $\lambda_0\in \mc{P}_n^l$ (Definition \ref{def-order-on-nh-partition}) always belongs to $\mc{P}_n^{\mathbf{r}}$.
\end{defn}

To any partition $\lambda\in \mc{P}_n^{\mathbf{r}}$ we have associated a thick idempotent in $\nh_n^l$ as in equation \eqref{eqn-sequence-idempotent}
\[
e_\lambda =e_{b_1}\otimes e_{b_2}\otimes \cdots \otimes e_{b_m}.
\]
We can then construct the right $\nh_n^l$-module $e_{\lambda}G(\lambda)$ as the idemptoent truncation of $G(\lambda)$ in Definition \ref{def-G-lambda} by the idemptotent $e_\lambda$. It is a right $p$-DG module by Proposition \ref{genYindecomp}.

Likewise, for the Webster algebra $W_n(\mathbf{r})$ (Definition \ref{def-Webster-algebra}), we associate the idempotent
\[
\e_{\lambda}= 
\begin{DGCpicture}
\DGCstrand[Red](0,0)(0,1)[$^{r_1}$]
\DGCstrand[Green](0.5,0)(0.5,1)[$^{b_1}$]
\DGCstrand[Red](1,0)(1,1)[$^{r_2}$]
\DGCstrand[Green](1.5,0)(1.5,1)[$^{b_2}$]
\DGCstrand[Red](2.5,0)(2.5,1)[$^{r_m}$]
\DGCstrand[Green](3,0)(3,1)[$^{b_m}$]
\DGCcoupon*(1.6,0.2)(2.4,0.8){$\cdots$}
\end{DGCpicture} 
\]
to each $\lambda\in \mc{P}_n^{\mathbf{r}}$, and the corresponding projective module $Q(\lambda):=\e_\lambda\cdot W_n(\mathbf{r})$. It is clear that $Q(\lambda)$ is a right $p$-DG module over $W_n(\mathbf{r})$.

As in Proposition \ref{prop-projective-as-summand}, the minimal partition $\lambda_0$ gives rise to $e_{\lambda_0}G(\lambda_0)$, which is a projective module of $\nh_n^l$. This implies that the action of $\nh_n^l$ on $\oplus_{\lambda\in \mc{P}_n^{\mathbf{r}}}e_\lambda G(\lambda)$ is faithful.

\begin{defn}
For any $n\in \{0,1,\dots, l\}$, the $p$-DG \emph{$m$-tensor quiver Schur algebra} is 
\[
S_n(\mathbf{r}):=\END_{\nh_n^l}\left(\bigoplus_{\lambda\in \mc{P}_n^{\mathbf{r}}}e_\lambda G(\lambda)\right),
\]
equipped with the natural $p$-differential as endomorphism algebra of a $p$-DG module.
\end{defn}

The faithfulness of the $\nh_n^l$ representation exhibits the commuting $S_n(\mathbf{r})$ and $\nh_n^l$ actions on $\oplus_{\lambda\in \mc{P}_n^{\mathbf{r}}}e_\lambda G(\lambda)$ as double commutants, so that the framework of Section \ref{sec-double} applies.

Similar to what we have seen in Section \ref{subsec-conn-Web}, the collection of modules $\oplus_{\lambda\in \mc{P}_n^{\mathbf{r}}}e_\lambda G(\lambda)$ may be reconstructed using the Webster idempotent
\[
\e(\kappa_0)= 
\begin{DGCpicture}
\DGCstrand[Red](0,0)(0,1)[$^{r_1}$]
\DGCstrand[Red](.5,0)(.5,1)[$^{r_2}$]
\DGCstrand[Red](1.5,0)(1.5,1)[$^{r_m}$]
\DGCstrand(2,0)(2,1)[$^{1}$]
\DGCstrand(3,0)(3,1)[$^{n}$]
\DGCcoupon*(.6,0.2)(1.4,0.8){$\cdots$}
\DGCcoupon*(2.1,0.2)(2.9,0.8){$\cdots$}
\end{DGCpicture} 
\]
as
\begin{equation}
\bigoplus_{\lambda\in \mc{P}_n^{\mathbf{r}}} e_\lambda G(\lambda) \cong 
\HOM_{W_n(\mathbf{r})}\left(Q(\kappa_0),\bigoplus_{\lambda\in \mc{P}_n^{\mathbf{r}}}Q(\lambda)\right),
\end{equation}
where $Q(\kappa_0):=\e(\kappa_0)\cdot W_n(\mathbf{r})$. Parallel to Theorem \ref{thm-iso-to-Webster-block}, we have the following.

\begin{thm}
There is an isomorphism of $p$-DG algebras
\[
\END_{\nh_n^l}\left(\bigoplus_{\lambda\in \mc{P}_n^{\mathbf{r}}} e_\lambda G(\lambda)\right)
\cong 
\END_{W_{\mathbf{r}}}\left(\bigoplus_{\lambda\in \mc{P}_n^{\mathbf{r}}}Q(\lambda)\right).
\]
\end{thm}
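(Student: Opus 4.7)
The plan is to mimic the proof strategy of Theorem \ref{thm-iso-to-Webster-block} (the two-tensor case), leveraging the double centralizer framework of Section \ref{sec-double} together with the Webster-side description of $\HOM$-spaces. The crucial input is that the preceding discussion already verifies the hypotheses needed to apply Theorem \ref{thm-double-centralizer}: namely, the $\nh_n^l$-module $M:=\bigoplus_{\lambda\in \mc{P}_n^{\mathbf{r}}}e_\lambda G(\lambda)$ is faithful because, by the direct analogue of Proposition \ref{prop-projective-as-summand}, the summand $e_{\lambda_0}G(\lambda_0)$ attached to the minimal partition $\lambda_0\in\mc{P}_n^{\mathbf{r}}$ is (a grading shift of) the unique indecomposable projective $\nh_n^l$-module, and $\nh_n^l$ is symmetric Frobenius by Proposition \ref{symm}.

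With $B:=\END_{\nh_n^l}(M)=S_n(\mathbf{r})$, the Frobenius property and Lemma \ref{lem-two-duals-isomorphic} give an identification of $(\nh_n^l,B)$-bimodules
\[
\HOM_B({_BM},B)\cong \HOM_{\nh_n^l}(M_{\nh_n^l},\nh_n^l)\cong M^*,
\]
where the last isomorphism uses the self-duality of each $G(\lambda)$ from Proposition \ref{prop-G-self-dual} (which restricts to self-duality of the summand $e_\lambda G(\lambda)$ since $e_\lambda$ is a $\dif$-stable idempotent). Consequently
\[
S_n(\mathbf{r})=\END_{\nh_n^l}(M)\cong M^*\otimes_{\nh_n^l}M\cong \HOM_{\nh_n^l}(M,\nh_n^l)\otimes_{\nh_n^l}M,
\]
and it remains to exhibit a compatible presentation of the Webster side.

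For this, I would identify $\nh_n^l$ with the idempotent corner $\e(\kappa_0)W_n(\mathbf{r})\e(\kappa_0)$ via the straightforward generalization of Lemma \ref{lem-Web-big-block} (the red strands $r_1,\dots,r_m$ are stacked to the left, the $n$ black strands to the right, with no interactions changing the calculation). Then, exactly as in Section \ref{subsec-conn-Web}, the \emph{sweeping map} produces a $p$-DG isomorphism of right $\nh_n^l$-modules
\[
\HOM_{W_n(\mathbf{r})}(Q(\kappa_0),Q(\lambda))\;\cong\;\e_\lambda W_n(\mathbf{r})\e(\kappa_0)\;\cong\; e_\lambda G(\lambda),
\]
for each $\lambda\in \mc{P}_n^{\mathbf{r}}$: one drags each block of $b_i$ thick black strands past all the red strands to its right using the relation $\dif\bigl(\,\begin{DGCpicture}[scale=0.4]\DGCstrand[Green](0,0)(1,1)\DGCstrand[Red](1,0)(0,1)\end{DGCpicture}\,\bigr)=0$ of Lemma \ref{lem-dif-on-Web}, so every sweep is a $p$-DG map; injectivity follows from \cite[Lemma 5.25]{Webcombined} and the image is generated by the monomial $e_\lambda y^\lambda$ after one absorbs the longest element of the parabolic $S_{b_1}\times\cdots\times S_{b_m}$ exactly as in the proof of the $m=2$ case. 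Summing over $\lambda$ and passing to the (derived) double centralizer on the Webster side yields
\[
\END_{W_n(\mathbf{r})}\!\Bigl(\bigoplus_{\lambda\in \mc{P}_n^{\mathbf{r}}}Q(\lambda)\Bigr)\cong \END_{\nh_n^l}(M)=S_n(\mathbf{r}),
\]
and a direct check that this identification intertwines the two $p$-differentials (both being induced by the endomorphism formula $(\dif f)(x)=\dif(f(x))-f(\dif x)$ starting from the natural $\dif$ on $W_n(\mathbf{r})$ and on $\nh_n^l$) completes the argument.

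The main obstacle I anticipate is purely combinatorial-diagrammatic rather than structural: in the two-tensor case one only had to sweep a single thick green block across one red strand, but with $m$ red strands in general position one must verify that the multi-step sweeping procedure lands inside the correct truncated ideal $e_\lambda G(\lambda)$ and not merely inside a larger $G(\mu)$ for some dominated $\mu$. This is where the idempotent $e_\lambda=e_{b_1}\otimes\cdots\otimes e_{b_m}$ plays its role, and the inductive reduction (sweeping the rightmost green block first, then the next, etc.) reduces the claim in each step to the already-proven two-tensor calculation. Verifying $p$-DG compatibility at each step is routine given Lemma \ref{lem-dif-on-Web}, since the sweeping crossings themselves are $\dif$-closed.
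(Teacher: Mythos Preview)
Your proposal is correct and takes essentially the same approach as the paper: the paper's own proof is simply ``almost identical to that of Theorem \ref{thm-iso-to-Webster-block}; we leave the details to the reader,'' and you have filled in precisely those details---faithfulness via the minimal partition, the Frobenius/double-centralizer framework of Section \ref{sec-double}, self-duality of $G(\lambda)$, and the sweeping-map identification $\e_\lambda W_n(\mathbf{r})\e(\kappa_0)\cong e_\lambda G(\lambda)$---with the expected inductive reduction to handle the multi-step sweep past $m$ red strands. One small cleanup: the word ``derived'' in your final step is spurious, since the entire argument (as in Theorem \ref{thm-iso-to-Webster-block}) takes place at the level of abelian module categories and underived $\HOM$'s.
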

\begin{proof}
The proof is almost identical to that of Theorem \ref{thm-iso-to-Webster-block}. We leave the details to the reader.
\end{proof}

The theorem gives rise to a diagrammatic description of the $m$-tensor quiver Schur algebra in terms of Webster diagrammatics.

By the framework in Section \ref{sec-double}, there exists a categorical action of $(\dot{\mc{U}},\dif)$ on the $p$-DG category
\[
(S(\mathbf{r}),\dif)\dmod:=\bigoplus_{n=0}^l (S_n(\mathbf{r}),\dif)\dmod
\] 
induced from that on $\oplus_{n=0}^l(\nh_n^l,\dif)\dmod$. We propose the following.

\begin{conj}
The compact derived category $\mc{D}^c(S_n(\mathbf{r}))$ categorifies the weight $l-2n$ subspace in the $m$-fold tensor product representation $V_{r_1}\otimes_{\mathbb{O}_p}V_{r_2}\otimes_{\mathbb{O}_p}\cdots\otimes_{\mathbb{O}_p} V_{r_m}$ of $\dot{U}_{\mathbb{O}_p}(\mf{sl}_2)$.
\end{conj}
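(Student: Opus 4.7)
The plan is to lift the two-tensor argument of Theorem \ref{mainthm} to arbitrary $m$ by induction on the number of tensor factors, using in an essential way the inductive description of Lusztig's canonical basis for tensor products. The base cases $m=1,2$ are Theorems \ref{catofVl} and \ref{mainthm}. For the inductive step, writing $\mathbf{r}=(r_1,\mathbf{r}')$ with $\mathbf{r}'=(r_2,\dots,r_m)$, every $\lambda\in\mc{P}_n^{\mathbf{r}}$ decomposes as $\lambda=(0^{a_1}1^{b_1})\,|\,\lambda'$ with $\lambda'\in\mc{P}_{n-b_1}^{\mathbf{r}'}$, and this decomposition organizes both the combinatorics and the Webster-diagrammatic presentation inherited from Section \ref{sec-Web}.

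First I would construct, for each $\lambda\in\mc{P}_n^{\mathbf{r}}$, an indecomposable $p$-DG module $Y(\lambda)$ in the filtered $p$-DG envelope $\mc{NH}_n^{\mathbf{r}}$ generated by $\{e_\lambda G(\lambda)\}$, producing exactly one indecomposable per $\lambda$. The construction generalizes Theorem \ref{classificationprop}: one chooses a reduced presentation of $\lambda$ as an iterated sequence of adjacent local moves, applies the corresponding composition of divided power functors $\mf{E}^{(a_i)}\mf{F}^{(b_i)}$ to the $p$-DG simple $Y(1^l)$ over $\nh_l^l$, and verifies indecomposability by an endomorphism-positivity computation modeled on Proposition \ref{indecomb>c}, using the commutation relations of Proposition \ref{prop-higher-Serre} and the adjunction formula \eqref{Fadjointformula}. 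The analogue of Proposition \ref{prop-pdg-eG} should then show that each $e_\lambda G(\lambda)$ carries a $\dif$-stable filtration whose subquotients are grading shifts of various $Y(\mu)$'s with $\mu\in\mc{P}_n^{\mathbf{r}}$, so that $\mc{NH}_n^{\mathbf{r}}$ is stable under $\mf{E}$ and $\mf{F}$, generalizing Theorem \ref{EFact}. Proposition \ref{prop-pdg-extension} then yields a $(\dot{\mc{U}},\dif)$-action on $\bigoplus_{n=0}^l(S_n(\mathbf{r}),\dif)\dmod$ that descends to $\mc{D}(S_n(\mathbf{r}))$.

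Next, I would compute the Grothendieck group via a basic-algebra reduction. Define $S_n^b(\mathbf{r}):=\END_{\nh_n^l}\bigl(\bigoplus_{\lambda}Y(\lambda)\bigr)$ and establish, extending Theorem \ref{homspaceY1Y2}, that with respect to the grading arising from the iterated divided power construction of $Y(\lambda)$ the algebra $S_n^b(\mathbf{r})$ is positively graded; this should follow from the iterated adjunction together with the positivity of structure constants of Grassmannian cohomology, and is consistent with the known Koszulity of the corresponding Webster algebras in the non-$p$-DG setting. Theorem \ref{thm-K-group-positive} then identifies $K_0(\mc{D}^c(S_n^b(\mathbf{r})))$ with a free $\mathbb{O}_p$-module of rank $|\mc{P}_n^{\mathbf{r}}|$ on classes $[P^b(\lambda)]$ of the obvious indecomposable projectives, and an iterated $p$-DG Morita argument along a refinement of the dominance order of $\mc{P}_n^{\mathbf{r}}$, patterned on Proposition \ref{prop-p-dg-Morita} and Theorem \ref{mainthm}(i), transfers the calculation to $S_n(\mathbf{r})$.

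The hard part will be matching the resulting $\dot{U}_{\mathbb{O}_p}$-module with the weight $l-2n$ subspace of $V_{r_1}\otimes\cdots\otimes V_{r_m}$, because for $m\geq 3$ the canonical basis admits only an inductive construction and there is no closed-form analogue of Proposition \ref{canbasisprop} against which to compare classes of indecomposables directly. The route I would pursue is to generalize the stratified structure of Section \ref{subsec-strat}: define standard $p$-DG modules $\Delta(\lambda)$ by quotienting $P(\lambda)$ by morphisms factoring through $Y(\mu)$ with $\mu>\lambda$, and show these decategorify to the standard basis $v_{b_1}\otimes v_{b_2}\otimes\cdots\otimes v_{b_m}$. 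The inductive step then couples (i) the uniqueness of highest-weight categorifications in the spirit of Losev-Webster \cite{LoWeb} to fix the $\dot{U}_{\mathbb{O}_p}$-module isomorphism class abstractly, with (ii) a comparison of the canonical modules $[P(\lambda)]$ against Lusztig's inductive canonical basis construction using the commutation formulas of Proposition \ref{prop-higher-Serre} at a prime root of unity. The main technical obstruction I foresee is that the Losev-Webster uniqueness theory is formulated in the non-$p$-DG setting, so care will be needed to promote the identification to the derived $p$-DG level, exactly as the analogous subtlety forced the passage from $\mc{U}$ to $\dot{\mc{U}}$ in \cite{EQ2} and the non-derived-Morita remark following Proposition \ref{prop-p-dg-Morita}.
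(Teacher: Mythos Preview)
The statement you are addressing is a \emph{conjecture} in the paper, not a theorem: the paper offers no proof of it. Immediately after stating the conjecture, the authors note that it is verified only for $m=1$ (the first example of Section~\ref{examplesubsection}) and $m=2$ (Theorem~\ref{mainthm}), and they announce that the case $m=l$, i.e.\ $V_1^{\otimes m}$, will be treated in a sequel. There is therefore no ``paper's own proof'' to compare your proposal against.

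Your write-up is a plausible research plan rather than a proof, and you are candid about this: you flag that the canonical basis for $m\geq 3$ has only an inductive description (the paper makes the same point in its ``Further comments'' subsection), and you correctly identify the main obstruction, namely that the Losev--Webster uniqueness machinery is not formulated in the $p$-DG setting. Several of the intermediate steps you outline are genuine open problems at the level of this paper: the construction of indecomposable $Y(\lambda)$ for general $\lambda\in\mc{P}_n^{\mathbf{r}}$, the positivity of $S_n^b(\mathbf{r})$, and the stability of $\mc{NH}_n^{\mathbf{r}}$ under $\mf{E},\mf{F}$ are all proved in the paper only for $m=2$, and the two-tensor proofs (Propositions~\ref{abc1cgeqb}, \ref{indecomb>c}, Theorems~\ref{classificationprop}, \ref{homspaceY1Y2}, \ref{EFact}) rely on the explicit dichotomy $b\leq c$ versus $b\geq c$ and the closed formula of Proposition~\ref{canbasisprop}, neither of which has an evident analogue for general $m$. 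So while your outline is a reasonable roadmap, it does not constitute a proof, and the paper does not claim one either.
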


So far, we have seen that the conjecture holds in the case when $m=1$ (see the first example of Section~\ref{examplesubsection}) and $m=2$ (Theorem \ref{mainthm}). 

In a sequel to this work, we will prove the conjecture for the extreme case when $m=l$, which corresponds to categorifying $V_1^{\otimes m}$. We expect that the stratified ($p$-DG) structure of the previous subsection generalizes to the $m$-tensor case, and will play an important role towards proving the conjecture. In particular, for the case of categorifying $V_1^{\otimes m}$, the stratified $p$-DG structure upgrades into a quasi-hereditary $p$-DG structure. We will develop further the related machinery in a sequel to this work, which will enable us to identify the Grothendieck groups of $\mc{D}^c(S(\mathbf{r}))$ explicitly.


\addcontentsline{toc}{section}{References}


\bibliographystyle{alpha}
\bibliography{qy-bib}

%

\vspace{0.1in}

\noindent M.~K.: { \sl \small Department of Mathematics, Columbia University, New York, NY 10027, USA} \newline \noindent {\tt \small email: khovanov@math.columbia.edu}

\vspace{0.1in}

\noindent Y.~Q.: { \sl \small Department of Mathematics, University of Virginia, Charlottesville, VA 22904, USA} \newline \noindent {\tt \small email: yq2dw@virginia.edu}

\vspace{0.1in}

\noindent J.~S.:  { \sl \small Department of Mathematics, CUNY Medgar Evers, Brooklyn, NY, 11225, USA}\newline \noindent {\tt \small email: jsussan@mec.cuny.edu}

%
\end{document}